\documentclass[11pt]{article}
\usepackage[english]{babel}
\usepackage{newtxtext}
\usepackage{amsmath}
\usepackage{amssymb}
\usepackage{amsfonts}
\usepackage{amsthm}
\usepackage{ dsfont }
\usepackage{authblk}
\usepackage{enumitem}
\usepackage{graphicx}
\usepackage[numbers]{natbib}
\usepackage{tikz-cd}
\usetikzlibrary{positioning}
\usetikzlibrary{shapes,snakes}
\usetikzlibrary{arrows,automata,positioning}
\usepackage{multicol}
\usepackage{float}
\usepackage{ stmaryrd }
\usepackage{ wasysym }
\usepackage{caption}
\setcitestyle{square}
\usepackage[colorlinks=true, allcolors=black]{hyperref}
\usepackage[a4paper, total={6in, 8in}]{geometry}
\usepackage{tikz}
\usepackage{multirow}
\usepackage{mathtools}
\usetikzlibrary{arrows,automata,positioning}
\usepackage[T1]{fontenc}
\usepackage[utf8]{inputenc}
\usepackage{babel}
\usepackage{graphicx}
\usepackage{ebproof}
\usepackage{prooft}
\usepackage{ mathrsfs }
\usepackage{color}
\usepackage{thmtools}
\theoremstyle{plain}
\newtheorem{theorem}{Theorem}[section]

\newtheorem{corollary}[theorem]{Corollary}
\newtheorem{proposition}[theorem]{Proposition}
\newtheorem{lemma}[theorem]{Lemma}
\theoremstyle{definition}
\newtheorem{definition}[theorem]{Definition}

\theoremstyle{remark}
\newtheorem{remark}[theorem]{Remark}
\usepackage{fancyhdr}
\usepackage[explicit,compact]{titlesec}
\titleformat{\chapter}[block]
    {\bfseries\huge}{\filright\huge\thechapter.}{1ex}{\huge\filright #1}
\usepackage{txfonts}
\newcommand{\RN}[1]{%
  \textup{\uppercase\expandafter{\romannumeral#1}}%
}

\newcommand{\dmnd}[2]{\langle  \scalebox{0.85}{$#1$} \rangle_{#2}~}
\newcommand{\idmnd}[3]{\langle  \scalebox{0.85}{$#1$} \rangle_{#2}^{#3}~}
\newcommand{\dwn}[0]{{\downarrow~}}
\newcommand{\W}[0]{\mathbf{W}_\mathsf{BI}}

\newcommand{\bio}[0]{{\mathsf{BI_\omega}}}

\newcommand{\rulerow}[1]{%

  \par\noindent\hbox to \textwidth{\hfil #1\hfil}\par\vspace{1.5em}%

}

\title{A Reflection Calculus for Bar Induction and the Linearity of Consistency Strength Ordering}

\author[1,2]{Fedor Pakhomov}
\author[1]{Vlad Lazar}

\affil[1]{Department of Mathematics: Analysis, Logic and Discrete Mathematics\\
Ghent University, Krijgslaan 297 S8, 9000 Ghent, Belgium}
\affil[2]{Steklov Mathematical Institute of the Russian Academy of Sciences\\
Ulitsa Gubkina 8, Moscow 117966, Russia}

\renewcommand\thanks[1]{\footnotetext{#1}} 

\date{\small October 2026}

\begin{document}
\maketitle

\begin{abstract}
We introduce a novel extension of reflection calculus, $\mathsf{RC_{BI}}$, to analyze a family of extensions of $\mathsf{ACA}_0$ generated by uniform $\omega$-model and syntactic reflection principles, together with several other related operators. The union of these theories coincides with the theory of Bar Induction ($\mathsf{BI}$). We prove that $\mathsf{RC_{BI}}$ is arithmetically sound, and that it is sound and complete with respect to a natural class of Kripke models. Leveraging these modal semantics, we demonstrate that the class of theories denoted by $\nabla$-free closed formulas is a pre-wellordering with respect to $\Pi^1_1$-soundness and consistency strength. This allows us to establish an ordinal analysis for $\mathsf{BI}$ and its fragments, analogous to Beklemishev's analysis of $\mathsf{PA}$. The structural properties of $\mathsf{RC_{BI}}$ naturally give rise to a novel ordinal notation system for the Bachmann-Howard ordinal which differs substantially from existing representations.
\end{abstract}


\vspace{\baselineskip}
\section{Introduction}
Reflection principles play a fundamental role in proof theory, serving both as tools for ordinal analysis and as subjects of independent study. This paper focuses on extensions of $\mathsf{ACA}_0$ obtained by iterating two common types of reflection principles: the syntactic reflection principle (which asserts that everything provable is true) and the semantic reflection principle (which asserts that everything true in the universe holds in some countably coded $\omega$-model). As shown by Harvey Friedman \cite{fried}, taking the union of all $\Pi^1_n$-restricted reflection principles over $\mathsf{ACA}_0$ yields the theory of Bar Induction ($\mathsf{BI}$). This characterization was subsequently refined by Gerhard Jäger and Thomas Strahm \cite{jager1999bar}, who established a precise level-by-level correspondence.

Traditionally, ordinal analysis of such theories has relied on the study of cut-free proofs. However, Lev Beklemishev \cite{beklemishev2004provability} introduced a radically different approach: carrying out proof-theoretic analysis by studying families of fragments generated through finite combinations of reflection principles. For first-order Peano Arithmetic ($\mathsf{PA}$) \cite{beklemishev2012calibrating} \cite{beklemishev2022reflection}, the modern presentation of this approach utilizes the modal logic defined as the reflection calculus $(\mathsf{RC})$.

The standard approach to the utilization of modal logic within the context of proof theory involves interpreting modal formulas $\Box\varphi$ as formal provability statements $\text{Prv}_T(\varphi)$ over a base theory $T$. This interpretation is captured by Gödel-Löb ($\mathsf{GL}$) (defined as $\mathsf{K}$ + Löb's axiom: $\Box(\Box A \to A) \to \Box A$) which has been proven to be arithmetically complete with respect to \textsf{PA} \cite{solovay1976provability}. Japaridze's polymodal logic ($\mathsf{GLP}$) \cite{GLP} is an extension of $\mathsf{GL}$ which has an infinite sequence of modalities $\Box_i$ which correspond to increasingly strong provability predicates. This system has yielded numerous significant results in proof theory, such as Beklemishev's ordinal analysis of Peano Arithmetic \cite{beklemishev2004provability}. However, the full logic remains notoriously difficult to work with due to its lack of traditional Kripke semantics and high computational complexity. Consequently, much recent work has focused on well-behaved fragments of the system, such as the strictly positive fragment \textsf{RC}.

We define a formula as (strictly) positive if it is constructed from atomic propositions and the truth constant $\top$, using only the conjunction operator ($\wedge$) and diamond modalities ($\Diamond_i$). The systematic study of the positive fragment of $\mathsf{GLP}$ was initiated by Dashkov, who proved that it is Kripke complete and polynomial time decidable \cite{dashkov2012positive}. $\mathsf{RC}$ was later formally introduced by Beklemishev \cite{beklemishev2012calibrating} utilizing an axiomatization that is equivalent to Dashkov's, but oriented toward a sequent-style calculus.

One of the main advantages of $\mathsf{RC}$ regarding its applicability to the analysis of arithmetic comes from interpreting its formulas as c.e. extensions of $\mathsf{PA}$, with modalities $\Diamond_n$ being interpreted as the uniform reflection principle $\Pi_{n+1}\textsf{-RFN}$. $\mathsf{RC}$ provides a sound and complete characterization of the inclusions between these fragments of $\mathsf{PA}$ and naturally serves as an ordinal notation system for $\varepsilon_0$, the proof-theoretic ordinal of $\mathsf{PA}$.

In this paper we will do a somewhat analogous treatment for the much stronger theory of Bar Induction $\mathsf{BI}$. The theory $\mathsf{BI}$ is a subtheory of classical second-order arithmetic that extends $\mathsf{ACA}_0$ by the full scheme of transfinite induction over arbitrary well-orders.

We develop a variant of $\mathsf{RC}$, denoted $\mathsf{RC_{BI}}$, tailored for the analysis of $\mathsf{BI}$. The formulas of $\mathsf{RC_{BI}}$ are generated by the following grammar: 
\[ A := \top \mid p \mid A\wedge B \mid \dmnd{1}{n} A \text{ for $n\ge 1$} \mid \dmnd{2}{n} A \text{ for $n\ge 2$} \mid {\downarrow~} A \mid \nabla A. \]

In the arithmetical interpretation of $\mathsf{RC_{BI}}$, we assign to the formulas c.e.\ extensions of $\mathsf{ACA}_0$. For the purpose of our analysis, it suffices to work with the set $\W$ of $\nabla$-free closed formulas of $\mathsf{RC_{BI}}$. We assign to $\top$ the base theory $\mathsf{ACA}_0$, to $A\land B$ the deductive closure of the union of the interpretations of $A$ and $B$, and we define the arithmetical interpretations of the modal connectives as follows:
\begin{itemize}
    \item $\dmnd{1}{n} T$ is $\mathsf{ACA}_0$ extended by the uniform (syntactic) $\Pi^1_n$-reflection principle over $T$: 
    \[ \Pi^1_n\textsf{-RFN}(T) : \forall \varphi \in \Pi^1_n \, (\text{Prv}_T(\varphi) \rightarrow \text{Tr}_{\Pi^1_n}(\varphi)). \]
    
    \item $\dmnd{2}{n} T$ is $\mathsf{ACA}_0$ extended by the uniform (semantic) $\omega$-model $\Pi^1_n$-reflection principle over $T$:  
    \[ \Pi^1_n\text{-}\omega\textsf{RFN}(T) : \forall \varphi(X) \in \Pi^1_n \, \forall P \, (\forall M(P\dot \in M \land M \models T \rightarrow M\models \varphi(P)) \rightarrow \text{Tr}_{\Pi^1_n}(\varphi(P))). \]
    
    \item $\dwn T$ is interpreted as the $\omega$-model projection operator applied to $T$: 
    \[ {\dwn} T : \{\varphi \mid T\vdash \forall M(M\models \mathsf{ACA}_0 \rightarrow M\models \varphi)\}. \]
    
    \item $\nabla T$ is $\mathsf{ACA}_0$ extended by the set of true $\Pi^1_1$ theorems of $T$: 
    \[ \Pi^1_1(T) : \{\varphi\in\Pi^1_1 \mid T\vdash \varphi\}. \]
\end{itemize}
The structures $M$ above range over $\omega$-models, which are structures in $L_2$ whose first-order part is the standard model $\mathds{N}$ of $\mathsf{PA}$. 

The language of the calculus allows one to talk about a large variety of fragments of $\mathsf{BI}$. As proved by Harvey Friedman \cite{fried}:
\[\mathsf{BI} \equiv \bigcup\limits_{n\in\omega}\Pi^1_n\text{-}\omega\textsf{RFN}(\mathsf{ACA}_0).\]
Thus the closed formulas of our calculus correspond to a family of subtheories of $\mathsf{BI}$, whose union is $\mathsf{BI}$ itself. Also the family contains a lot of natural theories (see Section \ref{section ONS} for more details): \begin{itemize} \item $\top$ is $\mathsf{ACA}_0$, \item $\dmnd{1}{2}\top $ is $\mathsf{ACA}_0'$, \item $\dmnd{2}{2} \top$ is $\mathsf{ACA}_0^+$, \item $\downarrow \top$ is $\mathsf{ACA}$, \item $\dmnd{2}{3}\top$ is $\Sigma^1_1\textsf{-DC}_0$, \item $\dmnd{2}{2}\dmnd{2}{3}\top$ is $\mathsf{ATR}_0$, \item $\dmnd{2}{n+3}\top$ is $\Pi^1_{n+1}\textsf{-BI}_0$.\end{itemize}

One of the main contributions of this paper is the proof that the set $\W$ of $\nabla$-free closed formulas of $\mathsf{RC_{BI}}$ forms a linear, well-founded pre-order under the $\Pi^1_1$-reflection strength ordering. The strict part of this relation is defined as:
\[ A <^1_1 B \stackrel{\text{def}}{\iff} B \vdash \dmnd{1}{1} A, \]
and the associated non-strict relation as $A\leq^1_1 B$ iff $A<^1_1 B$ or $\nabla A\dashv\vdash\nabla B$ (see Definition \ref{def order 1}).

\begin{theorem}\label{thm_lin_int}
    The relation $\leq^1_1$ is a linear, well-founded pre-order on $\W$; in particular, the quotient $\W/{\equiv^1_1}$ is well-ordered by $<^1_1$.
\end{theorem}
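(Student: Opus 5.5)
The plan follows Beklemishev's treatment of $\mathsf{RC}$ and of the worms ordering for $\varepsilon_0$. Everything happens inside the calculus $\mathsf{RC_{BI}}$, and arithmetical soundness (proved later in the paper) transfers the result to the theories. There are three properties to establish.

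\textbf{Transitivity and irreflexivity.} Transitivity of $<^1_1$ comes from the monotonicity axioms of $\mathsf{RC_{BI}}$. If $B\vdash\dmnd{1}{1}A$ and $C\vdash\dmnd{1}{1}B$, then $C\vdash\dmnd{1}{1}\dmnd{1}{1}A\vdash\dmnd{1}{1}A$. The $\nabla$-equivalence is compatible with $<^1_1$, because $\dmnd{1}{1}A$ depends only on $\nabla A$; I expect $\dmnd{1}{1}A\dashv\vdash\dmnd{1}{1}\nabla A$ to be a derivable principle. Hence $\leq^1_1$ is a pre-order. Irreflexivity of $<^1_1$ (that $A\nvdash\dmnd{1}{1}A$) follows from Kripke soundness. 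In the intended models the relation for $\dmnd{1}{1}$ is conversely well-founded, so no world can satisfy $A\wedge\neg$-style cycles; concretely, a $\dmnd{1}{1}$-chain in a finite model cannot loop.

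\textbf{Well-foundedness.} Suppose there were a descending sequence $A_0>^1_1A_1>^1_1\cdots$ in $\W$. Then $A_0\vdash\dmnd{1}{1}A_1\vdash\dmnd{1}{1}\dmnd{1}{1}A_2\vdash\cdots$, so $A_0$ would prove $\dmnd{1}{1}^kA_k$ for every $k$. I see two ways to rule this out. The first is arithmetical: every $A\in\W$ is interpreted by a true, hence $\Pi^1_1$-sound, theory, and a sequence of theories each proving the $\Pi^1_1$-soundness of the next cannot descend forever. The obstruction is that the sequence is not uniformly definable, so this argument is awkward. The second, which I will use, is to assign to each $A\in\W$ an ordinal $o(A)$ computed from its normal form, with the property that $A<^1_1B$ implies $o(A)<o(B)$. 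This is the Bachmann--Howard-style notation announced in the abstract. Well-foundedness then follows from the well-foundedness of the ordinals.

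\textbf{Linearity.} This is the main step, and I expect it to be the main obstacle. Given $A,B\in\W$, I want to show that $A<^1_1B$, or $B<^1_1A$, or $\nabla A\dashv\vdash\nabla B$. First I reduce every formula of $\W$ to a normal form: a ``worm-like'' conjunction in which $\downarrow$ and the modalities $\dmnd{2}{n}$ are pushed outward, using the reduction principles of $\mathsf{RC_{BI}}$. An example is $\dmnd{1}{n}(A\wedge\dmnd{1}{m}B)$ with $m>n$ reducing to a conjunction. I then argue by induction on the complexity of the normal forms. Comparison is done via the $\Pi^1_1$-consequences $\nabla A$, using Kripke completeness. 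Suppose neither $B\vdash\dmnd{1}{1}A$ nor $A\vdash\dmnd{1}{1}B$. Completeness gives countermodels, and from them I build, by gluing, a model in which $\nabla A$ and $\nabla B$ differ, unless they are equal. The hardest part is showing that for normal forms $\nabla A$ is determined by a single ``$\Pi^1_1$-ordinal''. For this I compute $\nabla$ of each connective explicitly in the calculus. Examples are $\nabla\dmnd{2}{n}A$ expressed through iterated $\dmnd{1}{1}$ over $\nabla$ of the lower parts, and $\nabla\downarrow A$ reduced similarly. These computations yield a recursive comparison algorithm on normal forms, whose totality and correctness give linearity. Combined with the earlier steps, this shows that $\W/{\equiv^1_1}$ is well-ordered by $<^1_1$.
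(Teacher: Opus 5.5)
Your proposal leaves both substantive claims unproved.

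\textbf{Well-foundedness and irreflexivity.} The route you discard is the one that works, and it is the one the paper takes. By Proposition~\ref{prop ari sound} together with the conservativity result Proposition~\ref{prop equiv}, $A<^1_1B$ yields $B^*\vdash\Pi^1_1\textsf{-RFN}(A^*)$. All the theories $A^*$ are true, hence $\Pi^1_1$-sound. The Pakhomov--Walsh theorem \cite[Theorem 3.2]{pakhomov2021reflection} excludes every descending sequence of $\Pi^1_1$-sound extensions of $\mathsf{ACA}_0$, definable or not, so your uniformity worry is unfounded. The route you adopt instead is an ordinal assignment $o$ with $A<^1_1B\Rightarrow o(A)<o(B)$, computed from normal forms. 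You never define it, and building it would presuppose a complete analysis of $<^1_1$. In the paper the ordinal notation is a consequence of the theorem, not an input to it. Your irreflexivity argument is also wrong: $R^1_1$ is not conversely well-founded in $\mathsf{RC_{BI_\omega}}$-models. The model $M_A$ for $A=\dmnd{2}{2}\top$ already contains a point $b$ with $bR^1_\omega b$. Iterating Lemma~\ref{lem omega loop} shows that such reflexive points are unavoidable in finite models of $\dmnd{2}{2}\top$. Irreflexivity should instead be read off from well-foundedness.

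\textbf{Linearity.} No normal form of the kind you describe is available. The calculus cannot push $\dmnd{2}{n}$ or $\dwn$ outward: already $\dmnd{1}{1}\dmnd{2}{2}\top\nvdash\dmnd{2}{2}\top$. Your sample rewriting is also reversed, since Axiom~12 splits $\dmnd{1}{n}(A\wedge\dmnd{1}{m}B)$ into a conjunction only for $m<n$. The gluing step points the wrong way. Countermodels certify non-derivability, whereas you must derive $A\vdash\nabla B$ and $B\vdash\nabla A$, and you give no mechanism that forces this. The promised explicit computation of $\nabla$ for $\dmnd{2}{n}$ and $\dwn$ is exactly the hard part; for limit-form $B$ the paper only captures $\nabla B$ through infinitely many approximations.

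The missing ideas are these:
\begin{enumerate}
\item[(i)] Use well-foundedness to pick a $<^1_1$-minimal incomparable pair $A,B$. Such a pair has the same $<^1_1$-predecessors, and $\le^1_1$ is linear below each of them.
\item[(ii)] Show that $\nabla B$ is determined by the predecessors of $B$. For successor-form $B=\bigwedge_i\dmnd{1}{1}B_i$ this is immediate. For limit-form $B$ it needs the approximations $(B)_n$ with $B>^1_1(B)_n$ (Lemma~\ref{lem approx linearity}), together with the finite-model Lemma~\ref{lem 12 tree approx 1}. That lemma states: if $\le^1_1$ is linear below $A$ and $A\vdash\dmnd{1}{1}(B)_n$ for infinitely many $n$, then $A\vdash\nabla B$. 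Its proof goes through the models $M_A$ and Theorem~\ref{thm trees}.
\end{enumerate}
Symmetry then gives $A\equiv^1_1B$, contradicting incomparability. In particular, the paper's linearity proof relies essentially on well-foundedness, which your linearity step never invokes.
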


At the level of the ordinal notation system, this ensures a proper total ordering on ordinal denotations. Conceptually, it implies that the fragments of $\mathsf{BI}$ denoted in our system form a well-ordered hierarchy with respect to both $\Pi^1_1$-reflection strength and consistency strength, extending previous results on iterated reflection in $L_2$ due to Walsh and Pakhomov \cite{pakhomov2021reflection} \cite{pakhomov2023reducing}.\\

On the technical level we also establish reduction properties (conservativity properties) between theories within the targeted family of fragments, which are a central requirement of Beklemishev's style of analysis. The reduction properties for our logic are as follows:

\begin{lemma}\label{lem 1 red 1}
    Given $T\subseteq \Pi^1_{n+1}$, 
    \[ T+\dmnd{1}{n+1} A \equiv_{\Pi^1_n} T + \frac{\varphi\in\Pi^1_n}{\dmnd{1}{n}(A+\varphi)}. \]
\end{lemma}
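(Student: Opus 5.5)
We prove the two inclusions separately. Write $U$ for the right-hand side, i.e.\ $T$ closed under the rule $\varphi\in\Pi^1_n/\dmnd{1}{n}(A+\varphi)$. The easy direction is that every theorem of $U$ is a theorem of $T+\dmnd{1}{n+1}A$. Since $T$ is contained in the left-hand side, it suffices to show that the left-hand theory is closed under the rule. So assume $T+\dmnd{1}{n+1}A\vdash\varphi$ with $\varphi\in\Pi^1_n$; we show that $\mathsf{ACA}_0$ proves
\[ \Pi^1_{n+1}\textsf{-RFN}(A)\wedge\varphi\to\Pi^1_n\textsf{-RFN}(A+\varphi). \]
Reason inside $\mathsf{ACA}_0$. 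Suppose $\text{Prv}_{A+\varphi}(\psi)$ with $\psi\in\Pi^1_n$. By the formalized deduction theorem, $\text{Prv}_A(\varphi\to\psi)$. The formula $\varphi\to\psi$ is equivalent to a $\Sigma^1_n\vee\Pi^1_n$ formula, hence to a $\Pi^1_{n+1}$ formula, so $\Pi^1_{n+1}$-reflection yields $\text{Tr}_{\Pi^1_{n+1}}(\varphi\to\psi)$. Together with $\varphi$ and the Tarski clauses for the partial truth predicates, this gives $\text{Tr}_{\Pi^1_n}(\psi)$. (The passage from $\varphi$ to $\text{Tr}_{\Pi^1_n}(\ulcorner\varphi\urcorner)$ is the standard uniform equivalence available in $\mathsf{ACA}_0$.)

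For the hard direction, $\Pi^1_n$-conservativity, we first reshape the reflection axiom. Writing a $\Pi^1_{n+1}$ sentence as $\forall X\,\sigma(X)$ with $\sigma\in\Sigma^1_n$, we have, over $\mathsf{ACA}_0$,
\[ \dmnd{1}{n+1}A\;\equiv\;\forall X\,\forall\sigma\in\Sigma^1_n\bigl(\text{Prv}_A(\forall Y\sigma(Y))\to\text{Tr}_{\Sigma^1_n}(\sigma(X))\bigr). \]
In words: for every set $X$, $A$ is $\Sigma^1_n$-reflecting with respect to $X$. The plan is to convert this parametric statement into parameter-free instances of $\Pi^1_n$-reflection over strengthened theories $A+\varphi$, which is exactly what the rule of $U$ provides. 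Concretely, suppose $T+\dmnd{1}{n+1}A\vdash\psi$ with $\psi\in\Pi^1_n$. By compactness, $\theta\wedge\dmnd{1}{n+1}A\vdash\psi$ for a single $\Pi^1_{n+1}$ sentence $\theta$ that is a finite conjunction of axioms of $T$; here we use $T\subseteq\Pi^1_{n+1}$. Contraposing,
\[ \mathsf{ACA}_0\vdash\theta\wedge\neg\psi\to\neg\dmnd{1}{n+1}A, \]
and $\theta\wedge\neg\psi$ is $\Pi^1_{n+1}$.

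We then want a $\Pi^1_n$ sentence $\varphi$ with $U\vdash\varphi$ such that $\mathsf{ACA}_0\vdash\dmnd{1}{n}(A+\varphi)\wedge\theta\to\psi$. The candidate is a formalized Herbrand/cut-elimination analysis of the derivation above. Because the hypothesis $\theta$ is $\Pi^1_{n+1}$ and the conclusion $\psi$ is $\Pi^1_n$, the derivation uses the reflection axiom only at finitely many sets $S$ that are witnessed from $\theta$'s universal quantifier together with $\neg\psi$. The counterexample to reflection then becomes an $A$-proof of a sentence whose falsity contradicts a single $\Pi^1_n$ consequence $\varphi$ of $\theta$ and $\psi$. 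We would choose $\varphi$ to be essentially $\neg\neg\psi$ relativized: take $\varphi:=\psi$ itself. The argument would show that $T\vdash\neg\psi\to\neg\dmnd{1}{n}(A+\ldots)$, and then apply the rule iteratively, by induction on the number of reflection instances used.

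I expect this conservativity step to be the main obstacle. What I would try first is the model-theoretic route. Assume $U\nvdash\psi$, take a model $\mathcal M\models U+\neg\psi$, and show that $\mathcal M$ satisfies $\Pi^1_{n+1}\textsf{-RFN}(A)$, or at least that some model elementary for $\Pi^1_n$ formulas does. Given a counterexample set $X\in\mathcal M$ to $\Sigma^1_n$-reflection, we would form the $\Pi^1_n$ sentence expressing the relevant failure and derive from it a violation of one of the $\dmnd{1}{n}(A+\varphi)$ axioms valid in $\mathcal M$. If this fails because of the parameter $X$, we fall back on the syntactic Herbrand analysis sketched above, imitating Beklemishev's proof of the reduction property for $\mathsf{PA}$ with partial truth predicates for $\Pi^1_n$ in place of arithmetical ones.
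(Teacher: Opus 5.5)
You correctly show that the rule theory $U$ is contained in $T+\dmnd{1}{n+1}A$: $\mathsf{ACA}_0$ proves $\dmnd{1}{n+1}A\wedge\varphi\to\dmnd{1}{n}(A+\varphi)$ for $\varphi\in\Pi^1_n$, exactly as you argue. The $\Pi^1_n$-conservativity direction, however, is not proved. You list strategies but carry none of them out, and the concrete ones do not work.

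\begin{itemize}
\item Taking $\varphi:=\psi$ is circular. The implication $\dmnd{1}{n}(A+\psi)\vdash\psi$ is trivial, but obtaining $\dmnd{1}{n}(A+\psi)$ from the rule already presupposes $U\vdash\psi$. More generally, ``find $\varphi$ with $U\vdash\varphi$ and $\theta\wedge\dmnd{1}{n}(A+\varphi)\to\psi$'' is just a restatement of $U\vdash\psi$, since the finitely many rule instances in a $U$-proof can be merged into one.
\item The model-theoretic route fails as stated. An arbitrary $\mathcal M\models U+\neg\psi$ need not satisfy $\dmnd{1}{n+1}A$. The rule only delivers $\dmnd{1}{n}(A+\varphi)$ for parameter-free $\varphi$ that are \emph{provable} in $U$. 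By contrast, $\dmnd{1}{n+1}A$ is equivalent over $\mathsf{ACA}_0$ to the parametric scheme $\forall\vec x,\vec X\,(\varphi(\vec x,\vec X)\to\dmnd{1}{n}(A+\varphi(\dot{\vec x},\dot{\vec X})))$, $\varphi\in\Pi^1_n$. That scheme must hold for whatever $\Pi^1_n$ facts are true of the sets of $\mathcal M$.
\item The Herbrand fallback hits the same wall. After cut elimination the second-order witnesses are eigenvariables, so you would need reflection for $\Pi^1_n$ formulas with free set variables, under a universal closure and alongside side formulas.
\end{itemize}
You rightly sense that the parameter is the crux, but you do not supply what resolves it.

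The paper cites \cite[Lemma~2.11]{pakhomov2023reducing} and, for $\Pi^1_{n+1}$-axiomatized $T$, argues exactly as in Lemma~\ref{Diam22_alt_red}. This route has two parts, and both are missing from your proposal.

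\textbf{A general conservation lemma} \cite[Lemma~2.1]{pakhomov2023corrigendum}, proved model-theoretically along Avigad's lines. For $\Pi^1_{n+1}$-axiomatized $T$, adding the parametric scheme is $\Pi^1_n$-conservative over $U$ provided $U$ is closed under the parametric rule with $\Pi^1_n$ side formulas $\theta$:
\[ \frac{\forall\vec x,\vec X,\vec y,\vec Y\,\bigl(\theta(\vec x,\vec X,\vec y,\vec Y)\lor\varphi(\vec x,\vec X)\bigr)}{\forall\vec x,\vec X,\vec y,\vec Y\,\bigl(\theta(\vec x,\vec X,\vec y,\vec Y)\lor\dmnd{1}{n}(A+\varphi(\dot{\vec x},\dot{\vec X}))\bigr)} \]
Roughly, $\neg\theta$ carries the $\Sigma^1_n$ information about the parameters that a model construction has to respect.

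\textbf{The closure check}, which is easy.
\begin{itemize}
\item Applying the parameter-free rule to the premise gives $U\vdash\dmnd{1}{n}(A+\forall(\theta\lor\varphi))$.
\item $\mathsf{ACA}_0$ proves that this implies the conclusion. For parameters with $\neg\theta$, absorb the true $\Sigma^1_n$ sentence $\neg\theta(\dots)$ into $\dmnd{1}{n}$: if $B+\sigma\vdash\chi$ with $\chi\in\Pi^1_n$, then $B$ proves the $\Pi^1_n$ formula $\neg\sigma\lor\chi$.
\item Since $A+\neg\theta(\dots)+\forall(\theta\lor\varphi)\vdash\varphi(\dots)$, anti-monotonicity of $\dmnd{1}{n}$ in the theory yields $\dmnd{1}{n}(A+\varphi(\dots))$.
\end{itemize}

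Without the conservation lemma, or a fully worked-out cut-elimination argument that produces the parametric rule, your hard inclusion remains unproved.
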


\begin{lemma}\label{lem 2 red 2}
    Given $T\subseteq \Pi^1_{n+1}$, 
    \[ T+\dmnd{2}{n+1} A \equiv_{\Pi^1_n} T + \frac{\varphi\in\Pi^1_n}{\dmnd{2}{n}(A+\varphi)}. \]
\end{lemma}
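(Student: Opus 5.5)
We prove the two inclusions separately, following the pattern of Beklemishev's reduction property for $\mathsf{PA}$ and relativising it to $\omega$-models. The right-hand side is $T$ plus the rule: from a $\Pi^1_n$ sentence $\varphi$, infer $\dmnd{2}{n}(A+\varphi)$. Since the rule only applies to $\Pi^1_n$ premises, its applications can be organised as a closure process. Start from $T$; at each stage, add $\dmnd{2}{n}(A+\varphi)$ for every $\Pi^1_n$ sentence $\varphi$ already derived.

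\textbf{Right to left.} We show that $T+\dmnd{2}{n+1}A$ proves every instance of the rule. By induction on derivations, it suffices to prove
\[ \dmnd{2}{n+1}A\vdash \varphi\to\dmnd{2}{n}(A+\varphi) \quad\text{for each } \varphi\in\Pi^1_n. \]
To do this, assume $\varphi$ and take an instance of $\Pi^1_n\text{-}\omega\textsf{RFN}(A+\varphi)$. It has the form: for a $\Pi^1_n$ formula $\psi(X)$ and a parameter $P$, if every $\omega$-model $M\ni P$ with $M\models A+\varphi$ satisfies $\psi(P)$, then $\psi(P)$ holds.

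Consider the formula $\chi(P):=\varphi\to\psi(P)$. It is $\Pi^1_{n+1}$; in fact it is a Boolean combination of $\Pi^1_n$ formulas, so it lies in $\Delta^1_{n+1}$. By hypothesis, every $\omega$-model $M\ni P$ with $M\models A$ satisfies $\chi(P)$. Applying $\Pi^1_{n+1}\text{-}\omega\textsf{RFN}(A)$ to $\chi$ gives $\mathrm{Tr}(\varphi\to\psi(P))$, and together with $\varphi$ this yields $\psi(P)$. All of this is formalised in $\mathsf{ACA}_0$ using partial truth definitions.

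\textbf{Left to right.} This is the conservativity direction. Let $\theta\in\Pi^1_n$ with $T+\dmnd{2}{n+1}A\vdash\theta$. Since $\dmnd{2}{n+1}A$ is axiomatised by $\Pi^1_{n+1}$ sentences, finitely many instances of $\omega$-reflection, together with $T\subseteq\Pi^1_{n+1}$, prove $\theta$. I would argue semantically, or via a cut-elimination/Herbrand-style argument for $\Pi^1_{n+1}$ theories over $\Pi^1_n$ conclusions.

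Suppose $\theta$ fails while the right-hand theory $U$ holds; we want to build a countable model of $T+\dmnd{2}{n+1}A$ in which $\theta$ fails. The standard approach uses the equivalence $\dmnd{2}{n+1}A\equiv$ ``every true $\Sigma^1_{n+1}$-with-parameter statement holds in some $\omega$-model of $A$ containing the parameter''. Write a relevant $\Sigma^1_{n+1}$ instance as $\exists Y\,\sigma(P,Y)$ with $\sigma\in\Pi^1_n$. Given a witness $Y$, the rule applied to $\sigma(P,Y)$ produces an $\omega$-model of $A+\sigma(P,Y)$, and hence, by the $\Pi^1_n$-correctness of reflection, an $\omega$-model of $A$ containing $P,Y$ in which $\sigma(P,Y)$ holds.

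The rule as stated has closed premises, so parameters must be handled. I plan to do this by working with a formalised version: using $\Pi^1_n$-truth as a single $\Pi^1_n$ formula, add a fresh set constant and use $T\subseteq\Pi^1_{n+1}$ so that parameter witnesses can be absorbed into the $\Pi^1_n$ part.

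\textbf{Main obstacle.} I expect the hard step to be making the left-to-right direction rigorous. The natural proof goes through the following chain: $\Pi^1_{n+1}$ consequences of finitely many instances reduce, via Herbrandisation of the leading $\forall X\exists Y$, to iterating the rule finitely often. Uniformly, this is the statement that $U$ is closed under the needed Skolem-like steps. I would first try the direct proof-theoretic route. Write each used instance of $\Pi^1_{n+1}\text{-}\omega\textsf{RFN}(A)$ in prenex form as $\forall X\exists Y\,\sigma$. Then apply the $\Pi^1_n$-conservativity of $\Pi^1_{n+1}$ axioms over the closure of $T$ under the corresponding $\Pi^1_n$-rule; this is the analogue of Beklemishev's ``$\Pi_{n+1}$ axioms are $\Pi_n$-conservative over their rule form''. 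Finally, verify that the rule form coincides with $\dmnd{2}{n}(A+\varphi)$, using the right-to-left computation in reverse.
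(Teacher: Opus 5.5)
Your right-to-left direction is correct. Your plan for the other direction also matches the paper's route: rewrite $\dmnd{2}{n+1}A$ as a $\Pi^1_{n+1}$ scheme, then invoke a $\Pi^1_n$-conservativity principle for $\Pi^1_{n+1}$ axioms over their rule form. The paper does this by citing the conservation lemma of the Pakhomov--Walsh corrigendum, applied exactly as in Lemma~\ref{Diam22_alt_red}.

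However, the left-to-right direction, which is the whole content, is never carried out, and the one step that actually needs an argument is missing. The conservation principle is not about an arbitrary prenex or Herbrand form $\forall X\exists Y\sigma$. It applies to a scheme of the shape $\forall\vec x,\vec X\,(\varphi(\vec x,\vec X)\to\psi(\vec x,\vec X))$ with $\varphi,\psi\in\Pi^1_n$; here $\psi=\dmnd{2}{n}(A+\varphi(\dot{\vec x},\dot{\vec X}))$, which is $\Pi^1_n$ in the parameters only because $n\ge 2$. The principle also has a hypothesis: the rule theory must be closed under the version of the rule with $\Pi^1_n$ side formulas and free parameters,
\[ \frac{\forall\vec x,\vec X,\vec y,\vec Y\,(\theta\lor\varphi(\vec x,\vec X))}{\forall\vec x,\vec X,\vec y,\vec Y\,(\theta\lor\dmnd{2}{n}(A+\varphi(\dot{\vec x},\dot{\vec X})))},\qquad \theta\in\Pi^1_n . \]
Your right-hand theory only has the rule for closed premises without side formulas. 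So this closure is a genuine lemma, and it is where the specific behaviour of $\omega$-reflection enters. Your remark about ``absorbing parameter witnesses into the $\Pi^1_n$ part'' does not supply it.

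To fill the gap, you need two things.
\begin{itemize}
\item[(1)] The $\mathsf{ACA}_0$-equivalence of $\dmnd{2}{n+1}A$ with the parametrized scheme $\forall\vec x,\vec X(\varphi\to\dmnd{2}{n}(A+\varphi(\dot{\vec x},\dot{\vec X})))$ for $\varphi\in\Pi^1_n$. Your right-to-left computation, done with parameters, gives one half. The consistency-form argument with a witness $Y$ that you sketched gives the other.
\item[(2)] The closure above. Apply the closed rule to the $\Pi^1_n$ sentence $\forall\vec x\vec X\vec y\vec Y(\theta\lor\varphi)$. Then show in $\mathsf{ACA}_0$ that $\dmnd{2}{n}(A+\forall(\theta\lor\varphi))$ implies the conclusion. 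Fix parameters with $\neg\theta(\vec x,\vec X,\vec y,\vec Y)$. For any true $\Sigma^1_n$ sentence $\sigma$, the sentence $\sigma\land\neg\theta(\vec x,\vec X,\vec y,\vec Y)$ is again true $\Sigma^1_n$. The $\Sigma^1_n\textsf{-}\omega\textsf{Con}$ form then yields an $\omega$-model containing the parameters, satisfying $A+\forall(\theta\lor\varphi)+\sigma\land\neg\theta$. That model satisfies $\varphi(\vec x,\vec X)$, which gives $\dmnd{2}{n}(A+\varphi(\dot{\vec x},\dot{\vec X}))$.
\end{itemize}

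Separately, your semantic sketch is circular as written. The rule is a closure condition on the set of theorems, not a truth condition in models. In a model of the right-hand theory, ``the rule applied to $\sigma(P,Y)$'' yields nothing for parameters $P,Y$ of that model. The implication $\sigma(P,Y)\to\dmnd{2}{n}(A+\sigma(\dot P,\dot Y))$ is an instance of the very scheme you are trying to make true. A model-theoretic proof would need a chain or saturation construction, and that again rests on the side-formula closure.
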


\begin{lemma}\label{lem 3 red 3}\label{lem:red_rule_dar_bas}
    \[ T+ {\dwn} \mathsf{ACA}_0 \equiv T + \left\{ \frac{\varphi}{\dmnd{1}{n} \varphi} \;\middle|\; n\in \mathbb{N} \right\}. \]
\end{lemma}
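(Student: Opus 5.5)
We prove the two inclusions separately. Write $R$ for the right-hand side, i.e.\ $T$ closed under the rules $\varphi/\dmnd{1}{n}\varphi$ for all $n$. Here $\dmnd{1}{n}\varphi$ is read as $\mathsf{ACA}_0+\Pi^1_n\textsf{-RFN}(\mathsf{ACA}_0+\varphi)$. The theory ${\dwn}\mathsf{ACA}_0$ consists of all $\psi$ with $\mathsf{ACA}_0\vdash\forall M(M\models\mathsf{ACA}_0\to M\models\psi)$. Since every such $\psi$ is true in the full universe, and $\mathsf{ACA}_0$ proves the existence of $\omega$-models of $\mathsf{ACA}_0$ containing any given set only under extra assumptions, the key known fact is that ${\dwn}\mathsf{ACA}_0\equiv\mathsf{ACA}$. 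That is, ${\dwn}\mathsf{ACA}_0$ is full arithmetical comprehension plus the full induction scheme. We would first establish (or recall from the section on natural theories) this identification, as $\downarrow\top$ is listed there as $\mathsf{ACA}$.

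\emph{Left-to-right.} We show $T+{\dwn}\mathsf{ACA}_0\subseteq R$. It suffices to derive in $R$ every instance of the induction scheme for an arbitrary $L_2$-formula $\theta(x,\vec Y)$, say $\theta\in\Pi^1_n$. Take $\varphi:=\top$, so the rule gives $\dmnd{1}{n+1}\top$, i.e.\ $\Pi^1_{n+1}\textsf{-RFN}(\mathsf{ACA}_0)$. This reflection principle proves the universal closure of the true induction instance for $\theta$, since $\mathsf{ACA}_0$ proves ``$\forall \vec Y\,(\theta(0)\wedge\forall x(\theta(x)\to\theta(x+1))\to\forall x\,\theta(x))$'' relativized to any complexity? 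Not directly: $\mathsf{ACA}_0$ only proves induction for sets. Instead we use the standard argument that uniform $\Pi^1_{n+1}$-reflection over $\mathsf{ACA}_0$ yields $\Pi^1_n$-induction. One reflects the $\mathsf{ACA}_0$-provable $\Pi^1_{n+1}$ statements $\forall\vec Y\,\forall k\,(\theta(0)\wedge\forall x<k(\theta(x)\to\theta(x+1))\to\theta(\underline{k}))$ uniformly in numerals $k$. These are provable by $k$-fold modus ponens, with proofs primitive recursive in $k$. Uniform reflection then gives the statement for all $k$, which is the induction instance. Thus $\dmnd{1}{n+1}\top$ proves $\Pi^1_n$-induction, and $R$ contains $\mathsf{ACA}$.

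\emph{Right-to-left.} We show that $T+{\dwn}\mathsf{ACA}_0$ is closed under the rules. Suppose $T+{\dwn}\mathsf{ACA}_0\vdash\varphi$. We need $T+{\dwn}\mathsf{ACA}_0\vdash\Pi^1_n\textsf{-RFN}(\mathsf{ACA}_0+\varphi)$. Since $\varphi$ is a single sentence, it is true, and full induction is available. We use the partial truth definition $\text{Tr}_{\Pi^1_m}$ for $m$ large enough to cover $\varphi$ and all $\Pi^1_n$ formulas, and prove by induction on the length of $\mathsf{ACA}_0$-proofs that every formula of complexity $\le\Pi^1_m$ appearing in a cut-free or suitably bounded proof is true. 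Concretely, we formalize in $\mathsf{ACA}$: ``every $\mathsf{ACA}_0+\varphi$-proof, after partial cut elimination to complexity $\Pi^1_m$, has only $\text{Tr}_{\Pi^1_m}$-true sequents.'' This uses induction on a $\Pi^1_{m}$-formula, which $\mathsf{ACA}$ has, together with the truth of $\varphi$ and of the $\mathsf{ACA}_0$ axioms, where arithmetical comprehension instances are true by full comprehension. This is the classical fact that $\mathsf{ACA}$ proves $\Pi^1_n$-reflection for every finite extension of $\mathsf{ACA}_0$ by true sentences.

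\emph{Main obstacle.} The main obstacle is this last step. It requires the appropriate partial cut-elimination or formalized truth argument for second-order proofs, including set quantifier rules with arithmetical instantiation. We handle this by working in the $\omega$-logic-free Tait calculus with cuts restricted to $\Pi^1_m$. We invoke standard partial cut elimination, which is provable in $\mathsf{ACA}_0$ since it needs only elementary bounds for fixed finite cut rank.
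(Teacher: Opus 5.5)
\paragraph{Gap: the inclusion ${\dwn}\mathsf{ACA}_0\subseteq\mathsf{ACA}$ is assumed, not proved.} Your left-to-right direction reduces to deriving the induction scheme in $R$, and that reduction needs ${\dwn}\mathsf{ACA}_0\subseteq\mathsf{ACA}$. This inclusion is the entire content of the lemma. The paper explicitly reformulates the lemma as ${\dwn}\mathsf{ACA}_0\equiv\mathsf{ACA}$ and lists it among its new results. Recalling it from the list of natural theories is circular, because the entry $({\dwn}\top)^*=\mathsf{ACA}$ there is derived from this very lemma.

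Your argument, together with the easy inclusion $\mathsf{ACA}\subseteq{\dwn}\mathsf{ACA}_0$ (every coded $\omega$-model satisfies each induction instance), establishes only the routine part: $R\equiv T+\mathsf{ACA}\subseteq T+{\dwn}\mathsf{ACA}_0$. For the closure under the rules you do not need formalized cut elimination. The arithmetical soundness of Axiom 13 already puts every $\Pi^1_m\textsf{-RFN}(\mathsf{ACA}_0)$ into ${\dwn}\mathsf{ACA}_0$, and together with $\varphi$ this yields $\Pi^1_n\textsf{-RFN}(\mathsf{ACA}_0+\varphi)$. If you keep the cut-elimination route, use a finite axiomatization of $\mathsf{ACA}_0$. $\mathsf{ACA}$ cannot certify the truth of nonstandard comprehension instances, since that would require $\omega$-jumps.

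The missing inclusion is hard for exactly the reason you mention yourself. $\psi\in{\dwn}\mathsf{ACA}_0$ only says that $\mathsf{ACA}_0$ proves $\psi$ holds in every coded $\omega$-model of $\mathsf{ACA}_0$. $\mathsf{ACA}$ cannot produce such a model containing given parameters, so $\psi$ cannot simply be evaluated in one. That $\psi$ is true in the standard model says nothing about its provability in $\mathsf{ACA}$.

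\paragraph{What is needed: forcing.} The missing idea is to make the whole second-order universe behave like a coded $\omega$-model.

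The paper first converts the hypothesis into $\mathsf{PA}(M)\vdash(M\models\psi)$. Here $\mathsf{PA}(M)$ is $\mathsf{PA}$ with a fresh predicate $M$, induction for all formulas of the extended language, and the axiom that $M$ codes an $\omega$-model of $\mathsf{ACA}_0$. The conversion holds because expanding a model of $\mathsf{PA}(M)+\lnot(M\models\psi)$ by its definable subsets gives a model of $\mathsf{ACA}_0$ refuting the hypothesis.

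It then interprets $\mathsf{PA}(M)$ in $\mathsf{ACA}$ by forcing with finite sequences of sets ordered by extension. This is a L\'evy collapse whose generic $M$ enumerates all sets. $\mathsf{ACA}_0$ verifies that every condition forces $M\models\mathsf{ACA}_0$, and that forcing $M\models\psi$ is equivalent to $\psi$. The full induction of $\mathsf{ACA}$ is precisely what is needed to force every induction instance of $\mathsf{PA}(M)$.

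In model-theoretic form, the same argument reads as follows. Take a countable $N\models\mathsf{ACA}+\lnot\psi$, adjoin a generic enumeration $G$ of the sets of $N$, and keep the sets arithmetical in $G$. Full induction in $N$ makes this a model of $\mathsf{ACA}_0$. In it, $G$ is a coded $\omega$-model of $\mathsf{ACA}_0+\lnot\psi$, contradicting $\psi\in{\dwn}\mathsf{ACA}_0$.
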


\begin{lemma}\label{lem 4 red 4}\label{lem:red_rule_dar_ref}
    \[ T+ {\dwn}\dmnd{1}{1} A \equiv T + \left\{ \frac{\varphi}{\dmnd{1}{n}({\dwn} A+\varphi)} \;\middle|\; n\in \mathbb{N} \right\}. \]
\end{lemma}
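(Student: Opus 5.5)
We prove the two inclusions separately. Here $T+\dwn\dmnd{1}{1}A$ is $T$ together with every sentence $\psi$ such that $\dmnd{1}{1}A\vdash\forall M(M\models\mathsf{ACA}_0\to M\models\psi)$. The right-hand side is $T$ closed under the rules: from $\varphi$ infer $\Pi^1_n\textsf{-RFN}(\dwn A+\varphi)$. The plan mirrors Lemma \ref{lem 3 red 3}, which is the case where $A$ is replaced by the trivial theory, and uses the same two ingredients: $\omega$-models of $\dmnd{1}{1}A$-consequences are obtained from reflection, and a single $\omega$-model statement over $\dmnd{1}{1}A$ can be unfolded into finitely many reflection steps.

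\textbf{Right-to-left.} Let $S$ be the left-hand theory and suppose $S\vdash\varphi$. We must show $S\vdash\dmnd{1}{n}(\dwn A+\varphi)$. Over $\mathsf{ACA}_0$, uniform $\Pi^1_1$-reflection over $A$ is equivalent to the statement that every $\Pi^1_1$ theorem of $A$ is true. Formalized in $\mathsf{ACA}_0$, it also gives that for each set $P$ there is a countably coded $\omega$-model containing $P$ and satisfying $\dwn A$. To see this, take the $\omega$-model obtained from the $\Pi^1_1$-reflection / $\omega$-model existence argument. Every sentence of $\dwn A$ is $A$-provably true in all $\omega$-models of $\mathsf{ACA}_0$. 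Relativizing to $P$, this turns into a $\Pi^1_1$-type statement about the jump hierarchy, which $\dmnd{1}{1}A$ certifies.

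Hence $\dwn\dmnd{1}{1}A$ proves, inside every $\omega$-model of $\mathsf{ACA}_0$, the existence of such submodels. Combined with full $\omega$-model reasoning, we get $\omega$-models of $\dwn A+\varphi$ for each $\varphi\in S$. The point is that $\dwn(\cdot)$ yields the full $\omega$-model closure, and full induction is available in the ambient true-arithmetic sense. A countable $\omega$-model of $\dwn A+\varphi$ that is closed enough (a $\beta_n$-style model, obtained by iterating the construction with full induction as in Lemma \ref{lem 3 red 3}) yields $\Pi^1_n$-reflection over $\dwn A+\varphi$. We will simply cite the mechanism of Lemma \ref{lem 3 red 3}, relativized to the oracle $A$: $\dwn\mathsf{ACA}_0$ proves $\dmnd{1}{n}\varphi$ from $\varphi$.

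\textbf{Left-to-right.} Suppose $\dmnd{1}{1}A\vdash\forall M(M\models\mathsf{ACA}_0\to M\models\psi)$. We want to derive $\psi$ using the rules. Since the consequences of $\dmnd{1}{1}A$ are axiomatized over $\mathsf{ACA}_0$ by a single $\Pi^1_1$-reflection schema instance, $\psi$ reduces by compactness to an implication. Namely, $\mathsf{ACA}_0+\Pi^1_1\textsf{-RFN}(A)$ proves that $\psi$ holds in all $\omega$-models. Cut elimination (or an $\omega$-model completeness argument) then bounds the complexity: $\psi$ follows in $\mathsf{ACA}$, i.e. in $\dwn\mathsf{ACA}_0$, from finitely many instances of reflection of the form $\Pi^1_k\textsf{-RFN}(\dwn A)$. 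Here we use that $A$-provability inside $\omega$-models coincides with $\dwn A$-provability of the relativized formulas.

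Each such instance is obtained by one application of the rule with $\varphi=\top$. Induction instances of $\mathsf{ACA}$ are obtained via Lemma \ref{lem 3 red 3}, since its rules are special cases of ours: $\dmnd{1}{n}\varphi$ follows from $\dmnd{1}{n}(\dwn A+\varphi)$. Chaining these yields $\psi$. The parameter $\varphi$ is needed to absorb side hypotheses already derived in $T$, exactly as in the conservation lemmas \ref{lem 1 red 1}--\ref{lem 2 red 2}.

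\textbf{Main obstacle.} The delicate step is the left-to-right direction: showing that the $\omega$-model consequences of $\Pi^1_1\textsf{-RFN}(A)$ are captured by reflection over $\dwn A$ rather than over $A$ itself. This requires that, inside an $\omega$-model, the $\Pi^1_1$ soundness of $A$ is equivalent to reflection over $\dwn A$. I would first attempt this via the characterization of $\Pi^1_1$-reflection as well-foundedness of the associated proof-search trees, relativized to $\omega$-models.
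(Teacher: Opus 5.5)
\textbf{Main gap.} The real content of the lemma is the inclusion of $\dwn\dmnd{1}{1}A$ in $\mathsf{ACA}_0+\{\dmnd{1}{n}\dwn A\mid n\in\mathbb{N}\}$, which is your left-to-right direction, and there your proposal has no argument. The claim that cut elimination ``bounds the complexity'', so that $\psi$ follows in $\mathsf{ACA}$ from finitely many instances of $\Pi^1_k\textsf{-RFN}(\dwn A)$, just restates what must be proved; your own ``main obstacle'' paragraph concedes this.

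The difficulty is not one of complexity. Your hypothesis only says that $\psi$ holds in every countably coded $\omega$-model $M$ of $\mathsf{ACA}_0$. The target theory proves the existence of no such model: for sound $A$ it holds in the $\omega$-model of arithmetical sets. So you need a device that turns facts about a coded $M$ into facts about the ambient second-order universe, and turns $A$-reflection about $M$ into reflection over $\dwn A$.

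The paper supplies this in two steps.
\begin{itemize}
\item First, a conservation result. A $\Pi^0_\infty$ statement $\varphi(M)$ such that $\dmnd{1}{1}A$ proves $\varphi(M)$ for all $\omega$-models $M$ of $\mathsf{ACA}_0$ is already provable in $\mathsf{PA}(M)+\{\Pi^0_n(M)\textsf{-RFN}(A)\mid n\in\mathbb{N}\}$. This is shown by expanding a counter-model to its definable subsets.
\item Second, a forcing interpretation of that theory in $\mathsf{ACA}_0+\{\dmnd{1}{n}\dwn A\}$, with finite sequences of sets as conditions. This is a L\'evy collapse under which the whole second-order universe becomes a generic $\omega$-model $M$ of $\mathsf{ACA}_0$.
\item Induction is forced using the full induction available from the $\dmnd{1}{n}\top$.
\item The point where $\dwn A$, rather than $A$, enters is this claim: if $A$ proves $\varphi(M')$ for all $\omega$-models $M'$ of $\mathsf{ACA}_0$, then $A$ proves that every such $M'$ satisfies $\forall P\,(P\Vdash\varphi(M))$. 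It is established by a truth lemma for sufficiently generic filters inside $M'$, using that a generic enumeration of $M'$ is again an $\omega$-model of $\mathsf{ACA}_0$.
\item Reflection over $\dwn A$ then makes the reflection instances of $\mathsf{PA}(M)$ forced.
\end{itemize}
Nothing in your sketch (proof-search trees, ``relativized formulas'') plays this role.

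\textbf{Your other direction.} It rests on a false step. $\Pi^1_1\textsf{-RFN}(A)$ does not provably yield a coded $\omega$-model of $\dwn A$ containing a given $P$. Already $\mathsf{ACA}_0+\Pi^1_1\textsf{-RFN}(\mathsf{ACA}_0)$ is $\mathsf{ACA}_0$ plus a true $\Pi^1_1$ sentence, so it holds in the $\omega$-model of arithmetical sets, which contains no coded $\omega$-model of $\mathsf{ACA}_0$. The ``$\beta_n$-style'' models are further still out of reach. Invoking Lemma \ref{lem 3 red 3} ``relativized to the oracle $A$'' is not meaningful, since $A$ is a theory; moreover, the paper obtains that lemma as a simplification of this one.

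No model construction is needed here. $\Pi^1_1\textsf{-RFN}(A)$ makes every $\omega$-model $M$ of $\mathsf{ACA}_0$ satisfy all members of $\dwn A$. So if a $\Sigma^1_n$ sentence $\chi$ true in $M$ made $\dwn A+\chi$ inconsistent, then $\neg\chi\in\dwn A$ and hence $M\models\neg\chi$, a contradiction. This gives $\dmnd{1}{n}\dwn A\in\dwn\dmnd{1}{1}A$, which is the soundness of Axiom 14. Closure under the rule then follows, because $\varphi\wedge\dmnd{1}{N}\dwn A$ implies $\dmnd{1}{n}(\dwn A+\varphi)$ for $N$ large relative to $n$ and the complexity of $\varphi$.
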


\begin{lemma}\label{lem 5 red 5}
    Given a finitely axiomatizable theory $U$ and a $\Pi^1_2$-axiomatizable theory $T$,
    \[ T+ \dmnd{2}{2} U \equiv_{\Pi^1_1} T + \left\{ \frac{\varphi\in \Pi^1_1}{\dmnd{1}{1}(U+{\dwn}(\mathsf{ACA}_0+\varphi))} \right\}. \]
\end{lemma}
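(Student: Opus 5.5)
We prove the two inclusions separately. Write $R$ for the rule-extension on the right: $T$ closed under the rule from $\varphi\in\Pi^1_1$ infer $\dmnd{1}{1}(U+{\dwn}(\mathsf{ACA}_0+\varphi))$.

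\textbf{Right into left.} We show that $T+\dmnd{2}{2}U$ is closed under the rule, so every theorem of $R$ is a theorem of $T+\dmnd{2}{2}U$. Suppose $T+\dmnd{2}{2}U\vdash\varphi$ with $\varphi\in\Pi^1_1$.
\begin{itemize}
\item Since $U$ is finitely axiomatizable, $\dmnd{2}{2}U$ is a single $\Pi^1_2$ sentence. It asserts that every $\Sigma^1_2$ truth, with parameter $P$, holds in some $\omega$-model of $U$ containing $P$.
\item We would first show that $\dmnd{2}{2}U$ proves that $U$ holds in some $\omega$-model $M$ satisfying all true $\Pi^1_1$ statements of the form $\varphi$. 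The idea is to reflect the true $\Sigma^1_2$ statement "there is a set coding a countable $\omega$-model of $\mathsf{ACA}_0+\varphi$" (the $\omega$-model $\Pi^1_2$-reflection).
\item Cleaner route: use $\dmnd{2}{2}U$ on the $\Sigma^1_2$ sentence $\exists X\,(X=\text{jump hierarchy}\ldots)$.
\end{itemize}
Our concrete plan is as follows. Reason in $T+\dmnd{2}{2}U$. We have $\varphi$. The formula $\varphi$ is $\Pi^1_1$, so it is downward absolute to $\omega$-models. Any $\omega$-model $M$ of $U$ therefore satisfies $\mathsf{ACA}_0+\varphi$ (for $\Pi^1_1$ sentences, $\omega$-models inherit truth).

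Moreover, an $\omega$-model $M\models U$ satisfies ${\dwn}(\mathsf{ACA}_0+\varphi)$. The reason is that $\mathsf{ACA}_0+\varphi$ proves "every $\omega$-model of $\mathsf{ACA}_0$ satisfies $\psi$" only for statements $\psi$ that are genuinely true in all $\omega$-models of $\mathsf{ACA}_0+\varphi$. This step needs $M$ to see enough $\omega$-submodels, and we obtain that by applying $\omega$-reflection once more, to a suitable $\Sigma^1_2$ truth. Having an $\omega$-model of $U+{\dwn}(\mathsf{ACA}_0+\varphi)$ gives its $\Pi^1_1$-soundness, i.e.\ $\dmnd{1}{1}$ of it. All of this is formalized in $\mathsf{ACA}_0$ using the fact that $\omega$-models are $\Pi^1_1$-correct for $\Sigma^1_1$ downward.

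\textbf{Left into right.} We show that every $\Pi^1_1$ consequence of $T+\dmnd{2}{2}U$ is provable in $R$. Here we would use a cut-elimination or Herbrand-style argument as in Lemma \ref{lem 2 red 2}.
\begin{itemize}
\item By Lemma \ref{lem 2 red 2} with $n=1$, $T+\dmnd{2}{2}U$ is $\Pi^1_1$-conservative over $T$ plus the rule from $\varphi\in\Pi^1_1$ infer $\dmnd{2}{1}(U+\varphi)$.
\item It therefore suffices to show $\dmnd{1}{1}(U+{\dwn}(\mathsf{ACA}_0+\varphi))\vdash\dmnd{2}{1}(U+\varphi)$.
\item $\dmnd{2}{1}(U+\varphi)$ states, for $\Sigma^1_1$-type conditions, truth transfer from all $\omega$-models of $U+\varphi$. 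Its failure means some $\Sigma^1_1$ fact $\exists Y\theta(P,Y)$ is false in reality, yet every $\omega$-model of $U+\varphi$ containing $P$ contains a witness.
\item Then $U+{\dwn}(\mathsf{ACA}_0+\varphi)$ proves the $\Pi^1_1$-type statement "for all $P$, if every $\omega$-model $\ldots$ then $\exists Y\theta$". We use ${\dwn}$ to internalize quantification over $\omega$-models of $\mathsf{ACA}_0+\varphi$, which requires $U\vdash\mathsf{ACA}_0$ and $\varphi$ being $\Pi^1_1$ in order to relativize.
\item $\Pi^1_1$-soundness then yields a contradiction.
\end{itemize}

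\textbf{Main obstacle.} The main obstacle is this last internalization. The ${\dwn}$ operator gives statements true in all $\omega$-models of $\mathsf{ACA}_0+\varphi$ provably, and we must match that with the semantic hypothesis that quantifies over $\omega$-models of $U+\varphi$. Finite axiomatizability of $U$ is essential here, because it lets $U$ be a single sentence relativized inside $M$. We would handle this using the uniform version over parameters $P$ and the completeness of $\omega$-logic formalized in $\mathsf{ACA}_0$.
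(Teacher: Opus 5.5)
\textbf{The easy inclusion.} Your inclusion of the rule theory into $T+\dmnd{2}{2}U$ is essentially the forward direction of the paper's Lemma~\ref{Diam22_alt}, once it is made precise. For each set $X$ you need an $\omega$-model $M_1\models U$ containing $X$ and the parameters. You then need a second $\omega$-model $M_2\models U$ with $M_1\dot\in M_2$. So $M_1$ must be \emph{an element of} an $\omega$-model of $\mathsf{ACA}_0+\varphi$, rather than ``see $\omega$-submodels''. Also, a single $\omega$-model of $U+\dwn(\mathsf{ACA}_0+\varphi)$ does not give $\Pi^1_1$-soundness; you need one around every set.

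\textbf{The gap.} The other inclusion does not work.
\begin{itemize}
\item Arithmetically, $\dmnd{2}{1}(U+\varphi)$, i.e.\ $\Pi^1_1\text{-}\omega\textsf{RFN}(U+\varphi)$, only says that every set lies in some $\omega$-model of $U+\varphi$. A false $\Sigma^1_1$ fact holds in no $\omega$-model, so its ``failure'' just means some set lies in no such model.
\item For a true $\Pi^1_1$ sentence $\varphi$, this is equivalent to $\dmnd{2}{2}U$ itself, since $\varphi$ holds in every $\omega$-model anyway. Invoking Lemma~\ref{lem 2 red 2} at $n=1$ (which the paper avoids) therefore reduces the problem to itself.
\item Your key claim thus amounts to $\varphi\wedge\dmnd{1}{1}(U+\dwn(\mathsf{ACA}_0+\varphi))\vdash\dmnd{2}{2}U$, and this is false.
\end{itemize}
Counterexample: take $T=U=\mathsf{ACA}_0$ and $\varphi$ the sentence $0=0$. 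The premise $\dmnd{1}{1}(\mathsf{ACA}_0+\dwn\mathsf{ACA}_0)$, i.e.\ $\Pi^1_1\textsf{-RFN}(\mathsf{ACA})$, is a true $\Pi^1_1$ sentence. Hence it holds in the $\omega$-model of arithmetical sets. That model contains no coded $\omega$-model of $\mathsf{ACA}_0$, so $\dmnd{2}{2}\mathsf{ACA}_0$ (that is, $\mathsf{ACA}_0^+$) fails there.

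More generally, the rule theory is $T$ plus true $\Pi^1_1$ sentences, so in general it does not prove $\dmnd{2}{2}U$ at all. The lemma asserts only $\Pi^1_1$-conservativity, which needs a conservation argument rather than a pointwise derivation of $\omega$-reflection. The internalization you flag as the main obstacle fails for a concrete reason: $\omega$-models of $\mathsf{ACA}_0+\varphi$ carry no information about the set at which $\omega$-reflection fails.

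\textbf{The missing idea.} You must apply the scheme at a \emph{different} $\Pi^1_1$ formula, one that encodes the failure. The paper proceeds in two steps, neither of which appears in your plan.
\begin{itemize}
\item It first shows (Lemma~\ref{Diam22_alt}) that $\dmnd{2}{2}U$ is equivalent to the parametrized scheme $\forall\vec x,\vec X\,(\varphi\to\dmnd{1}{1}(U+\dwn(\mathsf{ACA}_0+\varphi(\dot{\vec x},\dot{\vec X}))))$. Suppose $\dmnd{2}{2}U$ fails at a set $A$, witnessed by a true $\psi(A)\in\Pi^1_1$ that is false in all $\omega$-models of $U$ containing $A$. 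Take the true $\Pi^1_1$ sentence $\theta$ asserting that $\psi(A)$ holds and fails in every $\omega$-model of $U$. Then $\dwn(\mathsf{ACA}_0+\theta)$ contains both $\psi(A)$ and $U\to\neg\psi(A)$; this is where finite axiomatizability of $U$ is used. So $U+\dwn(\mathsf{ACA}_0+\theta)$ is inconsistent.
\item It then converts this $\Pi^1_2$ scheme into the rule by the conservation result \cite[Lemma~2.1]{pakhomov2023corrigendum}. Its hypothesis is closure of the rule theory under the rule with $\Pi^1_1$ side formulas $\theta$. This is verified by absorbing the true $\Sigma^1_1$ sentence $\neg\theta$ into $\dmnd{1}{1}$ and using $\dwn(\mathsf{ACA}_0+\psi)\sqsubseteq\dwn(\mathsf{ACA}_0+(\chi\lor\psi))+\neg\chi$.
\end{itemize}
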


By utilizing all the above, we obtain an ordinal analysis for $\mathsf{BI}$ in the style of Beklemishev. Specifically, the theory occupying position $\alpha$ in our hierarchy has the proof-theoretic ordinal $\varepsilon_\alpha$. This entails that $(\W,<^1_1)$ is an ordinal notation system for the proof-theoretic ordinal of $\mathsf{BI}$; it was known that the proof-theoretic ordinal of $\mathsf{BI}$ is the Bachmann-Howard ordinal (see a footnote at the end of Section \ref{section ONS}). It appears that our ordinal notation system for the Bachmann-Howard ordinal is substantially different from the existing ones based on collapsing functions, extended Veblen hierarchy, or higher order primitive recursion on ordinals. Moreover, as for Beklemishev's analysis of $\mathsf{PA}$, our result entails the ordinal analysis not just for $\mathsf{BI}$ itself, but for a rich family of its subtheories.
\vspace{\baselineskip}
\subsection{On the problem of linearity of consistency strength ordering}
On top of the results related to the program of ordinal analysis and the study of reflection principles, we believe that the results of this paper shed some light on the question of linearity of consistency strength ordering for the natural theories. The above, rather than being a formal mathematical question, is an empirical observation. While in general it is known that the consistency strength ordering has all the ``bad'' properties\footnote{There are, for example, infinite descending chains in this order as well as non-linearities in it. The non-linearity of the closely related interpretability order is known at least since the work of R.~Montague \cite{montague1962theories}, while for the consistency strength ordering it is an easy consequence of Solovay's Theorem on Arithmetical Completeness of the provability logic $\mathsf{GL}$ \cite{solovay1976provability}. The ill-foundedness of the ordering seems to be independently proved by Friedman, Smorynski, and Solovay \cite{smorynski2012self,lindstrom2017aspects}. We also refer to a recent publication of Hamkins with examples of this nature \cite{hamkins2025nonlinearity}.}, it is known that when attention is restricted to \emph{natural} theories, the ordering does appear to behave as a pre-well-ordering. This property of linearity of the ordering for natural theories was mentioned by Friedman \cite{friedman98letter} and discussed more in depth by Steel \cite{steel2014godel}. Some more philosophically oriented discussions of this phenomenon can be found in \cite{walsh2026hierarchy,grotenhuis2022natural,meadows2025consistency}.  

There are arguments explaining the nice structure of consistency strength ordering that are based on the observations that some closely related orderings do have such a nice structure. Namely, Walsh and the first author of the present paper \cite{pakhomov2021reflection} showed well-foundedness of $\Pi^1_1$-reflection strength ordering on $\Pi^1_1$-sound theories. Meadows observed \cite{meadows2025consistency} that the order on $\beta$-verified $\beta$-consistency strengths of $\beta$-consistent theories is a pre-well-order, where $\beta$-consistency of a theory $T$ is simply defined as the existence of a transitive model of $T$, and $T\models_\beta\varphi$ is defined as $\varphi$ being true in all transitive models of $T$ (see also \cite{jeon2025ranking}). However, those observations do not directly entail ``good'' structure of the consistency strength ordering: In general, the consistency strength ordering can have more comparisons than the finer $\Pi^1_1$-reflection ordering, and segments of the former can have infinite descending chains where the corresponding segments of the latter are well-founded. And the ordering of Meadows is, in general, not directly connected to consistency strength ordering (being neither finer nor coarser, as can be seen from easy-to-construct examples). 

The kind of argument we are presenting in the present paper is the following. Instead of looking at very broad classes of theories given in terms of certain soundness conditions and thus approximating the class of natural theories from above, we look at a much smaller class of theories that simply consist of theories admitting denotations in our reflection calculus.  However, with this smaller class of theories our machinery enables us to prove pre-well-orderedness of the actual consistency strength ordering. Thus the question becomes how exhaustive is the class of the theories that we consider with respect to the whole class of natural theories; or, more precisely, since quite obviously we only consider the theories within the interval $[\mathsf{ACA}_0,\mathsf{BI}]$, how exhaustive is our class for the natural theories from the same interval. The answer to the latter question obviously very much depends on how one gauges the intuitive, informal, and at least to some extent subjective notion of naturality of theories. In the present paper we chiefly focus on investigation within the boundaries of mathematical methodology and  will abstain from a deeper discussion of this evidently interesting question.

One of the motivating considerations behind the present paper is a straightforward conceptual argument: If a system of fragments of a given $\Pi^1_1$-sound theory corresponds to a sufficiently well-behaved reflection calculus, then this family of fragments is indeed well-ordered with respect to consistency strength. This principle is captured by the following theorem:

\begin{theorem}\label{abstract_linearity}
    Suppose $\mathcal{T}$ is a set of c.e.\ $\Pi^1_1$-sound extensions of a c.e.\ theory $T_0 \supseteq \mathsf{ACA}_0$ such that:
    \begin{enumerate}
        \item \label{abstract_linearity_1} For any $T,U \in \mathcal{T}$, 
        \[ \text{if } T \equiv_{\Pi^1_1} U, \text{ then } T_0 \vdash (T \equiv_{\Pi^1_1} U). \]
        \item \label{abstract_linearity_2} For each $T \in \mathcal{T}$,   
        \[ T \equiv_{\Pi^1_1} T_0 + \{\Pi^1_1\textsf{-RFN}(U) \mid U <_{\Pi^1_1} T, U \in \mathcal{T}\}, \] 
        where $U <_{\Pi^1_1} T$ denotes that $T \vdash \Pi^1_1\textsf{-RFN}(U)$.
    \end{enumerate}
    Then $\mathcal{T}$ is a linear, well-founded pre-order with respect to $<_{\Pi^1_1}$ and $<_{\Pi^1_1}$ coincides with $<_{\mathsf{Con}}$ on $\mathcal{T}$.
\end{theorem}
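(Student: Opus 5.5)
Well-foundedness comes for free, so the plan concentrates on linearity and on identifying $<_{\Pi^1_1}$ with $<_{\mathsf{Con}}$. Since every $T\in\mathcal{T}$ is $\Pi^1_1$-sound, the result of Walsh and the first author \cite{pakhomov2021reflection} shows that $<_{\Pi^1_1}$ has no infinite descending chains among $\Pi^1_1$-sound c.e. extensions of $\mathsf{ACA}_0$. Alternatively, one can argue directly: if $T_0<T_1$, $T_1<T_2,\dots$ descends, each $T_{i}$ proves $\Pi^1_1\textsf{-RFN}(T_{i+1})$. Formalizing well-foundedness via the standard $\Pi^1_1$-soundness argument gives a contradiction.

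For linearity I would argue by induction along the well-founded relation, on pairs $(T,U)$ ordered by the rank of $T$ and then of $U$. The claim to prove is trichotomy: $U<_{\Pi^1_1}T$, or $T<_{\Pi^1_1}U$, or $T\equiv_{\Pi^1_1}U$.

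Put $S_T=\{V\in\mathcal{T}: V<_{\Pi^1_1}T\}$. By condition \ref{abstract_linearity_2}, $T$ is $\Pi^1_1$-equivalent to $T_0+\{\Pi^1_1\textsf{-RFN}(V): V\in S_T\}$. Now suppose $U\not<T$ and $T\not<U$. I first show $S_T\subseteq S_U$. Take $V\in S_T$. By the induction hypothesis, $V$ is comparable with $U$. The case $U<V$ would give $U<T$, since $\Pi^1_1\textsf{-RFN}(V)$ implies $\Pi^1_1\textsf{-RFN}(U)$ once $V\vdash \Pi^1_1\textsf{-RFN}(U)$. The case $U\equiv_{\Pi^1_1}V$ would also give $U<T$, and this needs condition \ref{abstract_linearity_1}: the $\Pi^1_1$-equivalence must be provable in $T_0$ so that $T\vdash\Pi^1_1\textsf{-RFN}(V)\to\Pi^1_1\textsf{-RFN}(U)$. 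Hence $V<U$. By symmetry $S_U\subseteq S_T$, so $S_T=S_U$, and condition \ref{abstract_linearity_2} gives $T\equiv_{\Pi^1_1}U$. The transitivity step and the substitution of $\Pi^1_1$-equivalent theories inside $\Pi^1_1\textsf{-RFN}$ are exactly where condition \ref{abstract_linearity_1} enters; I expect checking this formalized transfer to be routine but necessary.

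For the coincidence with consistency strength, $U<_{\Pi^1_1}T$ implies $T\vdash\mathsf{Con}(U)$, because $\Pi^1_1$-reflection for $U$ applied to $0=1$ yields consistency. For the converse, suppose $T\vdash\mathsf{Con}(U)$ but $U\not<_{\Pi^1_1}T$. By linearity, either $T<_{\Pi^1_1}U$ or $T\equiv_{\Pi^1_1}U$.

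In the first case $U\vdash\mathsf{Con}(T)$. Since $\mathsf{Con}(T)$ is $\Pi^0_1$, hence $\Pi^1_1$, we get $U\vdash\mathsf{Con}(U)$. This contradicts Gödel's second incompleteness theorem, because $U$ is consistent and c.e. In the second case $U\vdash\mathsf{Con}(U)$ directly, since $T\vdash\mathsf{Con}(U)$ and $\mathsf{Con}(U)$ is $\Pi^1_1$; again Gödel is contradicted. Thus $<_{\mathsf{Con}}$ and $<_{\Pi^1_1}$ agree.

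I expect the main obstacle to be organizing the induction for linearity correctly. The induction must run on a well-founded measure on pairs, namely the natural sum of ranks, and the rank function has to be available before linearity is known. I would take it from the well-founded relation $<_{\Pi^1_1}$ itself restricted to $\mathcal{T}$, so the argument stays non-circular.
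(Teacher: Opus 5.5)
Your proposal is correct and is essentially the paper's proof. Your well-founded induction on pairs is the paper's choice of a minimal $T_1$ followed by a minimal $T_2$ incomparable with it, with the same key step $S_T=S_U$ via conditions~1 and~2 and the same G\"odel argument for $<_{\mathsf{Con}}$; in the $\equiv_{\Pi^1_1}$ case you even avoid condition~1, which is fine since $\mathsf{Con}(U)$ is a single $\Pi^1_1$ sentence. One slip to fix: when $T<_{\Pi^1_1}U$, the sentence you must reflect is $\mathsf{Con}(U)$, which $T$ proves, using $U\vdash\Pi^1_1\textsf{-RFN}(T)$, whereas the fact $U\vdash\mathsf{Con}(T)$ that you cite does not by itself yield $U\vdash\mathsf{Con}(U)$.
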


\begin{proof} 
The fact that the relation $<_{\Pi^1_1}$ is transitive is straightforward to prove. Its well-foundedness is established by \cite[Theorem 3.2]{pakhomov2021reflection}. Thus, it only remains to show that any two elements are comparable.

We proceed by contradiction. Suppose that $<_{\Pi^1_1}$ is not linear. Because the relation is well-founded, we can choose a $<_{\Pi^1_1}$-minimal theory $T_1 \in \mathcal{T}$ that is incomparable with some other theory in $\mathcal{T}$. That is, there exists a $T_2 \in \mathcal{T}$ such that $T_1 \not<_{\Pi^1_1} T_2$, $T_1 \not\equiv_{\Pi^1_1} T_2$, and $T_2 \not<_{\Pi^1_1} T_1$. We pick a $<_{\Pi^1_1}$-minimal $T_2$.

We first observe that the set of strictly weaker theories is identical for both $T_1$ and $T_2$. Specifically, for any $U \in \mathcal{T}$:
\[ U <_{\Pi^1_1} T_1 \iff U <_{\Pi^1_1} T_2 \]
To prove the left-to-right implication, suppose $U <_{\Pi^1_1} T_1$. By the minimality of $T_1$, $U$ must be comparable with $T_2$. If $U \equiv_{\Pi^1_1} T_2$, condition \ref{abstract_linearity_1} implies that $T_0$ proves this equivalence, hence $T_1$ proves the equivalence, thus the principles $\Pi^1_1\textsf{-RFN}(U)$ and $\Pi^1_1\textsf{-RFN}(T_2)$ are equivalent over $T_1$, and therefore $T_2 <_{\Pi^1_1} T_1$, which contradicts our initial assumption. If $T_2 <_{\Pi^1_1} U$, transitivity yields $T_2 <_{\Pi^1_1} T_1$, again a contradiction. Thus, the only remaining possibility is $U <_{\Pi^1_1} T_2$. The reverse implication ($U <_{\Pi^1_1} T_2 \implies U <_{\Pi^1_1} T_1$) is established by an analogous argument.

Consequently, by applying condition \ref{abstract_linearity_2} to both $T_1$ and $T_2$, we obtain:
\begin{align*}
T_1 &\equiv_{\Pi^1_1} T_0+\{\Pi^1_1\textsf{-RFN}(U)\mid U<_{\Pi^1_1} T_1, U\in \mathcal{T}\} \\
    &= T_0+\{\Pi^1_1\textsf{-RFN}(U)\mid U<_{\Pi^1_1} T_2, U\in \mathcal{T}\} \\
    &\equiv_{\Pi^1_1} T_2
\end{align*}
This yields $T_1 \equiv_{\Pi^1_1} T_2$, which directly contradicts our initial assumption that $T_1$ and $T_2$ are incomparable. Therefore, the $<_{\Pi^1_1}$ pre-order must be linear.

To see that $<_{\mathsf{Con}}$ coincides with $<_{\Pi^1_1}$ on $\mathcal{T}$ we simply have to observe that any extra comparison arising from $<_{\mathsf{Con}}$ would lead to one of the two following impossible situations. Either a pair of theories $T_2<_{\mathsf{Con}} T_1$, for which $T_1<_{\Pi^1_1} T_2$, which would entail that $T_2$ proves its own consistency contradicting soundness of $T_2$ in the light of G\"odel's second incompleteness theorem. Or a pair of theories $T_1<_{\mathsf{Con}}T_2$ such that $\mathsf{ACA}_0$ proves that $T_1\equiv_{\Pi^1_1} T_2$, which would also be impossible since it leads to $T_2$ proving its own consistency.
\end{proof}

For our particular case, $T_0$ is simply $\mathsf{ACA}_0$, and the set $\mathcal{T}$ from Theorem \ref{abstract_linearity} is taken to be the set of fragments of $\mathsf{BI}$ corresponding to formulas $A \in \W$. In this setting, condition \ref{abstract_linearity_2} is exactly the reduction property. Condition \ref{abstract_linearity_1}, on the other hand, would be a consequence of the fact that the calculus $\mathsf{RC_{BI}}$ proves all arithmetically true sequents $\nabla A \vdash \nabla B$ (for $A, B \in \W$), combined with the provability of the arithmetical soundness of $\mathsf{RC_{BI}}$ in $\mathsf{ACA}_0$.

Unfortunately, our proof of the linearity of $<^1_1$ (Theorem \ref{thm_lin_int}) does not proceed via a direct application of Theorem \ref{abstract_linearity}: the only way we currently know to prove the arithmetical completeness of $\mathsf{RC_{BI}}$ for sequents $\nabla A \vdash \nabla B$ is as a consequence of the linearity of the $<^1_1$-order on $\W$ itself.

Our proof of Theorem \ref{thm_lin_int} shares similarities with that of Theorem \ref{abstract_linearity}; however, we replace the use of condition \ref{abstract_linearity_1} with an involved analysis of finite Kripke models for the logic $\mathsf{RC_{BI}}$. Nonetheless, we see our construction as a proof of concept for the following general setup: if for a natural theory $T$ a sufficiently rich family of natural operators on the fragments of $T$ is discovered, then the set of fragments of $T$ generated by these operators will be well-ordered by consistency strength with the order type of the proof-theoretic ordinal of $T$. When following the approach from this work, successful application of this vision to other theories will depend on both proving the corresponding reduction properties and showing that the laws governing the interaction of these operators can be captured by a sufficiently well-behaved formal system (such as $\mathsf{RC_{BI}}$ in the case of this paper).

\vspace{\baselineskip}
\section{A modal framework for reflection-based theories up to Bar Induction}\label{section calc}
We begin by formally defining the reflection calculus for Bar Induction ($\mathsf{RC_{BI}}$), its conservative extension $\mathsf{RC_{BI_\omega}}$, as well as several key syntactic properties and a Kripke semantics for the latter. In the first subsection we present the grammar and the axiomatization of $\mathsf{RC_{BI}}$, along with a proof of its arithmetical soundness (Proposition \ref{prop ari sound}). Afterwards, in Subsection \ref{subsec:omega} we introduce the conservative extension $\mathsf{RC_{BI_\omega}}$ of $\mathsf{RC_{BI}}$, which admits a better-behaved semantic treatment from the perspective of modal logic. Subsection \ref{subsec:syntactic} is devoted to syntactic properties of derivations in $\mathsf{RC_{BI_\omega}}$, which are needed in the completeness proof of Subsection \ref{subsec:kripke}. Finally, in Subsection \ref{subsec:kripke}, we introduce a Kripke-style semantics for the logic and prove our main semantic result: $\mathsf{RC_{BI_\omega}}$ is sound and complete with respect to this class of models (Theorem \ref{thm completeness}).
\vspace{\baselineskip}
\subsection{The Reflection Calculus for Bar Induction}\label{subsec:rcbi}
The logic $\mathsf{RC_{BI}}$ is designed to satisfy the following key properties:
\begin{enumerate}
    \item It is capable of interpreting the left-to-right implications from Lemmas \ref{lem 1 red 1} through \ref{lem 5 red 5}.
    \item The syntactic entailment relation $A \vdash \dmnd{1}{1} B$ induces a well-ordering on formulas.
\end{enumerate}
We define the language $\mathcal{L}_{\mathsf{BI}}$ of the standard reflection calculus $\mathsf{RC_{BI}}$ as the modal language containing the constant $\top$ (truth), a set of propositional variables $\{p, q, \dots\}$, the binary conjunction connective $\wedge$, and the unary modal connectives: $\dmnd{1}{\alpha}$ (for $1 \leq \alpha < \omega$), $\dmnd{2}{\beta}$ (for $2 \leq \beta < \omega$), $\dwn$, and $\nabla$. The grammar for formulas in $\mathsf{RC_{BI}}$ is as follows:
\[ A := \top \mid p \mid A \wedge B \mid \dmnd{1}{\alpha} A \mid \dmnd{2}{\beta} A \mid {\dwn} A \mid \nabla A. \]

We write $A \vdash_{\mathsf{BI}} B$ to indicate that the sequent $A \vdash B$ is derivable in $\mathsf{RC_{BI}}$. Furthermore, we define $\W$ as the set of $\nabla$-free closed formulas of $\mathsf{RC_{BI}}$.

\begin{definition}
The axiomatization of $\mathsf{RC_{BI}}$ consists of a standard set of inference rules, two trivial axioms for identity and truth ($A\vdash A$ and $A\vdash \top$, which we often use without mention), and fifteen non-trivial axioms grouped into four categories: five monotonicity axioms, five transitivity axioms, two conjunction axioms, and three constructive axioms. The inference rules are as follows:
\par\vspace{1em}
\begin{multicols}{3}
    \centering
    \begin{prooftree}
        \hypo{A \vdash C}
        \infer 1[L$\wedge_1$]{A \wedge B \vdash C}
    \end{prooftree}
    \par\vspace{1.5em}
    \begin{prooftree}
        \hypo{A \vdash B}
        \infer 1[Mod$_1$]{\dmnd{i}{\alpha} A \vdash \dmnd{i}{\alpha} B}
    \end{prooftree}
    
    \columnbreak
    \begin{prooftree}
        \hypo{B \vdash C}
        \infer 1[L$\wedge_2$]{A \wedge B \vdash C}
    \end{prooftree}
    \par\vspace{1.5em}
    \begin{prooftree}
        \hypo{A \vdash B}
        \infer 1[Mod$_2$]{{\dwn} A \vdash {\dwn} B}
    \end{prooftree}
    
    \columnbreak
    \begin{prooftree}
        \hypo{A \vdash B}
        \hypo{A \vdash C}
        \infer 2[R$\wedge$]{A \vdash B \wedge C}
    \end{prooftree}
    \par\vspace{1.5em}
    \begin{prooftree}
        \hypo{A \vdash B}
        \infer 1[Mod$_3$]{\nabla A \vdash \nabla B}
    \end{prooftree}
\end{multicols}
\par\vspace{1em}
\rulerow{%
    \begin{prooftree}
        \hypo{A \vdash B}
        \hypo{B \vdash C}
        \infer 2[Cut]{A \vdash C}
    \end{prooftree}
}
\begin{itemize}
    \item \textbf{The trivial axioms:}
    \begin{multicols}{2}
    \begin{itemize}
        \item[\textbullet] $A \vdash A$ (identity); 
        \item[\textbullet] $A \vdash \top$ (truth);
    \end{itemize}
    \end{multicols}
    
    \item \textbf{Monotonicity axioms:} 
    \begin{multicols}{2}
    \begin{enumerate}
        \item $\dmnd{i}{\alpha} A \vdash \dmnd{i}{\beta} A$ for $\beta < \alpha$; 
        \item $\dmnd{2}{\alpha} A \vdash \dmnd{1}{\alpha} A$;
        \item $A \vdash \nabla A$;
        \item $\dmnd{1}{1} A \vdash \nabla A$;
        \item ${\dwn} A \vdash \nabla A$;
        \vfill\null
    \end{enumerate}
    \end{multicols}

    \item \textbf{Transitivity axioms:}
    \begin{multicols}{2}
    \begin{enumerate}
        \setcounter{enumi}{5}
        \item $\dmnd{i}{\alpha} \dmnd{j}{\alpha} A \vdash \dmnd{j}{\alpha} A$; 
        \item $\nabla \nabla A \vdash \nabla A$; 
        \item $\nabla \dmnd{1}{1} A \vdash \dmnd{1}{1} A$;
        \item $\dmnd{1}{1} \nabla A \vdash \dmnd{1}{1} A$;
        \item ${\dwn} \nabla A \vdash {\dwn} A$;
        \vfill\null
    \end{enumerate}
    \end{multicols}

    \item \textbf{Conjunction axioms:}
    \begin{multicols}{2}
    \begin{enumerate}
        \setcounter{enumi}{10}
        \item \label{Ax_conj1} $\dmnd{2}{\alpha} A \wedge \dmnd{1}{1} B \vdash \dmnd{2}{\alpha}(A \wedge {\dwn} B)$;
        \item \label{Ax_conj2} $\dmnd{i}{\alpha} A \wedge \dmnd{j}{\beta} B \vdash \dmnd{i}{\alpha}(A \wedge \dmnd{j}{\beta} B)$ for $\beta < \alpha$;
    \end{enumerate}
    \end{multicols}

    \item \textbf{Constructive axioms:}
    \begin{multicols}{2}
    \begin{enumerate}
        \setcounter{enumi}{12}
        \item ${\dwn} \top \vdash \dmnd{1}{\alpha} \top$;
        \item ${\dwn} \dmnd{1}{1} A \vdash \dmnd{1}{\alpha} {\dwn} A$;
        \item $A \wedge \dmnd{2}{2} B \vdash \nabla({\dwn} A \wedge B)$.
    \end{enumerate}
    \end{multicols}
\end{itemize}
\end{definition}

The intended arithmetical interpretation for the standard system $\mathsf{RC_{BI}}$ is a map $*$ satisfying the following conditions:
\begin{itemize}
    \item $\top^* = \mathsf{ACA}_0$; 
    \item Propositional variables $p$ correspond to arbitrary c.e.\ extensions of $\mathsf{ACA}_0$;
    \item $(A \wedge B)^* = A^* \cup B^*$; 
    \item $(\dmnd{1}{n} A)^* = \mathsf{ACA}_0 + \Pi^1_n\textsf{-RFN}(A^*)$;
    \item $(\dmnd{2}{n} A)^* = \mathsf{ACA}_0 + \Pi^1_n\text{-}\omega\textsf{RFN}(A^*)$;
    \item $({\dwn} A)^* = {\dwn} (A^*)$; 
    \item $(\nabla A)^* = \mathsf{ACA}_0 + \Pi^1_1(A^*)$.
\end{itemize}

To prove that the derivable sequents of $\mathsf{RC_{BI}}$ are arithmetically sound, it is beneficial to consider the equivalent formulation of the reflection principles as consistency statements. 

\begin{definition}\label{def con form}
We define the consistency assertions $\Sigma^1_n\textsf{-Con}(A)$ and $\Sigma^1_n\textsf{-}\omega\textsf{Con}(A)$ of a theory $A$ as:
\[ \Sigma^1_n\textsf{-Con}(A) : \forall\varphi \in \Sigma^1_n \, (\text{Tr}_{\Sigma^1_n}(\varphi) \rightarrow \text{Con}(A + \varphi)); \]
\[ \Sigma^1_n\textsf{-}\omega\textsf{Con}(A) : \forall\varphi \in \Sigma^1_n \, (\text{Tr}_{\Sigma^1_n}(\varphi) \rightarrow \exists M (M \models A + \varphi)). \]
\end{definition}

\begin{proposition}[{\cite[Section 2]{pakhomov2023reducing}}]\label{prop con form}
The following equivalences hold:
 \[ \Pi_n^1\textsf{-RFN}(A) \equiv \Sigma^1_n\textsf{-Con}(A); \]
 \[ \Pi^1_n\textsf{-}\omega\textsf{RFN}(A) \equiv \Sigma^1_n\textsf{-}\omega\textsf{Con}(A). \]
 \end{proposition}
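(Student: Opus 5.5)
We prove both equivalences by the same argument, working inside $\mathsf{ACA}_0$ with partial truth predicates $\text{Tr}_{\Pi^1_n}$ and $\text{Tr}_{\Sigma^1_n}$. These predicates are available and satisfy the Tarski clauses and the reflection schema $\text{Tr}_{\Pi^1_n}(\ulcorner\psi\urcorner)\leftrightarrow\psi$ for each standard $\psi\in\Pi^1_n$. The key duality is that $\neg\varphi$ is (provably equivalent to) a $\Sigma^1_n$ formula whenever $\varphi\in\Pi^1_n$, and conversely. Moreover $\text{Tr}_{\Sigma^1_n}(\neg\varphi)\leftrightarrow\neg\text{Tr}_{\Pi^1_n}(\varphi)$ is provable uniformly in the code $\varphi$. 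Parameters are handled by letting $\varphi$ range over formulas with the set parameter substituted by a (code for a) set constant, or equivalently over formulas with free set variables together with an assignment.

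For the syntactic version, argue as follows. To get $\Pi^1_n\textsf{-RFN}(A)\to\Sigma^1_n\textsf{-Con}(A)$, take $\psi\in\Sigma^1_n$ with $\text{Tr}_{\Sigma^1_n}(\psi)$, and suppose $A+\psi$ is inconsistent. By the (formalized) deduction theorem, $\text{Prv}_A(\neg\psi)$, and $\neg\psi\in\Pi^1_n$. Reflection then yields $\text{Tr}_{\Pi^1_n}(\neg\psi)$, i.e. $\neg\text{Tr}_{\Sigma^1_n}(\psi)$, a contradiction. For the converse, take $\varphi\in\Pi^1_n$ with $\text{Prv}_A(\varphi)$ and suppose $\neg\text{Tr}_{\Pi^1_n}(\varphi)$. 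Then $\text{Tr}_{\Sigma^1_n}(\neg\varphi)$ holds, so $\mathrm{Con}(A+\neg\varphi)$; but $A\vdash\varphi$ makes $A+\neg\varphi$ inconsistent. One detail concerns the set parameter $P$ in $\varphi(P)$. The syntactic reflection is stated for sentences, so instances with parameters must be treated via the uniform version $\forall X$ with a free variable. The inconsistency of $A+\psi(X)$ then means $A\vdash\forall X\neg\psi(X)$, and we instantiate at the particular $P$.

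For the semantic version, the argument is the same with "$\exists M(M\models A+\psi(P))$" in place of consistency. Suppose $\text{Tr}_{\Sigma^1_n}(\psi(P))$ holds and no $\omega$-model of $A$ containing $P$ satisfies $\psi(P)$. Then every such $M$ satisfies $\neg\psi(P)$, since $\omega$-model satisfaction is a total bivalent relation for coded countable models; this is arithmetical in $M$ and available in $\mathsf{ACA}_0$. By $\omega$-reflection we get $\text{Tr}_{\Pi^1_n}(\neg\psi(P))$, a contradiction. Conversely, if every $M\ni P$ with $M\models A$ satisfies $\varphi(P)$ but $\text{Tr}_{\Sigma^1_n}(\neg\varphi(P))$ holds, then $\omega$-consistency gives some $M\models A+\neg\varphi(P)$. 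This $M$ must be required to contain $P$, which is implicit in $M\models\psi(P)$ making sense, and so it contradicts the hypothesis. The consistency formulation should therefore be read with the parameter-containing condition built in.

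The main obstacle is bookkeeping rather than mathematics. We must check that $\mathsf{ACA}_0$ proves the uniform commutation of $\text{Tr}$ with negation and with the prenex normalization needed to treat $\neg\varphi$ as a $\Sigma^1_n$ code. For the $\omega$-model case, we must also check that $M\models\neg\chi\leftrightarrow\neg(M\models\chi)$ holds for the arithmetically defined satisfaction relation of a coded $\omega$-model. Both facts are standard, via the arithmetical definition of the full satisfaction relation on a countable coded structure, and this is the content of the cited reference \cite{pakhomov2023reducing}.
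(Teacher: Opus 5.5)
Your argument is correct and is the standard duality argument for this result (negation swapping $\Pi^1_n$ and $\Sigma^1_n$ codes, the formalized deduction theorem, and bivalence of satisfaction in $\omega$-models); the paper gives no proof of its own and only cites it from \cite[Section 2]{pakhomov2023reducing}. You are also right that the $\omega$-consistency scheme must be read with $P$ required to lie in $M$, as in the $\mathbf{L}_2$ setting of Section~\ref{section reduction properties}, since that is what the direction to the parametrized $\omega$-reflection uses. One small correction: the bivalence you need concerns only the $\Sigma^1_n$ and $\Pi^1_n$ formulas $\psi(P)$ and $\varphi(P)$, and it comes from relativizing the partial truth predicates to $M$, which is arithmetical in $M$. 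The full satisfaction relation of a coded $\omega$-model, by contrast, is in general not arithmetical in $M$ and need not exist in $\mathsf{ACA}_0$.
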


\begin{proposition}\label{prop ari sound}
    Given formulas $A$ and $B$ of $\mathsf{RC_{BI}}$, if $A \vdash_{\mathsf{BI}} B$ then $A^* \vdash B^*$.
\end{proposition}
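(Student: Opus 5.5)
We argue by induction on the length of the derivation of $A \vdash_{\mathsf{BI}} B$, proving that for every arithmetical interpretation $*$ the theory $A^*$ proves every axiom of $B^*$. Here $A^*\vdash B^*$ means: every axiom of $B^*$ is a theorem of $A^*$, argued in the metatheory.

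\emph{Inference rules.} Identity, truth, Cut and the conjunction rules are immediate, because every interpretation extends $\mathsf{ACA}_0$ and $(A\wedge B)^*$ is the union. For Mod$_1$ we use that if $A^*\vdash B^*$ then $\mathsf{ACA}_0$ proves $\text{Prv}_{B^*}(\varphi)\to\text{Prv}_{A^*}(\varphi)$. This holds because each axiom of $B^*$ is (externally) an $A^*$-theorem; for the c.e.\ presentations we pass to the deductive closure of $A^*$, which is harmless since the reflection principles only refer to provability. Likewise $\mathsf{ACA}_0$ proves that every $\omega$-model of $A^*$ is a model of $B^*$, provided we read $M\models T$ as satisfaction of all theorems of $T$. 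So $\Pi^1_n$-reflection over $A^*$ implies $\Pi^1_n$-reflection over $B^*$. Mod$_2$ and Mod$_3$ are direct from the definitions of ${\dwn}$ and $\Pi^1_1(\cdot)$.

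\emph{Monotonicity and transitivity axioms.} Axioms 1 and 2 hold because $\Pi^1_\beta\subseteq\Pi^1_\alpha$, and because $\omega$-model reflection implies syntactic reflection: a provable formula holds in all $\omega$-models of $A^*$. Axioms 3--5 reduce to showing that the $\Pi^1_1$ theorems of $A^*$ follow from the theory in question. For axiom 4, $\Pi^1_1$-$\textsf{RFN}(A^*)$ proves each $\Pi^1_1$ theorem of $A^*$. For axiom 5, if $A^*\vdash\varphi\in\Pi^1_1$, then $\varphi$ holds in all $\omega$-models, hence ${\dwn}A^*$ contains $\varphi$ relativized to $\omega$-models. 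Since $\Pi^1_1$ statements are downward absolute from $\omega$-models, the relativized statement yields $\varphi$ over $\mathsf{ACA}_0$; here we use $\mathsf{ACA}_0$-provable existence of an $\omega$-model containing any given set, which holds only if we include the relevant countable coded model, the standard trick from \cite{pakhomov2023reducing}.

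Axiom 6 is handled through Proposition \ref{prop con form}. Given a true $\Sigma^1_\alpha$ formula $\varphi$, the outer principle yields consistency of (or an $\omega$-model of) $A'+\varphi$, where $A'$ is the inner reflection theory. That theory in turn yields consistency of, or a model of, $A^*+\varphi$. This requires formalizing the argument once inside the inner theory, and it uses the fact that $\varphi$ is $\Sigma^1_\alpha$ and so is reflected in both steps. Axioms 7--10 are similar, using that $\Pi^1_1$-reflection over $\mathsf{ACA}_0+\Pi^1_1(A^*)$ is equivalent to $\Pi^1_1$-reflection over $A^*$, since a $\Pi^1_1$ consequence of true $\Pi^1_1$ theorems is again provable from $A^*$ by formalized $\Pi^1_1$ reasoning.

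\emph{Conjunction and constructive axioms.} Axiom 12: given a true $\Sigma^1_\alpha$ formula $\varphi$, the conjunction $\varphi\wedge\Pi^1_\beta$-$\textsf{RFN}(B^*)$ is equivalent to a $\Sigma^1_\alpha$ statement, because $\beta<\alpha$. Applying $\Sigma^1_\alpha$-consistency, or the $\omega$-model form, to this conjunction gives the claim. Axiom 11: $\Pi^1_1$-$\textsf{RFN}(B^*)$ gives ${\dwn}B^*$ inside every $\omega$-model, since by $\Pi^1_1$-reflection its $\omega$-model translations are true arithmetically/$\Pi^1_1$, and they persist to all $\omega$-models. Axioms 13 and 14 use the fact that $\mathsf{ACA}$, and more generally ${\dwn}$, proves full uniform reflection of $\mathsf{ACA}_0$ by a partial-truth argument, using full induction for $\omega$-model statements. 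Axiom 14 adapts this with the inner theory.

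The main obstacle is axiom 15: from $A^*$ together with $\Pi^1_2$-$\omega\textsf{RFN}(B^*)$ we must derive every $\Pi^1_1$ theorem of ${\dwn}A^*\wedge B^*$. The plan is to take such a theorem $\psi$ and assume $\neg\psi$, a $\Sigma^1_1$ statement. The $\Sigma^1_2$-$\omega$-consistency form then yields an $\omega$-model $M\models B^*+\neg\psi$. Next, $A^*$ proves that every $\omega$-model satisfies ${\dwn}A^*$, since that is exactly what ${\dwn}A^*$ axiomatizes; so $M\models{\dwn}A^*$. Hence $M$ satisfies ${\dwn}A^*\wedge B^*$, and therefore $M\models\psi$, because $M$ is an $\omega$-model and so satisfies the theorems of these theories. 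This is a contradiction by upward absoluteness of $\Sigma^1_1$. The delicate point is verifying ``$M\models$ all theorems'' uniformly; this is where the $\Sigma^1_2$ strength (index $2$) is needed.
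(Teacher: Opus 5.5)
Your overall scheme matches the paper's: induction on derivations, with the consistency forms of Proposition~\ref{prop con form} for the diamonds. Your arguments for Axioms 11 and 12 are essentially the paper's.

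The genuine gap is in Axiom 15, at exactly the step you flagged. Reasoning inside $A^*+\Pi^1_2\text{-}\omega\textsf{RFN}(B^*)$, you conclude $M\models{\downarrow}A^*$ because ``$A^*$ proves that every $\omega$-model satisfies ${\downarrow}A^*$''. But $A^*$ proves this only axiom by axiom. The single sentence ``every $\omega$-model of $\mathsf{ACA}_0$ satisfies every axiom of ${\downarrow}A^*$'' is a form of $\Pi^1_1$-reflection over $A^*$, and it is not available on the left-hand side. Together with the $\omega$-models supplied by $\dmnd{2}{2}$ it would yield $\Pi^1_1\textsf{-RFN}(A^*)$; for $A=\dmnd{2}{2}\top$, $B=\top$, the theory $A^*$ would then prove its own $\Pi^1_1$-reflection.

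Your proposed remedy does not address this. The index $2$ plays no role: $\neg\psi$ is $\Sigma^1_1$, so $\Sigma^1_1$ $\omega$-consistency would already do, and the index is $2$ only because $\dmnd{2}{1}$ is not in the language. The missing idea is compactness, which is what the paper uses. Since $\psi$ is a fixed sentence with ${\downarrow}A^*\cup B^*\vdash\psi$, a single $D\in{\downarrow}A^*$ (a finite conjunction) gives $B^*+D\vdash\psi$, and $A^*$ does prove that every $\omega$-model of $\mathsf{ACA}_0$ satisfies $D$. With $D$ in hand your argument goes through: the $\omega$-model $M\models B^*+\neg\psi$ satisfies $D$, hence $\psi$, a contradiction. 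No absoluteness is needed, and ``$M\models$ all theorems'' is just soundness of $\omega$-models for the fixed theory $B^*+D$. The paper phrases this directly, applying $\Pi^1_2\text{-}\omega\textsf{RFN}(B^*)$ to the $\Pi^1_1$ sentence $\psi$.

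Your one-line treatment of Axiom 14 leaves the same question of uniformity open. The partial-truth argument you give for Axiom 13 is valid, via ${\downarrow}\mathsf{ACA}_0\supseteq\mathsf{ACA}$, but it does not adapt: the axioms of ${\downarrow}A^*$ have unbounded complexity, so no partial truth predicate covers them. The paper works inside an arbitrary $\omega$-model instead. In $\mathsf{ACA}_0+\Pi^1_1\textsf{-RFN}(A^*)$, each axiom $\chi$ of ${\downarrow}A^*$ comes with an $A^*$-proof of the $\Pi^1_1$ sentence ``every $\omega$-model of $\mathsf{ACA}_0$ satisfies $\chi$''. Reflection makes this true uniformly in $\chi$, so every $\omega$-model of $\mathsf{ACA}_0$ satisfies $\Sigma^1_\alpha\textsf{-Con}({\downarrow}A^*)$. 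This puts $(\dmnd{1}{\alpha}{\downarrow}A)^*$ into ${\downarrow}(\dmnd{1}{1}A)^*$. In Axioms 11 and 14 the uniformity comes from a $\dmnd{1}{1}$ on the left; in Axiom 15 there is none over $A$, which is why compactness is needed there.

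There are two smaller errors:
\begin{itemize}
\item For Mod$_1$, knowing externally that each axiom of $B^*$ is an $A^*$-theorem does not give $\mathsf{ACA}_0\vdash\text{Prv}_{B^*}(\varphi)\to\text{Prv}_{A^*}(\varphi)$ for arbitrary c.e.\ presentations; Feferman-style presentations are counterexamples, and passing to deductive closures does not help. Your induction hypothesis should instead be the $\mathsf{ACA}_0$-provable inclusion $B^*\sqsubseteq A^*$.
\item For Axiom 5, you have misread ${\downarrow}$. By downward absoluteness of $\Pi^1_1$ sentences, $A^*$ proves that its $\Pi^1_1$ theorem $\varphi$ holds in every $\omega$-model of $\mathsf{ACA}_0$, so $\varphi$ itself is an axiom of ${\downarrow}A^*$. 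The principle you invoke, that every set lies in an $\omega$-model of $\mathsf{ACA}_0$, is not provable in $\mathsf{ACA}_0$ (it is equivalent to $\mathsf{ACA}_0^+$), and it is not needed.
\end{itemize}
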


\begin{proof} 
    Most of the axioms generally follow by simple inclusions, or arguments similar to those provided for the arithmetical soundness of the original reflection calculus \cite{beklemishev2014positive}, or the $\mathsf{RC^\nabla}$ version found in \cite{beklemishev2018reflection}. As such, we are only going to present the proof of the more involved Axioms 11--15:

    \begin{itemize}
        \item \textbf{Axiom 11}: $\dmnd{2}{\alpha} A \wedge \dmnd{1}{1} B \vdash \dmnd{2}{\alpha} (A \wedge {\dwn} B)$.
        
        We reason in $(\dmnd{2}{\alpha} A \wedge \dmnd{1}{1} B)^*$ that $(\dmnd{2}{\alpha}(A \wedge {\dwn} B))^*$ holds. By Proposition \ref{prop con form}, we can use the consistency formulation from Definition \ref{def con form} as an equivalent arithmetical interpretation for the modalities. Then, from $(\dmnd{2}{\alpha} A)^*$ we have that for every true $\Sigma^1_\alpha$-sentence $\varphi$, there exists an $\omega$-model of $A^* + \varphi$. For every such model, the validity of $(\dmnd{1}{1} B)^*$ implies that all axioms of $({\dwn} B)^*$ will be satisfied. Considering the consistency formulation once again, we conclude that $(\dmnd{2}{\alpha} (A \wedge {\dwn} B))^*$ holds.

        \item \textbf{Axiom 12}: $\dmnd{i}{\alpha} A \wedge \dmnd{j}{\beta} B \vdash \dmnd{i}{\alpha} (A \wedge \dmnd{j}{\beta} B)$, for $\alpha > \beta$.
        
        We have two similar cases depending on the value of $i$. For $i=1$, let $C \in \Pi^1_\alpha$ be a sentence provable in $(A \wedge \dmnd{j}{\beta} B)^*$. This is equivalent to saying that $A^* \vdash (\dmnd{j}{\beta} B)^* \rightarrow C$. Since $((\dmnd{j}{\beta} B)^* \rightarrow C)$ can be trivially transformed to a sentence from $\Pi^1_\alpha$, we get that $(\dmnd{i}{\alpha} A \wedge \dmnd{j}{\beta} B)^*$ implies $C$. 
        
        For the second case $i=2$, we argue by using the consistency formulation. From $(\dmnd{2}{\alpha} A)^*$, we have that for every true $\Sigma^1_\alpha$-sentence $\varphi$, there exists an $\omega$-model of $A^* + \varphi$. Since $(\dmnd{j}{\beta} B)^*$ is a true $\Pi^1_\beta$-sentence, hence a true $\Sigma^1_\alpha$-sentence as $\beta < \alpha$, we immediately get the consistency formulation of $(\dmnd{2}{\alpha}(A \wedge \dmnd{j}{\beta} B))^*$.

        \item \textbf{Axiom 13}: $\dwn\top\vdash\dmnd{1}{\alpha}\top$.

        We reason in $\mathsf{ACA}_0$. Suppose we have $\varphi\in\Pi^1_\alpha$ such that $M\models \text{Prv}_{\mathsf{ACA}_0}(\varphi)$, where $M\models \mathsf{ACA}_0$ is an $\omega$-model. Since provability is $\Sigma^0_1$, it is absolute for $\omega$-models, so $M\models \varphi$ which implies $M\models \text{Tr}_{\Pi^1_\alpha}(\varphi)$. Thus, every $\omega$-model $M$ of $\mathsf{ACA}_0$ satisfies $\Pi^1_\alpha\textsf{-RFN}(\mathsf{ACA}_0)$ for arbitrary $\alpha$.
        
        \item \textbf{Axiom 14}: $\dwn\dmnd{1}{1} A\vdash \dmnd{1}{\alpha}\dwn A$.

        We again make use of Proposition \ref{prop con form}. Reason in $\mathsf{ACA}_0+\Pi^1_1\textsf{-RFN}(A^*)$. Let $M\models \mathsf{ACA}_0$ be an $\omega$-model, and suppose that there is a $\Sigma^1_\alpha$ formula $\varphi$ true in $M$ such that $M\models \text{Prv}_{(\dwn A)^*+\varphi}(\bot)$. This implies (by absoluteness of provability) that $(\dwn A)^*\vdash \neg\varphi$. From the definition of $\downarrow$ we get that $A^*$ proves that $\forall M'(M'\models \mathsf{ACA}_0\rightarrow M'\models \neg \varphi)$. This formula is $\Pi^1_1$, so by $\Pi^1_1\textsf{-RFN}(A^*)$ it is true. Hence, $M\models \neg\varphi$ which is a contradiction. Thus, $M\models \Sigma^1_\alpha\textsf{-Con}((\dwn A)^*)$.
        
        \item \textbf{Axiom 15}: $A \wedge \dmnd{2}{2} B \vdash \nabla({\dwn} A \wedge B)$.

        It suffices to show that every $\Pi^1_1$-sentence $C$ with $(\dwn A\wedge B)^*\vdash C$ is provable in $(A\wedge \dmnd{2}{2}B)^*$. By compactness, there is $D\in \dwn A^*$ with $B^*+ D\vdash C$. We now proceed to reason in $A^*+\Pi^1_2\textsf{-$\omega$RFN}(B^*)$. Since $D\in \dwn A^*$, $A^*$ proves that every $\omega$-model of $\mathsf{ACA}_0$ satisfies $D$, hence every $\omega$-model of $B^*$ is an $\omega$-model of $B^*+D$. Hence, $(\dmnd{2}{2}B)^*$ implies $\text{Tr}_{\Pi^1_2}(C)$. Since $C\in\Pi^1_1$ we conclude that $C$ holds.
    \end{itemize}
\end{proof}

\vspace{\baselineskip}
\subsection{The extended calculus ($\mathsf{RC_{BI_\omega}}$)}\label{subsec:omega}
$\mathsf{RC_{BI_\omega}}$ extends the standard system $\mathsf{RC_{BI}}$ by a new modality $\dmnd{1}{\omega}$. Intuitively, this operator represents the limit of the finitely labeled modalities $\dmnd{1}{\alpha}$ for $\alpha < \omega$. That is, if a formula is provable at all finite levels, $\dmnd{1}{\omega}$ allows us to treat it via a single modality. In the context of the Kripke semantics this translates to having a single unified transition.

The extended language $\mathcal{L}_{\mathsf{BI_\omega}}$ is the language $\mathcal{L}_{\mathsf{BI}}$ with the additional modal operator $\dmnd{1}{\omega}$. The grammar remains the same, with the exception of the new modality. Regarding the syntax, the only changes made are to Axiom 12, Axiom 13, and Axiom 14 to better accommodate the new modal operator.

\begin{definition}\label{def 2 oRC}
We write $A \vdash_{\mathsf{BI_\omega}} B$ to mean that $A \vdash B$ is provable in $\mathsf{RC_{BI_\omega}}$. The extended system $\mathsf{RC_{BI_\omega}}$ is defined by the following inference rules and axioms:
\par\vspace{1em}
\begin{multicols}{3}
    \centering
    \begin{prooftree}
        \hypo{A \vdash C}
        \infer 1[L$\wedge_1$]{A \wedge B \vdash C}
    \end{prooftree}
    \par\vspace{1.5em}
    \begin{prooftree}
        \hypo{A \vdash B}
        \infer 1[Mod$_1$]{\dmnd{i}{\alpha} A \vdash \dmnd{i}{\alpha} B}
    \end{prooftree}
    
    \columnbreak
    \begin{prooftree}
        \hypo{B \vdash C}
        \infer 1[L$\wedge_2$]{A \wedge B \vdash C}
    \end{prooftree}
    \par\vspace{1.5em}
    \begin{prooftree}
        \hypo{A \vdash B}
        \infer 1[Mod$_2$]{{\dwn} A \vdash {\dwn} B}
    \end{prooftree}
    
    \columnbreak
    \begin{prooftree}
        \hypo{A \vdash B}
        \hypo{A \vdash C}
        \infer 2[R$\wedge$]{A \vdash B \wedge C}
    \end{prooftree}
    \par\vspace{1.5em}
    \begin{prooftree}
        \hypo{A \vdash B}
        \infer 1[Mod$_3$]{\nabla A \vdash \nabla B}
    \end{prooftree}
\end{multicols}
\par\vspace{1em}
\rulerow{%
    \begin{prooftree}
        \hypo{A \vdash B}
        \hypo{B \vdash C}
        \infer 2[Cut]{A \vdash C}
    \end{prooftree}
}
\begin{itemize}
    \item \textbf{The trivial axioms:} 
    \begin{multicols}{2}
    \begin{itemize}
        \item[\textbullet] $A \vdash A$ (identity); 
        \item[\textbullet] $A \vdash \top$ (truth);
    \end{itemize}
    \end{multicols}

    \item \textbf{Monotonicity axioms:} 
    \begin{multicols}{2}
    \begin{enumerate}
        \item $\dmnd{i}{\alpha} A \vdash \dmnd{i}{\beta} A$ for $\beta < \alpha$; 
        \item $\dmnd{2}{\alpha} A \vdash \dmnd{1}{\alpha} A$;
        \item $A \vdash \nabla A$;
        \item $\dmnd{1}{1} A \vdash \nabla A$;
        \item ${\dwn} A \vdash \nabla A$;
        \vfill\null
    \end{enumerate}
    \end{multicols}

    \item \textbf{Transitivity axioms:}
    \begin{multicols}{2}
    \begin{enumerate}
        \setcounter{enumi}{5}
        \item $\dmnd{i}{\alpha} \dmnd{j}{\alpha} A \vdash \dmnd{j}{\alpha} A$; 
        \item $\nabla \nabla A \vdash \nabla A$; 
        \item $\nabla \dmnd{1}{1} A \vdash \dmnd{1}{1} A$;
        \item $\dmnd{1}{1} \nabla A \vdash \dmnd{1}{1} A$;
        \item ${\dwn} \nabla A \vdash {\dwn} A$;
        \vfill\null
    \end{enumerate}
    \end{multicols}

    \item \textbf{Conjunction axioms:}
    \begin{multicols}{2}
    \begin{enumerate}
        \setcounter{enumi}{10}
        \item $\dmnd{2}{\alpha} A \wedge \dmnd{1}{1} B \vdash \dmnd{2}{\alpha}(A \wedge {\dwn} B)$;
        \item $\dmnd{i}{\alpha} A \wedge \dmnd{j}{\beta} B \vdash \dmnd{i}{\alpha}(A \wedge \dmnd{j}{\beta} B)$ for $\beta < \alpha$ or $\alpha=\beta=\omega$;
    \end{enumerate}
    \end{multicols}

    \item \textbf{Constructive axioms:}
    \begin{multicols}{2}
    \begin{enumerate}
        \setcounter{enumi}{12}
        \item ${\dwn} \top \vdash \dmnd{1}{\omega} \top$;
        \item ${\dwn} \dmnd{1}{1} A \vdash \dmnd{1}{\omega} {\dwn} A$;
        \item $A \wedge \dmnd{2}{2} B \vdash \nabla({\dwn} A \wedge B)$.
    \end{enumerate}
    \end{multicols}
\end{itemize}
\end{definition}

The introduction of the $\dmnd{1}{\omega}$ modality facilitates a better-behaved semantic treatment without increasing the deductive power of the logic over the original language $\mathcal{L}_{\mathsf{BI}}$. As established by the following proposition, the extended system $\mathsf{RC_{BI_\omega}}$ is a conservative extension of the standard calculus.

\begin{proposition}\label{prop equiv}
    For any formulas $A, B \in \mathcal{L}_{\mathsf{BI}}$, the standard and extended calculi are equivalent:
    \[ A \vdash_{\mathsf{BI_\omega}} B \iff A \vdash_{\mathsf{BI}} B. \]
\end{proposition}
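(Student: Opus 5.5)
The direction $A \vdash_{\mathsf{BI}} B \Rightarrow A \vdash_{\mathsf{BI_\omega}} B$ is routine: every rule and axiom of $\mathsf{RC_{BI}}$ is available in $\mathsf{RC_{BI_\omega}}$ or derivable there. Axiom 12 with finite $\beta<\alpha$ is literally present. Axioms 13 and 14 of $\mathsf{RC_{BI}}$ follow from their $\omega$-versions, since $\dmnd{1}{\omega} C \vdash \dmnd{1}{\alpha} C$ is an instance of monotonicity Axiom 1 with $\alpha<\omega$. So the real content is the converse, conservativity.

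For the converse, a naive translation $\dmnd{1}{\omega}\mapsto\dmnd{1}{N}$ for a large $N$ fails. The instance $\alpha=\beta=\omega$ of Axiom 12 would become $\dmnd{1}{N}A\wedge\dmnd{1}{N}B\vdash\dmnd{1}{N}(A\wedge\dmnd{1}{N}B)$, which is not valid. Instead I will translate each $\mathcal{L}_{\mathsf{BI_\omega}}$-formula $A$ into a descending sequence $\tau(A)=(a_k)_{k\ge 1}$ of $\mathcal{L}_{\mathsf{BI}}$-formulas with $a_{k+1}\vdash_{\mathsf{BI}} a_k$. The sequence approximates the limit modality. The clauses are:
\begin{itemize}
\item variables and $\top$ go to constant sequences;
\item conjunction and the operators $\dmnd{1}{n}$, $\dmnd{2}{n}$ (finite $n$), $\dwn$, $\nabla$ act pointwise;
\item $\tau(\dmnd{1}{\omega}A)_k=\dmnd{1}{k}a_k$.
\end{itemize}
Descent is preserved: pointwise operators are monotone by the Mod and $\wedge$ rules, and $\dmnd{1}{k+1}a_{k+1}\vdash\dmnd{1}{k+1}a_k\vdash\dmnd{1}{k}a_k$ by Mod$_1$ and Axiom 1. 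A sequent $A\vdash B$ will be interpreted as
\[\forall k\,\exists l\;\; a_l\vdash_{\mathsf{BI}} b_k .\]
For $\omega$-free $A,B$ both sequences are constant, so the interpretation is just $A\vdash_{\mathsf{BI}}B$. It therefore suffices to show by induction on derivations that every $\mathsf{RC_{BI_\omega}}$-derivable sequent has a true interpretation.

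The rules are straightforward, with descent letting us raise indices freely. Cut composes the witnesses. For R$\wedge$, take the maximum of the two witnesses. The pointwise Mod rules are immediate. For Mod$_1$ at $\omega$: given $a_l\vdash b_k$, pick $l'\ge\max(l,k)$, so that $\dmnd{1}{l'}a_{l'}\vdash\dmnd{1}{k}a_{l'}\vdash\dmnd{1}{k}a_l\vdash\dmnd{1}{k}b_k$.

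For the axioms, those without $\omega$ translate pointwise into the same $\mathsf{RC_{BI}}$ axiom, taking $l=k$. The $\omega$-instances are checked one at a time:
\begin{itemize}
\item \textbf{Axiom 1} ($\alpha=\omega$, finite $\beta$): take $l\ge\max(k,\beta+1)$; then $\dmnd{1}{l}a_l\vdash\dmnd{1}{\beta}a_k$.
\item \textbf{Axiom 6} ($\alpha=\omega$, so $i=j=1$): $\dmnd{1}{k}\dmnd{1}{k}a_k\vdash\dmnd{1}{k}a_k$.
\item \textbf{Axiom 12}, case $\beta<\alpha=\omega$: take $l$ large and apply the finite Axiom 12 with $\beta<l$.
\item \textbf{Axiom 12}, case $\alpha=\beta=\omega$: take $l=k+1$. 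Then
\[\dmnd{1}{k+1}a_{k+1}\wedge\dmnd{1}{k+1}b_{k+1}\vdash\dmnd{1}{k+1}a_{k}\wedge\dmnd{1}{k}b_{k}\vdash\dmnd{1}{k+1}(a_k\wedge\dmnd{1}{k}b_k)\vdash\dmnd{1}{k}(a_k\wedge\dmnd{1}{k}b_k),\]
using the finite Axiom 12 with $k<k+1$. This is exactly the step where the sequence translation succeeds and the fixed-$N$ one fails.
\item \textbf{Axioms 13 and 14}: these become $\dwn\top\vdash\dmnd{1}{k}\top$ and $\dwn\dmnd{1}{1}a_k\vdash\dmnd{1}{k}\dwn a_k$, which are the $\mathsf{RC_{BI}}$ versions with $\alpha=k$.
\end{itemize}
There are no $\dmnd{2}{\omega}$ instances to consider.

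The main obstacle I expect is checking the ordinary axioms carefully when their schematic subformulas themselves contain $\dmnd{1}{\omega}$. For example, in Axiom 15, $A\wedge\dmnd{2}{2}B\vdash\nabla(\dwn A\wedge B)$, or in Axiom 11, the index of the sequence must be kept uniform across all occurrences. This works because every non-$\omega$ clause is pointwise, so taking $l=k$ reduces each instance to an $\mathsf{RC_{BI}}$-axiom instance at stage $k$. Once this bookkeeping is verified, the induction yields conservativity and hence the proposition.
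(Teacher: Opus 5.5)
Your proof is correct and follows essentially the paper's route: both prove, by induction on $\mathsf{RC_{BI_\omega}}$-derivations, a $\forall\exists$ invariant saying that every finite approximation of the succedent is $\mathsf{RC_{BI}}$-implied by some finite approximation of the antecedent, with the same case analysis of the $\omega$-instances of Axioms 1, 6, 12, 13 and 14. The only difference is bookkeeping. The paper gives each occurrence of $\dmnd{1}{\omega}$ an independent finite index and uses that lowering indices inside a formula is derivable, whereas you use a single diagonal index $k$ together with the descent property $a_{k+1}\vdash_{\mathsf{BI}} a_k$; this is why your Axiom 12 case with $\alpha=\beta=\omega$ takes $l=k+1$ rather than the paper's $\max(n,\beta'+1)$.
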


\begin{proof}
    For the direct implication, we call a formula $A'\in\mathcal L_{\mathsf{BI}}$ a finite instance of a formula $A\in\mathcal L_{\mathsf{BI_\omega}}$ if $A'$ is obtained from $A$ by replacing every occurrence of $\dmnd{1}{\omega}$ by some $\dmnd{1}{n}$ with $1\leq n<\omega$ ($n$ may be chosen independently for each occurrence). We prove the following stronger statement by induction on the derivation of $A\vdash_{\mathsf{BI_\omega}}B$: for any $A,B\in\mathcal L_{\mathsf{BI_\omega}}$ with $A\vdash_{\mathsf{BI_\omega}}B$ and every finite instance $B'$ of $B$, there is a finite instance $A'$ of $A$ with $A'\vdash_{\mathsf{BI}}B'$.\\
    
       \noindent If the derivation is an instance of an axiom in which every modality written out in the axiom scheme has a finite index, then every schematic letter occurring on the left occurs on the right (except for the truth axiom $A\vdash\top$, where any finite instance $A'$ of $A$ will do), so we take $A'$ to be the instance of the left-hand side in which the schematic letters are instantiated as in $B'$, and $A'\vdash_{\mathsf{BI}}B'$ is an instance of the same axiom of $\mathsf{RC_{BI}}$. Hence, for the base case we only need to check the instances of Axioms 1, 6 and 12 with $\alpha=\omega$, and Axioms 13 and 14:
    \begin{itemize}
        \item Axiom 1: $\dmnd{1}{\omega}C\vdash\dmnd{1}{\beta}C$ with $\beta<\omega$.
        Given $B'=\dmnd{1}{\beta}C'$, we take $A'=\dmnd{1}{\beta+1}C'$.
        \item Axiom 6: $\dmnd{1}{\omega}\dmnd{1}{\omega}C\vdash\dmnd{1}{\omega}C$.
        Given $B'=\dmnd{1}{n}C'$, we take $A'=\dmnd{1}{n}\dmnd{1}{n}C'$.
        \item Axiom 12: $\dmnd{1}{\omega}C\wedge\dmnd{j}{\beta}D\vdash\dmnd{1}{\omega}(C\wedge\dmnd{j}{\beta}D)$ with $\beta\leq\omega$.
        Given $B'=\dmnd{1}{n}(C'\wedge\dmnd{j}{\beta'}D')$ (where $\beta'=\beta$ if $\beta<\omega$ and the index chosen in $B'$ otherwise), we define $k=\max(n,\beta'+1)$ and take $A'=\dmnd{1}{k}C'\wedge\dmnd{j}{\beta'}D'$. Since $\beta'<k$, Axiom 12 gives $A'\vdash_{\mathsf{BI}}\dmnd{1}{k}(C'\wedge\dmnd{j}{\beta'}D')$, from which we can easily derive $A'\vdash_{\mathsf{BI}}B'$.
        \item Axiom 13: ${\dwn}\top\vdash\dmnd{1}{\omega}\top$.
        Given $B'=\dmnd{1}{n}\top$, we take $A'={\dwn}\top$.
        \item Axiom 14: ${\dwn}\dmnd{1}{1}C\vdash\dmnd{1}{\omega}{\dwn}C$.
        Given $B'=\dmnd{1}{n}{\dwn}C'$, we take $A'={\dwn}\dmnd{1}{1}C'$.
    \end{itemize}
    For the inductive step we do a case distinction on the inference rules:
    \begin{itemize}
        \item L$\wedge_1$ (and by symmetry L$\wedge_2$), from $C\vdash B$ with $A=C\wedge D$.
        
        The induction hypothesis gives a finite instance $C'$ of $C$ with $C'\vdash_{\mathsf{BI}}B'$, and we take $A'=C'\wedge D'$ for an arbitrary finite instance $D'$ of $D$, using L$\wedge_1$.
        \item R$\wedge$, from $A\vdash C$ and $A\vdash D$ with $B=C\wedge D$.
        
        Then $B'=C'\wedge D'$ for finite instances $C'$ of $C$ and $D'$ of $D$, and the induction hypothesis gives finite instances $A'_1$ and $A'_2$ of $A$ with $A'_1\vdash_{\mathsf{BI}}C'$ and $A'_2\vdash_{\mathsf{BI}}D'$. We take $A'$ to be the finite instance of $A$ which at every occurrence of $\dmnd{1}{\omega}$ takes the larger of the indices chosen in $A'_1$ and $A'_2$. Then $A'\vdash_{\mathsf{BI}}A'_1$ and $A'\vdash_{\mathsf{BI}}A'_2$, since lowering the index of a $\dmnd{1}{n}$ inside a formula is derivable by Axiom 1 together with the rules Mod$_1$, Mod$_2$, Mod$_3$, L$\wedge$ and R$\wedge$ (by a straightforward induction on $A$). Hence $A'\vdash_{\mathsf{BI}}C'$ and $A'\vdash_{\mathsf{BI}}D'$ by Cut, and $A'\vdash_{\mathsf{BI}}B'$ by R$\wedge$.
        \item Mod$_1$, from $C\vdash D$ with $A=\dmnd{i}{\alpha}C$ and $B=\dmnd{i}{\alpha}D$. 
        
        Then $B'=\dmnd{i}{\alpha'}D'$ for a finite instance $D'$ of $D$, where $\alpha'=\alpha$ if $\alpha<\omega$ and $\alpha'<\omega$ is the index chosen in $B'$ if $\alpha=\omega$. The induction hypothesis gives $C'$ with $C'\vdash_{\mathsf{BI}}D'$, and we take $A'=\dmnd{i}{\alpha'}C'$, using Mod$_1$. The rules Mod$_2$ and Mod$_3$ are treated in the same way.
        \item Cut, from $A\vdash C$ and $C\vdash B$.
        
        The induction hypothesis for $C\vdash B$ gives a finite instance $C'$ of $C$ with $C'\vdash_{\mathsf{BI}}B'$, and then the induction hypothesis for $A\vdash C$ gives $A'$ with $A'\vdash_{\mathsf{BI}}C'$; we conclude by Cut.
    \end{itemize}

    Now if $A,B\in\mathcal L_{\mathsf{BI}}$, then $A$ and $B$ are their own only finite instances, so the statement gives $A\vdash_{\mathsf{BI}}B$.\\

    The reverse implication is straightforward, as any finite instance of Axioms 13 and 14 from the standard system can be recovered using the $\omega$-versions and Axiom 1:
    \begin{enumerate}
        \item Axiom 13: ${\dwn} \top \vdash \dmnd{1}{n} \top$
        \begin{center}
            \begin{prooftree}
                \hypo{{\dwn} \top \vdash \dmnd{1}{\omega} \top}
                \hypo{\dmnd{1}{\omega} \top \vdash \dmnd{1}{n} \top}
                \infer 2[Cut]{{\dwn} \top \vdash \dmnd{1}{n} \top}
            \end{prooftree}
        \end{center}
        \item Axiom 14: ${\dwn} \dmnd{1}{1} A \vdash \dmnd{1}{n} {\dwn} A$
        \begin{center}
            \begin{prooftree}
                \hypo{{\dwn} \dmnd{1}{1} A \vdash \dmnd{1}{\omega} {\dwn} A}
                \hypo{\dmnd{1}{\omega} {\dwn} A \vdash \dmnd{1}{n} {\dwn} A}
                \infer 2[Cut]{{\dwn} \dmnd{1}{1} A \vdash \dmnd{1}{n} {\dwn} A}
            \end{prooftree}
        \end{center}
    \end{enumerate}
    Finally, we note that Axiom 12 in $\mathsf{RC_{BI_\omega}}$ already includes all instances of the axiom from the standard system.
\end{proof}

\vspace{\baselineskip}
\subsection{Syntactic properties}\label{subsec:syntactic}

In this subsection, we establish several structural properties of derivations within our calculus. Despite the inclusion of the Cut rule in the axiomatization, the restricted interaction between the different modalities and the limited set of binary connectives ensure that derivations retain a high degree of regularity. These syntactic results serve as essential preliminaries for the proof of completeness.

For convenience, we will refer to the left-conjunction rules L$\wedge_1$ and L$\wedge_2$ collectively as L$\wedge$. Furthermore, we treat the commutation of conjuncts as implicit, allowing us to simplify the presentation of derivations by omitting purely structural steps related to the associativity and commutativity of conjunction.

We denote by $A \vdash_{\mathsf{BI_\omega}}^{cf} B$ the existence of a cut-free derivation of the sequent $A \vdash B$ in $\mathsf{RC_{BI_\omega}}$.

\begin{lemma}\label{lem cf con}
    If $A \vdash_{\mathsf{BI_\omega}}^{cf} B \wedge C$, then $A \vdash_{\mathsf{BI_\omega}}^{cf} B$ and $A \vdash_{\mathsf{BI_\omega}}^{cf} C$.
\end{lemma}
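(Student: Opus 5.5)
We argue by induction on the height of a cut-free derivation $\mathcal{D}$ of $A \vdash B \wedge C$, distinguishing cases on the last step of $\mathcal{D}$. The point is that very few rules or axioms can have a conjunction on the right-hand side, so the case analysis is short. We go through the possible last steps in turn.

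\textbf{Axioms.} Consider first the case where $\mathcal{D}$ is a single axiom instance. Inspecting the list of axioms of $\mathsf{RC_{BI_\omega}}$ in Definition \ref{def 2 oRC}, the right-hand side of every non-trivial axiom is headed by a modality ($\dmnd{i}{\alpha}$, $\dwn$ or $\nabla$), never by $\wedge$. The truth axiom has right-hand side $\top$, which is not a conjunction. Hence the only axiom that can end in $B \wedge C$ is identity, $B \wedge C \vdash B \wedge C$. In that case we obtain $B \wedge C \vdash B$ from the identity $B \vdash B$ by L$\wedge_1$, and similarly $B \wedge C \vdash C$ by L$\wedge_2$. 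Both derivations are cut-free.

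\textbf{Rules.} The rules Mod$_1$, Mod$_2$ and Mod$_3$ produce right-hand sides headed by a modality, so they cannot be the last step. The remaining cases are these:
\begin{itemize}
\item If the last rule is R$\wedge$, its premises are exactly $A \vdash B$ and $A \vdash C$, and they have cut-free derivations. This case is immediate.
\item If the last rule is L$\wedge$, then $A = A_1 \wedge A_2$ and the premise is, say, $A_1 \vdash B \wedge C$, derived cut-free with smaller height. By the induction hypothesis, $A_1 \vdash^{cf} B$ and $A_1 \vdash^{cf} C$. Applying L$\wedge_1$ to each gives $A \vdash^{cf} B$ and $A \vdash^{cf} C$. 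The case of L$\wedge_2$ is symmetric.
\end{itemize}

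The only real care needed is the convention that commutation and associativity of conjunction are treated implicitly. The induction must respect this, which amounts to noting that those structural steps are themselves cut-free and preserve the statement. Alternatively, one can do the induction modulo this identification of conjunctions.

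I expect no genuine obstacle: the proof is a routine inspection of rule and axiom shapes. The step most worth checking is the exhaustive verification that no axiom has a conjunction as its conclusion. In particular, Axioms 11, 12 and 15 carry conjunctions only under a modality on the right.
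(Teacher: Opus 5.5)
Your proposal is correct and follows the paper's proof essentially step for step. Both argue by induction on the cut-free derivation: identity is the only axiom with a conjunction on the right and is handled by L$\wedge_1$/L$\wedge_2$ from the identity sequents $B\vdash B$ and $C\vdash C$; R$\wedge$ is immediate from its premises; and L$\wedge$ follows from the induction hypothesis plus a reapplication of L$\wedge$.
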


\begin{proof}
    Suppose we have a cut-free derivation of $A \vdash B \wedge C$. We proceed by induction on the length of the derivation. 
    
    If the derivation consists of a single axiom, the only one that can feature a conjunction on the right-hand side is the trivial axiom $A \vdash A$ (where $A = B \wedge C$). In this case, we obtain cut-free proofs of $A \vdash B$ and $A \vdash C$ via a single application of L$\wedge_1$ or L$\wedge_2$ to the identity sequents $B \vdash B$ or $C \vdash C$, respectively.

    If the derivation is not an axiom, we inspect the last rule applied. Given that the derivation is cut-free and the right-hand side is a conjunction, the only possible rules are L$\wedge$ or R$\wedge$. 
    
    If R$\wedge$ was the last rule applied:
    \[
    \begin{prooftree}
        \hypo{A \vdash B}
        \hypo{A \vdash C}
        \infer 2[R$\wedge$]{A \vdash B \wedge C}
    \end{prooftree}
    \]
    the premises immediately provide the required cut-free derivations. 

    If L$\wedge$ was the last rule applied, then $A$ is of the form $A' \wedge D$ (or $D \wedge A'$), and the derivation ends with:
    \[
    \begin{prooftree}
        \hypo{A' \vdash B \wedge C}
        \infer 1[L$\wedge$]{A' \wedge D \vdash B \wedge C}
    \end{prooftree}
    \]
    By the induction hypothesis, there exist cut-free derivations of $A' \vdash B$ and $A' \vdash C$. Applying the L$\wedge$ rule to these premises yields $A' \wedge D \vdash B$ and $A' \wedge D \vdash C$, completing the proof.
\end{proof}

\begin{lemma}\label{lem seq}
    If $A\vdash_\mathsf{BI_\omega} B$, then there exists a sequence $A_0, A_1,\dots,A_n$ with $n\geq1$ such that $A_0=A$, $A_n=B$ and $A_i\vdash_\mathsf{BI_\omega}^{cf} A_{i+1}$ for every $i< n$.
\end{lemma}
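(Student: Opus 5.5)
We proceed by induction on the given (possibly cut-containing) derivation of $A\vdash B$ in $\mathsf{RC_{BI_\omega}}$. The aim is to show that every derivable sequent $A\vdash B$ admits a \emph{cut-free chain}: a sequence $A=A_0,A_1,\dots,A_n=B$ with $n\ge 1$ and $A_i\vdash^{cf}_{\mathsf{BI_\omega}}A_{i+1}$ for all $i<n$. The idea is that Cut is exactly concatenation of chains, so each rule must be shown to preserve the existence of chains.

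The cases go as follows.

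\emph{Axioms.} An axiom instance is itself a cut-free derivation, so the chain $A_0=A$, $A_1=B$ of length $n=1$ works.

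\emph{Cut.} Given chains for $A\vdash C$ and $C\vdash B$, concatenating them at $C$ gives a chain for $A\vdash B$.

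\emph{L$\wedge$.} The premise is $A\vdash C$ with conclusion $A\wedge D\vdash C$, and we have a chain $A=A_0,\dots,A_n=C$. Applying L$\wedge$ to the cut-free derivation of $A_0\vdash A_1$ gives $A\wedge D\vdash^{cf}A_1$, so $A\wedge D,A_1,\dots,A_n$ is a chain.

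\emph{Modal rules.} Suppose the premise $A\vdash B$ has chain $A_0,\dots,A_n$. Applying Mod$_1$ (respectively Mod$_2$, Mod$_3$) to each link $A_i\vdash^{cf}A_{i+1}$ keeps each link cut-free, so $\dmnd{i}{\alpha}A_0,\dots,\dmnd{i}{\alpha}A_n$ is a chain, and similarly for ${\dwn}$ and $\nabla$.

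\emph{R$\wedge$ (the main obstacle).} The chains for $A\vdash B$ and $A\vdash C$ may have different lengths, and their intermediate formulas need not be comparable, so they cannot simply be paired link by link. We handle this with a two-phase chain through conjunctions. Let $A=B_0,\dots,B_m=B$ and $A=C_0,\dots,C_k=C$ be the two chains. Since $A\vdash^{cf}A\wedge A$ holds by R$\wedge$ from two identity axioms, we first step from $A$ to $A\wedge A=B_0\wedge C_0$.
\begin{itemize}
\item In the first phase we advance the left conjunct. From $B_j\vdash^{cf}B_{j+1}$ we get $B_j\wedge C_0\vdash^{cf}B_{j+1}$ by L$\wedge$, and $B_j\wedge C_0\vdash^{cf}C_0$ by L$\wedge$ applied to an identity axiom. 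Then R$\wedge$ gives $B_j\wedge C_0\vdash^{cf}B_{j+1}\wedge C_0$.
\item In the second phase we symmetrically advance the right conjunct, passing from $B\wedge C_j$ to $B\wedge C_{j+1}$.
\end{itemize}
This yields a chain
\[A,\; B_0\wedge C_0,\;\dots,\; B_m\wedge C_0,\;\dots,\; B_m\wedge C_k=B\wedge C,\]
all of whose links are cut-free. Its length is at least $1$.

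Finally, the trivial axioms $A\vdash A$ and $A\vdash\top$ are covered by the axiom case, which completes the induction. No appeal to Lemma \ref{lem cf con} is needed. The only delicate point is the R$\wedge$ case, and the two-phase construction resolves it.
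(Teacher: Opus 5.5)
Your proof is correct and follows essentially the same route as the paper: induction on the derivation, with axioms giving chains of length $1$, Cut as concatenation, L$\wedge$ modifying only the first link, the modal rules applied linkwise, and R$\wedge$ handled by first advancing the left conjunct and then the right conjunct. The only difference is that you insert an extra link $A\vdash^{cf}A\wedge A$, whereas the paper merges this into the first step $A\vdash^{cf}A'_1\wedge A$; this is inessential.
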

\begin{proof}
    We proceed by induction on the length of the derivation of $A\vdash_\mathsf{BI_\omega} B$. The base case is immediate, since the derivation is an instance of an axiom and we can take $n=1$. Otherwise we distinguish cases according to the last inference rule of the derivation:
    \begin{itemize}
        \item L$\wedge_1$ (and by symmetry L$\wedge_2$): the derivation ends with
        \begin{center}
            \begin{prooftree}
                \hypo{A' \vdash B}
                \infer 1[L$\wedge_1$]{A' \wedge A'' \vdash B}
            \end{prooftree}
        \end{center}
        where $A=A'\wedge A''$. By the induction hypothesis there is a sequence $A_0, A_1,\dots,A_n$ with $n\geq1$, $A_0=A'$, $A_n=B$ and $A_i\vdash_\mathsf{BI_\omega}^{cf} A_{i+1}$ for every $i<n$. Applying L$\wedge_1$ to the cut-free derivation of $A'\vdash A_1$ gives $A'\wedge A''\vdash_\mathsf{BI_\omega}^{cf} A_1$, so we simply replace $A_0$ by $A'\wedge A''$.
        
        \item R$\wedge$: the derivation ends with
        \begin{center}
            \begin{prooftree}
                \hypo{A \vdash B'}
                \hypo{A \vdash B''}
                \infer 2[R$\wedge$]{A \vdash B' \wedge B''}
            \end{prooftree}
        \end{center}
        where $B=B'\wedge B''$. By the induction hypothesis there are sequences $A'_0, A'_1,\dots,A'_n$ and $A''_0, A''_1,\dots,A''_m$ with the required properties, such that $A'_0=A''_0=A$, $A'_n=B'$ and $A''_m=B''$. Then the sequence
        \[ A,\ A'_1\wedge A,\ \dots,\ A'_{n-1}\wedge A,\ B'\wedge A,\ B'\wedge A''_1,\ \dots,\ B'\wedge A''_{m-1},\ B'\wedge B''=B \]
        is as required: 
        \begin{itemize}
            \item $A\vdash^{cf}_\mathsf{BI_\omega} A'_1\wedge A$ by R$\wedge$ from $A\vdash^{cf}_\mathsf{BI_\omega} A'_1$ and the identity axiom;
            \item  $A'_i\wedge A\vdash^{cf}_\mathsf{BI_\omega} A'_{i+1}\wedge A$ by R$\wedge$ from $A'_i\wedge A\vdash^{cf}_\mathsf{BI_\omega} A'_{i+1}$ and $A'_i\wedge A\vdash^{cf}_\mathsf{BI_\omega} A$.
        \end{itemize} The steps $B'\wedge A''_j\vdash^{cf}_\mathsf{BI_\omega} B'\wedge A''_{j+1}$ are symmetric.
        
        \item Mod$_i$ (for $i=1,2,3$): let $\circ$ be the corresponding modality of $\mathsf{RC_{BI_\omega}}$. The derivation ends with
        \begin{center}
            \begin{prooftree}
                \hypo{A' \vdash B'}
                \infer 1[Mod$_i$]{\circ A' \vdash \circ B'}
            \end{prooftree}
        \end{center}
        where $A=\circ A'$ and $B=\circ B'$. By the induction hypothesis there is a sequence $A_0, A_1,\dots,A_n$ with $A_0=A'$, $A_n=B'$ and $A_i\vdash_\mathsf{BI_\omega}^{cf} A_{i+1}$ for every $i<n$. Applying Mod$_i$ to each of these cut-free derivations, we can take the sequence $A=\circ A_0,\circ A_1,\dots,\circ A_n=B$.
        
        \item Cut: the derivation ends with
        \begin{center}
            \begin{prooftree}
                \hypo{A \vdash C}
                \hypo{C \vdash B}
                \infer 2[Cut]{A \vdash B}
            \end{prooftree}
        \end{center}
        By the induction hypothesis there are sequences $A'_0, A'_1,\dots,A'_n$ and $A''_0, A''_1,\dots,A''_m$ with the required properties, such that $A'_0=A$, $A'_n=A''_0=C$ and $A''_m=B$. Then we can simply take their concatenation $A=A'_0,A'_1,\dots,A'_n,A''_1,\dots,A''_m=B$.
    \end{itemize}
\end{proof}

\begin{lemma}\label{lem seq 2}
    Suppose $A_0, A_1, \dots, A_n$ is a sequence such that $A_i\vdash_\mathsf{BI_\omega}^{cf} A_{i+1}$ for every $i< n$. Then we can construct a new sequence $A'_0, A'_1, \dots, A'_{n}$ with the same properties and with $A'_0=A_0$ and $A'_n=A_n$, such that for every $0<i < n$, the sequent $A'_i \vdash_{\mathsf{BI_\omega}}^{cf} A'_{i+1}$ has a cut-free derivation that does not end with an application of an L$\wedge$ rule.
\end{lemma}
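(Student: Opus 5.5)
The plan is to push every L$\wedge$ step toward the beginning of the chain, into the first link $A'_0\vdash A'_1$, by repeatedly \emph{weakening} the intermediate formulas. The basic observation is this. Suppose some link $A_i\vdash^{cf} A_{i+1}$ with $0<i<n$ has a cut-free derivation ending in L$\wedge$. Then $A_i=C\wedge D$, and stripping the final L$\wedge$ steps leaves a cut-free derivation of $C\vdash A_{i+1}$, where $C$ is a conjunct of $A_i$ (possibly after several nested L$\wedge$ steps). We may take the maximal such stripping, so that the derivation of $C\vdash A_{i+1}$ does not itself end in L$\wedge$.

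We then replace $A_i$ by $C$. The new link $C\vdash^{cf}A_{i+1}$ has the desired form. For the preceding link we need $A_{i-1}\vdash^{cf} C$; this follows from $A_{i-1}\vdash^{cf} C\wedge D$ by Lemma \ref{lem cf con}, applied once for each nested conjunction peeled off. The length of the sequence is unchanged, and its endpoints are unchanged because $0<i<n$.

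To organize the iteration, I would process the indices from right to left, $i=n-1,n-2,\dots,1$. At step $i$, only $A_i$ is modified. This fixes link $i$ and changes link $i-1$, which is handled at the next step. The links to the right of $i$ have already been fixed, and their left formulas $A_j$ with $j>i$ are not altered again, so they stay fixed. After processing $i=1$, only link $0$, namely $A'_0\vdash A'_1$, may still end in L$\wedge$, and the statement permits this. Since every step performs one finite replacement, the procedure terminates after $n-1$ steps.

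The main point needing care is the claim that cut-free derivations ending in a block of L$\wedge$ rules really reduce to a derivation of $C\vdash A_{i+1}$ that does not end in L$\wedge$. This holds by reading off the derivation tree: a final L$\wedge$ has a single premise whose left side is a subconjunct. Iterating, and using that left sides shrink strictly, we eventually reach either an axiom or a non-L$\wedge$ rule. Implicit commutativity and associativity of conjuncts, as stipulated in the paper, let us treat $C$ as a genuine conjunct of $A_i$. The repeated use of Lemma \ref{lem cf con} then gives $A_{i-1}\vdash^{cf}C$ with no cut introduced.
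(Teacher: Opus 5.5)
Your proposal is correct and is essentially the paper's proof. The paper also processes the links from $i=n-1$ down to $1$, replacing $A_i$ by the sub-conjunction obtained by stripping the terminal L$\wedge$ steps of the derivation of $A_i\vdash A_{i+1}$. It likewise restores the preceding link $A_{i-1}\vdash^{cf}_{\mathsf{BI_\omega}}$ (new $A_i$) by repeated applications of Lemma \ref{lem cf con}, so endpoints and length are preserved.
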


\begin{proof}
    We say that $A'$ is a sub-conjunction of $A$ if $A'$ is obtained from $A$ by repeatedly replacing a conjunction by one of its two conjuncts (in particular, $A$ is a sub-conjunction of itself). We note the following two facts:
    \begin{itemize}
        \item If $A\vdash^{cf}_\bio B$ and $B'$ is a sub-conjunction of $B$, then $A\vdash^{cf}_\bio B'$, by repeated applications of Lemma \ref{lem cf con}.
        \item Every cut-free derivation of $A\vdash B$ consists of a cut-free derivation of $A'\vdash B$ that does not end with L$\wedge$, for some sub-conjunction $A'$ of $A$, followed by applications of L$\wedge$ if $A'\neq A$.
    \end{itemize}
    We modify the sequence $A_0,A_1,\dots,A_n$ by processing the steps $A_i\vdash A_{i+1}$ in the order $i=n-1,n-2,\dots,1$ (maintaining that the sequence satisfies $A_j\vdash^{cf}_\bio A_{j+1}$ for every $j<n$, and that the steps already processed have cut-free derivations that do not end with L$\wedge$).
    
    Consider the step $A_i\vdash^{cf}_\bio A_{i+1}$. By the second fact, there is a sub-conjunction $A'_i$ of $A_i$ such that $A'_i\vdash A_{i+1}$ has a cut-free derivation that does not end with L$\wedge$, and by the first fact $A_{i-1}\vdash^{cf}_\bio A'_i$. We replace $A_i$ by $A'_i$. This changes only the steps $i-1$ and $i$: step $i$ now has the required derivation, and step $i-1$ is still cut-free. Since $i$ ranges over $1,\dots,n-1$, the endpoints $A_0$ and $A_n$ are never changed, and after processing $i=1$ every step except possibly the first has a cut-free derivation that does not end with L$\wedge$.
\end{proof}

\begin{lemma}\label{lem downarrow}
    If ${\dwn} A \wedge B \vdash_{\mathsf{BI_\omega}} {\dwn} C$, then either $B \vdash_{\mathsf{BI_\omega}} {\dwn} C$ or $A \vdash_{\mathsf{BI_\omega}} \nabla C$.
\end{lemma}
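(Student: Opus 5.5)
We will prove a more general statement by induction along the chains supplied by Lemmas \ref{lem seq} and \ref{lem seq 2}. By Lemma \ref{lem seq}, the hypothesis ${\dwn} A \wedge B \vdash_{\mathsf{BI_\omega}} {\dwn} C$ yields a chain ${\dwn}A\wedge B = A_0 \vdash^{cf} A_1 \vdash^{cf} \dots \vdash^{cf} A_n = {\dwn}C$. By Lemma \ref{lem seq 2}, we may assume that every step after the first has a cut-free derivation not ending in L$\wedge$.

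The main auxiliary claim concerns a single cut-free step ${\dwn}A\wedge B \vdash^{cf} D$. It says that $D$ is, up to the structural conventions, a conjunction $D_1\wedge D_2$ (either part possibly absent) with $B\vdash D_2$ and with $D_1$ either of the form ${\dwn}A'$ for some $A'$ with $A\vdash A'$, or some formula with $A \vdash \nabla$-information only. More concretely, we show by induction on the cut-free derivation that every conjunct of $D$ is derivable from $B$ alone, or is of the form ${\dwn}A'$ with $A\vdash A'$, or is of the form $\nabla E$ with $A\vdash\nabla E$.

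To establish this we inspect the axioms whose left side can be ${\dwn}A$ or a conjunction containing it. Axiom 5 gives $\nabla A$. Axioms 13 and 14 give $\dmnd{1}{\omega}\top$, respectively $\dmnd{1}{\omega}{\dwn}A''$ when $A=\dmnd{1}{1}A''$. The identity axiom and Axiom 15 produce $\nabla({\dwn}{\dwn}A\wedge B')$ when ${\dwn}A\wedge \dmnd{2}{2}B'$ is on the left. The claim must therefore be phrased with a suitable invariant class of formulas $\mathcal{X}(A)$: the formulas $X$ derivable from ${\dwn}A$ that are not ${\dwn}$-headed, together with the observation that any such $X$ satisfies $A\vdash \nabla X'$ for the relevant consequences.

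Then we propagate along the chain. The final formula is ${\dwn}C$, and the last step, not ending in L$\wedge$, must be an instance of Mod$_2$, identity, Axiom 10 (${\dwn}\nabla C'\vdash{\dwn}C'$), or R$\wedge$ with ${\dwn}$ on the right. We trace back which conjunct of $A_{n-1}$ the formula ${\dwn}C$ came from. If it descends from the $B$-side, we get $B\vdash {\dwn}C$. If it descends from the ${\dwn}A$-side, it came from a ${\dwn}A'$ with $A\vdash A'$ (through Mod$_2$ steps) or through Axiom 10 applied to ${\dwn}\nabla A'$. In that case the chain gives $A'\vdash C$ or $A'\vdash\nabla C$, and hence $A\vdash\nabla C$ by Axiom 3 and Mod$_3$ with Axiom 7.

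The main obstacle is the mixed case, where a ${\dwn}$-headed conjunct later in the chain arises from a formula combining both sides. Axiom 15 instantiated with left side ${\dwn}A\wedge\dmnd{2}{2}B'$ produces $\nabla({\dwn}{\dwn}A\wedge B')$, and Axiom 11 produces $\dmnd{2}{\alpha}(B'\wedge{\dwn}B'')$. We must show that these never lead back to a top-level ${\dwn}C$. The reason to check is that the only axioms yielding a ${\dwn}$-headed right side with non-${\dwn}$ left side do not exist: every axiom with ${\dwn}$ at the top of the right side has ${\dwn}$ at the top of the left side (identity, Axiom 10). I would first prove this as a separate small lemma: if $X\vdash^{cf}{\dwn}C$ by a derivation not ending in L$\wedge$, then $X={\dwn}X'$ with $X'\vdash\nabla C$ or $X'\vdash C$. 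Applying it backwards along the chain reduces everything to the first step, which is handled by the case analysis above.
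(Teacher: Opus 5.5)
Your final argument is exactly the paper's proof. It has three parts: the small lemma that a cut-free derivation of $X\vdash{\downarrow}C$ not ending in L$\wedge$ must be Mod$_2$, identity or Axiom 10; its backward application along the chain normalized by Lemma \ref{lem seq 2}, composing the resulting $\nabla$-bounds with Mod$_3$ and Axiom 7; and L$\wedge$-stripping at the first step. The paper phrases the backward pass as an induction on the minimal chain length.

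The single-step classification of all conjuncts of $D$ in your second paragraph is false as stated, since Axiom 13 gives ${\downarrow}\top\wedge\top\vdash^{cf}\dmnd{1}{\omega}\top$. It is also unnecessary: your small lemma already forces $A_1$ to be ${\downarrow}$-headed, so you should simply drop it.
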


\begin{proof}
    By Lemma \ref{lem seq}, there is a sequence $A_0, A_1, \dots, A_n$ such that $A_0 = {\dwn} A \wedge B$, $A_n = {\dwn} C$, and $A_i \vdash_{\mathsf{BI_\omega}}^{cf} A_{i+1}$ for all $i < n$. We will prove the statement via induction on the minimal possible length $n$ of such a sequence. We note that by Lemma \ref{lem seq 2}, we can always transform the sequence into one of equal length where none of the derivations (apart from the first one) have L$\wedge$ as the last rule applied.

    For the base case ($n=1$), we have that ${\dwn} A \wedge B \vdash_{\mathsf{BI_\omega}}^{cf} {\dwn} C$. Such a cut-free derivation exists only if the last rule applied was L$\wedge$. Then, the premise of this step is either $B \vdash {\dwn} C$, ${\dwn} A \vdash {\dwn} C$, or ${\dwn} A \wedge B' \vdash {\dwn} C$ (in which case we repeat the argument until we land in one of the first two cases). If we are in the first case, we are done. If we land in the second case, we note that the derivation for ${\dwn} A \vdash {\dwn} C$ will also be cut-free. Thus, the only possibilities are: the last rule applied was an instance of Mod$_2$, from which we get $A \vdash C$; the sequent is an instance of Axiom 10 (${\downarrow} \nabla C \vdash {\downarrow} C$), which means $A = \nabla C$; or the sequent is an instance of the identity axiom, which means $A = C$. In all cases, we obtain $A \vdash \nabla C$ (using Axiom 3 in the first and third cases).

    For the inductive step, we look at the last derivation $A_{n-1} \vdash_{\mathsf{BI_\omega}}^{cf} {\dwn} C$. Since the last derivation rule cannot be L$\wedge$, either the last rule applied is Mod$_2$, or the cut-free derivation is an instance of Axiom 10. In the first case, $A_{n-1}$ is of the form ${\dwn} C'$ with $C' \vdash_{\mathsf{BI_\omega}} C$. In the second case, $A_{n-1}$ is of the form ${\dwn} \nabla C$. In both cases, we can apply the inductive argument on ${\dwn} A \wedge B \vdash_{\mathsf{BI_\omega}} {\dwn} C'$ (and ${\dwn} A \wedge B \vdash_{\mathsf{BI_\omega}} {\dwn} \nabla C$, respectively) to get that either $B \vdash_{\mathsf{BI_\omega}} A_{n-1}$, which gives $B \vdash_{\mathsf{BI_\omega}} {\dwn} C$, or $A \vdash_{\mathsf{BI_\omega}} \nabla C'$ (and $A \vdash_{\mathsf{BI_\omega}} \nabla \nabla C$, respectively). Since $C' \vdash_{\mathsf{BI_\omega}} C$ implies $\nabla C' \vdash_{\mathsf{BI_\omega}} \nabla C$, and $\nabla \nabla C \vdash_{\mathsf{BI_\omega}} \nabla C$ by Axiom 7, both paths yield $A \vdash_{\mathsf{BI_\omega}} \nabla C$.
\end{proof}

\begin{lemma}\label{lem downarrow to diamond}
    Suppose ${\dwn}A_1 \wedge \dots \wedge {\dwn}A_k \wedge B
    \vdash_{\mathsf{BI_\omega}} \dmnd{i}{\alpha}C$. Then there exist
    sequences $A^1_p, \dots, A^{n_p}_p$ for each $p \leq k$, and a sequence
    $D_1, \dots, D_m$, such that:
    \begin{enumerate}
        \item $A_p \vdash_{\mathsf{BI_\omega}} \dmnd{1}{1}A^q_p$, for all
              $p \leq k$ and $q \leq n_p$;
        \item ${\dwn}A_1 \wedge \dots \wedge {\dwn}A_k \wedge B
              \vdash_{\mathsf{BI_\omega}} \dmnd{1}{1}D_r$, for all $r \leq m$;
        \item $\displaystyle \bigwedge_{p \leq k}
              {\dwn}\Big(\bigwedge_{q \leq n_p}\dmnd{1}{1}A^q_p\Big) \wedge
              \bigwedge_{r \leq m}\dmnd{1}{1}D_r \wedge B
              \vdash_{\mathsf{BI_\omega}} \dmnd{i}{\alpha}C$.
    \end{enumerate}
    The empty conjunction is read as $\top$.
\end{lemma}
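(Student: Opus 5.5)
We prove the statement by induction on the length $n$ of a sequence of cut-free steps. By Lemma \ref{lem seq}, there is a sequence $E_0,\dots,E_n$ with $E_0={\dwn}A_1\wedge\dots\wedge{\dwn}A_k\wedge B$, $E_n=\dmnd{i}{\alpha}C$ and $E_j\vdash^{cf}_{\mathsf{BI_\omega}}E_{j+1}$. By Lemma \ref{lem seq 2}, we may assume that no step except the first ends with L$\wedge$. Throughout, write $L$ for the left-hand side and $L^\ast$ for the ``refined'' left-hand side of item 3. The aim is to record, as we go, the places where a ${\dwn}A_p$ is actually used to produce a diamond. Each such use is witnessed by some $A_p\vdash\dmnd{1}{1}A^q_p$, which will go into the sequence for $p$. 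Each genuinely mixed conclusion of the form $\dmnd{1}{1}D$ derived from $L$ is put into the $D$-sequence.

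\textbf{Base case $n=1$.} The last rule of a cut-free derivation of $L\vdash\dmnd{i}{\alpha}C$ is L$\wedge$, so the derivation reduces to a single conjunct.
\begin{itemize}
\item If that conjunct is $B$, take all sequences empty; item 3 then follows by L$\wedge$.
\item If it is ${\dwn}A_p$, inspect the cut-free derivation of ${\dwn}A_p\vdash\dmnd{i}{\alpha}C$. Its last step can only be Axiom 13 or Axiom 14 (with $i=1$, $\alpha=\omega$), or a further cut-free chain, which does not arise here.
  \begin{itemize}
  \item For Axiom 14, $A_p=\dmnd{1}{1}A'$ and $C={\dwn}A'$. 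Set $A^1_p=A'$, so item 1 holds by identity, and item 3 is the axiom instance ${\dwn}\dmnd{1}{1}A'\vdash\dmnd{1}{\omega}{\dwn}A'$.
  \item For Axiom 13, $A_p=\top$ and ${\dwn}\top\vdash\dmnd{1}{\omega}\top$ is already available in $L^\ast$ with the empty conjunction.
  \end{itemize}
\end{itemize}

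\textbf{Inductive step.} Look at the last cut-free step $E_{n-1}\vdash\dmnd{i}{\alpha}C$, which does not end in L$\wedge$.
\begin{itemize}
\item If it ends with Mod$_1$, or is an instance of Axiom 1, 2 or 6, then $E_{n-1}$ is itself a single diamond formula $\dmnd{i'}{\alpha'}C'$. The induction hypothesis applies to $L\vdash E_{n-1}$, and the resulting data transfers to the conclusion by Cut with the last step.
\item If it is Axiom 11 or 12, then $E_{n-1}$ is a conjunction of two diamonds $\dmnd{i}{\alpha}C_1\wedge\dmnd{j}{\beta}C_2$. By Lemma \ref{lem cf con} applied to the shorter prefix, $L\vdash\dmnd{i}{\alpha}C_1$ and $L\vdash\dmnd{j}{\beta}C_2$, both with chains of length $<n$. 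Apply the induction hypothesis to each and take the union (concatenation) of the sequences for each $p$ and of the $D$-sequences. Item 3 for the combined data then gives both conjuncts, and the axiom finishes the step. A cleaner alternative in the Axiom 11 case with $j=1,\beta=1$ is to simply add $C_2$ to the $D$-sequence: item 2 holds since $L\vdash\dmnd{1}{1}C_2$, and item 3 follows from $\dmnd{1}{1}C_2$ being a conjunct of $L^\ast$.
\item If it is Axiom 13 or 14, then $E_{n-1}$ is ${\dwn}\top$ or ${\dwn}\dmnd{1}{1}A'$, and $L\vdash{\dwn}\dmnd{1}{1}A'$. Here we use Lemma \ref{lem downarrow}, iterated over the conjuncts ${\dwn}A_p$. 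We need a multi-conjunct version: either $B\vdash{\dwn}\dmnd{1}{1}A'$, or some $A_p\vdash\nabla\dmnd{1}{1}A'$, hence $A_p\vdash\dmnd{1}{1}A'$ by Axiom 8.
  \begin{itemize}
  \item In the second case, put $A'$ into the sequence for $p$. Then ${\dwn}(\bigwedge_q\dmnd{1}{1}A^q_p)\vdash{\dwn}\dmnd{1}{1}A'$ by Mod$_2$, and Axiom 14 gives item 3.
  \item In the first case, $B$ alone suffices.
  \end{itemize}
\end{itemize}
The multi-conjunct version of Lemma \ref{lem downarrow} is obtained by treating ${\dwn}A_2\wedge\dots\wedge B$ as the ``$B$'' of that lemma and recursing.

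\textbf{Main obstacle.} Lemma \ref{lem downarrow} is stated with a single $\dwn$-conjunct and a $\dwn$ target, and the case splitting above must be exhaustive. The delicate point is checking that every axiom whose right side is a diamond has a left side that is either a diamond, a conjunction of diamonds, or a $\dwn$-formula. Reading off the list, only Axioms 1, 2, 6, 11, 12, 13 and 14 qualify, so the cases above cover everything. A second point is that item 2 must hold with the full left side $L$, which is guaranteed because each $D_r$ is added only after we have derived $L\vdash\dmnd{1}{1}D_r$.
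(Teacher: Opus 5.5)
Your overall strategy is the paper's: induction on the length of a normalized chain of cut-free steps (Lemmas \ref{lem seq}, \ref{lem seq 2}), splitting conjunctions with Lemma \ref{lem cf con}, collecting $\dmnd{1}{1}$-conclusions in the $D$-sequence, and handling Axiom 14 by iterating Lemma \ref{lem downarrow} and then Axiom 8. However, your case analysis of the last step $E_{n-1}\vdash\dmnd{i}{\alpha}C$ is not exhaustive, and the claim in your final paragraph is false. Apart from identity and Axioms 1, 2, 6, 11--14, two more axioms have a diamond on the right.
\begin{itemize}
\item Axiom 9, $\dmnd{1}{1}\nabla A\vdash\dmnd{1}{1}A$, is harmless: its left side is a diamond, so it goes with Axioms 1, 2, 6.
\item Axiom 8, $\nabla\dmnd{1}{1}A\vdash\dmnd{1}{1}A$, has a $\nabla$-formula on the left. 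The induction hypothesis then says nothing about $L\vdash E_{n-1}$, and the argument stalls.
\end{itemize}
This case does occur. For $L={\dwn}\dmnd{1}{1}C\wedge\top$, the chain $L,\ \nabla\dmnd{1}{1}C,\ \dmnd{1}{1}C$ (L$\wedge$ with Axiom 5, then Axiom 8) has the normalized shape. It cannot be shortened, since $L\vdash\dmnd{1}{1}C$ has no cut-free derivation. Nor can you sidestep rank one: the statement allows $\alpha=1$, and your Axiom 12 case with $\beta=1$ invokes the induction hypothesis for a $\dmnd{1}{1}$-target.

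The fix is the one the paper uses: dispose of $\alpha=1$ before the induction. Take all $n_p=0$, $m=1$, $D_1=C$; item 2 is then the hypothesis, and item 3 follows by L$\wedge$. This is exactly the trick you already apply in the Axiom 11 case. For $\alpha\geq2$, neither Axiom 8 nor Axiom 9 can be the last step, and the rank-one conjuncts in Axioms 11 and 12 go into the $D$-sequence.

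Minor points:
\begin{itemize}
\item Treat $k=0$ separately; it is trivial. Your claim that the base-case derivation ends with L$\wedge$ needs $k\geq1$. So does the use of the conjunct ${\dwn}\top$ of $L^\ast$ for an Axiom 13 step, which you leave implicit in the inductive step.
\item The base case reduces to a sub-conjunction of $B$, not necessarily $B$ itself.
\item An identity last step should be excluded by minimality or absorbed into the induction hypothesis.
\end{itemize}
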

\begin{proof}
    For $k=0$, let $m=0$. Then, conditions 1 and 2 are trivial, and condition 3 is the same as the assumption. For $k \geq 1$ and $\alpha = 1$, it suffices to take all $n_p = 0$, $m = 1$, and $D_1 = C$. Then, condition 1 is again trivial, condition 2 is the assumption, and condition 3 holds by L$\wedge$. Thus, for the remainder of the proof we assume $k \geq 1$ and $\alpha > 1$.
 
    By Lemma \ref{lem seq} we know that there is a sequence $E_0, \dots, E_l$ with $E_0={\dwn}A_1 \wedge \dots \wedge {\dwn}A_k \wedge B$, $E_l = \dmnd{i}{\alpha}C$ such that $E_t \vdash^{cf}_{\mathsf{BI_\omega}} E_{t+1}$ for all $t < l$. We argue by induction on the minimal length $l$ of such a sequence.
    
    Fix a sequence of minimal length $l$ for the given sequent. By Lemma \ref{lem seq 2} we may assume that none of the sequents $E_t \vdash^{cf}_{\mathsf{BI_\omega}} E_{t+1}$, apart possibly from the first, ends with an application of L$\wedge$, and by minimality we may assume that no derivation is an instance of the identity axiom.
 
    We record one bookkeeping observation used throughout: if sequences $(A^q_p)$, $(D_r)$ and $({A'}^q_p)$, $({D'}_r)$ satisfy conditions 1--3 for conclusions $\dmnd{i}{\alpha}C$ and $\dmnd{j}{\beta}C'$ respectively, then their concatenation satisfies conditions 1 and 2, and satisfies the analogue of condition 3 for each of the two conclusions separately. Conditions 1 and 2 are easy to see, and for condition 3 we use that $\displaystyle {\dwn}\Big(\bigwedge_{q \in Q}\dmnd{1}{1}A^q_p\Big) \vdash {\dwn}\Big(\bigwedge_{q \in Q'}\dmnd{1}{1}A^q_p\Big)$ for  $Q' \subseteq Q$.
 
    We also note that no axiom with a $\Diamond$-conclusion has a ${\dwn}$-conjunct on its left-hand side: the only axioms with a conjunction on the left and a $\Diamond$ on the right are Axioms 11 and 12, and both of their conjuncts are $\Diamond$-formulas.

    We now proceed with the main inductive argument:
    \begin{itemize}
        \item Base case ($l = 1$): we have a cut-free derivation of $E_0 \vdash \dmnd{i}{\alpha}C$. Since $E_0$ contains a ${\dwn}$-conjunct and the right-hand side is not a conjunction, by the note above the last rule must be L$\wedge$. Iterating the argument through the premises as in Lemma \ref{lem seq 2}, we reach a cut-free derivation of either $B' \vdash \dmnd{i}{\alpha}C$ for a sub-conjunction $B'$ of $B$, or ${\dwn}A_p \vdash \dmnd{i}{\alpha}C$ for some $p \leq k$. In the first case we take all $n_p = m = 0$, and condition 3 holds by L$\wedge$. In the second case, the only cut-free derivations of a sequent with ${\dwn}$-leading left side and $\Diamond$-leading right side are instances of Axiom 13 or Axiom 14.

        For Axiom 13 we have $A_p = \top$ and $\dmnd{i}{\alpha}C = \dmnd{1}{\omega}\top$. By taking $n_p = m = 0$, so that the $p$-th conjunct of condition 3 is ${\dwn}\top$, condition 3 holds by L$\wedge$ and Axiom 13.
        
         For Axiom 14 we have $A_p = \dmnd{1}{1}A'$ and $\dmnd{i}{\alpha}C = \dmnd{1}{\omega}{\dwn}A'$. Let $n_p = 1$, $A^1_p = A'$ and all other sequences empty. Condition 1 holds by the identity axiom, and condition 3 reads $\dots \wedge {\dwn}\dmnd{1}{1}A' \wedge \dots \vdash \dmnd{1}{\omega}{\dwn}A'$, which holds by L$\wedge$ and Axiom 14.

        \item Inductive step ($l \geq 2$): consider the last sequent in the sequence: $E_{l-1} \vdash^{cf}_{\mathsf{BI_\omega}} \dmnd{i}{\alpha}C$. Its derivation does not end with L$\wedge$, and the right-hand side is not a conjunction, so it either ends with Mod$_1$ or is an axiom instance. 
        
        In the first case $E_{l-1} = \dmnd{i}{\alpha}C'$ with $C' \vdash^{cf}_{\mathsf{BI_\omega}} C$. Since the sequence $E_0, \dots, E_{l-1}$ has length $l-1$, the induction hypothesis yields sequences satisfying conditions 1--3 for $\dmnd{i}{\alpha}C'$. By using Cut with $\dmnd{i}{\alpha}C' \vdash \dmnd{i}{\alpha}C$ (from $C' \vdash C$ by Mod$_1$) we get condition 3 for $\dmnd{i}{\alpha}C$. 
        
        The second case is for the sequent to be an instance of Axiom 1, 2, 6, 11, 12, 13, or 14. When the sequence ends in a conjunction $X \wedge Y$, we apply the induction hypothesis as follows. Lemma \ref{lem cf con} applied to the final sequent yields cut-free $E_{l-2} \vdash X$ and $E_{l-2} \vdash Y$, hence sequences $E_0, \dots, E_{l-2}, X$ and $E_0, \dots, E_{l-2}, Y$ of length $l-1$. We apply the induction hypothesis to each separately and concatenate the outputs by the bookkeeping observation.
    \begin{itemize}
        \item Axioms 1, 2, 6: in each case $E_{l-1}$ is of the form $\dmnd{i'}{\alpha'}C'$ with $\alpha' \geq \alpha > 1$, and the axiom derives $\dmnd{i'}{\alpha'}C' \vdash \dmnd{i}{\alpha}C$. We apply the induction hypothesis to the sequence $E_0, \dots, E_{l-1}$ and Cut the axiom instance onto the derivation of condition 3.
        \item Axiom 11: $E_{l-1} = \dmnd{2}{\alpha}A' \wedge \dmnd{1}{1}B'$ and $\dmnd{i}{\alpha}C = \dmnd{2}{\alpha}(A' \wedge {\dwn}B')$. Splitting the sequence, the induction hypothesis applies to the branch ending in $\dmnd{2}{\alpha}A'$ (as $\alpha > 1$), yielding sequences $(A^q_p)$, $(D_r)$ with
        \[ \bigwedge_{p \leq k}{\dwn}\Big(\bigwedge_{q}\dmnd{1}{1}A^q_p\Big)
           \wedge \bigwedge_{r}\dmnd{1}{1}D_r \wedge B
           \vdash_{\mathsf{BI_\omega}} \dmnd{2}{\alpha}A'. \]
        The branch ending in $\dmnd{1}{1}B'$ witnesses $E_0 \vdash_{\mathsf{BI_\omega}} \dmnd{1}{1}B'$. We then extend the $D$-sequence by $D_{m+1} := B'$, for which condition 2 holds from the witness. Condition 3 for the extended sequences follows by R$\wedge$ with the conjunct $\dmnd{1}{1}D_{m+1}$ followed by Axiom 11.
        
        \item Axiom 12: $E_{l-1} = \dmnd{i}{\alpha}A' \wedge \dmnd{j}{\beta}B'$ with $\beta < \alpha$ or $\alpha = \beta = \omega$, and $\dmnd{i}{\alpha}C = \dmnd{i}{\alpha}(A' \wedge \dmnd{j}{\beta}B')$. We split the sequence and apply the induction hypothesis to the branch ending in $\dmnd{i}{\alpha}A'$. If $\beta > 1$, we also apply it to the branch ending in $\dmnd{j}{\beta}B'$ and concatenate. Condition 3 follows by R$\wedge$ and Axiom 12. If $\beta = 1$ (implying $j = 1$), we extend the $D$-sequence by $D_{m+1} := B'$ as in the previous case, and condition 3 then follows by R$\wedge$ and Axiom 12.
        
        \item Axiom 13: $E_{l-1} = {\dwn}\top$ and $\dmnd{i}{\alpha}C = \dmnd{1}{\omega}\top$. We take all sequences empty. Since $k \geq 1$, the left-hand side of condition 3 contains the conjunct ${\dwn}\top$, and ${\dwn}\top \vdash \dmnd{1}{\omega}\top$ is Axiom 13. Condition 3 follows by L$\wedge$.
        
        \item Axiom 14: $E_{l-1} = {\dwn}\dmnd{1}{1}A'$ and $\dmnd{i}{\alpha}C = \dmnd{1}{\omega}{\dwn}A'$. The sequence witnesses $E_0 \vdash_{\mathsf{BI_\omega}} {\dwn}\dmnd{1}{1}A'$. By applying Lemma \ref{lem downarrow} $k$ times (to the conjuncts ${\dwn}A_p$ one at a time), either $B \vdash_{\mathsf{BI_\omega}} {\dwn}\dmnd{1}{1}A'$, or $A_p \vdash_{\mathsf{BI_\omega}} \nabla\dmnd{1}{1}A'$ for some $p \leq k$. 
        
        In the first case we take all sequences empty. The left-hand side of condition 3 proves $B$ by L$\wedge$, hence ${\dwn}\dmnd{1}{1}A'$ by Cut, hence $\dmnd{1}{\omega}{\dwn}A'$ by Axiom 14.
        
        In the second case, Axiom 8 yields $A_p \vdash_{\mathsf{BI_\omega}} \dmnd{1}{1}A'$. We take $n_p = 1$, $A^1_p = A'$ and all other sequences empty. Condition 3 then reads $\dots \wedge {\dwn}\dmnd{1}{1}A' \wedge \dots \vdash \dmnd{1}{\omega}{\dwn}A'$, which follows by L$\wedge$ and Axiom 14.
    \end{itemize}
    \end{itemize}
\end{proof}

\vspace{\baselineskip}
\subsection{Kripke semantics}\label{subsec:kripke}
We introduce a fairly straightforward Kripke semantics for $\mathsf{RC_{BI_\omega}}$, with all modalities treated in a standard $\Diamond$-modality style. We prove the completeness of the logic with respect to this class of models via a canonical-model construction. Let $\mathsf{Rel(RC_{BI_\omega})} = \{R^i_\alpha, R^1_\omega, R^\dwn, R^\nabla \mid i \in \{1, 2\}, i \leq \alpha < \omega\}$ be the set of relation symbols corresponding to the modalities of $\mathsf{RC_{BI_\omega}}$.

A Kripke frame in the language $\mathcal{L}_{\mathsf{BI_\omega}}$ is defined as a tuple $(\mathcal{S}, \mathcal{R})$, where $\mathcal{S}$ is a non-empty set of states, and $\mathcal{R}$ is a family of binary relations on $\mathcal{S}$ indexed by the symbols in $\mathsf{Rel(RC_{BI_\omega})}$. A Kripke model $(\mathcal{S}, \mathcal{R}, \mathcal{V})$ is a Kripke frame $(\mathcal{S}, \mathcal{R})$ together with a valuation function $\mathcal{V} : \mathcal{S} \times \mathsf{Var} \to \{0, 1\}$, where $\mathsf{Var}$ is the set of propositional variables, assigning truth values to these variables at each state. We call a model finite if the set $\mathcal{S}$ is finite.

\begin{definition}\label{def 3 frame 1}
    An $\mathsf{RC_{BI_\omega}}$ model is a Kripke model $W=(\mathcal S, \mathcal R,\mathcal V)$ in the language $\mathcal{L}_\mathsf{BI_\omega}$ that satisfies the following conditions:
    \begin{multicols}{2}
    \begin{enumerate}[label=\Roman*.]
    \item \textbf{Monotonicity:}
    \begin{enumerate}[label=I.\arabic*]
        \item $aR^i_{\alpha} b$ implies $aR^i_\beta b$, for $\beta<\alpha$;
        \item $aR^2_\alpha b$ implies $aR^1_\alpha b$;
        \item $aR^\nabla a$, for all $a\in\mathcal{S}$;
        \item $aR^1_\alpha b$ implies $aR^\nabla b$;
        \item $aR^\dwn b$ implies $aR^\nabla b$.
    \end{enumerate}
    \item \textbf{Transitivity:} 
    \begin{enumerate}[label=II.\arabic*]
        \item $a R^i_\alpha b R^j_\alpha c$ implies $a R^j_\alpha c$;
        \item $aR^\nabla bR^\nabla c$ implies $a R^\nabla c$;
        \item $aR^\nabla b R^1_\alpha c$ implies $aR^1_1 c$;
        \item $a R^1_\alpha b R^\nabla c$ implies $a R^1_1 c$;
        \item $a R^\dwn b R^\nabla c$ implies $aR^\dwn c$.
    \end{enumerate}
    \item \textbf{Conjunction conditions:}
    \begin{enumerate}[label=III.\arabic*]
        \item $aR^i_\alpha b\text{ and } a R^j_\beta c$ implies $bR^j_\beta c$,\\ for $\beta<\alpha$ or $\alpha=\beta=\omega$;
        \item $a R^2_\alpha b\text{ and } aR^i_\beta c$ implies $bR^\dwn c$.
    \end{enumerate}
    \item \textbf{Constructive conditions:} 
    \begin{enumerate} [label=IV.\arabic*]
        \item $aR^\dwn b$ implies $\exists c$ such that $aR^1_\omega c$;
        \item $aR^\dwn bR^1_\alpha c$ implies $\exists d(aR^1_\omega dR^\dwn c)$;
        \item $aR^2_\alpha b$ implies $\exists c$ such that: $aR^\nabla c$, $cR^\dwn a$, $\forall d\forall R(bRd\Rightarrow cRd)$, and $\mathcal{V}(b,p)\leq \mathcal{V}(c,p)$ for all $p\in\mathsf{Var}$.
    \end{enumerate}
\end{enumerate}
\vfill\null
\end{multicols}
\end{definition}

\begin{definition}\label{def 4 sat model}
    Given an $\mathsf{RC_{BI_\omega}}$ model $W = (\mathcal{S}, \mathcal{R}, \mathcal{V})$ and a state $a \in \mathcal{S}$, we inductively define the semantic satisfaction relation $W, a \models A$ as follows:
    \begin{itemize}
        \item $W, a \models \top$; $W, a \models p$ iff $\mathcal{V}(a, p) = 1$ for $p \in \mathsf{Var}$;
        \item $W, a \models A \wedge B$ iff $W, a \models A$ and $W, a \models B$;
        \item $W, a \models \dmnd{1}{\alpha} A$ iff $\exists b \in \mathcal{S}$ such that $W, b \models A$ and $a R^1_\alpha b \in \mathcal{R}$;
        \item $W, a \models \dmnd{2}{\alpha} A$ iff $\exists b \in \mathcal{S}$  such that $W, b \models A$ and $a R^2_\alpha b \in \mathcal{R}$;
        \item $W, a \models {\dwn} A$ iff $\exists b \in \mathcal{S}$ such that $W, b \models A$ and $a R^\dwn b \in \mathcal{R}$;
        \item $W, a \models \nabla A$ iff $\exists b \in \mathcal{S}$ such that $W, b \models A$ and $a R^\nabla b \in \mathcal{R}$.
    \end{itemize}
    In proofs with a defined model $W = (\mathcal{S}, \mathcal{R}, \mathcal{V})$, when evident, we will often simply write $a R b$ to denote that $a R b \in \mathcal{R}$.
    A sequent $A \vdash B$ is true in $W$ if for any $a \in \mathcal{S}$, 
    \[ W, a \models A \implies W, a \models B. \]
\end{definition}

\begin{theorem}\label{thm completeness}
    $\mathsf{RC_{BI_\omega}}$ is sound and complete with respect to $\mathsf{RC_{BI_\omega}}$ models.
\end{theorem}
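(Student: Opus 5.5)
Soundness is proved by induction on derivations. The rules L$\wedge$, R$\wedge$, Cut and Mod$_1$--Mod$_3$ preserve truth in any Kripke model, since every modality is interpreted existentially and hence monotonically. Each axiom is then matched with its frame condition.

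Axioms 1--10 correspond to conditions I.1--I.5 and II.1--II.5. For instance, for Axiom 8, if $aR^\nabla b$ and $b\models\dmnd{1}{1}A$ via $bR^1_1c$, then II.3 gives $aR^1_1c$.

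Axioms 11 and 12 use the conjunction conditions. Suppose $aR^2_\alpha b\models A$ and $aR^1_1c\models B$. Then III.2 gives $bR^\dwn c$, so $b\models A\wedge\dwn B$. Axiom 12 is handled the same way via III.1.

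Axioms 13 and 14 follow from IV.1 and IV.2. For Axiom 14, if $aR^\dwn bR^1_1 c\models A$, we get $d$ with $aR^1_\omega d R^\dwn c$, so $d\models\dwn A$.

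For Axiom 15, suppose $a\models A$ and $aR^2_2b\models B$. Condition IV.3 gives $c$ with $aR^\nabla c$ and $cR^\dwn a$, so $c\models\dwn A$. Moreover $c$ dominates $b$ both in all outgoing relations and in valuation. A routine induction on formulas then shows that $c$ satisfies every formula satisfied at $b$, so $c\models B$ and hence $a\models\nabla(\dwn A\wedge B)$.

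For completeness I would build a canonical model. Its states are the \emph{$\bio$-theories}: sets $\Gamma$ of formulas closed under $\vdash_\bio$ and $\wedge$, including all theories generated by a single formula, i.e.\ $\Gamma_A=\{B\mid A\vdash_\bio B\}$. Relations are defined by $\Gamma R^i_\alpha\Delta$ iff $\dmnd{i}{\alpha}D\in\Gamma$ for all $D\in\Delta$, and similarly for $R^\dwn$ and $R^\nabla$. The valuation is $\mathcal V(\Gamma,p)=1$ iff $p\in\Gamma$.

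The truth lemma $\Gamma\models A\iff A\in\Gamma$ is the main point. In the diamond direction, $\dmnd{i}{\alpha}A\in\Gamma$ yields the witness $\Gamma_A$, and Mod$_1$ gives $\Gamma R^i_\alpha\Gamma_A$. The converse is immediate from the definition of the relations. Completeness then follows: if $A\nvdash B$, then $\Gamma_A$ satisfies $A$ but not $B$.

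It remains to verify the frame conditions in the canonical model.
\begin{itemize}
\item Conditions I and II follow directly from the corresponding axioms, with I.3 using Axiom 3.
\item Condition III.2 needs the following: if $\dmnd{2}{\alpha}D\in\Gamma$ for all $D\in\Delta$ and $\dmnd{1}{\beta}E\in\Gamma$ for all $E\in\Theta$, then $\dwn E\in\Delta$ for all $E\in\Theta$. This fails for arbitrary $\Delta$. I would therefore instead take the states to be theories, but choose witnesses in the truth lemma more carefully. One option is to close witnesses under what $\Gamma$ forces via Axioms 11 and 12, taking $\Delta=\{D\mid \dmnd{2}{\alpha}D\in\Gamma\}$-style maximal witnesses. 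The alternative is to redefine the relations; for instance, define $\Gamma R^2_\alpha\Delta$ to additionally require $\dwn E\in\Delta$ whenever $\dmnd{1}{1}E\in\Gamma$, and likewise for III.1. The truth lemma then needs the witness $\{D\mid \dmnd{2}{\alpha}(A\wedge\ldots)\}$ to be deductively closed, which Axioms 11 and 12 provide by absorbing finitely many extra conjuncts.
\item The existential conditions IV.1--IV.3 are where the syntactic lemmas enter. For IV.2, given $\Gamma R^\dwn\Delta R^1_\alpha\Theta$, I need some $\Xi$ with $\Gamma R^1_\omega\Xi R^\dwn\Theta$. I would take $\Xi=\{X\mid \dmnd{1}{\omega}X\in\Gamma\}$ closed appropriately, and use Axiom 14 together with Lemma \ref{lem downarrow to diamond} to show $\dwn T\in\Xi$ for $T\in\Theta$.
\item For IV.3, I would take the state $c=\{X\mid \nabla X\in\Gamma\}$ enlarged by $\dwn\Gamma$. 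Axiom 15 ensures that $\nabla(\dwn A\wedge B)\in\Gamma$ when $A\in\Gamma$ and $\dmnd{2}{2}B\in\Gamma$. The domination clause would be secured by choosing $c\supseteq b$, which is possible since the relations are defined by membership.
\end{itemize}

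I expect the main obstacle to be reconciling the universal conditions III.1--III.2, which force strong constraints on successors, with the existential conditions IV.2--IV.3 and the witness choice in the truth lemma. To keep all of this coherent, I would define the relations membership-wise and check each existential condition using Lemmas \ref{lem downarrow} and \ref{lem downarrow to diamond}, which describe exactly what $\dwn$-conjunctions can prove.
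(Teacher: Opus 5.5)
Your soundness half is correct and follows the paper's argument, including the induction on formulas that transfers $B$ from $b$ to the dominating point $c$ for Axiom 15. The completeness half has a genuine gap. It sits in IV.3, not IV.2.

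\textbf{The domination clause.} Strengthen the relations as you propose for III.1--III.2. That is, $\Gamma R^i_\alpha\Delta$ also requires every $\dmnd{j}{\beta}E\in\Gamma$ with $\beta<\alpha$ (or $\alpha=\beta=\omega$) to lie in $\Delta$, and, for $i=2$, $\dwn E\in\Delta$ for every diamond $\dmnd{j}{\beta}E\in\Gamma$; this is essentially the paper's A1--A3. Then the domination clause $\forall d\,\forall R\,(bRd\Rightarrow cRd)$ is no longer secured by $c\supseteq b$.

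\begin{itemize}
\item For $bR^j_\gamma d$ to yield $cR^j_\gamma d$, every diamond formula of $c$ of rank $<\gamma$ must lie in $d$. For $j=2$ you also need $\dwn C\in d$ for every diamond $\dmnd{i}{\beta}C\in c$.
\item But $c$ contains $\dwn\Gamma$, and diamonds are derivable from $\dwn$-formulas via Axioms 13 and 14, combined with Axioms 11, 12 and members of $b$. So a priori $c$ carries diamond formulas that successors of $b$ need not inherit.
\item What must be shown is that every diamond formula of $c$ already belongs to $b$. This is property (\ref{eq b in c}), the heart of the paper's proof.
\end{itemize}

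The paper proves it as follows. Take $\dwn A_1\wedge\dots\wedge\dwn A_k\wedge B\vdash_\bio\dmnd{j}{\beta}C$ with $A_p\in\Gamma$ and $B\in b$. Lemma \ref{lem downarrow to diamond}, which rests on the cut-free sequence analysis (Lemmas \ref{lem seq}, \ref{lem seq 2}) and on Lemma \ref{lem downarrow}, replaces each $\dwn A_p$ by $\dwn\bigl(\bigwedge_q\dmnd{1}{1}A^q_p\bigr)$. That formula lies in $b$ by Lemma \ref{lem packing}. The price is extra conjuncts $\dmnd{1}{1}D_r\in c$; these lie in $\Gamma$ by C1 for $\Gamma R^\nabla c$ and Axiom 8, hence in $b$ by inheritance. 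Your proposal has no substitute for this step. You also place Lemma \ref{lem downarrow to diamond} in IV.2, where it is not needed.

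\textbf{The concrete witnesses.} Independently of the above, the witnesses you name do not work.
\begin{itemize}
\item The sets $\{X\mid\nabla X\in\Gamma\}$ and $\{X\mid\dmnd{1}{\omega}X\in\Gamma\}$ are not closed under conjunction. Already $\nabla p\wedge\nabla q\nvdash_\bio\nabla(p\wedge q)$ and $\dmnd{1}{\omega}p\wedge\dmnd{1}{\omega}q\nvdash_\bio\dmnd{1}{\omega}(p\wedge q)$, as small three-point models show. So their deductive closures violate C1 for $\Gamma R^\nabla c$ and A1 for $\Gamma R^1_\omega\Xi$, respectively.
\item Axiom 15 only certifies $\nabla(\dwn A_1\wedge\dots\wedge\dwn A_k\wedge B)\in\Gamma$ for $B$ with $\dmnd{2}{2}B\in\Gamma$, that is, for conjunctions of members of $b$. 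It says nothing for arbitrary $X$ with $\nabla X\in\Gamma$.
\end{itemize}

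The paper uses the minimal choices instead:
\begin{itemize}
\item For IV.3, $c$ is the closure of $\{\dwn A\mid A\in\Gamma\}\cup b$.
\item For IV.2, the middle point is the closure of $\{\dwn T\mid T\in\Theta\}$ together with all diamond formulas of $\Gamma$. A1 then follows directly from Axiom 14 (after Axiom 1 under $\dwn$) and Axiom 12 at $\alpha=\beta=\omega$.
\item In the truth lemma, the witness for $\dmnd{2}{\alpha}B\in\Gamma$ is the closure of $B$, the diamonds of $\Gamma$ of rank $<\alpha$, and $\dwn C$ for all diamonds $\dmnd{i}{\gamma}C\in\Gamma$, checked via Axioms 11 and 12.
\end{itemize}
Your sketch points toward this last construction. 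However, it leaves the definition of the relations, and hence what has to be verified, unsettled.
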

We will prove soundness and completeness separately in Proposition \ref{prop Soundness} and Proposition \ref{prop completeness}.

\begin{proposition}[Soundness]\label{prop Soundness}
    $\mathsf{RC_{BI_\omega}}$ is sound for the class of all $\mathsf{RC_{BI_\omega}}$ models. Explicitly, if $A \vdash_{\mathsf{BI_\omega}} B$, then $A \vdash B$ is true in all $\mathsf{RC_{BI_\omega}}$ models.
\end{proposition}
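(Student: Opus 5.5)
We will argue by induction on the derivation of $A \vdash_{\mathsf{BI_\omega}} B$, showing that each sequent obtained is true in every $\mathsf{RC_{BI_\omega}}$ model $W=(\mathcal S,\mathcal R,\mathcal V)$.

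\textbf{Rules.} The inference rules are routine. L$\wedge$, R$\wedge$ and Cut follow directly from the clause for conjunction and from transitivity of implication at a fixed state $a$. For Mod$_1$, Mod$_2$ and Mod$_3$, suppose $W,a\models\circ A$. Then there is a witness $b$ with $aRb$ and $W,b\models A$. By the induction hypothesis $W,b\models B$, so $W,a\models\circ B$. The trivial axioms are immediate.

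\textbf{Axioms 1--12.} Each of these is checked by matching it with the frame condition of the same number.
\begin{itemize}
\item Axioms 1--5 correspond to conditions I.1--I.5. For example, for Axiom 3 the witness is $a$ itself, via I.3.
\item Axioms 6--10 correspond to II.1--II.5. We compose the two witnessing arrows, using the appropriate condition. For instance, for Axiom 8 the chain $aR^\nabla bR^1_1 c$ gives $aR^1_1c$ by II.3.
\item For Axiom 12, take witnesses $aR^i_\alpha b\models A$ and $aR^j_\beta c\models B$. Condition III.1 gives $bR^j_\beta c$, so $b\models A\wedge\dmnd{j}{\beta}B$.
\item For Axiom 11, take $aR^2_\alpha b\models A$ and $aR^1_1 c\models B$. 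Condition III.2 gives $bR^\dwn c$, so $b\models A\wedge\dwn B$.
\end{itemize}

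\textbf{Axioms 13 and 14.} Axiom 13 is exactly IV.1, since every state satisfies $\top$. For Axiom 14, assume $a\models\dwn\dmnd{1}{1}A$. Then there are $b,c$ with $aR^\dwn bR^1_1 c$ and $c\models A$. Condition IV.2 gives $d$ with $aR^1_\omega dR^\dwn c$. Hence $d\models\dwn A$, and so $a\models\dmnd{1}{\omega}\dwn A$.

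\textbf{Axiom 15.} This is the one non-mechanical case, since IV.3 both transfers frame structure and compares valuations. Assume $a\models A$ and $aR^2_2 b\models B$. Condition IV.3 gives $c$ with $aR^\nabla c$, $cR^\dwn a$, every arrow out of $b$ copied out of $c$, and $\mathcal V(b,\cdot)\le\mathcal V(c,\cdot)$. From $cR^\dwn a$ and $a\models A$ we get $c\models\dwn A$. It remains to show $c\models B$, after which $aR^\nabla c$ yields $a\models\nabla(\dwn A\wedge B)$.

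For this we prove an auxiliary lemma by induction on formulas: if $b,c$ satisfy $\forall d\forall R(bRd\Rightarrow cRd)$ and $\mathcal V(b,p)\le\mathcal V(c,p)$ for all $p$, then $b\models F$ implies $c\models F$ for every formula $F$.
\begin{itemize}
\item Variables are handled by the valuation inequality, and $\top$ is trivial.
\item Conjunction is immediate from the induction hypothesis.
\item For a modal formula $\circ F$, the witness $d$ for $b$ also satisfies $cRd$, and $d\models F$ is unchanged.
\end{itemize}
Strict positivity of the language is what makes this step work, so we do not need induction on subformulas at $d$.

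Combining all cases completes the induction, and hence the proof of soundness.
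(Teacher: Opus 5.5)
Your proof is correct and follows the same route as the paper: the rules and Axioms 1--14 are checked directly against the matching frame conditions I--IV (using the rank-$1$ instances for Axioms 8, 9 and 11), and Axiom 15 is handled through the point $c$ supplied by IV.3. Your auxiliary lemma, which transfers satisfaction from $b$ to $c$, spells out the step the paper states in a single clause (``from $b\models B$ we get $c\models B$'').
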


\begin{proof}
    The proof proceeds by direct inspection of the axioms and inference rules of $\mathsf{RC_{BI_\omega}}$. The inference rules preserve truth in a model by direct inspection of Definition \ref{def 4 sat model}. For the axioms, the correspondence with the frame conditions is as follows: Axioms 1--5 correspond to conditions I.1--I.5, Axioms 6--10 correspond to II.1--II.5, Axioms 11 and 12 correspond to III.2 and III.1, and Axioms 13--15 correspond to IV.1--IV.3. Axioms 1--14 follow immediately as they correspond precisely to the frame conditions (or a slightly weakened version in the case of Axioms 8, 9, and 11). 

    The only non-immediate case left is Axiom 15 ($A\wedge\dmnd{2}{2}B\vdash\nabla({\downarrow}A\wedge B)$): let $a\models A$ and $aR^2_\gamma b$ ($\gamma\geq2$) with $b\models B$, and let $c$ be as provided by IV.3. From $cR^\downarrow a$ and $a\models A$ we get $c\models{\downarrow}A$; from $b\models B$ we get $c\models B$. Hence $aR^\nabla c$, and so $a\models\nabla({\downarrow}A\wedge B)$.
\end{proof}

We now turn to the proof of completeness. We begin by introducing the canonical model $W_{\mathsf{BI_\omega}}$ in Definition \ref{def adequate 3}, whose states are given by deductively closed sets of formulas. In Lemma \ref{lem 7 adequate 2}, we show that membership in the closed theory associated with a state of $W_{\mathsf{BI_\omega}}$ corresponds precisely to semantic satisfaction at that state. In Lemma \ref{lem adequate 1}, we establish that the canonical model is indeed an $\mathsf{RC_{BI_\omega}}$ model by verifying the frame conditions. Finally, we conclude the completeness proof in Proposition \ref{prop completeness}.

\begin{definition}\label{def adequate 3}
    We define the canonical model $W_{\mathsf{BI_\omega}} = (\mathcal{S}_{\mathsf{BI_\omega}}, \mathcal{R}_{\mathsf{BI_\omega}}, \mathcal{V}_{\mathsf{BI_\omega}})$ of $\mathsf{RC_{BI_\omega}}$ as follows:
    \begin{itemize}
        \item $\mathcal{S}_{\mathsf{BI_\omega}}$ is the set of all deductively closed sets of formulas in $\mathcal{L}_{\mathsf{BI_\omega}}$.
        \item For any variable $p \in \mathsf{Var}$ and $a \in \mathcal{S}_{\mathsf{BI_\omega}}$, $\mathcal{V}_{\mathsf{BI_\omega}}(a, p) = 1$ iff $p \in a$.
        \item Given two states $a, b \in \mathcal{S}_{\mathsf{BI_\omega}}$:
        \begin{itemize}
            \item $a R^i_\alpha b \in \mathcal{R}_{\mathsf{BI_\omega}}$ iff either $\alpha < \omega$, or ($i=1$ and $\alpha=\omega$), and the following hold for every formula $A$:
            \begin{enumerate}[label=A\arabic*:]
                \item $A \in b$ implies $\dmnd{i}{\alpha} A \in a$.
                \item $(\beta < \alpha \text{ or } \alpha = \beta = \omega)$ and $\dmnd{j}{\beta} A \in a$ implies $\dmnd{j}{\beta} A \in b$.
                \item (Only if $i=2$) $\dmnd{j}{\beta} A \in a$ implies ${\dwn} A \in b$.
            \end{enumerate}
            \item $a R^\dwn b \in \mathcal{R}_{\mathsf{BI_\omega}}$ iff the following holds for every formula $A$:
            \begin{enumerate}[label=B\arabic*:]
                \item $A \in b$ implies ${\dwn} A \in a$.
            \end{enumerate}
            \item $a R^\nabla b \in \mathcal{R}_{\mathsf{BI_\omega}}$ iff the following holds for every formula $A$:
            \begin{enumerate}[label=C\arabic*:]
                \item $A \in b$ implies $\nabla A \in a$.
            \end{enumerate}
        \end{itemize}
    \end{itemize}
\end{definition}
\begin{lemma}\label{lem 7 adequate 2}
    For any $A\in \mathcal{L}_\mathsf{BI_\omega}$, $$W_{\mathsf{BI_\omega}},a\models A\text{ iff } A\in a.$$
\end{lemma}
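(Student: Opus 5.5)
We prove the truth lemma by induction on the construction of $A$. The cases $\top$, $p$ and $A\wedge B$ are immediate: states are deductively closed, so $\top\in a$, and $A\wedge B\in a$ iff $A\in a$ and $B\in a$ (by L$\wedge$ and R$\wedge$). The valuation clause settles the variables. For each modal case $\circ A$, with $\circ$ one of $\dmnd{i}{\alpha}$, ${\dwn}$, $\nabla$, two directions must be checked.

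\emph{Left to right.} Suppose $aRb$ for the relation $R$ of $\circ$ and $b\models A$. By the induction hypothesis $A\in b$, and clause A1, B1 or C1 respectively gives $\circ A\in a$.

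\emph{Right to left.} Given $\circ A\in a$, we must build a witness state $b$ with $A\in b$ and $aRb$. The natural candidate is the deductive closure of $\{A\}$ together with whatever the relation forces into $b$.

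For $\nabla$ and ${\dwn}$ we take $b=\{B : A\vdash_{\mathsf{BI_\omega}} B\}$. If $B\in b$, then Mod$_3$ (resp. Mod$_2$) gives $\nabla A\vdash\nabla B$ (resp. ${\dwn}A\vdash{\dwn}B$). Deductive closure of $a$ then yields C1 (resp. B1).

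For $\dmnd{i}{\alpha}A$ (with $\alpha<\omega$, or $i=1,\alpha=\omega$) the witness must also satisfy A2 and, when $i=2$, A3. Let
\[\Gamma=\{A\}\cup\{\dmnd{j}{\beta}C\in a : \beta<\alpha \text{ or } \alpha=\beta=\omega\}\cup\{{\dwn}C : i=2,\ \dmnd{1}{1}C\in a\},\]
and let $b$ be the deductive closure of $\Gamma$, i.e. all $B$ with $\bigwedge\Gamma_0\vdash B$ for some finite $\Gamma_0\subseteq\Gamma$. This choice makes A2 hold outright.

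For A3 we only put in ${\dwn}C$ for $\dmnd{1}{1}C\in a$. When $\dmnd{j}{\beta}C\in a$ with $\beta\geq1$, Axiom 1 and Axiom 2 give $\dmnd{1}{1}C\in a$, so A3 is covered. (The index $\beta$ ranges over $1\le\beta\le\omega$, so every such formula descends to $\dmnd{1}{1}$.)

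A1 is the main point. Suppose $B\in b$, witnessed by
\[A\wedge\textstyle\bigwedge_k \dmnd{j_k}{\beta_k}C_k\wedge\bigwedge_l{\dwn}D_l\vdash B .\]
Apply Mod$_1$ to obtain $\dmnd{i}{\alpha}$ of the left-hand side $\vdash\dmnd{i}{\alpha}B$. It then remains to derive $\dmnd{i}{\alpha}$ of the left-hand side from formulas in $a$. We do this by absorbing the conjuncts one at a time:
\begin{itemize}
\item each $\dmnd{j_k}{\beta_k}C_k$ is absorbed by Axiom 12, which needs exactly $\beta<\alpha$ or $\alpha=\beta=\omega$;
\item when $i=2$, each ${\dwn}D_l$ is absorbed using $\dmnd{1}{1}D_l\in a$ and Axiom 11.
\end{itemize}
The order of absorption needs some care: Axiom 11 requires the outer modality to be $\dmnd{2}{\alpha}$, and Axiom 12 keeps the outer $\dmnd{i}{\alpha}$ unchanged, so the steps can be applied in any order. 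Concretely, starting from $\dmnd{i}{\alpha}A\wedge\dots\in a$ we iteratively obtain $\dmnd{i}{\alpha}(A\wedge X_1\wedge\dots)$. This shows $\dmnd{i}{\alpha}B\in a$, establishing A1. Finally $A\in b$ by construction, so the induction hypothesis gives $b\models A$ and hence $a\models\dmnd{i}{\alpha}A$.

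I expect the main obstacle to be this verification of A1 under the simultaneous demands of A2 and A3, and in particular the boundary case $\alpha=\omega$ with $i=1$, where A2 must include $\dmnd{1}{\omega}$ formulas and Axiom 12 in its $\alpha=\beta=\omega$ form is needed. A secondary check is that $b$ is deductively closed as defined; this is just closure under finite conjunctions together with Cut.
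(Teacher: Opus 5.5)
Your proposal is correct and follows the paper's proof essentially step for step. It uses the same induction and the same witnesses for $\nabla$ and ${\dwn}$. In the diamond cases your generating set $\Gamma$ is exactly the paper's: $\Delta$ plus the inner formula, together with $\Delta^{\dwn}$ in the $\dmnd{2}{\alpha}$ case, which agrees with your ${\dwn}$-part because $\dmnd{j}{\beta}C\in a$ yields $\dmnd{1}{1}C\in a$. Condition A1 is verified via Mod$_1$, Axiom 12 and Axiom 11 just as in the paper; the only difference is presentational, since you merge the paper's three separate cases $\dmnd{1}{\alpha}$, $\dmnd{1}{\omega}$ and $\dmnd{2}{\alpha}$ into one parametrized construction.
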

\begin{proof}
We prove this via induction on the construction of $A$. If $A$ is $\top$ or a conjunction, the proof is immediate for both directions. 
\vspace{\baselineskip}\\
($\Rightarrow$) Suppose that $W_{\mathsf{BI_\omega}}, a \models A$. If $A$ is of the form $\dmnd{i}{\alpha} B$, $\dwn B$ or $\nabla B$, then by Definition \ref{def 4 sat model} there exists a corresponding accessible state $b$ with $W_{\mathsf{BI_\omega}},b\models B$. By the induction hypothesis we have that $B\in b$. From this and the rules A1, B1 and C1 we get that $A\in a$.
\vspace{\baselineskip}\\
($\Leftarrow$) For the reverse implication, given $A\in a$ we have five different cases based on the outermost modality of $A$: $A=\dmnd{1}{\alpha} B$, $A=\dmnd{1}{\omega} B$, $A=\dwn B$, $A=\nabla B$, and $A=\dmnd{2}{\alpha} B$.

The first case we consider is $A=\dmnd{1}{\alpha} B$. We need to show that there is a state $b$ such that $aR^1_\alpha b$ and $W_{\mathsf{BI_\omega}}, b\models B$. Let $\Delta=\{\dmnd{i}{\gamma} C\;|\;\dmnd{i}{\gamma} C\in a $, $ \gamma<\alpha\}$ and $b$ be the deductive closure of $\Delta\cup \{B\}$. By using the induction hypothesis on $B\in b$ we have that $W_{\mathsf{BI_\omega}},b\models B$, so we are left to show that $aR^1_\alpha b$. We can prove this by simply checking that the conditions A1 and A2 hold for $aR^1_\alpha b$:
    \begin{enumerate}[label=A\arabic*:]
        \item If $D\in b$ we have to show that $\dmnd{1}{\alpha} D\in a$. Since $D\in b$ then there is a finite set of formulas $\{\dmnd{i_1}{\gamma_1}C_1,\dmnd{i_2}{\gamma_2}C_2,...,\dmnd{i_n}{\gamma_n}C_n\}\subseteq \Delta$ such that $\dmnd{i_1}{\gamma_1}C_1\wedge...\wedge \dmnd{i_n}{\gamma_n} C_n\wedge B\vdash_{\mathsf{BI_\omega}} D$. By the definition of $\Delta$ we have that $\forall k\leq n(\gamma_k<\alpha)$, so by repeated applications of Axiom 12 we can derive that $\dmnd{i_1}{\gamma_1}C_1\wedge...\wedge \dmnd{i_n}{\gamma_n} C_n\wedge \dmnd{1}{\alpha} B\vdash_{\mathsf{BI_\omega}}\dmnd{1}{\alpha}(\dmnd{i_1}{\gamma_1}C_1\wedge...\wedge \dmnd{i_n}{\gamma_n} C_n\wedge B)$. By using these as assumptions, we can do the following derivation:
        \end{enumerate}
        \begin{center}
        \footnotesize
            \begin{prooftree}
                \hypo{\dmnd{i_1}{\gamma_1}C_1\wedge...\wedge \dmnd{i_n}{\gamma_n} C_n\wedge \dmnd{1}{\alpha} B\vdash \dmnd{1}{\alpha}(\dmnd{i_1}{\gamma_1}C_1\wedge...\wedge \dmnd{i_n}{\gamma_n} C_n\wedge B)}
                \hypo{\dmnd{i_1}{\gamma_1}C_1\wedge...\wedge \dmnd{i_n}{\gamma_n} C_n\wedge B\vdash D}
                \infer 1[Mod$_1$]{\dmnd{1}{\alpha}(\dmnd{i_1}{\gamma_1}C_1\wedge...\wedge \dmnd{i_n}{\gamma_n} C_n\wedge B)\vdash \dmnd{1}{\alpha} D}
                \infer 2[Cut]{\dmnd{i_1}{\gamma_1}C_1\wedge...\wedge \dmnd{i_n}{\gamma_n} C_n\wedge \dmnd{1}{\alpha} B\vdash \dmnd{1}{\alpha} D}
            \end{prooftree}
        \end{center}
        Since $a$ is deductively closed, we conclude that $\dmnd{1}{\alpha} D\in a$.
        \begin{enumerate}[resume, label=A\arabic*:]
        \item If $\dmnd{j}{\gamma} D\in a$ with $\gamma<\alpha$ we have to show that $\dmnd{j}{\gamma} D\in b$. Since $\dmnd{j}{\gamma} D\in a$ and $\gamma<\alpha$, from the definition of $\Delta$ we have that $\dmnd{j}{\gamma} D\in \Delta$. Since $\Delta\subset b$ we immediately get that $\dmnd{j}{\gamma} D\in b$.
    \end{enumerate}
    
    Suppose now that $A=\dmnd{1}{\omega} B$. The argument will be identical to the case above, with the only slight difference being in the definition of the set $\Delta$. We prove that there is a state $b$ such that $aR^1_\omega b$ and $W_\mathsf{BI_\omega}, b\models B$. Let $\Delta=\{\dmnd{i}{\gamma} C\;|\;\dmnd{i}{\gamma} C\in a \}$ and $b$ be the deductive closure of $\Delta\cup \{B\}$. By using the induction hypothesis on $B\in b$ we have that $W_{\mathsf{BI_\omega}},b\models B$, so we are left to show that $aR^1_\omega b$. We can prove this by simply checking that the conditions A1 and A2 hold for $aR^1_\omega b$:
    \begin{enumerate}[label=A\arabic*:]
        \item If $D\in b$ we have to show that $\dmnd{1}{\alpha} D\in a$. Since $D\in b$ then there is a finite set of formulas $\{\dmnd{i_1}{\gamma_1}C_1,\dmnd{i_2}{\gamma_2}C_2,...,\dmnd{i_n}{\gamma_n}C_n\}\subseteq \Delta$ such that $\dmnd{i_1}{\gamma_1}C_1\wedge...\wedge \dmnd{i_n}{\gamma_n} C_n\wedge B\vdash_{\mathsf{BI_\omega}} D$. By repeated applications of Axiom 12 we can derive that $\dmnd{i_1}{\gamma_1}C_1\wedge...\wedge \dmnd{i_n}{\gamma_n} C_n\wedge \dmnd{1}{\omega} B\vdash_{\mathsf{BI_\omega}}\dmnd{1}{\omega}(\dmnd{i_1}{\gamma_1}C_1\wedge...\wedge \dmnd{i_n}{\gamma_n} C_n\wedge B)$. By using these as assumptions, we can again do the same derivation as above:
        \end{enumerate}
        \begin{center}
            \begin{prooftree}
            \footnotesize
                \hypo{\dmnd{i_1}{\gamma_1}C_1\wedge...\wedge \dmnd{i_n}{\gamma_n} C_n\wedge \dmnd{1}{\omega} B\vdash \dmnd{1}{\omega}(\dmnd{i_1}{\gamma_1}C_1\wedge...\wedge \dmnd{i_n}{\gamma_n} C_n\wedge B)}
                \hypo{\dmnd{i_1}{\gamma_1}C_1\wedge...\wedge \dmnd{i_n}{\gamma_n} C_n\wedge B\vdash D}
                \infer 1[Mod$_1$]{\dmnd{1}{\omega}(\dmnd{i_1}{\gamma_1}C_1\wedge...\wedge \dmnd{i_n}{\gamma_n} C_n\wedge B)\vdash \dmnd{1}{\omega} D}
                \infer 2[Cut]{\dmnd{i_1}{\gamma_1}C_1\wedge...\wedge \dmnd{i_n}{\gamma_n} C_n\wedge \dmnd{1}{\omega} B\vdash \dmnd{1}{\omega} D}
            \end{prooftree}
        \end{center}
        Since $a$ is deductively closed, we conclude that $\dmnd{1}{\omega} D\in a$.
        \begin{enumerate}[resume, label=A\arabic*:]
        \item If $\dmnd{j}{\gamma} D\in a$ we have to show that $\dmnd{j}{\gamma} D\in b$. Since $\dmnd{j}{\gamma} D\in a$, from the definition of $\Delta$ we have that $\dmnd{j}{\gamma} D\in \Delta$. Since $\Delta\subset b$ we immediately get that $\dmnd{j}{\gamma} D\in b$.
    \end{enumerate}
    
    For the case $A=\dwn B$ we show that there exists a state $b$ such that $aR^\dwn b$ and $W_{\mathsf{BI_\omega}},b\models B$. For this we simply consider $b=\{C\;|\;B\vdash_\mathsf{BI_\omega}C\}$. By definition $b$ is deductively closed, so it is a state in $W_{\mathsf{BI_\omega}}$. Also from its definition we have that $B\in b$, and by the induction hypothesis we get $W_{\mathsf{BI_\omega}} ,b\models B$. To show that $a R^\dwn b$ we only need to check the condition B1, namely that if $C\in b$ then $\dwn C\in a$. Since for any formula $C\in b$ we have $B\vdash_\mathsf{BI_\omega}C$, by a simple application of the rule Mod$_2$, we get $\dwn B\vdash_\mathsf{BI_\omega}\dwn C$. Since $a$ is deductively closed and $\dwn B\in a$, we get that $\dwn C\in a$. 
    
    The argument for the case $A=\nabla B\in a$ is similar to the above one. We show that if we take $b=\{C\;|\;B\vdash_\mathsf{BI_\omega}C\}$, then $a R^\nabla b$ and $W_{\mathsf{BI_\omega}} , b\models B$. By definition $b$ is deductively closed, so it is a state in $W_{\mathsf{BI_\omega}}$. Also from its definition we have that $B\in b$, and by the induction hypothesis we get $W_{\mathsf{BI_\omega}} ,b\models B$. To show that $a R^\nabla b$ we only need to check the condition C1, namely that if $C\in b$ then $\nabla C\in a$. Since for any formula $C\in b$ we have $B\vdash_\mathsf{BI_\omega}C$, by a simple application of the rule Mod$_3$, we get $\nabla B\vdash_\mathsf{BI_\omega}\nabla C$. Since $a$ is deductively closed and $\nabla B\in a$, we get that $\nabla C\in a$.
    
    The last remaining case is $A=\dmnd{2}{\alpha} B$. We have to show that there is a state $b$ such that $aR^2_\alpha b$ and $W_{\mathsf{BI_\omega}}, b\models B$. Let $\Delta=\{\dmnd{i}{\gamma} C\;|\;\dmnd{i}{\gamma} C\in a $, $ \gamma<\alpha\}$, $\Delta^\dwn=\{\dwn C\;|\;\dmnd{i}{\gamma} C\in a $, for some $\gamma\}$ and $b$ be the deductive closure of $\Delta\cup\Delta^\dwn\cup\{B\}$. From the induction hypothesis it is clear that $W_{\mathsf{BI_\omega}},b \models B$. To show that $aR^2_\alpha b$ we are going to check that the conditions A1--A3 are satisfied:
    \begin{enumerate}[label=A\arabic*:]
        \item If $D\in b$ then $\dmnd{2}{\alpha} D\in a$. Since $D\in b$ then there are finite sequences $(\dmnd{i_k}{\gamma_k}C_k)_{k\leq n}$ from $\Delta$ and $(\dmnd{i'_k}{\gamma'_k}C'_k)_{k\leq m}$ from $a$ (hence $(\dwn C'_k)_{k\leq m}$ are in $\Delta^\dwn$) such that: 
    \end{enumerate}
        $$\dmnd{i_1}{\gamma_1}C_1\wedge...\wedge \dmnd{i_n}{\gamma_n} C_n\wedge \dwn C'_1\wedge \dwn C'_2 \wedge ... \wedge \dwn C'_m\wedge B\vdash_{\mathsf{BI_\omega}} D.$$ By using the Mod$_1$ rule on this we get:
        \begin{equation}\label{equation 2.18 1}\dmnd{2}{\alpha}(\dmnd{i_1}{\gamma_1}C_1\wedge...\wedge \dmnd{i_n}{\gamma_n} C_n\wedge \dwn C'_1\wedge \dwn C'_2 \wedge ... \wedge\dwn C'_m\wedge B)\vdash_{\mathsf{BI_\omega}} \dmnd{2}{\alpha} D.\end{equation}
        On the other hand, by repeated applications of instances of Axiom 12 we can derive that: 
        $$\dmnd{2}{\alpha} B\wedge \dmnd{i_1}{\gamma_1}C_1\wedge...\wedge \dmnd{i_n}{\gamma_n} C_n\wedge \dmnd{i'_1}{\gamma'_1}C'_1\wedge...\wedge \dmnd{i'_m}{\gamma'_m} C'_m\vdash_{\mathsf{BI_\omega}}\dmnd{2}{\alpha} (B\wedge \dmnd{i_1}{\gamma_1}C_1\wedge...\wedge \dmnd{i_n}{\gamma_n} C_n)\wedge \dmnd{i'_1}{\gamma'_1}C'_1\wedge...\wedge \dmnd{i'_m}{\gamma'_m} C'_m$$ and by using instances of Axiom 11 we can also derive: $$\dmnd{2}{\alpha} (B\wedge \dmnd{i_1}{\gamma_1}C_1\wedge...\wedge \dmnd{i_n}{\gamma_n} C_n)\wedge \dmnd{i'_1}{\gamma'_1}C'_1\wedge...\wedge \dmnd{i'_m}{\gamma'_m} C'_m\vdash_\mathsf{BI_\omega}\dmnd{2}{\alpha} (B\wedge \dmnd{i_1}{\gamma_1}C_1\wedge...\wedge \dmnd{i_n}{\gamma_n} C_n\wedge \dwn C'_1\wedge...\wedge \dwn C'_m).$$ Combining these two via the Cut rule we get: 
        \begin{equation}\label{equation 2.18 2}\dmnd{2}{\alpha} B\wedge \dmnd{i_1}{\gamma_1}C_1\wedge...\wedge \dmnd{i_n}{\gamma_n} C_n\wedge \dmnd{i'_1}{\gamma'_1}C'_1\wedge...\wedge \dmnd{i'_m}{\gamma'_m} C'_m\vdash_\mathsf{BI_\omega}\dmnd{2}{\alpha} (B\wedge \dmnd{i_1}{\gamma_1}C_1\wedge...\wedge \dmnd{i_n}{\gamma_n} C_n\wedge \dwn C'_1\wedge...\wedge \dwn C'_m).
        \end{equation}
        Using Cut on (\ref{equation 2.18 1}) and (\ref{equation 2.18 2}) gives us a derivation for:
        $$\dmnd{2}{\alpha} B\wedge \dmnd{i_1}{\gamma_1}C_1\wedge...\wedge \dmnd{i_n}{\gamma_n} C_n\wedge \dmnd{i'_1}{\gamma'_1}C'_1\wedge...\wedge \dmnd{i'_m}{\gamma'_m} C'_m\vdash_\mathsf{BI_\omega}\dmnd{2}{\alpha} D.$$ Since $a$ is deductively closed we can conclude that $\dmnd{2}{\alpha} D\in a$.
    \begin{enumerate}[resume, label=A\arabic*:]
        \item If $\dmnd{i}{\gamma} D\in a$ with $\gamma<\alpha$ then $\dmnd{i}{\gamma} D\in b$. From the definition of $\Delta$ we have that $\dmnd{i}{\gamma} D\in \Delta$. Since $\Delta \subset b$ we immediately get $\dmnd{i}{\gamma} D\in b$.

        \item If $\dmnd{i}{\gamma} D\in a$ then $\dwn D\in b$. By definition of $\Delta^\dwn$ we have that $\dwn D\in\Delta^\dwn$. Since $\Delta^\dwn\subset b$ we get $\dwn D\in b$.
    \end{enumerate}
\end{proof} 

\begin{lemma}\label{lem packing}
    Let $a, b \in \mathcal{S}_{\mathsf{BI_\omega}}$ with $aR^2_\alpha b \in \mathcal{R}_{\mathsf{BI_\omega}}$, and suppose $\dmnd{1}{1}T_1, \dots, \dmnd{1}{1}T_k \in a$. Then ${\dwn}\big(\dmnd{1}{1}T_1 \wedge \dots \wedge \dmnd{1}{1}T_k\big) \in b$.
\end{lemma}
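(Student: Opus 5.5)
The plan is to get a single $\Diamond$-formula into $a$ whose argument is the whole conjunction $\dmnd{1}{1}T_1 \wedge \dots \wedge \dmnd{1}{1}T_k$, and then push it into $b$ as a ${\dwn}$-formula using clause A3 of the canonical relation $R^2_\alpha$ from Definition \ref{def adequate 3}. A3 states that for $aR^2_\alpha b$, whenever $\dmnd{j}{\beta} D \in a$ we have ${\dwn} D \in b$. It therefore suffices to show
\[ \dmnd{2}{\alpha}\big(\dmnd{1}{1}T_1 \wedge \dots \wedge \dmnd{1}{1}T_k\big) \in a. \]

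First, I will obtain a starting $\dmnd{2}{\alpha}$-formula in $a$. Since $b$ is deductively closed, $\top \in b$ by the truth axiom. By A1 for $aR^2_\alpha b$, this gives $\dmnd{2}{\alpha}\top \in a$. Note that here $\alpha \geq 2$, because $R^2_\alpha$ is only defined for $2 \leq \alpha < \omega$.

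Next, I will absorb the conjuncts $\dmnd{1}{1}T_i$ one at a time using Axiom 12. For any $X$, the instance
\[ \dmnd{2}{\alpha} X \wedge \dmnd{1}{1} T \vdash \dmnd{2}{\alpha}(X \wedge \dmnd{1}{1} T) \]
is legitimate because $1 < \alpha$. Starting from $X_0 = \top$ and setting $X_{i} = X_{i-1} \wedge \dmnd{1}{1}T_i$, deductive closure of $a$ together with $\dmnd{1}{1}T_i \in a$ yields $\dmnd{2}{\alpha} X_i \in a$ by induction on $i$. Finally, $X_k \dashv\vdash \dmnd{1}{1}T_1 \wedge \dots \wedge \dmnd{1}{1}T_k$, removing the leading $\top$ by L$\wedge$/R$\wedge$ and the truth axiom. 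Applying Mod$_1$ to this equivalence gives the displayed membership, and A3 then finishes the proof.

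There is no serious obstacle; the only point needing care is the bookkeeping. One must check that the strict inequality $\beta < \alpha$ required by Axiom 12 holds, which it does since $\beta = 1 < 2 \leq \alpha$. One must also note that the empty case $k=0$ reduces to ${\dwn}\top \in b$, which follows from $\dmnd{2}{\alpha}\top \in a$ via A3 in the same way.
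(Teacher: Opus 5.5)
Your proposal is correct and follows the paper's proof exactly: you obtain $\dmnd{2}{\alpha}\top\in a$ from $\top\in b$ via A1, absorb the $k$ conjuncts $\dmnd{1}{1}T_q$ by Axiom 12 (using $1<2\leq\alpha$), and then apply A3. Your extra care with the leading $\top$ and the case $k=0$ is harmless bookkeeping that the paper leaves implicit.
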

\begin{proof}
    Since $b$ is deductively closed, $\top \in b$, so by condition A1 for $aR^2_\alpha b$ we have $\dmnd{2}{\alpha}\top \in a$. As $\alpha \geq 2 > 1$, $k$ applications of Axiom 12 yield
    \[ \dmnd{2}{\alpha}\top\wedge\dmnd{1}{1}T_1 \wedge \dots \wedge \dmnd{1}{1}T_k \vdash_{\mathsf{BI_\omega}} \dmnd{2}{\alpha}\big(\top \wedge \dmnd{1}{1}T_1 \wedge \dots \wedge \dmnd{1}{1}T_k\big), \] 
    so by deductive closure of $a$ we get $\dmnd{2}{\alpha}\big(\bigwedge_{q \leq k}\dmnd{1}{1}T_q\big) \in a$. Condition A3 for $aR^2_\alpha b$ then gives ${\dwn}\big(\bigwedge_{q \leq k}\dmnd{1}{1}T_q\big) \in b$.
\end{proof}

\begin{lemma}\label{lem adequate 1}
    $W_{\mathsf{BI_\omega}}$ is an $\mathsf{RC_{BI_\omega}}$ model.
\end{lemma}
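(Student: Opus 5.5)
We verify the frame conditions of Definition \ref{def 3 frame 1} one by one for the canonical relations of Definition \ref{def adequate 3}, using throughout that states are deductively closed and that the membership conditions A1--A3, B1, C1 can be pushed through the axioms of $\mathsf{RC_{BI_\omega}}$. The monotonicity and transitivity conditions are routine.
\begin{itemize}
\item For I.1 and I.2, A1 for the stronger relation together with Axioms 1 and 2 gives A1 for the weaker one. A2 and A3 weaken trivially, since the range of $\beta$ shrinks; the case $\alpha=\omega$ uses that A2 for $R^1_\omega$ covers all finite indices.
\item I.3 is Axiom 3.
\item I.4 and I.5 follow from A1, respectively B1, composed with Axioms 4 and 5; for I.4 we first pass from $\dmnd{1}{\alpha}$ to $\dmnd{1}{1}$ by Axiom 1.
\item For II.1, A1 for the composite follows from Axiom 6 applied to $\dmnd{i}{\alpha}\dmnd{j}{\alpha}A\in a$. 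A2 is transitive through $b$. For A3 (when $j=2$), a formula $\dmnd{k}{\beta}A\in a$ with $\beta<\alpha$ lies in $b$ by A2 for $aR^i_\alpha b$, and then A3 for $bR^2_\alpha c$ applies; for $\beta\ge\alpha$ we first weaken to some $\beta'<\alpha$ via Axiom 1 (or use $\dmnd{k}{1}$, since $\alpha\ge2$).
\item II.2--II.5 are immediate from Axioms 7--10. For II.3 and II.4 we also need A2 for $R^1_1$, which is vacuous because no index is below $1$.
\end{itemize}

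For the conjunction conditions, take III.1 with $aR^i_\alpha b$, $aR^j_\beta c$ and $\beta<\alpha$. A1 for $bR^j_\beta c$ asks that $A\in c$ imply $\dmnd{j}{\beta}A\in b$; this holds because $\dmnd{j}{\beta}A\in a$ and A2 transfers it to $b$. A2 for $bR^j_\beta c$ follows by combining A2 for $aR^j_\beta c$ with the observation that any $\dmnd{k}{\gamma}D\in b$ with $\gamma<\beta$ should already lie in $a$. That observation is not immediate. To get it I would use A1 for $aR^i_\alpha b$ to obtain $\dmnd{i}{\alpha}\dmnd{k}{\gamma}D\in a$, and then pass to $\dmnd{k}{\gamma}D\in a$ via Axiom 1 and Axiom 6 at level $\gamma$. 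For A3 when $j=2$, $\dmnd{k}{\gamma}D\in b$ similarly yields $\dmnd{k}{\gamma}D\in a$, hence $\dwn D\in c$. The case $\alpha=\beta=\omega$ is analogous. III.2 uses A3 for $aR^2_\alpha b$: from $A\in c$ we get $\dmnd{i}{\beta}A\in a$ by A1 for $aR^i_\beta c$, hence $\dwn A\in b$, which is exactly B1 for $bR^\dwn c$.

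The constructive conditions need witnesses. For IV.1, $\top\in b$ gives $\dwn\top\in a$ by B1, so $\dmnd{1}{\omega}\top\in a$ by Axiom 13, and Lemma \ref{lem 7 adequate 2} produces $c$ with $aR^1_\omega c$. For IV.2, we have $aR^\dwn b$ and $bR^1_\alpha c$. Define $e=\{C\mid \text{there is } E\in c \text{ with } E\vdash C\}$ (that is, $c$ itself, which is closed) and $d$ as the deductive closure of $\{\dwn E\mid E\in c\}$ together with all diamond formulas of $a$. B1 for $dR^\dwn c$ is then immediate. For A1 of $aR^1_\omega d$, a finite conjunction $\bigwedge\dwn E_q\wedge\Delta$ is handled as follows. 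Each $E\in c$ gives $\dmnd{1}{1}E\in b$ via A1 and Axiom 1, then $\dwn\dmnd{1}{1}E\in a$ by B1, and $\dmnd{1}{\omega}\dwn E\in a$ by Axiom 14. Closure of $c$ lets us merge the finitely many $E_q$ into one, using Mod$_2$ to get $\dwn(\bigwedge E_q)\vdash\bigwedge\dwn E_q$. Axiom 12 at $\omega$ then absorbs $\Delta$, mirroring the $\dmnd{1}{\omega}$ case of Lemma \ref{lem 7 adequate 2}.

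The main obstacle is IV.3. We are given $aR^2_\alpha b$ and need $c$ with $aR^\nabla c$, $cR^\dwn a$, $c$ extending every outgoing relation of $b$, and $b\subseteq c$ (valuations). My first attempt is to take $c$ as the deductive closure of $b\cup\{\dwn A\mid A\in a\}$. Then $cR^\dwn a$ holds by construction. Since $c\supseteq b$, and the canonical relations are defined by membership in the source, I expect $bRd\Rightarrow cRd$. This works for A1, B1 and C1, but A2 and A3 demand that diamonds in $c$ transfer to $d$, and $c$ may contain new diamonds. So I would check this carefully, possibly shrinking the closure or using Lemma \ref{lem downarrow to diamond} to show that new diamonds in $c$ are already essentially in $b$.

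The crux is $aR^\nabla c$, i.e. $\dwn A\wedge B\in c$ should force $\nabla(\dwn A\wedge B)\in a$ for finite data. We have $B\in b$, so $\dmnd{2}{2}B\in a$ by A1 and Axiom 1, and $A\in a$. Axiom 15 then gives $\nabla(\dwn A\wedge B)\in a$, and Mod$_3$ yields $\nabla C$ for every consequence $C$. For the diamond transfer, Lemma \ref{lem downarrow to diamond} reduces any diamond consequence $\dmnd{i}{\beta}C$ of $\dwn A\wedge B$ to one derived from $B$ together with $\dmnd{1}{1}$-packages. Those packages lie in $b$ by Lemma \ref{lem packing} and A2/A3, so such diamonds already belong to $b$.
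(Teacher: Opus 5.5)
Your plan is the paper's own proof. You use the same witness $d$ in IV.2 and the same witness $c$ (the deductive closure of $b\cup\{\dwn A\mid A\in a\}$) in IV.3. You get $aR^\nabla c$ from Axiom~15, and you transfer diamonds from $c$ back to $b$ via Lemma~\ref{lem downarrow to diamond} and Lemma~\ref{lem packing}.

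The one step that fails as justified is your last sentence. Lemma~\ref{lem downarrow to diamond} produces two kinds of extra conjuncts:
\begin{itemize}
\item the packages $\dwn\bigl(\bigwedge_q\dmnd{1}{1}A^q_p\bigr)$, which are in $b$ by Lemma~\ref{lem packing} because each $\dmnd{1}{1}A^q_p$ is in $a$;
\item the diamonds $\dmnd{1}{1}D_r$ of its condition~2, about which you only know that they follow from $\dwn A_1\wedge\dots\wedge\dwn A_k\wedge B$, i.e.\ that they lie in $c$.
\end{itemize}
Lemma~\ref{lem packing} says nothing about the second kind, and A2 for $aR^2_\alpha b$ only moves formulas that are already in $a$. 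This is not a side case. For a rank-one diamond $\dmnd{1}{1}C\in c$ the lemma performs no reduction at all (it takes $D_1=C$), yet A2 and A3 for $cR^j_\gamma d$ require transferring exactly such formulas to $d$.

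The missing link is the already established $aR^\nabla c$:
\begin{itemize}
\item C1 gives $\nabla\dmnd{1}{1}D_r\in a$;
\item Axiom~8 turns this into $\dmnd{1}{1}D_r\in a$;
\item A2 for $aR^2_\alpha b$ (as $1<\alpha$) then puts $\dmnd{1}{1}D_r$ into $b$.
\end{itemize}
So $aR^\nabla c$ is not a separate ``crux''. It must be proved first and then fed into the diamond-transfer argument.

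There are also two smaller slips.
\begin{itemize}
\item In III.1, for A3 of $bR^2_\beta c$, you claim that $\dmnd{k}{\gamma}D\in b$ ``similarly'' gives $\dmnd{k}{\gamma}D\in a$. A3 ranges over all $\gamma$, and for $\gamma\ge\alpha$ this fails (e.g.\ $\dmnd{1}{2}\dmnd{1}{3}\top\nvdash_{\mathsf{BI_\omega}}\dmnd{1}{3}\top$). What $\dmnd{i}{\alpha}\dmnd{k}{\gamma}D\in a$ does give is $\dmnd{1}{1}D\in a$, via Axioms~2, 1 and~6. That suffices, since A3 for $aR^2_\beta c$ applies to diamonds of any rank.
\item Axiom~1 alone cannot lower $\dmnd{2}{\alpha}$ to rank one, because there is no $\dmnd{2}{1}$. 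In II.1 (your parenthetical about $\dmnd{k}{1}$) and in III.1 (passing from $\dmnd{i}{\alpha}\dmnd{k}{\gamma}D$ to $\dmnd{k}{\gamma}D$ when $i=2$, $\gamma=1$), apply Axiom~2 before Axiom~1.
\end{itemize}
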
  
\begin{proof} We are going to prove that the model conditions I--IV are satisfied:
\begin{enumerate}[label=\Roman*.]
    \item Monotonicity: since states are deductively closed, these follow immediately from A1, B1 and C1.
    \item Transitivity: we check each condition.
        \begin{enumerate}[label=II.\arabic*]
            \item $aR^i_\alpha bR^j_\alpha c$ implies $aR^j_\alpha c$: we verify that A1--A3 hold for $aR^j_\alpha c$.
             \begin{enumerate}[label=A\arabic*:]
                \item If $A\in c$ then by condition A1 for $bR^j_\alpha c$ we have $\dmnd{j}{\alpha} A\in b$. By A1 for $aR^i_\alpha b$ we get $\dmnd{i}{\alpha} \dmnd{j}{\alpha} A\in a$. Since $a$ is deductively closed and $\dmnd{i}{\alpha} \dmnd{j}{\alpha} A\vdash_\mathsf{BI_\omega} \dmnd{j}{\alpha} A$ we get that $\dmnd{j}{\alpha} A\in a$.
                \item If $\beta <\alpha$ (or $\alpha=\beta=\omega$) and $\dmnd{k}{\beta} A\in a$, by A2 for $aR^i_\alpha b$ we have $\dmnd{k}{\beta} A\in b$, and further by A2 on $bR^j_\alpha c$ we get that $\dmnd{k}{\beta} A\in c$.
                \item Only for $j=2$. If $\dmnd{k}{\beta} A\in a$, then $\dmnd{1}{1} A\in a$ by Axioms 2 and 1. By A2 for $aR^i_\alpha b$ we have $\dmnd{1}{1} A\in b$, and by A3 for $bR^2_\alpha c$ we get $\dwn A\in c$.
            \end{enumerate}
            \item $aR^\nabla b R^\nabla c$  implies $a R^\nabla c$: we only need to check that C1 holds for $aR^\nabla c$. 
            
            If $A\in c$ then by C1 for $bR^\nabla c$ we have $\nabla A\in b$, and by C1 for $aR^\nabla b$ we then get $\nabla\nabla A\in a$. Since $a$ is deductively closed and $\nabla \nabla A\vdash_\mathsf{BI_\omega}\nabla A$ we get that $\nabla A\in a$.
            \item $aR^\nabla bR^1_\alpha c$ implies $a R^1_1 c$: we only need to check A1 for $aR^1_1 c$, as A2 is vacuous for rank $1$.

            If $A\in c$ then by A1 for $bR^1_\alpha c$ we have $\dmnd{1}{\alpha} A\in b$, and by C1 for $aR^\nabla b$ we get $\nabla \dmnd{1}{\alpha} A\in a$. Since $a$ is deductively closed and $\nabla \dmnd{1}{\alpha} A\vdash_\mathsf{BI_\omega}\dmnd{1}{1} A$, we conclude that $\dmnd{1}{1} A\in a$.
            \item $aR^1_\alpha b R^\nabla c$ implies $a R^1_1 c$: as before, we only need to check A1 for $aR^1_1 c$.

            If $A\in c$ then by C1 for $bR^\nabla c$ we have $\nabla A\in b$. From this, by A1 for $aR^1_\alpha b$ we get $\dmnd{1}{\alpha} \nabla A\in a$. Since $\dmnd{1}{\alpha} \nabla A\vdash_{\mathsf{BI_\omega}}\dmnd{1}{1} A$ and $a$ is deductively closed, we conclude that $\dmnd{1}{1} A\in a$.
            \item $aR^\dwn bR^\nabla c$ implies $a R^\dwn c$: we check B1 for $aR^\dwn c$.

            If $A\in c$ then by C1 for $bR^\nabla c$ we have $\nabla A\in b$. Then, by B1 for $aR^\dwn b$ we have $\dwn\nabla A\in a$. Since $\dwn\nabla A\vdash_\mathsf{BI_\omega}\dwn A$, by deductive closure we get $\dwn A\in a$.
        \end{enumerate}
    \item Conjunction conditions:
        \begin{enumerate}[label=III.\arabic*]
            \item $aR^i_\alpha b$ and $a R^j_\beta c$ implies $bR^j_\beta c$, for $\beta<\alpha$ or $\alpha=\beta=\omega$: we verify A1--A3 for $bR^j_\beta c$.
                \begin{enumerate}[label=A\arabic*:]
                    \item If $C\in c$ then by A1 for $aR^j_\beta c$ we have $\dmnd{j}{\beta} C\in a$. Since $\beta<\alpha$ or $\alpha=\beta=\omega$, by A2 for $aR^i_\alpha b$ we get $\dmnd{j}{\beta} C\in b$.
                    \item If $\dmnd{k}{\gamma} B\in b$, for $\gamma<\beta$, then by A1 for $aR^i_\alpha b$ we have $\dmnd{i}{\alpha} \dmnd{k}{\gamma} B\in a$. Since $\dmnd{i}{\alpha} \dmnd{k}{\gamma} B\vdash_\mathsf{BI_\omega}\dmnd{k}{\gamma} B$ we have $\dmnd{k}{\gamma} B\in a$. From this, by A2 for $aR^j_\beta c$ we get $\dmnd{k}{\gamma} B\in c$.
                    \item Only for $j=2$. If $\dmnd{k}{\gamma} B\in b$ then by A1 for $aR^i_\alpha b$ we get $\dmnd{i}{\alpha} \dmnd{k}{\gamma} B\in a$. Since $a$ is deductively closed and $\dmnd{i}{\alpha} \dmnd{k}{\gamma} B\vdash_\mathsf{BI_\omega}\dmnd{1}{1} B$, we have that $\dmnd{1}{1} B\in a$. By now using A3 on $aR^2_\beta c$ we get $\dwn B\in c$.
                \end{enumerate}
             \item $a R^2_\alpha b$ and $aR^i_\beta c$ implies $bR^\dwn c$: we only need to verify B1 for $bR^\dwn c$:

             If $A\in c$ then by A1 for $aR^i_\beta c$ we have $\dmnd{i}{\beta} A\in a$. By A3 on $aR^2_\alpha b$ we get that $\dwn A\in b$.
        \end{enumerate}
     \item Constructive conditions:
        \begin{enumerate} [label=IV.\arabic*] 
            \item $aR^\dwn b$ implies $\exists c(aR^1_\omega c)$. By B1 on $aR^\dwn b$ we have that $\dwn\top\in a$. Since $a$ is deductively closed and $\dwn\top\vdash_\mathsf{BI_\omega}\dmnd{1}{\omega} \top$, we also have that $\dmnd{1}{\omega}\top \in a$. Then by Lemma \ref{lem 7 adequate 2} there has to be a state $c$ such that $aR^1_\omega c$.
            \item $aR^\dwn bR^1_\alpha c$ implies $\exists d(aR^1_\omega dR^\dwn c)$. Let $d$ be the deductive closure of the set $\{\dwn C\;|\;C\in c\}\cup\{\dmnd{i}{\beta} A\;|\;\dmnd{i}{\beta} A\in a\}$. By definition $d R^\dwn c$. For $aR^1_\omega d$ we verify A1 and A2:
            \begin{enumerate}[label=A\arabic*:]
                \item If $D\in d$ then there are sequences $( C_k)_{k\leq n}$ from $c$, and $(\dmnd{i_k}{\gamma_k}A_k)_{k\leq m}$ from $a$ such that $\dwn C_1\wedge ...\wedge \dwn C_n\wedge \dmnd{i_1}{\gamma_1}A_1\wedge ...\wedge \dmnd{i_m}{\gamma_m} A_m\vdash_\mathsf{BI_\omega} D$. Since $c$ is deductively closed, we then have that $C'=C_1\wedge C_2\wedge\dots\wedge C_n$ is in $c$, hence $\dwn C'$ is in $d$. Moreover, $\dwn C'\wedge \dmnd{i_1}{\gamma_1}A_1\wedge ...\wedge \dmnd{i_m}{\gamma_m} A_m\vdash_\mathsf{BI_\omega} D$. By using the Mod$_1$ rule we get that 
                \begin{equation}\label{equation 2.19 IV.2 A1}
                \dmnd{1}{\omega}(\dwn C'\wedge \dmnd{i_1}{\gamma_1}A_1\wedge ...\wedge \dmnd{i_m}{\gamma_m} A_m)\vdash_\mathsf{BI_\omega} \dmnd{1}{\omega} D.
                \end{equation}
                Since $bR^1_\alpha c$ and $C'\in c$, condition A1 gives $\dmnd{1}{\alpha}C'\in b$, and since $aR^{\downarrow}b$, condition B1 gives ${\dwn}\dmnd{1}{\alpha}C'\in a$. Now ${\dwn}\dmnd{1}{\alpha}C'\vdash_{\mathsf{BI_\omega}}{\dwn}\dmnd{1}{1}C' \vdash_{\mathsf{BI_\omega}}\dmnd{1}{\omega}{\dwn}C'$ (Axiom 1 with Mod$_2$, then Axiom 14), so by deductive closure $\dmnd{1}{\omega}{\dwn}C'\in a$, and hence
                \[
                \dmnd{1}{\omega}{\dwn}C'\wedge\dmnd{i_1}{\gamma_1}A_1\wedge\dots\wedge\dmnd{i_m}{\gamma_m}A_m\in a.
                \]
                Each $\dmnd{i_k}{\gamma_k}A_k$ has $\gamma_k<\omega$, or $\gamma_k=\omega$ and $i_k=1$, so by $m$ applications of Axiom 12 we get
                \[
                \dmnd{1}{\omega}{\dwn}C'\wedge\dmnd{i_1}{\gamma_1}A_1\wedge\dots\wedge \dmnd{i_m}{\gamma_m}A_m\vdash_{\mathsf{BI_\omega}} \dmnd{1}{\omega}\big({\dwn}C'\wedge\dmnd{i_1}{\gamma_1}A_1\wedge\dots\wedge \dmnd{i_m}{\gamma_m}A_m\big).
                \]
                By using Cut with \eqref{equation 2.19 IV.2 A1} we get $\dmnd{1}{\omega}D\in a$, by the deductive closure of $a$.
                
                \item If $\dmnd{i}{\beta} A\in a$, then from the definition of the set $d$ we immediately get that $\dmnd{i}{\beta} A\in d$.
            \end{enumerate}
            \item $aR^2_\alpha b$ implies $\exists c$ such that $aR^\nabla c$,
            $cR^{\dwn}a$, $\forall d\,\forall R\,(bRd \Rightarrow cRd)$, and
            $\mathcal{V}(b,p) \leq \mathcal{V}(c,p)$ for all $p \in \mathsf{Var}$.
            
            Let $c$ be the deductive closure of $\{{\dwn}A \mid A \in a\} \cup b$. Since $b \subseteq c$, the condition $\mathcal{V}(b,p) \leq \mathcal{V}(c,p)$ is immediate. For $cR^{\dwn}a$, every $A \in a$ has ${\dwn}A \in c$ by construction, so B1 is satisfied.
            
            For $aR^\nabla c$ we have to verify C1: if $C \in c$ then $\nabla C \in a$. Given $C \in c$, there are $A_1, \dots, A_k \in a$ and $B_1, \dots, B_l \in b$ with ${\dwn}A_1 \wedge \dots \wedge {\dwn}A_k \wedge B_1 \wedge \dots \wedge B_l \vdash_{\mathsf{BI_\omega}} C$. By condition A1 for $aR^2_\alpha b$ we have $\dmnd{2}{\alpha}(B_1 \wedge \dots \wedge B_l) \in a$, hence by Axiom 1 also $\dmnd{2}{2}(B_1 \wedge \dots \wedge B_l) \in a$, so $A_1 \wedge \dots \wedge A_k \wedge \dmnd{2}{2}(B_1 \wedge \dots \wedge B_l)
            \in a$, from which Axiom 15 gives $\nabla({\dwn}A_1 \wedge \dots \wedge {\dwn}A_k \wedge B_1 \wedge \dots \wedge B_l) \in a$, and hence $\nabla C \in a$.
            
            We first prove the following property:
            \begin{equation}\label{eq b in c}
                \text{For every formula }\dmnd{j}{\beta}C,\text{ if }\dmnd{j} {\beta}C \in c\text{ then }\dmnd{j}{\beta}C \in b.
            \end{equation}
            \noindent To prove (\ref{eq b in c}), pick $A_1, \dots, A_k \in a$ and $B \in b$ such that
            \[ {\dwn}A_1 \wedge \dots \wedge {\dwn}A_k \wedge B
               \vdash_{\mathsf{BI_\omega}} \dmnd{j}{\beta}C. \]
            Lemma \ref{lem downarrow to diamond} yields sequences $(A^q_p)_{q \leq n_p}$
            (for $p \leq k$) and $(D_r)_{r \leq m}$ such that:
            \begin{enumerate}
                \item[(a)] $A_p \vdash_{\mathsf{BI_\omega}} \dmnd{1}{1}A^q_p$ for all $p, q$.\\
                
                By deductive closure of $a$, each $\dmnd{1}{1}A^q_p \in a$, and by Lemma \ref{lem packing} applied to $aR^2_\alpha b$,
                \[ {\dwn}\Big(\bigwedge_{q \leq n_p}\dmnd{1}{1}A^q_p\Big) \in b
                   \quad \text{for every } p \leq k. \]
                   
                \item[(b)] ${\dwn}A_1 \wedge \dots \wedge {\dwn}A_k \wedge B \vdash_{\mathsf{BI_\omega}} \dmnd{1}{1}D_r$ for all $r \leq m$.\\
                
                Each $\dmnd{1}{1}D_r \in c$, so by the already established condition C1 for $aR^\nabla c$ we get $\nabla\dmnd{1}{1}D_r \in a$, and by Axiom 8 and deductive closure, $\dmnd{1}{1}D_r \in a$. Condition A2 for $aR^2_\alpha b$ (as $1 < \alpha$) then gives $\dmnd{1}{1}D_r \in b$ for every $r \leq m$.\\
                
                \item[(c)] $\displaystyle \bigwedge_{p \leq k} {\dwn}\Big(\bigwedge_{q \leq n_p}\dmnd{1}{1}A^q_p\Big) \wedge \bigwedge_{r \leq m}\dmnd{1}{1}D_r \wedge B  \vdash_{\mathsf{BI_\omega}} \dmnd{j}{\beta}C$.
            \end{enumerate}
            By (a) and (b), every conjunct of the left-hand side of (c) belongs to $b$, so $\dmnd{j}{\beta}C \in b$ by deductive closure. Hence (\ref{eq b in c}) holds.\\
        
            It remains to verify $\forall d\,\forall R\,(bRd \Rightarrow cRd)$. The cases $R = R^{\dwn}$ and $R = R^\nabla$ are immediate since $b \subseteq c$. Suppose $bR^j_\gamma d$. We verify conditions A1--A3 for $cR^j_\gamma d$:
            \begin{itemize}
                \item[A1:] If $D \in d$, then $\dmnd{j}{\gamma}D\in b$ by A1 for $bR^j_\gamma d$, and $b \subseteq c$ gives $\dmnd{j}{\gamma}D \in c$.
                \item[A2:] Suppose $\beta < \gamma$ (or $\beta = \gamma = \omega$) and $\dmnd{i'}{\beta}C \in c$. By (\ref{eq b in c}), $\dmnd{i'}{\beta}C \in b$, and A2 for $bR^j_\gamma d$ gives $\dmnd{i'}{\beta}C \in d$.
                \item[A3:] ($j = 2$) Suppose $\dmnd{i'}{\beta}C \in c$. By (\ref{eq b in c}), $\dmnd{i'}{\beta}C \in b$, and A3 for $bR^2_\gamma d$ gives ${\dwn}C \in d$.
            \end{itemize}
        \end{enumerate}
\end{enumerate}
\end{proof}

\begin{proposition}[Completeness]\label{prop completeness}
    If $A \vdash B$ is true in all $\mathsf{RC_{BI_\omega}}$ models, then $A \vdash_{\mathsf{BI_\omega}} B$.
\end{proposition}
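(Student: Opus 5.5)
We argue by contraposition, using the canonical model $W_{\mathsf{BI_\omega}}$ of Definition \ref{def adequate 3}. Suppose $A \nvdash_{\mathsf{BI_\omega}} B$. By Lemma \ref{lem adequate 1}, $W_{\mathsf{BI_\omega}}$ is an $\mathsf{RC_{BI_\omega}}$ model, so it suffices to find a state of it at which $A$ holds and $B$ fails. That state will witness that the sequent $A \vdash B$ is not true in all $\mathsf{RC_{BI_\omega}}$ models.

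We take the state
\[ a = \{ C \mid A \vdash_{\mathsf{BI_\omega}} C \}. \]
This set is deductively closed: if $C_1,\dots,C_n \in a$ and $C_1\wedge\dots\wedge C_n \vdash_{\mathsf{BI_\omega}} D$, then R$\wedge$ and Cut give $A \vdash_{\mathsf{BI_\omega}} D$. Hence $a \in \mathcal{S}_{\mathsf{BI_\omega}}$.

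By the identity axiom $A \in a$, and by the assumption $B \notin a$. Lemma \ref{lem 7 adequate 2} then yields $W_{\mathsf{BI_\omega}}, a \models A$ and $W_{\mathsf{BI_\omega}}, a \not\models B$. So $A \vdash B$ fails in $W_{\mathsf{BI_\omega}}$, which is the required contrapositive.

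All the real work sits in Lemmas \ref{lem 7 adequate 2} and \ref{lem adequate 1}, which we may assume. The only point to check here is that ``deductively closed'' in Definition \ref{def adequate 3} means closed under finite conjunctions and $\vdash_{\mathsf{BI_\omega}}$-consequence, which is exactly what we verified for $a$. No consistency requirement on states is needed, since the language has no $\bot$.
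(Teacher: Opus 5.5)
Your proof is correct and is the paper's argument: contraposition, with $a$ taken to be the deductive closure of $A$ as a state of the canonical model $W_{\mathsf{BI_\omega}}$, then Lemma \ref{lem adequate 1} and Lemma \ref{lem 7 adequate 2}. Your explicit check that $a$ is deductively closed, via R$\wedge$ and Cut, is a detail the paper leaves implicit.
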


\begin{proof}
    We prove the contrapositive. Suppose that $A \nvdash_{\mathsf{BI_\omega}} B$ and let $a$ be the deductive closure of $A$ within $\mathcal{L}_{\mathsf{BI_\omega}}$. By definition we have that $A \in a$ and $B \notin a$. Also, $a$ is a valid state in $W_{\mathsf{BI_\omega}}$. From Lemma \ref{lem adequate 1}, we know that $W_{\mathsf{BI_\omega}}$ is an $\mathsf{RC_{BI_\omega}}$ model. By Lemma \ref{lem 7 adequate 2}, since $A \in a$ and $B \notin a$, we have $W_{\mathsf{BI_\omega}}, a \models A$ and $W_{\mathsf{BI_\omega}}, a \not\models B$. Therefore, the sequent $A \vdash B$ is not true in $W_{\mathsf{BI_\omega}}$.
\end{proof}


\vspace{\baselineskip}
\section{Finite model constructions}\label{section fin models}
In this section, we introduce several types of finite rooted structures derived from the variable-free formulas of $\mathsf{RC_{BI_\omega}}$ that will serve as the main technical tools in the proofs of Section~\ref{section linearity}. A key advantage of working with finite models is that they admit a natural notion of embeddability, which will play a central role in connecting semantic and syntactic aspects of the logic.

\begin{definition}\label{def 9 embedding}
    An embedding $f : V \hookrightarrow W$ of a model $V = (\mathcal{S}_V, \mathcal{R}_V, \mathcal{V}_V)$ into a model $W = (\mathcal{S}_W, \mathcal{R}_W, \mathcal{V}_W)$ is a function from $\mathcal{S}_V$ to $\mathcal{S}_W$ such that:
    \begin{itemize}
        \item $\forall a, b \in \mathcal{S}_V$ and any relation symbol $R$, $a R b \in \mathcal{R}_V$ implies $f(a) R f(b) \in \mathcal{R}_W$;
        \item For all $a\in\mathcal{S}_V$ and all propositional variables $p \in \mathsf{Var}$, if $V, a \models p$ then $W, f(a) \models p$.
    \end{itemize} 
    If such a function exists we write $V \hookrightarrow W$ and say that $V$ is embeddable into $W$.
\end{definition}

\begin{proposition}\label{prop embedding}
    Given models $V = (\mathcal{S}_V, \mathcal{R}_V, \mathcal{V}_V)$ and $W = (\mathcal{S}_W, \mathcal{R}_W, \mathcal{V}_W)$, if $f : V \hookrightarrow W$ is an embedding, then for every $a \in \mathcal{S}_V$ and every formula $A$,
    \[ V, a \models A \implies W, f(a) \models A. \]
\end{proposition}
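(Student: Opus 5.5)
We argue by structural induction on $A$, proving the claim simultaneously for all states $a\in\mathcal{S}_V$. Since every modality of $\mathcal{L}_{\mathsf{BI_\omega}}$ is interpreted existentially and formulas are strictly positive, the statement is a monotonicity property. It needs only the two defining clauses of an embedding in Definition \ref{def 9 embedding}: preservation of each relation in $\mathcal{R}$, and preservation of truth of propositional variables. We never use injectivity, and we never use the frame conditions of Definition \ref{def 3 frame 1}. The argument therefore works verbatim for arbitrary Kripke models in the language, not only for $\mathsf{RC_{BI_\omega}}$ models.

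For the base cases, $W,f(a)\models\top$ holds trivially by Definition \ref{def 4 sat model}. If $A=p$ is a propositional variable and $V,a\models p$, then $W,f(a)\models p$ by the second clause of Definition \ref{def 9 embedding}. For a conjunction $A=B\wedge C$, $V,a\models B\wedge C$ gives $V,a\models B$ and $V,a\models C$. The induction hypothesis then yields $W,f(a)\models B$ and $W,f(a)\models C$, hence $W,f(a)\models B\wedge C$.

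For the modal cases, let $\circ$ be any of $\dmnd{1}{\alpha}$, $\dmnd{1}{\omega}$, $\dmnd{2}{\alpha}$, ${\dwn}$ or $\nabla$, and let $R$ be the corresponding relation symbol in $\mathsf{Rel(RC_{BI_\omega})}$. Suppose $V,a\models\circ B$. By Definition \ref{def 4 sat model} there is $b\in\mathcal{S}_V$ with $aRb\in\mathcal{R}_V$ and $V,b\models B$. By the first clause of Definition \ref{def 9 embedding}, $f(a)Rf(b)\in\mathcal{R}_W$. By the induction hypothesis applied to $B$ at the state $b$, $W,f(b)\models B$. So $f(b)$ witnesses $W,f(a)\models\circ B$. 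All five modalities are handled uniformly by this single argument.

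There is no genuine obstacle in this proof. The only point requiring care is to state the induction hypothesis for all states of $V$ at once, since the modal step moves from $a$ to a successor $b$.
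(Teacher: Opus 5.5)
Your proof is correct and matches the paper's argument: a structural induction on $A$, with base cases for $\top$ and propositional variables, a conjunction case, and one uniform modal step that pushes the witnessing transition $aRb$ forward to $f(a)Rf(b)$. Your observation that neither the frame conditions nor injectivity are used is also accurate. It is what licenses the paper's later application of this proposition to the trees $T_A$, which are not $\mathsf{RC_{BI_\omega}}$ models.
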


\begin{proof} 
    We prove this via induction on the construction of $A$. The base case of $A$ being $\top$ is immediate, and the case of $A$ being a propositional variable is true by the definition of an embedding. 

    For the case $A = \dmnd{1}{\alpha} B$, if $V, a \models \dmnd{1}{\alpha} B$, then there exists $b \in \mathcal{S}_V$ such that $a R^1_\alpha b \in \mathcal{R}_V$ and $V, b \models B$. This entails that $f(a) R^1_\alpha f(b) \in \mathcal{R}_W$. By the inductive hypothesis we have $W, f(b) \models B$, hence $W, f(a) \models \dmnd{1}{\alpha} B$.
    
    The cases for $A = \dmnd{2}{\alpha} B$, $A = \nabla B$, and $A = {\dwn} B$ follow by the exact same argument.

    For $A = B \wedge C$, if $V, a \models B \wedge C$ then $V, a \models B$ and $V, a \models C$. By the inductive hypothesis we have that $W, f(a) \models B$ and $W, f(a) \models C$, from which we get $W, f(a) \models B \wedge C$.
\end{proof}

\vspace{\baselineskip}
\subsection{Rooted models and modal trees $T_A$}
In this subsection, we introduce the notion of rooted models and standard modal trees $T_A$ for formulas of $\mathsf{RC_{BI_\omega}}$. For a given formula $A$, the modal tree $T_A$ is a rooted model that explicitly represents the structure of $A$ in terms of its sub-formulas and modal operators. Each node in $T_A$ corresponds to a sub-formula, and the transitions are labeled by the respective modalities appearing in $A$. Despite not being full $\mathsf{RC_{BI_\omega}}$ models themselves, the modal trees characterise satisfiability: a formula holds at a given state in a Kripke model if and only if its modal tree can be embedded into the model such that the root is mapped to that state (Proposition \ref{prop embed}).

Essentially, $T_A$ can be viewed as a simple directed tree graph with a distinguished root point, constructed directly from the syntactic form of the formula. We proceed by formally defining rooted structures as follows:

\begin{definition}\label{def root}
    A rooted model $M$ within $\mathcal{L}_{\mathsf{BI_\omega}}$ is a tuple $(r_M, \mathcal{S}_M, \mathcal{R}_M, \mathcal{V}_M)$ where:
    \begin{itemize}
        \item $(\mathcal{S}_M, \mathcal{R}_M, \mathcal{V}_M)$ is a Kripke model in $\mathcal{L}_{\mathsf{BI_\omega}}$;
        \item $r_M \in \mathcal{S}_M$ is such that $\forall s \in \mathcal{S}_M$, there exists a directed path from $r_M$ to $s$. More formally, for any $s \neq r_M$, there exists a finite sequence of states $s_1, s_2, \dots, s_n \in \mathcal{S}_M$ and relation symbols $R_0, R_1, \dots, R_n$ such that $r_M R_0 s_1$, $s_1 R_1 s_2$, $\dots$, $s_n R_n s$ are all in $\mathcal{R}_M$.
    \end{itemize}
\end{definition}

\begin{definition}\label{def root embedding}
    Given rooted models $M_1$ and $M_2$ with roots $r_{M_1}$ and $r_{M_2}$ respectively, we write $M_1 \hookrightarrow_r M_2$ to denote that there exists an embedding $f : M_1 \hookrightarrow M_2$ such that $f(r_{M_1}) = r_{M_2}$.
\end{definition}

We denote the trivial model as $T_\top = (r_\top, \{r_\top\}, \emptyset, \mathcal{V}_\top)$, where $\mathcal{V}_\top(r_\top, p) = 0$ for all propositional variables $p \in \mathsf{Var}$. 


We define $T_A$ similarly to the canonical trees $T[A]$ from \cite{beklemishev2014positive}:
\begin{definition}\label{def 10 trees 1}
    Given an $\mathsf{RC_{BI_\omega}}$ formula $A$, we inductively define the modal tree $T_A = (r^T_A, \mathcal{S}^T_A, \mathcal{R}^T_A, \mathcal{V}^T_A)$ associated with it as follows:
    \begin{itemize}
        \item $T_\top$ is as previously defined; 
        \item $T_p = (r^T_p, \{r^T_p\}, \emptyset, \mathcal{V}^T_p)$, where $\mathcal{V}^T_p(r^T_p, q) = 1$ iff $q = p$ for $q \in \mathsf{Var}$;
        \item $T_{\dmnd{i}{\alpha}A}$ is $T_A$ extended by a new root point $r^T_{\dmnd{i}{\alpha}A}$ such that $r^T_{\dmnd{i}{\alpha}A} R^i_\alpha r^T_A$;
        \item $T_{\dwn A}$ is $T_A$ extended by a new root point $r^T_{\dwn A}$ such that $r^T_{\dwn A} R^\dwn r^T_A$;
        \item $T_{\nabla A}$ is $T_A$ extended by a new root point $r^T_{\nabla A}$ such that $r^T_{\nabla A} R^\nabla r^T_A$;
        \item $T_{A\wedge B}$ is the disjoint union of $T_A$ and $T_B$ with roots $r^T_A$ and $r^T_B$ identified (note: the roots take the union of the valuations).
    \end{itemize}
\end{definition}

\begin{proposition}\label{prop embed}
    If $W = (\mathcal{S}_W, \mathcal{R}_W, \mathcal{V}_W)$ is an $\mathsf{RC_{BI_\omega}}$ model, then for any $a \in \mathcal{S}_W$ and formula $A$, 
    \[ W, a \models A \text{ iff } \exists f : T_A \hookrightarrow W \text{ with } f(r^T_A) = a. \]
\end{proposition}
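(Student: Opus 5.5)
We prove both directions simultaneously by induction on the construction of $A$, for all states $a\in\mathcal S_W$ at once. Note that neither direction uses the frame conditions of Definition \ref{def 3 frame 1}; the argument works for any Kripke model in $\mathcal L_{\mathsf{BI_\omega}}$, and the hypothesis that $W$ is an $\mathsf{RC_{BI_\omega}}$ model is inessential. The right-to-left direction is essentially Proposition \ref{prop embedding}, once we check that $T_A, r^T_A\models A$. That check is itself a simple induction on $A$ following Definition \ref{def 10 trees 1}, using that every modal step in $T_A$ is witnessed by the edge from the new root to $r^T_B$. The conjunction case uses that $T_B$ and $T_C$ both embed into $T_{B\wedge C}$ fixing roots. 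The same observation (that formulas are monotone along root-preserving embeddings) handles the identified-root valuation union. Then given $f:T_A\hookrightarrow W$ with $f(r^T_A)=a$, Proposition \ref{prop embedding} yields $W,a\models A$.

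For the left-to-right direction, we construct $f$ by recursion on $A$. For $\top$, we map $r_\top\mapsto a$, and there are no edges or true variables to check. For $p$, we map $r^T_p\mapsto a$, and the only true variable $p$ holds at $a$ by assumption. For $A=\circ B$ with $\circ$ any of $\dmnd{i}{\alpha}$, $\dmnd{1}{\omega}$, $\dwn$ or $\nabla$, Definition \ref{def 4 sat model} gives $b$ with $aR^\circ b$ and $W,b\models B$. The induction hypothesis gives $g:T_B\hookrightarrow W$ with $g(r^T_B)=b$. We extend $g$ by sending the new root to $a$, and the only new edge $r^T_{\circ B}R^\circ r^T_B$ is mapped to $aR^\circ b$. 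For $A=B\wedge C$, we obtain $g:T_B\hookrightarrow W$ and $h:T_C\hookrightarrow W$, both sending their roots to $a$. We define $f$ on the disjoint union as $g$ on $T_B$ and $h$ on $T_C$; this is well defined on the identified root precisely because $g$ and $h$ agree there.

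The only point needing care, and the mild obstacle, is the conjunction case in both directions. There we must confirm that the edges of $T_{B\wedge C}$ are exactly the union of the edges of $T_B$ and $T_C$ (so they are preserved by $f$). We must also confirm that the root's valuation is the union of the two root valuations, each of which is respected at $a$ since $g$ and $h$ are embeddings. Both facts are immediate from Definition \ref{def 10 trees 1}. Since the recursion only ever maps edges of the tree to witnessing edges in $W$, no appeal to the frame conditions of Definition \ref{def 3 frame 1} is needed.
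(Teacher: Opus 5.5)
Your proposal is correct and follows the paper's proof essentially step for step. For the right-to-left direction you verify $T_A,r^T_A\models A$ by induction and transfer it along the embedding via Proposition~\ref{prop embedding}; for the left-to-right direction you build the embedding by recursion on $A$, using the witnessing transition in the modal cases and gluing the two root-preserving embeddings in the conjunction case. Your remark that the frame conditions of Definition~\ref{def 3 frame 1} are never used is also accurate, since the paper's argument does not use them either.
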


\begin{proof}
     ($\Leftarrow$) An easy induction on $A$ shows that $T_A, r^T_A\models A$: for $A$ of the forms $\dmnd{i}{\alpha} B$, ${\downarrow}B$, and $\nabla B$ the added root transition points to the root of $T_B$, and for conjunctions the glued root inherits the transitions and valuations of both roots. Proposition \ref{prop embedding} then transfers satisfaction along the embedding $f$, giving us that $W,a\models A$.

    ($\Rightarrow$) By induction on $A$. The cases $A=\top$ and $A=p$ are trivial. If $A=B\wedge C$, the embeddings of $T_B$ and $T_C$ obtained from the induction hypothesis both map their respective roots to $a$, hence their union is an embedding of $T_{B\wedge C}$. If $A=\dmnd{i}{\alpha} B$, then $W,a\models A$ provides $b$ with $aR^i_\alpha b$ and $W,b\models B$. Thus we can map the root of $T_A$ to $a$, and the induction hypothesis embeds $T_B$ at $b$. The cases $A={\downarrow}B$ and $A=\nabla B$ are analogous, using the witnessing transition directly.
\end{proof}

\vspace{\baselineskip}
\subsection{Tree-like models $M_A$}\label{subsec:tree-like-submodels}
We next define the finite tree-like model $M_A$ associated with a variable-free formula $A$ of $\mathsf{RC_{BI_\omega}}$. Unlike the modal trees $T_A$, the structure $M_A$ constitutes a valid $\mathsf{RC_{BI_\omega}}$ model (Proposition \ref{prop MA is a model}) such that $A$ is satisfied at its root (Proposition \ref{prop Mmodel}). Lemma \ref{lem 11 trees 3} establishes that satisfiability in the canonical model is characterized by embeddings from $M_A$. These properties yield a semantic characterization of derivability in $\mathsf{RC_{BI_\omega}}$: a sequent $A \vdash_{\mathsf{BI_\omega}} B$ is derivable if and only if $B$ is satisfied at the root of $M_A$ (Theorem \ref{thm trees}). For the rest of the paper, since we are working with the variable-free fragment, we omit valuation functions.

\begin{definition}\label{def rank}
We define the rank of a transition in a model as $1$ if it is an $R^\dwn$ or $R^\nabla$ transition, and $\alpha$ if it is an $R^i_\alpha$ transition. Correspondingly, we define the rank of the modality $\dmnd{i}{\alpha}$ as $\alpha$, and that of $\nabla$ and $\dwn$ as $1$.
\end{definition}
\begin{definition}\label{def alpha path}
    A path from $a$ to $b$ in a model is a sequence of points $a=a_0,a_1,\dots,a_n=b$ with $n\geq0$ such that $a_{k-1}Ra_k$ for every $1\leq k\leq n$. We call a path proper if $n\geq1$. For $\alpha\geq 2$, we define an $\alpha$-path to be a path whose transitions have rank $\geq\alpha$.
\end{definition}

\begin{definition}\label{def 12 trees 3}
    Given a variable-free $\mathsf{RC_{BI_\omega}}$ formula $A$, we define the rooted tree-like model $M_A = (r_A, \mathcal{S}_A, \mathcal{R}_A)$ as follows:
    \begin{enumerate}[label=(\arabic*)]
    \item \textbf{Pre-model} $M^0_A=(r_A, \mathcal{S}^0_A, \mathcal{R}^0_A)$ is defined via induction on $A$ in terms of the pre-models and models of the immediate sub-formulas of $A$:
    \begin{itemize}
        \item If $A = \top$, $M^0_A = T_\top$;
        \item If $A = \nabla B$, then $M^0_A$ is $M^0_B$ extended by $r_A$ such that $r_A R^\nabla r_B$;
        \item If $A = B \wedge C$, then $M^0_A$ is the disjoint union of $M^0_B$ and $M^0_C$ with $r_B$ and $r_C$ identified;
        \item If $A = \dmnd{1}{\alpha} B$, then $M^0_A$ is $M^0_B$ extended by $r_A$ such that $r_A R^1_\alpha r_B$;
         \item If $A = \dmnd{2}{\alpha} B$, then $M^0_A$ is $M^0_B$ extended by $r_A$ such that $r_A R^2_\alpha r_B$ and  $r_BR^1_\omega r_B\in \mathcal R^0_A$;
        \item If $A = {\dwn} B$, then $M^0_A$ is $M^0_B$ extended by $r_A$, a zero point $r^0_A$, and intermediate points $e^i_A$:
        \begin{itemize}
            \item $\mathcal{S}^0_A = \mathcal{S}^0_B \cup \{r_A,r_A^0\} \cup \{e^i_A\mid\exists b_i\in \mathcal S^0_B,\ r_B R^1_1 b_i\in\mathcal R_B\}$;
            \item $\mathcal R^0_A=\mathcal R^0_B\cup \{r_AR^\dwn r_B,\; r_AR^1_\omega r_A^0\}\cup \{r_AR^1_\omega e^i_A,\;e^i_AR^\dwn b_i\;|\;e^i_A\in \mathcal S^0_A\}$.
        \end{itemize}
    \end{itemize}
    Since the pre-models $M^0_A$ can be seen as extensions of the tree models $T_A$, we are going to call tree points the nodes that can be identified with the nodes of $T_A$. Note that these will then be precisely the points that are not an intermediate point or a zero point. As such, we can write the set of states as $\mathcal{S}^0_A=\mathcal{S}_A^{~tree}\cup\mathcal{S}_A^{~int}\cup \mathcal{S}_A^{~zero}$, where:
        \begin{itemize}
            \item[-] $\mathcal{S}_A^{~tree}$ denotes the set of ``tree points'';
            \item[-] $\mathcal{S}_A^{~int}$ denotes the set of ``intermediate points''; 
            \item[-] $\mathcal{S}_A^{~zero}$ denotes the set of ``zero points'';
        \end{itemize}
    The transitions between distinct points from $\mathcal{S}_A^{~tree}$ will be referred to as tree transitions.
    \item \textbf{Copy and anchor points.} Given a point $b$ in $M^0_A$, we define the 2-image of $b$ as the maximal 2-path $a_0,a_1,\dots,a_n=b$ (with $n\geq 0$) composed of pairwise distinct states in $\mathcal S^0_A$:
    \[ a_0\, R^{i_1}_{\alpha_1}\, a_1\, R^{i_2}_{\alpha_2}\, \cdots \, R^{i_n}_{\alpha_n}\, a_n \qquad (\alpha_k\geq2 \text{ for all } k\leq n). \]
    We call $a_0$ the anchor of $b$ (which we denote $b^\top$), and we call $b$ an anchor point if $b^\top=b$.

    We say that a point is an $R^2$ point if there is an $R^2_\alpha$ transition towards it in $\mathcal R^0_A$. For each $R^2$ point $b$ we add an extra point $b^\dwn$ which we call a ``copy point'' of $b$. We call the set of copy points $\mathcal S_A^{~copy}$, and define $\mathcal S_A=\mathcal S^0_A\cup\mathcal S_A^{~copy}$. We then expand the set of relations as follows:
    \[ \mathcal R^{c}_A = \mathcal R^{0}_A\cup\{b^\dwn R^1_\omega b,\;b^\top R^\nabla b^\dwn,\; b^\dwn R^\dwn b^\top \mid b\text{ is an } R^2\text{ point}\}. \]
    Denote the new structure as $M^c_A=(r_A,\mathcal S_A,\mathcal R^c_A)$ (informally the pre-model with copy points).

    \item \textbf{Closure.} $\mathcal{R}_A$ is the closure of $\mathcal{R}_A^c$ under the following rules (the \emph{internal closure procedure}):
        \begin{multicols}{2}
        \begin{itemize}
            \item Monotonicity:
             \begin{enumerate}
                \item If $aR^i_{\alpha} b$ add $aR^i_\beta b$ for all $i\leq \beta<\alpha$;
                \item If $aR^2_\alpha b$ add $aR^1_\alpha b$;
                \item Add $aR^\nabla a$ for all $a$;
                \item If $aR^1_\alpha b$ add $aR^\nabla b$;
                \item If $aR^\dwn b$ add $aR^\nabla b$;
            \end{enumerate}
            \item Transitivity:
            \begin{enumerate}
            \setcounter{enumii}{5}
                \item If $aR^i_\alpha b R^j_\alpha c$ add $a R^j_\alpha c$;
                \item If $aR^\nabla bR^\nabla c$ add $aR^\nabla c$;
                \item If $aR^\nabla bR^1_\alpha c$ add $aR^1_1 c$;
                \item If $aR^1_\alpha b R^\nabla c$ add $aR^1_1 c$;
                \item If $aR^\dwn b R^\nabla c$ add $aR^\dwn c$;
            \end{enumerate}
            \item Conjunction:
            \begin{enumerate}
            \setcounter{enumii}{10}
                \item If $aR^i_\alpha b$ and $aR^j_\beta c$ add $b R^j_\beta c$,\\ when either $\alpha>\beta$ or $\alpha=\beta=\omega$ holds;
                \item If $aR^2_\alpha b$ and $aR^i_\beta c$ add $b R^\dwn c$;
            \end{enumerate}
        \end{itemize}
        \end{multicols}
    \end{enumerate}
\end{definition}

We record the following facts about paths in $M^0_A$ that are used throughout the following proofs:
\begin{enumerate}[label=(P\arabic*)]
    \item Every $\alpha$-path is a $\beta$-path for $\beta\leq\alpha$, and the concatenation of two $\alpha$-paths is an $\alpha$-path.
    \item If there is a proper $\alpha$-path from $a$ to $b$ in $M^0_A$, then $aR^1_\alpha b\in\mathcal R_A$.
    \item In $M^0_A$ a point has at most one incoming transition of rank $\geq2$ apart from its own loop.
    \item Every point $b$ of $M^0_A$ has a unique 2-image, and every point admitting a 2-path to $b$ in $M^0_A$ lies on the 2-image of $b$.
    \item For an $R^2$ point $b$ the 2-image has $n\geq1$, consists of tree points, and ends with the $R^2$ tree transition into $b$.
\end{enumerate}
(P1) and (P2) are easy to prove, and (P4) and (P5) are direct consequences of (P3). 

For (P3): for a tree point this is its incoming tree transition, for a zero or intermediate point $e$ of $r_{A'}$ it is $r_{A'}R^1_\omega e$. Consequently, if there are proper $2$-paths from $b$ to $a$ and from $b'$ to $a$ with $b\neq b'$, then there is a $2$-path from $b$ to $b'$ or from $b'$ to $b$; and if there is a $2$-path from $b$ to $b'$, then every $2$-path from $b$ to $a$ passes through $b'$ and splits into a $2$-path from $b$ to $b'$ followed by a $2$-path from $b'$ to $a$. We also note that a proper $2$-path from a point to itself consists of loops, and that the anchor points are exactly $r_A$ and the tree points whose incoming tree transition has rank $1$.

\begin{proposition}\label{prop Mmodel}
For any variable-free $\mathsf{RC_{BI_\omega}}$ formula $A$, we have $M_A,r_A\models A$.
\end{proposition}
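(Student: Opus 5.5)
The plan is to show that the modal tree $T_A$ embeds into $M_A$ with root mapped to $r_A$, and then conclude with the easy direction of Proposition \ref{prop embed}: $T_A, r^T_A\models A$ together with Proposition \ref{prop embedding}. That argument only uses Proposition \ref{prop embedding}, which holds for arbitrary Kripke models, so we do not yet need to know that $M_A$ is an $\mathsf{RC_{BI_\omega}}$ model (Proposition \ref{prop MA is a model} comes later). For the embedding we take the identification of the nodes of $T_A$ with the tree points of $M^0_A$, which is already noted in Definition \ref{def 12 trees 3}. The map sends $r^T_A$ to $r_A$ and each node $r^T_B$, for an occurrence of the subformula $B$, to the corresponding tree point $r_B$.

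The first step is to check, by induction on the construction of $A$, that every transition of $T_A$ is present in $\mathcal R^0_A$ between the images:
\begin{itemize}
\item for $A=\dmnd{i}{\alpha}B$, the transition $r_A R^i_\alpha r_B$ is added verbatim;
\item for $A={\dwn}B$ and $A=\nabla B$, the transitions $r_AR^\dwn r_B$ and $r_AR^\nabla r_B$ are added verbatim;
\item for conjunctions, both the tree and the pre-model glue the two roots, so the two embeddings given by the induction hypothesis agree on the common root and their union is an embedding.
\end{itemize}
The extra structure only adds states and transitions: zero points, intermediate points, the loops $r_BR^1_\omega r_B$, copy points, and the closure. Since $\mathcal R^0_A\subseteq\mathcal R^c_A\subseteq\mathcal R_A$, the map is an embedding $T_A\hookrightarrow_r M_A$. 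The formulas are variable-free, so the valuation clause is vacuous.

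The one point requiring care is the inductive bookkeeping. $M^0_A$ is defined from the pre-models $M^0_B$ of the subformulas, not from the full models $M_B$. For example, the $\dwn$-clause refers to $R^1_1$-successors in $\mathcal R_B$, but only to create intermediate points. I must make sure the tree part of $M^0_B$ is carried unchanged into $M^0_A$, and that the closure and copy steps are applied only once, at the top level. The induction should therefore be stated for the pre-models: ``$T_B$ embeds into $M^0_B$ rooted at $r_B$'', proved for every subformula $B$. The statement for $M_A$ then follows from the inclusion $\mathcal R^0_A\subseteq\mathcal R_A$. I expect no genuine obstacle; the only subtle step is confirming that the clauses of Definition \ref{def 12 trees 3} never delete or relabel tree transitions, and this holds by inspection.
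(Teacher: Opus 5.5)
Your proposal is correct and follows essentially the same route as the paper: identify $T_A$ with the tree points and tree transitions of $M^0_A$, note that the later construction steps only add states and transitions, so $T_A\hookrightarrow_r M_A$, and then conclude by the easy direction of Proposition \ref{prop embed}. You also make explicit two points the paper leaves implicit: that easy direction rests only on Proposition \ref{prop embedding}, so Proposition \ref{prop MA is a model} is not needed, and the induction should be stated for the pre-models $M^0_B$.
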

\begin{proof}
The tree points and tree transitions of $M^0_A$ form an isomorphic copy of $T_A$ rooted at $r_A$, and the further steps of the construction only add points and transitions. Hence $T_A\hookrightarrow_r M_A$, and the claim follows from the easy direction of Proposition \ref{prop embed}.
\end{proof}
\begin{definition}\label{def connector}
    Let $A$ be variable-free and $\alpha\geq2$. Given points $a$ and $b$ of $M^0_A$, a point $c$ is called an $\alpha$-connector of $a$ to $b$ if there is a $(1+\alpha)$-path from $c$ to $a$ and a proper $\alpha$-path from $c$ to $b$ in $M^0_A$. If $c$ is an $\alpha$-connector of $a$ to $b$, we say that $c$ connects $a$ to $b$ at $\alpha$. 
\end{definition}
\begin{lemma}\label{lem rank2 paths}
Let $A$ be variable-free, $\alpha\geq2$ and $aR^i_\alpha b\in\mathcal R_A$.
\begin{enumerate}[label=(\roman*)]
    \item If $a$ is not a copy point, then there exists an $\alpha$-connector $c$ of $a$ to $b$.
    \item If $a$ is a copy point $d^\dwn$, then either $b=d$, or there is an $\alpha$-connector $c$ of $d$ to $b$.
    \item If $i=2$, then in both (i) and (ii) the connector $c$ can be chosen such that $c\neq b$. Also, in (ii) the connector $c\neq b$ exists even when $b=d$.
\end{enumerate}
In particular, no transition of rank $\geq2$ of $\mathcal R_A$ is towards a copy point, $r_A$, or a tree point whose incoming tree transition has rank $1$.
\end{lemma}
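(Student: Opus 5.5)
We argue by induction on the stage at which the transition $aR^i_\alpha b$ enters $\mathcal R_A$. The closure $\mathcal R_A$ is obtained from $\mathcal R^c_A$ by iterating the internal closure rules, so we fix an enumeration of derivation steps. We prove (i)--(iii) simultaneously for every rank-$\geq 2$ transition generated at each step. The final ``in particular'' clause then follows immediately. A connector $c$ of $a$ to $b$ gives a proper $\alpha$-path in $M^0_A$ ending at $b$, so $b$ is a point of $M^0_A$ with an incoming rank-$\geq2$ transition in $M^0_A$. By (P3)/(P5) and the remark after them, no copy point, $r_A$, or tree point whose incoming tree transition has rank $1$ has such a transition, apart from loops. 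For loops we use that a proper $2$-path from a point to itself consists of loops, and only points $r_B$ with $A'=\dmnd{2}{\beta}B$, or points that are already rank-$2$ targets, carry them.

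\textbf{Base case.} The rank-$\geq2$ transitions of $\mathcal R^c_A$ are of three kinds.
\begin{itemize}
\item A tree transition $aR^i_\alpha b$: the connector is $c=a$, using the trivial path to $a$.
\item A loop $r_BR^1_\omega r_B$: here $c=r_B$, with the loop itself as the proper path.
\item An added $r_{A'}R^1_\omega e$ for a zero or intermediate point $e$, or a copy transition $b^\dwn R^1_\omega b$: for the former, $c=r_{A'}$. The latter falls under (ii) with $b=d$.
\end{itemize}
For (iii), the only $R^2$ transitions of $\mathcal R^c_A$ are tree transitions $aR^2_\alpha b$, and $c=a\neq b$ works. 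For a copy point $b^\dwn$ of an $R^2$ point $b$, (P5) provides the tree $R^2$ transition $a'R^2 b$ into $b$. Take $c=a'$: it lies on the $2$-image of $b$, so it has a $(1+\alpha)$-path to $b$ (trivially satisfied if $c=b$ is excluded) and a proper $\alpha$-path to $b$. This gives the extra clause of (ii).

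\textbf{Inductive step.} We go through the closure rules that produce rank-$\geq2$ transitions: 1, 2, 6 and 11. Rules 1 and 2 lower the rank or the index. The same connector works, by (P1), since a $(1+\alpha)$-path is a $(1+\beta)$-path for $\beta<\alpha$.

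Rule 6 gives $aR^i_\alpha b R^j_\alpha e$. By induction there is a connector $c_1$ of $a$ (or of $d$, when $a=d^\dwn$) to $b$, and a connector $c_2$ of $b$ to $e$. Note that $b$ is not a copy point by the in-particular clause already established for earlier stages, so case (i) applies to $bR^j_\alpha e$. Both $c_1$ and $c_2$ have $2$-paths to $b$, so by (P4) they lie on the $2$-image of $b$ and are comparable.
\begin{itemize}
\item If $c_2$ lies above $c_1$: then $c_1$ connects $a$ to $e$. Its proper path to $b$ goes through $c_2$, so concatenating with the path from $c_2$ to $e$ yields a proper $\alpha$-path from $c_1$ to $e$ by (P1).
\item Otherwise: $c_2$ lies on the path from $c_1$ to $b$. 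We need a $(1+\alpha)$-path from $c_2$ to $a$. This is the delicate point, since the path from $c_2$ to $b$ is only an $\alpha$-path. I would instead take the connector to be $c_1$ whenever possible. To do so I would strengthen the induction hypothesis: the connector can always be chosen on the $2$-image of $a$ with the required path, and the proper path from $c$ to $b$ may be chosen to pass through $c'$ whenever $c'$ is a point of the $2$-image of $b$ below $c$.
\end{itemize}
For the $i=2$ clause, take the connector coming from the $aR^2_\alpha b$ part. Then $c\neq e$, because a path from $c$ through $b$ to $e=c$ would be a $2$-cycle, hence made of loops, forcing $c=b$, which is excluded.

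Rule 11 gives $aR^i_\alpha b$ and $aR^j_\beta e$ with $\beta<\alpha$ or $\alpha=\beta=\omega$, yielding $bR^j_\beta e$. Take the connector $c$ of $a$ to $e$ at $\beta$, and the connector $c'$ of $a$ to $b$ at $\alpha$. By (P4) both lie on the $2$-image of $a$. Whichever of them is higher connects $b$ to $e$ at $\beta$. Its $(1+\alpha)$-path to $a$ extended to $b$ is a $(1+\beta)$-path, since $1+\beta\leq\alpha$, or since both ranks equal $\omega$. The path to $e$ is the $\beta$-path, possibly prefixed by the segment between the two connectors.

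I expect Rule 6, together with the mismatch between $(1+\alpha)$-paths and $\alpha$-paths in the order of connectors, to be the main obstacle. I would resolve it first by formulating the induction hypothesis with the connector chosen maximally high on the $2$-image.
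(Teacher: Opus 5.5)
Your overall plan (induction on the closure stage, with only rules 1, 2, 6, 11, i.e.\ (a), (b), (f), (k), producing rank $\geq 2$, and comparing two connectors that lie on a common $2$-image) is the paper's. However, the case you single out as the main obstacle is left open, and the obstacle comes from misreading Definition~\ref{def connector}. In Rule 6 ($aR^i_\alpha bR^j_\alpha e$), $c_2$ is a connector \emph{of $b$} to $e$. So its path to $b$ is a $(1+\alpha)$-path, and only its path to $e$ is merely a proper $\alpha$-path. Your two bullets as written both describe the configuration $c_1\to c_2\to b$; the remaining case is that $c_2$ admits a proper $2$-path to $c_1$. There the argument is immediate. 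By (P3)/(P4) the $(1+\alpha)$-path from $c_2$ to $b$ passes through $c_1$, so its initial segment is a $(1+\alpha)$-path from $c_2$ to $c_1$. Appending the $(1+\alpha)$-path from $c_1$ to $a$ shows that $c_2$ connects $a$ to $e$. Your proposed remedies do not give this. First, $c_1$ cannot be used in this case: the $\alpha$-path from $c_2$ to $e$ may leave the $2$-image of $b$ strictly before reaching $c_1$, and then $c_1$ has no $2$-path to $e$ at all. Second, the strengthened hypothesis you sketch is vacuous, since every $2$-path from $c$ to $b$ automatically passes through all points of the $2$-image of $b$ between $c$ and $b$. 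In Rule 11 your conclusion that the higher connector works is right. But the $(1+\beta)$-path to $b$ is the proper $\alpha$-path from $c'$ to $b$, prefixed by the segment from $c$ to $c'$ if $c$ is higher. It is not the path to $a$ \emph{extended to $b$}, because $aR^i_\alpha b$ is a closure transition, not a path of $M^0_A$.

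Clause (iii) is not established either. In Rule 6 the upper index of the conclusion $aR^j_\alpha e$ is that of the second premise, so you need the hypothesis $c_2\neq e$ for $bR^2_\alpha e$. Your argument instead uses the first premise and a path from $b$ to $e$ in $M^0_A$ that need not exist. The correct argument is that the connector produced is $c_2$ or a point with a $2$-path to $c_2$. Such a point differs from $e$, because $c_2\neq e$ has a proper $2$-path to $e$, and a proper $2$-path from a point to itself consists of loops. The same argument is needed for Rule 11, which you do not treat.

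Finally, you omit the copy-source cases where hypothesis (ii) gives no connector because the target equals $d$:
\begin{itemize}
\item Rule 6 with $a=d^{\downarrow}$ and $b=d$: here you use the hypothesis for $dR^j_\alpha e$ directly.
\item Rule 11 with common source $d^{\downarrow}$: this requires a split according to whether each of the two targets equals $d$. For example, two copies of $d^{\downarrow}R^1_\omega d$ give $dR^1_\omega d$, whose connector is $d$ itself via the loop at the $R^2$ point $d$.
\end{itemize}
The extra clause of (iii) lives in these inductive cases, not in the base case, since no $R^2$ transition leaves a copy point in $\mathcal R^c_A$. Your base-case choice $c=a'$ would anyway require the tree transition from $a'$ into the $R^2$ point to have rank $\geq 1+\alpha$.
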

\begin{proof}
First we note that a point connecting $a$ to $b$ at $\alpha$ connects them at every $\alpha'\leq\alpha$, and nothing connects any point to a copy point, since no transition of $\mathcal R^0_A$ enters a copy point. 

We argue by induction on the stage at which $aR^i_\alpha b$ enters the closure.

The transitions of rank $\geq2$ in $\mathcal R^c_A$ are those of $\mathcal R^0_A$, for which $a$ connects $a$ to $b$ (empty path to $a$, the transition itself to $b$), and the transitions $d^\dwn R^1_\omega d$, for which $b=d$. Rules (a) and (b) only lower the rank or change the upper index, so the claim is inherited from their premise. The only other rules producing transitions of rank $\geq2$ are (f) and (k).

Rule (f): $aR^i_\alpha dR^j_\alpha b$, where $d$ is not a copy point by the induction hypothesis. Suppose first that $a$ is not a copy point, and let $c$ connect $a$ to $d$ and $c'$ connect $d$ to $b$, both at $\alpha$. Both admit a $2$-path to $d$, so one admits a $2$-path to the other.
\begin{itemize}
    \item If there is a $2$-path from $c'$ to $c$, then $c'$ connects $a$ to $b$. By the choice of $c'$ there is a proper $\alpha$-path from $c'$ to $b$. The $(1+\alpha)$-path from $c'$ to $d$ splits at $c$, and its initial part followed by the $(1+\alpha)$-path from $c$ to $a$ is a $(1+\alpha)$-path from $c'$ to $a$.
    \item Otherwise there is a proper $2$-path from $c$ to $c'$, and $c$ connects $a$ to $b$. By the choice of $c$ there is a $(1+\alpha)$-path from $c$ to $a$. The $\alpha$-path from $c$ to $d$ splits at $c'$, and its initial part followed by the proper $\alpha$-path from $c'$ to $b$ is a proper $\alpha$-path from $c$ to $b$.
\end{itemize}
If $a=d'^{\dwn}$ is a copy point, then $d=d'$ or some point connects $d'$ to $d$ at $\alpha$. In the first case the induction hypothesis for $dR^j_\alpha b$ is what we need. In the second the argument above, with $d'$ in place of $a$, gives a point connecting $d'$ to $b$.

Rule (k): $dR^{i'}_\gamma a$ and $dR^j_\alpha b$ with $\gamma\geq1+\alpha$, producing $aR^j_\alpha b$. By the induction hypothesis for $dR^{i'}_\gamma a$, the point $a$ is not a copy point. Note that a $\gamma$-path is a $(1+\alpha)$-path, and a $(1+\gamma)$-path is an $\alpha$-path. Suppose first that $d$ is not a copy point, and let $c$ connect $d$ to $a$ at $\gamma$ and $c'$ connect $d$ to $b$ at $\alpha$. Both admit a $2$-path to $d$.
\begin{itemize}
    \item If there is a $2$-path from $c$ to $c'$, then $c$ connects $a$ to $b$ at $\alpha$: the proper $\gamma$-path from $c$ to $a$ is a $(1+\alpha)$-path. The $(1+\gamma)$-path from $c$ to $d$ splits at $c'$ and its initial part is an $\alpha$-path, so followed by the proper $\alpha$-path from $c'$ to $b$ it gives a proper $\alpha$-path from $c$ to $b$.
    \item Otherwise there is a proper $2$-path from $c'$ to $c$, and $c'$ connects $a$ to $b$ at $\alpha$. By the choice of $c'$ there is a proper $\alpha$-path from $c'$ to $b$. The $(1+\alpha)$-path from $c'$ to $d$ splits at $c$, and its initial part followed by the proper $\gamma$-path from $c$ to $a$ is a $(1+\alpha)$-path from $c'$ to $a$.
\end{itemize}
Now suppose $d=d'^{\dwn}$ is a copy point, so that $a=d'$ or some point connects $d'$ to $a$ at $\gamma$, and $b=d'$ or some point connects $d'$ to $b$ at $\alpha$.
\begin{itemize}
    \item If $a=b=d'$, then $d'$ connects itself to itself at $\alpha$: the loop $d'R^1_\omega d'$ belongs to $\mathcal R^0_A$ since $d'$ is an $R^2$ point, and is a proper $\alpha$-path.
    \item If $a=d'\neq b$, the point connecting $d'$ to $b$ is what we need.
     \item If $a\neq d'=b$, let $c$ connect $d'$ to $a$ at $\gamma$. Then $c$ connects $a$ to $b$ at $\alpha$: the proper $\gamma$-path from $c$ to $a$ is a $(1+\alpha)$-path, and the $(1+\gamma)$-path from $c$ to $d'$ followed by the loop at $d'$ is a proper $\alpha$-path.
    \item If neither $a$ nor $b$ is $d'$, argue as in the non-copy case with $d'$ in place of $d$.
\end{itemize}

For (iii) we check, along the same induction, that for a transition with upper index $2$ the connector obtained is different from $b$. Recall that a proper $2$-path from a point to itself consists of loops. Hence if $c'\neq b$ admits a $2$-path to $b$ and $c$ admits a $2$-path to $c'$, then $c\neq b$. 

The $R^2$ transitions of $\mathcal R^c_A$ are tree transitions $aR^2_\alpha b$, which are connected by $a\neq b$. Rule (a) inherits the claim from its premise and rule (b) only produces $R^1$ transitions.
 
Rule (f) with $j=2$: The induction hypothesis for $dR^2_\alpha b$ provides $c'\neq b$ connecting $d$ to $b$. The connector produced above is $c'$ or a point $c$ admitting a $2$-path to $c'$, hence different from $b$. If $a=d'^\dwn$ and $d=d'$, the point $c'$ itself is as required.

Rule (k) with $j=2$: If $d$ is not a copy point, the induction hypothesis for $dR^2_\alpha b$ provides $c'\neq b$, and the connector produced above is again $c'$ or a point admitting a $2$-path to $c'$. If $d=d'^\dwn$, the induction hypothesis provides $c'\neq b$ connecting $d'$ to $b$ at $\alpha$, whether or not $b=d'$. If $a=d'$ we are done. Otherwise let $c$ connect $d'$ to $a$ at $\gamma$ and argue as in the non-copy case with $d'$ in place of $d$: if $c$ admits a $2$-path to $c'$, then $c$ connects $a$ to $b$ at $\alpha$ and $c\neq b$. Otherwise $c'$ connects $a$ to $b$ at $\alpha$.

The last claim of the lemma holds because the path from the connecting point to $b$ is proper, and no transition of $\mathcal R^0_A$ of rank $\geq2$ enters a copy point, $r_A$, or a tree point whose incoming tree transition has rank $1$.
\end{proof}

\begin{lemma}\label{lem closure facts}
Let $A$ be a variable-free $\mathsf{RC_{BI_\omega}}$ formula.
\begin{enumerate}[label=(\roman*)]
    \item The closure rules never add an $R^2_\alpha$ transition towards a point that is not an $R^2$ point of $M^0_A$. Hence the $R^2$ points of $M_A$ are those of $M^0_A$, and each has a copy point and an anchor.
        \item For every point $d$ of $M^0_A$ which is not an anchor point: $d^\top R^1_2d$. If moreover $d$ is an $R^2$ point, then $d^\top R^2_2d$.
    \item If $aR^2_\alpha b\in\mathcal R_A$, then one of the following holds:
    \begin{enumerate}[label=(\arabic*)]
        \item $a=b^\top$;
        \item $a$ is not a copy point, $a\neq b^\top$ and $b^\top R^1_1a$;
        \item $a=d^\dwn$ for an $R^2$ point $d$ with $d^\top=b^\top$.
    \end{enumerate}
\end{enumerate}
\end{lemma}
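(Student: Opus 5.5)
We prove the three parts in order, each by an induction on the stage at which a transition enters the internal closure, using the path facts (P1)--(P5) and Lemma \ref{lem rank2 paths}.

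For (i), we check which closure rules can output a transition with upper index $2$. Rule (a) keeps the target and only lowers the lower index. Rule (b) and all rules producing $R^1$, $R^\dwn$ or $R^\nabla$ transitions are irrelevant. Rule (f) with $j=2$ outputs $aR^2_\alpha b$ from a premise $dR^2_\alpha b$ with the same target, and rule (k) with $j=2$ likewise outputs $aR^2_\alpha b$ from a premise $dR^2_\alpha b$. Hence, inductively, every $R^2_\alpha$ transition of $\mathcal R_A$ has a target that is already the target of an $R^2$ transition of $\mathcal R^c_A$. The copy step adds only $R^1_\omega$, $R^\nabla$ and $R^\dwn$ transitions, so these targets are exactly the $R^2$ points of $M^0_A$. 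Copy points were attached to precisely these points, and every point of $M^0_A$ has an anchor by (P4). This proves (i).

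For (ii), let $d$ be a non-anchor point of $M^0_A$. Its 2-image $d^\top=a_0,\dots,a_n=d$ then has $n\geq1$, so it is a proper $2$-path, and (P2) gives $d^\top R^1_2 d$. If $d$ is moreover an $R^2$ point, (P5) says the last step is a tree transition $a_{n-1}R^2_{\alpha_n}d$, and rule (a) lowers it to $a_{n-1}R^2_2 d$. Every earlier step has rank $\geq2$, so it is some $R^{i_k}_{\alpha_k}$ transition, and rule (a) lowers it to $R^{i_k}_2$. We then fold from the right with rule (f): from $a_{k-1}R^{i_k}_2 a_k$ and $a_kR^2_2 d$ we obtain $a_{k-1}R^2_2 d$. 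After $n-1$ such steps we reach $d^\top R^2_2 d$.

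For (iii), let $aR^2_\alpha b\in\mathcal R_A$ and use Lemma \ref{lem rank2 paths}(iii) to get a connector $c\neq b$. First suppose $a$ is not a copy point. Then $c$ admits a $(1+\alpha)$-path to $a$ and a proper $\alpha$-path to $b$, so by (P4) $c$ lies on the 2-image of $b$, and there is a $2$-path from $b^\top$ to $c$. Concatenating it with the path from $c$ to $a$ gives, by (P1), a $2$-path from $b^\top$ to $a$. If this path is empty, then $a=b^\top$ and we are in case (1). If it is proper, (P2) and rule (a) give $b^\top R^1_1 a$, which is case (2) (or case (1) if nonetheless $a=b^\top$).

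Now suppose $a=d^\dwn$ is a copy point. By (i), $b$ is an $R^2$ point. Lemma \ref{lem rank2 paths}(ii),(iii) gives $c\neq b$ connecting $d$ to $b$, so there are $2$-paths from $c$ to both $d$ and $b$. As before, $b^\top$ has a $2$-path to $c$, and hence to $d$. By (P4), $b^\top$ lies on the 2-image of $d$. The anchor $d^\top$ therefore admits a $2$-path to $b^\top$, so it lies on the 2-image of $b$, which begins at $b^\top$. Because the 2-image is maximal and consists of pairwise distinct points, this forces $d^\top=b^\top$, which is case (3).

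The step I expect to be the main obstacle is this anchor bookkeeping: showing that lying on each other's 2-images forces the anchors to coincide. It rests on the uniqueness in (P3)/(P4) together with the remark that proper $2$-paths from a point to itself consist only of loops, and this is where I would be most careful.
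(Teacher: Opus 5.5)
Your proposal is correct and follows the paper's proof essentially step for step. Parts (i) and (ii), and the non-copy case of (iii), are argued identically to the paper.

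The only variation is in the copy case $a=d^{\dwn}$. You invoke Lemma~\ref{lem rank2 paths}(iii) to get a connector $c\neq b$ uniformly, where the paper splits off $b=d$. You then conclude $d^\top=b^\top$ by applying (P4) twice, instead of the paper's argument that the tree path from $b^\top$ to $d$ has all ranks $\geq 2$.

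Your closing step is not really justified by maximality of the 2-image, as you phrase it. It goes through as you anticipate at the end. The points $d^\top$ and $b^\top$ admit $2$-paths to each other, and a proper $2$-path from a point to itself consists of loops. Alternatively, no transition of rank $\geq 2$ of $\mathcal R^0_A$ enters the anchor $b^\top$. Either fact forces $d^\top=b^\top$.
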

\begin{proof}
(i) Rules (a) and (b) keep the target of the transition, rules (f) and (k) keep the target of their $R^2$ premise, and no other rule produces $R^2$ transitions.

(ii) The 2-image of $d$ is a proper $2$-path from $d^\top$ to $d$, so $d^\top R^1_2d$ by (P2). If $d$ is an $R^2$ point, then by (P5) the last transition of the 2-image is an $R^2$ transition. By (a) and (b) each transition of the 2-image yields an $R^1_2$ transition, the last one an $R^2_2$ transition, and (f) composes them to $d^\top R^2_2d$.

(iii) We use two observations.
\begin{itemize}
    \item By (P4), every point admitting a $2$-path to $b$ lies on the 2-image of $b$, and is therefore reached from $b^\top$ by a $2$-path.
    \item If there is a proper $2$-path from $b^\top$ to a point, then $b^\top$ is $R^1_1$-related to that point.
\end{itemize}
Now let $aR^2_\alpha b$. If $a$ is not a copy point, Lemma \ref{lem rank2 paths}(i) gives a point $c$ with a $(1+\alpha)$-path to $a$ and a proper $\alpha$-path to $b$; by the first observation $c$ lies on the 2-image of $b$, so there is a $2$-path from $b^\top$ to $a$. If it is empty we are in case (1), otherwise in case (2) by the second observation. If $a=d^\dwn$ is a copy point, Lemma \ref{lem rank2 paths}(ii) gives either $d=b$, whence $d^\top=b^\top$ and we are in case (3), or a point $c$ on the 2-image of $b$ with a $(1+\alpha)$-path to $d$, that is, one with all ranks $\geq3$ or all equal to $\omega$. Since $d$ is a tree point, this path consists of tree transitions apart from loops, so the tree path from $b^\top$ to $d$ has all ranks $\geq2$; as no transition of rank $\geq2$ enters $b^\top$, the 2-image of $d$ starts at $b^\top$, that is, $d^\top=b^\top$, and we are in case (3).
\end{proof}

\begin{lemma}\label{lem locality}
Let $A$ be a variable-free formula.
\begin{enumerate}[label=(\roman*)]
   \item For every anchor point $t$ of $M_A$ there is a formula $B$ with $r_B=t$ such that either $A=B$, or $\dmnd{1}{1}B$, ${\dwn}B$ or $\nabla B$ is a sub-formula of $A$. For every such $B$:
    \begin{itemize}
        \item every transition of $\mathcal R^0_A$ from a point outside $\mathcal S^0_B$ to a point of $\mathcal S^0_B$ has rank $1$, and no transition of $\mathcal R^0_A$ leads from a point of $\mathcal S^0_B$ to a point outside of it;
        \item if $a\in\mathcal S_B$ and $aRb\in\mathcal R_A$, then $b\in\mathcal S_B$ and $aRb\in\mathcal R_B$;
        \item for every $b\in\mathcal S_A$, we have $b\in\mathcal S_B$ if and only if $r_BR^\nabla b\in\mathcal R_A$;
        \item the restriction of $M_A$ to $\mathcal S_B$ is $M_B$, and for every $a\in\mathcal S_B$ and every formula $C$ we have $M_A,a\models C$ iff $M_B,a\models C$.
    \end{itemize}
    \item If $b$ is an $R^2$ point and $b^\dwn R^\dwn a\in\mathcal R_A$, then $b^\top R^\nabla a\in\mathcal R_A$.
    \item Let $c$ be an $R^2$ point. Then:
    \begin{itemize}
        \item $aR^1_1c^\dwn$ implies $aR^1_1c^\top$;
        \item $aR^\dwn c^\dwn$ implies $aR^\dwn c^\top$;
        \item $aR^\nabla c^\dwn$ implies $a=c^\dwn$ or $aR^\nabla c^\top$.
    \end{itemize}
\end{enumerate}
\end{lemma}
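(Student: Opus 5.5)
The plan is to prove all three parts by induction on the stage at which a transition enters the closure $\mathcal R_A$. Each closure rule (a)--(l) will be checked against the structural facts (P1)--(P5), Lemma \ref{lem rank2 paths} and Lemma \ref{lem closure facts}. Part (i) is the substantial one; parts (ii) and (iii) are short invariant checks.

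For (i), first identify the anchors. An anchor $t$ is either $r_A$ or a tree point whose incoming tree transition has rank $1$, so it is entered by an $R^\nabla$, $R^\dwn$ or $R^1_1$ tree transition. That transition comes from a sub-formula $\nabla B$, $\dwn B$ or $\dmnd{1}{1}B$, and then $t=r_B$. Here rank $1$ forces the index $1$, since $\dmnd{1}{\alpha}$ with $\alpha\ge2$ has rank $\ge2$.

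The first bullet then follows from the inductive definition of $M^0_A$. The only edges of $M^0_A$ created above $M^0_B$ that can touch $\mathcal S^0_B$ are of three kinds:
\begin{itemize}
\item the tree edge into $r_B$, which has rank $1$;
\item edges $e^i_{A'}R^\dwn b_i$ from intermediate points, which also have rank $1$;
\item $R^2$ loops at $r_{B'}$ for sub-formulas $B'$ of $B$, which lie inside $\mathcal S^0_B$ already.
\end{itemize}
No edge leaves $\mathcal S^0_B$. For copy points, note that $b^\top\in\mathcal S^0_B$ iff $b\in\mathcal S^0_B$, because $2$-images cannot cross rank-$1$ edges. So $\mathcal S_B$ is closed under the copy edges as well.

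The second bullet is the key closure-invariance claim. I would prove that the set of transitions of $\mathcal R_A$ with source in $\mathcal S_B$ is exactly $\mathcal R_B$, by induction on closure stages. This needs two checks.
\begin{itemize}
\item The transitive rules (f)--(j) never leave $\mathcal S_B$: their premise starts in $\mathcal S_B$, so by induction its target is in $\mathcal S_B$.
\item For the conjunction rules (k) and (l), take a source $d\notin\mathcal S_B$ with $dR^{i}_\gamma a$, $a\in\mathcal S_B$ and $\gamma\ge2$. Lemma \ref{lem rank2 paths} gives a connector $c$ with a proper $\gamma$-path to $a$ in $M^0_A$. This path must enter $\mathcal S^0_B$ through an edge of rank $\ge2$, which the first bullet forbids unless $c\in\mathcal S^0_B$. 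In that case $d$ (or the $d'$ with $d=d'^\dwn$) lies in $\mathcal S_B$ as well, by the $(1+\gamma)$-path from $c$.
\end{itemize}
Hence every rule instance producing an edge with source in $\mathcal S_B$ already has all its premises inside $\mathcal S_B$. This gives the second bullet, and the restriction claim follows; satisfaction is preserved because the generated submodel is closed under successors.

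The third bullet then comes out directly. For one direction, $r_B$ reaches all of $\mathcal S_B$ by a path, and rules (c), (d), (e), (g) turn such a path into an $R^\nabla$ edge. Here I must check that every edge type yields $R^\nabla$; for the $R^2$ and $R^1_\omega$ edges this goes through (b) and (d). The converse is the second bullet.

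For (ii), induct on the derivation of $b^\dwn R^\dwn a$. Initially it is $b^\dwn R^\dwn b^\top$, and $b^\top R^\nabla b^\top$ holds by (c). The rule (j) step follows by (g). The rule (l) step uses Lemma \ref{lem closure facts}(iii), case (3), together with $b^\top R^\nabla b^\dwn$ and transitivity.

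For (iii), induct again. The only transitions into $c^\dwn$ in $\mathcal R^c_A$ are $c^\top R^\nabla c^\dwn$. Lemma \ref{lem rank2 paths} excludes rank-$\ge2$ targets at copy points, so $R^1_1$ edges into $c^\dwn$ arise via (h) or (i) from an $R^\nabla$ edge into $c^\dwn$. From such an edge and (g) we get the $c^\top$ versions, using the reflexive loop $c^\top R^\nabla c^\top$ and re-deriving the conclusion with $c^\top$ as the target.

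I expect the main obstacle to be rule (k) in the second bullet of (i), namely showing that connectors never lie outside $B$. The case where $d$ is a copy point must be handled via Lemma \ref{lem rank2 paths}(ii), and it is easy to miss.
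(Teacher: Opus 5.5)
Your part (i) is essentially the paper's argument. It has the same boundary analysis of $\mathcal R^0_A$, the same induction over closure stages with the connector argument from Lemma \ref{lem rank2 paths} for rules (k) and (l), and the same path-to-$R^\nabla$ argument for the third bullet. The problems are in (ii) and (iii).

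\textbf{The rule (l) step in (ii) does not work as written.} Rule (l) produces $xR^\dwn y$ from $zR^2_\alpha x$ and $zR^i_\beta y$. So producing $b^\dwn R^\dwn a$ would need an $R^2$ transition \emph{into} the copy point $b^\dwn$. Lemma \ref{lem closure facts}(iii), case (3), is about a copy point as the \emph{source} of an $R^2$ transition, so it does not apply; anchors are in any case only defined for points of $M^0_A$. The correct observation is that this case is vacuous. By Lemma \ref{lem rank2 paths} (or Lemma \ref{lem closure facts}(i)), no transition of rank $\geq 2$ enters a copy point, so (l) never creates a transition with source $b^\dwn$. What remains is the base transition $b^\dwn R^\dwn b^\top$ and rule (j), which you handle correctly.

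\textbf{The case analysis in (iii) is inaccurate as stated.}
\begin{itemize}
\item An $R^1_1$ edge into $c^\dwn$ comes from an $R^\nabla$ edge into $c^\dwn$ only via (i). Via (f), (h) and (k) it comes from an earlier $R^1_1$ edge into $c^\dwn$. In (h) the premise entering $c^\dwn$ is $dR^1_1c^\dwn$, not an $R^\nabla$ edge.
\item You do not address the $R^\dwn$ implication (rules (j), (l)) or the $R^\nabla$ implication (rules (c), (d), (e), (g)).
\item Rules (d), (e), (i), (j), (l) pass between the three kinds of edges. So the three implications must be proved by one simultaneous induction, as the paper does.
\item Your base-case remark about $c^\top R^\nabla c^\top$ does not cover the loop $c^\dwn R^\nabla c^\dwn$ added by (c), nor the instances of (g), (i), (j) whose middle point is $c^\dwn$.
\end{itemize}
For the last point, either keep the exception $a=c^\dwn$ in the third implication, as the statement does, or note that $c^\dwn R^\nabla c^\top$ holds: it follows from the anchor transition $c^\dwn R^\dwn c^\top$ by (e). With that observation, your uniform ``re-derive with target $c^\top$'' step works for every rule. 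It even proves the stronger implication $aR^\nabla c^\dwn\Rightarrow aR^\nabla c^\top$, and the proof then goes through along the paper's lines.
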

\begin{proof}
(i) If $t=r_A$ we take $B=A$. Otherwise, by the path facts following Definition \ref{def 12 trees 3}, $t$ is a tree point whose incoming tree transition has rank $1$, so it was created by a clause $\dmnd{1}{1}B$, ${\dwn}B$ or $\nabla B$ of Definition \ref{def 12 trees 3}(1), and $t=r_B$. 

Now fix such a $B$. If $B=A$ only the third claim needs proof, and the argument below applies. Assume $B\neq A$. We note that $\mathcal S_B\subseteq\mathcal S_A$: the points of $M^0_B$ are points of $M^0_A$ by construction, and the $R^2$ points of $M^0_B$ are $R^2$ points of $M^0_A$, so $M_B$ and $M_A$ share their copy points.

For the first claim, the transitions of $\mathcal R^0_A$ are the tree transitions, the loops at $R^2$ points, and the transitions $r_{A'}R^1_\omega r^0_{A'}$, $r_{A'}R^1_\omega e^i_{A'}$ and $e^i_{A'}R^\dwn b_i$ for the sub-formula occurrences $A'={\dwn}B'$ of $A$. Among these, the only ones from a point outside $\mathcal S^0_B$ to a point of $\mathcal S^0_B$ are the tree transition into $r_B$, which has rank $1$ by the choice of $B$, and the transitions $e^i_{A'}R^\dwn b_i$ with $e^i_{A'}\notin\mathcal S^0_B$ and $b_i\in\mathcal S^0_B$. None of them leads from a point of $\mathcal S^0_B$ to a point outside of it. In particular, the 2-image of a point of $\mathcal S^0_B$ is the same in $M^0_B$ and in $M^0_A$, so the $R^2$ points, anchors and copy points of $M_B$ are those of $M_A$ lying in $\mathcal S_B$, and the restriction of $\mathcal R^c_A$ to sources in $\mathcal S_B$ is exactly $\mathcal R^c_B$. Since the closure rules are the same for $M_A$ and $M_B$, this also gives $\mathcal R_B\subseteq\mathcal R_A$.

We prove the second claim by induction on the stage at which $aRb$ enters the closure. The base case is the observation above.
\begin{itemize}
    \item Rules (a), (b), (d), (e): the conclusion has the same source as the premise, so the induction hypothesis applies to the premise, and the same rule yields the conclusion in $\mathcal R_B$.
    \item Rule (c): immediate, since it has no premise.
    \item Rules (f)--(j): the premises are $aRd$ and $dR'b$. The induction hypothesis for $aRd$ gives $d\in\mathcal S_B$ and $aRd\in\mathcal R_B$, then the induction hypothesis for $dR'b$ gives $b\in\mathcal S_B$ and $dR'b\in\mathcal R_B$, and the same rule yields $aRb\in\mathcal R_B$.
    \item Rules (k) and (l): the premises are $dR^i_\alpha a$ with $\alpha\geq2$ and $dR'b$. It suffices to show that $d\in\mathcal S_B$, since then the induction hypothesis applies to both premises. By Lemma \ref{lem rank2 paths}, $a$ is not a copy point, and either $d=a^\dwn$, or there is an $\alpha$-connector $c$ of $d$ to $a$ (if $d$ is not a copy point), or an $\alpha$-connector $c$ of $d'$ to $a$ (if $d=d'^\dwn$). In the first case $d\in\mathcal S_B$, as $a\in\mathcal S^0_B$. In the other two cases, $c$ admits a proper $2$-path to $a\in\mathcal S^0_B$ in $M^0_A$. By the first claim, this path lies in $\mathcal S^0_B$, so $c\in\mathcal S^0_B$, and the $(1+\alpha)$-path from $c$ to $d$, respectively to $d'$, lies in $\mathcal S^0_B$ as well. Hence $d\in\mathcal S_B$.
\end{itemize}

For the third claim, if $r_BR^\nabla b\in\mathcal R_A$, then $b\in\mathcal S_B$ by the second claim, as $r_B\in\mathcal S_B$. Conversely, let $b\in\mathcal S_B$. By construction, $b$ is reachable from $r_B$ by a path in $M^c_B$, and rules (b), (c), (d), (e) and (g) turn this path into $r_BR^\nabla b\in\mathcal R_B$. Since $\mathcal R_B\subseteq\mathcal R_A$, we get $r_BR^\nabla b\in\mathcal R_A$.

For the fourth claim, the transitions of $\mathcal R_A$ with source in $\mathcal S_B$ belong to $\mathcal R_B$ by the second claim, and $\mathcal R_B\subseteq\mathcal R_A$. Hence the restriction of $M_A$ to $\mathcal S_B$ is $M_B$. Finally, since the transitions from points of $\mathcal S_B$ are the same in both models, $M_A,a\models C$ iff $M_B,a\models C$ for $a\in\mathcal S_B$ follows by induction on $C$.

(ii) By induction on the stage at which $b^\dwn R^\dwn a$ enters the closure. The transition $b^\dwn R^\dwn b^\top$ of $\mathcal R^c_A$ is covered by rule (c). Rule (j) derives $b^\dwn R^\dwn a$ from $b^\dwn R^\dwn d$ and $dR^\nabla a$. Then $b^\top R^\nabla d$ by the induction hypothesis, and $b^\top R^\nabla a$ by (g). Rule (l) only produces $R^\dwn$ transitions whose source is the target of an $R^2$ transition, which is not a copy point by Lemma \ref{lem rank2 paths}.

(iii) By Lemma \ref{lem rank2 paths}, every $R^i_\alpha$ transition of $\mathcal R_A$ into $c^\dwn$ is an $R^1_1$ transition, so rules (a) and (b) do not produce transitions into $c^\dwn$. We prove the three implications simultaneously by induction on the stage. The only transition of $\mathcal R^c_A$ into $c^\dwn$ is $c^\top R^\nabla c^\dwn$, and $c^\top R^\nabla c^\top$ holds by (c). The loop $c^\dwn R^\nabla c^\dwn$ added by (c) is covered by the third implication. For the remaining rules:
\begin{itemize}
    \item Rule (d): the premise is $aR^1_1c^\dwn$. Then $aR^1_1c^\top$ by the induction hypothesis, so $aR^\nabla c^\top$ by (d).
    \item Rule (e): the premise is $aR^\dwn c^\dwn$. Then $aR^\dwn c^\top$, so $aR^\nabla c^\top$ by (e).
    \item Rule (f): the premise is $aR^i_1dR^1_1c^\dwn$. Then $dR^1_1c^\top$, so $aR^1_1c^\top$ by (f).
    \item Rule (h): the premise is $aR^\nabla dR^1_1c^\dwn$. Then $dR^1_1c^\top$, so $aR^1_1c^\top$ by (h).
    \item Rules (g), (i), (j): the premises are $aRd$ and $dR^\nabla c^\dwn$, where $R$ is $R^\nabla$, $R^1_\alpha$ or $R^\dwn$ respectively. If $d=c^\dwn$, then the conclusion is $aR^\nabla c^\dwn$, $aR^1_1c^\dwn$ or $aR^\dwn c^\dwn$, and the claim is the induction hypothesis for the premise $aRd$ (for rule (i), the premise $aR^1_\alpha c^\dwn$ has $\alpha=1$ by Lemma \ref{lem rank2 paths}). Otherwise $dR^\nabla c^\top$ by the induction hypothesis, and the same rule applied to $aRd$ and $dR^\nabla c^\top$ gives the claim.
    \item Rule (k): the premises are $dR^i_\alpha a$ and $dR^1_1c^\dwn$ with $\alpha>1$. Then $dR^1_1c^\top$, so $aR^1_1c^\top$ by (k).
    \item Rule (l): the premises are $dR^2_\alpha a$ and $dR^1_1c^\dwn$. Then $dR^1_1c^\top$, so $aR^\dwn c^\top$ by (l).
\end{itemize}
\end{proof}

\begin{proposition}\label{prop MA is a model}
For any variable-free $\mathsf{RC_{BI_\omega}}$ formula $A$, $M_A$ is an $\mathsf{RC_{BI_\omega}}$ model.
\end{proposition}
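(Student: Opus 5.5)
The plan is to check the frame conditions of Definition \ref{def 3 frame 1} one group at a time, using that $\mathcal R_A$ is closed under the internal closure procedure. Monotonicity I.1--I.5 and transitivity II.1, II.2, II.5 are literally closure rules (a)--(g) and (j). Rules (h), (i) give II.3, II.4 exactly, and rules (k), (l) give the conjunction conditions III.1 and III.2 exactly. The ranks in (k) line up because ``$\alpha>\beta$ or $\alpha=\beta=\omega$'' is the same side condition as in III.1. So groups I--III hold by construction, and the real work is the existential group IV. The closure adds transitions but never points, so the witnesses must already exist in $\mathcal S_A$.

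For IV.1, take $aR^\dwn b\in\mathcal R_A$; we need some $c$ with $aR^1_\omega c$. I will show by induction on the closure stage that every source of an $R^\dwn$ transition is either
\begin{itemize}
\item a tree point $r_{\dwn B}$, which has $r_{\dwn B}R^1_\omega r^0_{\dwn B}$ by construction; or
\item an intermediate point, a copy point, or the target of an $R^2$ transition.
\end{itemize}
In the second case an $R^1_\omega$ successor is available:
\begin{itemize}
\item each $R^2$ point $b$ has the loop $bR^1_\omega b$;
\item each copy point $b^\dwn$ has $b^\dwn R^1_\omega b$;
\item for the target of an $R^2$ transition coming from rule (l), Lemma \ref{lem closure facts}(iii) together with rule (k) applied to the loop should transfer an $R^1_\omega$ successor.
\end{itemize}
For intermediate points $e^i_{A'}$, I expect to need rule (i)/(f) reasoning to produce an $R^1_\omega$ successor. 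If none exists, I would check whether Lemma \ref{lem locality}(iii) lets us redirect to $c^\top$. This bookkeeping is tedious but local.

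IV.2 is the hard case: $aR^\dwn bR^1_\alpha c$ must yield $d$ with $aR^1_\omega d R^\dwn c$. The construction of $M^0_{\dwn B}$ is designed for the base case $a=r_{\dwn B}$, $b=r_B$, with $c=b_i$ an $R^1_1$-successor of $r_B$: the intermediate point $e^i$ is exactly the witness. Two things then remain.
\begin{itemize}
\item \textbf{Successors in the closure.} The construction uses $R^1_1$-successors of $r_B$ in $\mathcal R_B$, whereas we need all $R^1_\alpha$-successors in $\mathcal R_A$. By Lemma \ref{lem locality}(i), since $r_B$ is an anchor, every successor of $r_B$ in $M_A$ lies in $M_B$ with the same relation in $\mathcal R_B$. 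Monotonicity gives $r_BR^1_1c$. If $c\in\mathcal S^0_B$ we use $e^i$; if $c$ is a copy point, Lemma \ref{lem locality}(iii) gives $r_BR^1_1c^\top$, and then $eR^\dwn c^\top$ yields, via $c^\top R^\nabla c^\dwn$ and rule (j), $eR^\dwn c^\dwn$.
\item \textbf{Arbitrary $R^\dwn$ transitions.} I handle these by induction on the closure stage, through a classification lemma: every $aR^\dwn b$ either (1) arises from rule (j) from some $aR^\dwn b'R^\nabla b$, or is one of the generators $r_{\dwn B}R^\dwn r_B$, $e^iR^\dwn b_i$, $b^\dwn R^\dwn b^\top$; or (2) arises from rule (l). In each subcase I reduce to one of the following: an anchor-point successor argument as above, Lemma \ref{lem locality}(ii) (for copy points, moving to $b^\top$ with $R^\nabla$), or III.1 together with II.1 at rank $\omega$ (for rule (l): if $a'R^2_\gamma a$ and $a'R^i_\beta b$, then from $a$'s loop-like $R^1_\omega$ successors we build $d$).
\end{itemize}
The transitivity rules (i), (j) and (h) let an $R^\nabla$ detour in the middle be absorbed.

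IV.3 uses the copy points: for $aR^2_\alpha b$, I take $c=b^\dwn$. By construction $b^\dwn R^\dwn b^\top$ and $b^\top R^\nabla b^\dwn$. By Lemma \ref{lem closure facts}(iii), either $a=b^\top$, or $b^\top R^1_1 a$, or $a$ is a copy point with the same anchor. From this I derive $aR^\nabla b^\dwn$ via rules (g)/(h)/(d) and Lemma \ref{lem locality}(iii), and $b^\dwn R^\dwn a$ via rule (j) from $b^\dwn R^\dwn b^\top R^\nabla a$, where Lemma \ref{lem locality}(i), third bullet, gives $b^\top R^\nabla a$. For the last requirement, $bRd\Rightarrow b^\dwn Rd$, I will use $b^\dwn R^1_\omega b$ with rule (k) (and rules (f), (i), (j) for $\nabla,\dwn$), which transfers rank-$<\omega$ and rank-$\omega$ transitions from $b$ to $b^\dwn$. $R^\dwn$ and $R^\nabla$ successors of $b$ are handled by rule (i) combined with (j). I expect IV.2, specifically controlling the $R^\dwn$ transitions produced by rule (l), to be the main obstacle.
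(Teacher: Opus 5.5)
Your treatment of groups I--III, the classification of sources of $R^{\dwn}$ transitions in IV.1, and the base case of IV.2 at $r_{A'}$ all match the paper. That includes the copy-point detour via Lemma \ref{lem locality}(iii) and rule (j).

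\textbf{The gap is in IV.3.} The uniform choice $c=b^{\dwn}$ fails in case (2) of Lemma \ref{lem closure facts}(iii), where $a$ is not a copy point, $a\neq b^\top$ and $b^\top R^1_1a$. There $b^\top R^\nabla a$ holds by (d), and $a\notin N_{b^\top}$. So Lemma \ref{lem nabla cluster} with $t=b^\top$ shows that $aR^\nabla b^{\dwn}$ is not in $\mathcal R_A$ at all. No use of (g), (h), (d) can derive it, and Lemma \ref{lem locality}(iii) only restricts transitions into copy points.

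Concretely, take $A=\dmnd{2}{2}\dmnd{2}{2}\top$, with tree $r_AR^2_2xR^2_2y$. Then $y^\top=r_A$ and $r_AR^1_1x$, so $xR^2_2y$ falls under case (2), and $xR^\nabla y^{\dwn}$ is absent.

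The fix is to take $c=b$ in this case:
\begin{itemize}
\item $aR^\nabla b$ follows from $aR^2_\alpha b$ by (b) and (d);
\item the condition $bRd\Rightarrow cRd$ is trivial;
\item $bR^{\dwn}a$ follows by rule (l) from $b^\top R^2_2b$ (Lemma \ref{lem closure facts}(ii), since $b$ is an $R^2$ point) and $b^\top R^1_1a$.
\end{itemize}

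\textbf{The same missing device in IV.2.} The tool is $b^\top R^2_2b$ combined with rule (l), which gives $bR^{\dwn}x$ for every $x$ with $b^\top R^1_1x$. Your copy-point case of IV.2 needs it too. Lemma \ref{lem locality}(ii) only gives $b_0^\top R^\nabla b$ from $b_0^{\dwn}R^{\dwn}b$. You then need:
\begin{itemize}
\item rule (h) to get $b_0^\top R^1_1c$;
\item rule (l) to get $b_0R^{\dwn}c$;
\item the witness $d=b_0$, valid because $b_0^{\dwn}R^1_\omega b_0$.
\end{itemize}
For sources produced by rule (l), and for intermediate points, III.1 and II.1 are not needed. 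The chain $aR^{\dwn}bR^1_\alpha c$ gives $aR^{\dwn}c$ by (d) and (j), so $d=a$ works whenever $aR^1_\omega a$.

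\textbf{Smaller corrections.}
\begin{itemize}
\item In IV.1, the $R^1_\omega$ successor of an intermediate point $e^i_{A'}$ comes from rule (k) with $\alpha=\beta=\omega$, applied to $r_{A'}R^1_\omega e^i_{A'}$ with itself. This yields the loop $e^i_{A'}R^1_\omega e^i_{A'}$; rules (i) and (f) cannot produce it.
\item For the transfer $bRd\Rightarrow b^{\dwn}Rd$ in IV.3, rule (k) points the wrong way. Use (a) and then (f) from $b^{\dwn}R^1_\omega b$ for diamond transitions. For $R^\nabla$ and $R^{\dwn}$ successors, use $b^{\dwn}R^{\dwn}b^\top R^\nabla b$ with (j), followed by (e) where needed.
\end{itemize}
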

\begin{proof}
Conditions I--III correspond to the closure rules of Definition \ref{def 12 trees 3}, so we only need to verify the constructive conditions IV.1--IV.3.

IV.1: $aR^\dwn b$ implies $aR^1_\omega c$ for some $c$. We check that every source of an $R^\dwn$ transition of $\mathcal R_A$ has an outgoing $R^1_\omega$ transition.
\begin{itemize}
    \item Transitions of step (1): these are $r_{A'}R^{\dwn}r_{B'}$ and $e^i_{A'}R^{\dwn}b_i$ for a sub-formula $A'={\dwn}B'$. By construction $r_{A'}R^1_\omega r^0_{A'}$, and applying (k) to $r_{A'}R^1_\omega e^i_{A'}$ with itself (case $\alpha=\beta=\omega$) gives $e^i_{A'}R^1_\omega e^i_{A'}$.
    \item Transitions of step (2): these are $b^\dwn R^\dwn b^\top$ for the $R^2$ points $b$, and $b^\dwn R^1_\omega b$.
    \item Rule (j) adds no new sources of $R^\dwn$ transitions.
    \item Rule (l) adds $R^\dwn$ transitions only from $R^2$ points, which have an $R^1_\omega$ loop.
\end{itemize}

IV.2: $aR^{\dwn}bR^1_\alpha c$ implies $aR^1_\omega dR^{\dwn}c$ for some $d$. We first observe that, by (d) and (j), $aR^{\dwn}bR^1_\alpha c$ gives $aR^{\dwn}c$, so if $aR^1_\omega a$ we may take $d=a$. This covers every $a$ that is an $R^2$ point (whose loop belongs to $\mathcal R^0_A$, by Lemma \ref{lem closure facts}(i)) or an intermediate point (whose loop was obtained in IV.1).

We argue by induction on the stage at which $aR^{\dwn}b$ enters $\mathcal R_A$.
\begin{itemize}
\item $a$ is an intermediate point, or $aR^{\dwn}b$ is added by rule (l): then $aR^1_\omega a$ and we are done by the observation.
\item $a=r_{A'}$ and $b=r_{B'}$ for a sub-formula $A'={\dwn}B'$. By Lemma \ref{lem locality}(i), $c\in\mathcal S_{B'}$ and $r_{B'}R^1_\alpha c\in\mathcal R_{B'}$, hence $r_{B'}R^1_1c\in\mathcal R_{B'}$ by (a). If $c\in\mathcal S^0_{B'}$, then by construction there is an intermediate point $e$ with $r_{A'}R^1_\omega e$ and $eR^{\dwn}c$. Otherwise $c=c_0^{\dwn}$ is a copy point of $M_{B'}$. By Lemma \ref{lem locality}(iii) applied to $M_{B'}$ we have $r_{B'}R^1_1c_0^\top\in\mathcal R_{B'}$, and $c_0^\top\in\mathcal S^0_{B'}$, so there is an intermediate point $e$ with $r_{A'}R^1_\omega e$ and $eR^{\dwn}c_0^\top$. Since $c_0^\top R^\nabla c_0^{\dwn}$, rule (j) gives $eR^{\dwn}c$.
\item $a=b_0^{\dwn}$ is a copy point. By Lemma \ref{lem locality}(ii), $b_0^\top R^\nabla b$, so $b_0^\top R^1_1c$ by (h). Since $b_0^\top R^2_2b_0$ by Lemma \ref{lem closure facts}(ii), rule (l) gives $b_0R^{\dwn}c$. As $b_0^{\dwn}R^1_\omega b_0$, we may take $d=b_0$.
\item $aR^{\dwn}b$ is added by rule (j) from $aR^{\dwn}b'$ and $b'R^\nabla b$, and $a$ is not a copy point. Then $b'R^\nabla bR^1_\alpha c$ gives $b'R^1_1c$ by (h), and the induction hypothesis applied to $aR^{\dwn}b'R^1_1c$ provides $d$.
\end{itemize}
These cases are exhaustive: the $R^{\dwn}$ transitions of $\mathcal R^c_A$ are $r_{A'}R^{\dwn}r_{B'}$, $eR^{\dwn}b_i$ and $b_0^{\dwn}R^{\dwn}b_0^\top$, and the only closure rules producing $R^{\dwn}$ transitions are (j) and (l).\\

IV.3: $aR^2_\alpha b$ implies that there is $c$ with $aR^\nabla c$, $cR^\dwn a$, and $\forall d\forall R\,(bRd\Rightarrow cRd)$. By Lemma \ref{lem closure facts}(i), $b$ has a copy point $b^\dwn$ and an anchor $b^\top$. The point $c$ will be $b$ or $b^\dwn$. We check that $bRd$ implies $b^\dwn Rd$ for all transitions $R$.
\begin{itemize}
    \item For $bR^i_\beta d$: from $b^\dwn R^1_\omega b$ we get $b^\dwn R^1_\beta b$ by (a) and then $b^\dwn R^i_\beta d$ by (f).
       \item For $bR^\nabla d$: we have $b^\top R^\nabla b$ by Lemma \ref{lem closure facts}(ii) and (d), so $b^\dwn R^\dwn b^\top R^\nabla b$ gives $b^\dwn R^\dwn b$ by (j), then $b^\dwn R^\dwn d$ by (j) and $b^\dwn R^\nabla d$ by (e).
    \item For $bR^{\dwn}d$: by (e) we have $bR^\nabla d$, and the previous case already produced $b^{\dwn}R^{\dwn}d$.
\end{itemize}
We now distinguish the same cases as in Lemma \ref{lem closure facts}(iii).
\begin{itemize}
    \item Case (1): $a=b^\top$. We take $c=b^\dwn$. The required $aR^\nabla b^\dwn$ and $b^\dwn R^\dwn a$ are the anchor transitions.
    \item Case (2): $b^\top R^1_1a$. We take $c=b$. Then $aR^\nabla b$ holds by (b), (d), and $bR^\dwn a$ follows by rule (l) from $b^\top R^2_2 b$ and $b^\top R^1_1 a$.
    \item Case (3): $a=d^\dwn$ with $d^\top=b^\top$. We again take $c=b^\dwn$. Then $d^\dwn R^\dwn b^\top R^\nabla b^\dwn$ gives $aR^\nabla b^\dwn$ by (e), (g), and $b^\dwn R^\dwn b^\top R^\nabla d^\dwn$ gives $b^\dwn R^\dwn a$ by (j).
\end{itemize}
\end{proof}

\begin{lemma}\label{lem omega loop}
    For every variable-free $\mathsf{RC_\bio}$ formula $A$, $\dmnd{2}{\alpha}A\vdash_\bio \dmnd{2}{\alpha}(A\wedge\dmnd{1}{\omega} A)$.
\end{lemma}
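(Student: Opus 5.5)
The plan is to push everything syntactically inside the $\dmnd{2}{\alpha}$ and reduce the statement to a claim about a single world. The basic tool is Axiom 11. By Axioms 2 and 1, $\dmnd{2}{\alpha}A\vdash\dmnd{1}{1}B$ for every $B$ with $\dmnd{2}{\alpha}A\vdash\dmnd{1}{1}B$, and in particular for $B=A$. Combining with R$\wedge$ and Axiom 11 gives
\[
\dmnd{2}{\alpha}A\vdash_{\bio}\dmnd{2}{\alpha}\big(A\wedge{\dwn}A\wedge{\dwn}\top\big),
\]
and more generally any finite conjunction of ${\dwn}B$'s with $\dmnd{2}{\alpha}A\vdash\dmnd{1}{1}B$ may be added inside. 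By Mod$_1$ it then suffices to find such a conjunction $E$ with
\[
A\wedge E\vdash_{\bio}A\wedge\dmnd{1}{\omega}A .
\]
By R$\wedge$ the target reduces to $A\wedge E\vdash\dmnd{1}{\omega}A$.

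For the core step, write the variable-free $A$ as a conjunction of $\top$ and ``primes'' $\dmnd{i}{\beta}B$, ${\dwn}B$, $\nabla B$. The step has two parts.
\begin{enumerate}
\item From ${\dwn}\top$ (a sub-conjunct of $E$, obtained from ${\dwn}A\vdash{\dwn}\top$ by Mod$_2$), Axiom 13 gives $\dmnd{1}{\omega}\top$.
\item Repeated Axiom 12 (the $\beta<\omega$ case, and the $\alpha=\beta=\omega$ case for $\dmnd{1}{\omega}B$ primes) gives
\[
\dmnd{1}{\omega}\top\wedge D_1\wedge\dots\wedge D_n\vdash\dmnd{1}{\omega}(D_1\wedge\dots\wedge D_n)
\]
for all diamond primes $D_k$ of $A$.
\end{enumerate}
This settles every $A$ built only from diamond primes.

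The remaining primes ${\dwn}C$ and $\nabla C$ are where I expect the main obstacle. Neither is carried through a $\dmnd{1}{\omega}$ by Axiom 12. For ${\dwn}C$ I would try to obtain ${\dwn}\dmnd{1}{1}C'$ with $C'\vdash C$ inside the $\dmnd{2}{\alpha}$, so that Axiom 14 yields $\dmnd{1}{\omega}{\dwn}C$. The idea is to use $\dmnd{2}{\alpha}A\vdash\dmnd{1}{1}\dmnd{1}{1}\cdots$ through Axiom 6 and the transitivity of $\dmnd{1}{1}$, and then Axiom 11. I would then combine the result with the other conjuncts via the $\omega$-instance of Axiom 12, using that $\dmnd{1}{\omega}X\wedge\dmnd{1}{\omega}Y\vdash\dmnd{1}{\omega}(X\wedge\dmnd{1}{\omega}Y)$, and a subsequent induction on the structure of $A$ to flatten the nested $\dmnd{1}{\omega}$'s.

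For the $\nabla C$ primes I would try Axiom 15 together with Axioms 8 and 9. If this purely syntactic route stalls, the fallback is semantic, by completeness (Theorem \ref{thm completeness}). Given $aR^2_\alpha b$ with $b\models A$, use condition IV.3 to obtain the point $c$, which inherits all transitions of $b$ and satisfies $cR^{\dwn}a$. Then use IV.1 and IV.2 together with the ${\dwn}$-transitions to build an $R^1_\omega$ loop-like successor that again satisfies $A$, checking that the new point is $R^2_\alpha$-reachable from $a$ via condition III.1.
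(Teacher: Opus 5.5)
\textbf{Where the proposal breaks down.} Your reduction is the right frame, and it is the one the paper uses: insert ${\dwn}B$'s with $\dmnd{2}{\alpha}A\vdash_\bio\dmnd{1}{1}B$ via Axiom 11, then prove $A\wedge E\vdash_\bio\dmnd{1}{\omega}A$. Your diamond-only case is also correct. However, the ${\dwn}$- and $\nabla$-conjuncts, which are the real content of the lemma, are not handled.

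If you treat each ${\dwn}C_j$ separately, you get $\dmnd{1}{\omega}{\dwn}C_1,\dots,\dmnd{1}{\omega}{\dwn}C_k$. Axiom 12 only combines these into $\dmnd{1}{\omega}({\dwn}C_1\wedge\dmnd{1}{\omega}{\dwn}C_2)$, and so on. Your ``flattening'' to $\dmnd{1}{\omega}({\dwn}C_1\wedge{\dwn}C_2)$ is unjustified, for two reasons:
\begin{itemize}
\item Axiom 6 only collapses directly nested diamonds.
\item ${\dwn}$-formulas do not propagate backwards along $R^1_\omega$. For example, $\dmnd{1}{\omega}{\dwn}\top\nvdash_\bio{\dwn}\top$: the root of $M_{\dmnd{1}{\omega}{\dwn}\top}$ has no $R^{\dwn}$-successor.
\end{itemize}
So separate uses of Axiom 14 give you different witness points, and you have no means of merging them.

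Three further problems:
\begin{itemize}
\item Axiom 6 and transitivity go the wrong way for producing $\dmnd{1}{1}\dmnd{1}{1}\cdots$ from $\dmnd{2}{\alpha}A$.
\item Axiom 15 plays no role for the $\nabla$-conjuncts.
\item The semantic fallback does not close. The point supplied by IV.3 is only known to be $R^\nabla$-accessible from $a$. III.1 only passes lower-rank transitions of a point on to its successors, so it cannot give you an $R^2_\alpha$-transition from $a$ to that point.
\end{itemize}

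\textbf{The missing idea.} Apply Axiom 14 once, to $A$ itself, and use the fact that ${\dwn}$ is absorbed by ${\dwn}$- and $\nabla$-conjuncts.
\begin{itemize}
\item Axiom 12 with $\beta=1<\alpha$ gives $\dmnd{2}{\alpha}A\vdash_\bio\dmnd{2}{\alpha}(A\wedge\dmnd{1}{1}A)\vdash_\bio\dmnd{1}{1}\dmnd{1}{1}A$.
\item Axiom 11 with $B=\dmnd{1}{1}A$, followed by Axiom 14, then gives $\dmnd{2}{\alpha}(A\wedge\dmnd{1}{\omega}{\dwn}A)$.
\item By Axioms 5 and 10, ${\dwn}{\dwn}C\vdash_\bio{\dwn}\nabla C\vdash_\bio{\dwn}C$ and ${\dwn}\nabla C\vdash_\bio{\dwn}C\vdash_\bio\nabla C$. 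Hence ${\dwn}A$ proves every ${\dwn}$- and $\nabla$-conjunct of $A$.
\item Your step 2 (Axiom 12) then brings the diamond conjuncts inside the $\dmnd{1}{\omega}$.
\end{itemize}
What blocked you was the requirement $C'\vdash C$. By Axiom 10, $C'\vdash\nabla C$ suffices, and $C'=A$ satisfies this for all such conjuncts at once.

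\textbf{Repairing the semantic route.} It can be fixed in the same spirit by keeping $b$ itself as the witness.
\begin{itemize}
\item III.1 (using $aR^1_1b$) and III.2, both with $c:=b$, give $bR^1_1b$ and $bR^{\dwn}b$.
\item IV.2 then yields a point $d$ with $bR^1_\omega dR^{\dwn}b$.
\item $d\models A$: use I.5 and II.5 for the ${\dwn}$- and $\nabla$-conjuncts, and III.1 at rank $\omega$ for the diamond conjuncts.
\item Completeness, which is proved independently of this lemma, then gives the sequent.
\end{itemize}
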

\begin{proof}
    Since $A$ is variable-free, it can be written as $A=B_A\wedge C_A \wedge D_A$ where:
    \begin{itemize}
        \item[-] $B_A$ is a conjunction of $\dwn$-formulas (i.e. formulas of the form $\dwn B'$);
        \item[-] $C_A$ is a conjunction of $\Diamond$-formulas (i.e. formulas of the form $\dmnd{i}{\alpha} C'$);
        \item[-] $D_A$ is a conjunction of $\nabla$-formulas (i.e. formulas of the form $\nabla D'$).
    \end{itemize}
    If any of $B_A$, $C_A$ or $D_A$ is empty, we read it as $\top$. Then, it is easy to see that the following derivation holds:
    \[\begin{aligned} \dmnd{2}{\alpha}A&\vdash_\bio \dmnd{2}{\alpha} A\wedge \dmnd{2}{\alpha}\dmnd{1}{1} A\\ &\vdash_\bio \dmnd{2}{\alpha}(A\wedge \dwn \dmnd{1}{1} A)\\ &\vdash_\bio \dmnd{2}{\alpha}(A\wedge \dmnd{1}{\omega}\dwn A)\\ &\vdash_\bio \dmnd{2}{\alpha}(A\wedge C_A\wedge \dmnd{1}{\omega}\dwn(B_A\wedge D_A))\\&\vdash_\bio \dmnd{2}{\alpha}(A\wedge\dmnd{1}{\omega}(B_A\wedge C_A\wedge D_A))\\&\vdash_\bio \dmnd{2}{\alpha}(A\wedge\dmnd{1}{\omega} A).\end{aligned}\]
\end{proof}

\begin{lemma}\label{lem loop witness}
Consider the canonical model $W_{\mathsf{BI_\omega}} = (\mathcal{S}_{\mathsf{BI_\omega}}, \mathcal{R}_{\mathsf{BI_\omega}}, \mathcal{V}_{\mathsf{BI_\omega}})$, and let $a\in\mathcal S_{\mathsf{BI_\omega}}$ consist only of variable-free formulas and $\dmnd{2}{\alpha}B\in a$. Then there is $b\in \mathcal{S}_{\mathsf{BI_\omega}}$ with $aR^2_\alpha b$, $bR^1_\omega b$ and $B\in b$.
\end{lemma}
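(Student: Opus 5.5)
Starting from $\dmnd{2}{\alpha}B\in a$, we use deductive closure and Lemma \ref{lem omega loop} to get $\dmnd{2}{\alpha}(B\wedge\dmnd{1}{\omega}B)\in a$. Then we build $b$ by hand, as in the $\dmnd{2}{\alpha}$ case of Lemma \ref{lem 7 adequate 2}, choosing the data so that $b$ contains its own $\omega$-successor conditions. Set
\[
\Delta=\{\dmnd{i}{\gamma}C\in a \mid \gamma<\alpha\},\qquad \Delta^\dwn=\{\dwn C \mid \dmnd{i}{\gamma}C\in a\}.
\]
Since the variable-free formula $B$ gets replaced by the invariant we actually need, the simplest choice is to let $b$ be the deductive closure of $\Delta\cup\Delta^\dwn\cup\{E\}$, where $E$ ranges over all formulas with $\dmnd{2}{\alpha}E\in a$ and $E\vdash B$. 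Closing this family under conjunction uses Axiom 12 and is not automatic, so I prefer a fixed-formula choice:
\[
b=\mathrm{cl}\bigl(\Delta\cup\Delta^\dwn\cup\{B\}\cup\{\dmnd{1}{\omega}B\}\bigr).
\]
The proof of A1--A3 for $aR^2_\alpha b$ then goes through verbatim from Lemma \ref{lem 7 adequate 2}, now with $B\wedge\dmnd{1}{\omega}B$ in place of $B$. So $aR^2_\alpha b$ holds and $B\in b$.

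The real work is $bR^1_\omega b$, that is, A1 and A2 for $i=1,\alpha=\omega$.

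A2 requires that whenever $\dmnd{j}{\beta}D\in b$, also $\dmnd{j}{\beta}D\in b$. This is trivial, since source and target coincide.

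A1 requires that $D\in b$ implies $\dmnd{1}{\omega}D\in b$. This is the main obstacle. I would first try to prove that every generator $G$ of $b$ satisfies $\dmnd{1}{\omega}G\in b$, and then combine generators.
\begin{itemize}
\item For the generator $B$ this is immediate, since $\dmnd{1}{\omega}B\in b$.
\item For $\dmnd{1}{\omega}B$, Axiom 6 does not give $\dmnd{1}{\omega}\dmnd{1}{\omega}B$; it only goes the other way. So the fixed-formula choice fails here. I would therefore iterate and replace $B$ by a self-reproducing formula.
\end{itemize}
The better route is to show directly that each generator $G$ of $b$ has $\dmnd{1}{\omega}G$ derivable from the generators, and that this property is closed under conjunction. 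Closure under conjunction holds because, by Axiom 12 with $\alpha=\beta=\omega$, $\dmnd{1}{\omega}G_1\wedge\dmnd{1}{\omega}G_2\vdash\dmnd{1}{\omega}(G_1\wedge\dmnd{1}{\omega}G_2)$. This is not yet $\dmnd{1}{\omega}(G_1\wedge G_2)$, which is exactly why all generators should be $\Diamond$- or $\dwn$-formulas that persist. So I would restrict to generators of the following kinds.
\begin{itemize}
\item \emph{Elements of $\Delta$.} We need $\dmnd{1}{\omega}\dmnd{i}{\gamma}C$ from $b$. Mimicking Lemma \ref{lem omega loop}, use $\dmnd{2}{\alpha}\dmnd{1}{1}(\cdot)$, Axiom 11 and Axiom 14 to pass from $\dmnd{1}{1}X\in a$ to $\dwn\dmnd{1}{1}X\in b$, and hence to $\dmnd{1}{\omega}\dwn X\in b$.
\item \emph{Elements $\dwn C$ of $\Delta^\dwn$.} By Lemma \ref{lem packing}, $b$ contains $\dwn\bigl(\bigwedge\dmnd{1}{1}T_q\bigr)$, and Axiom 14 turns this into $\dmnd{1}{\omega}\dwn(\cdot)$.
\end{itemize}

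Concretely, I would redefine $b$ as the closure of $\{X \mid \dmnd{2}{\alpha}X\in a\}$ together with $\Delta$ and $\Delta^\dwn$. Here $X$ ranges over formulas with $\dmnd{2}{\alpha}X\in a$; this family is closed under the needed conjunctions by the A1 computation. Then, given $D\in b$, we have some such $X$ with $X\wedge(\text{finite part of }\Delta,\Delta^\dwn)\vdash D$. Lemma \ref{lem omega loop} gives $\dmnd{2}{\alpha}(X\wedge\dmnd{1}{\omega}X)\in a$, so $X\wedge\dmnd{1}{\omega}X$ is again in the family and $\dmnd{1}{\omega}X\in b$. Combining $\dmnd{1}{\omega}X$ with the $\Diamond$-formulas of $\Delta$ by Axiom 12 (ranks $<\omega$, or $\omega$ with upper index $1$), and with the $\dwn$-parts via the Axiom 11/14 argument above, yields $\dmnd{1}{\omega}D\in b$. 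The delicate step I expect to struggle with is exactly the treatment of the $\dwn$-conjuncts of $\Delta^\dwn$ inside $\dmnd{1}{\omega}$. My plan there is to absorb them into $X$ using Axiom 11, since $\dmnd{2}{\alpha}X\wedge\dmnd{1}{1}C\vdash\dmnd{2}{\alpha}(X\wedge\dwn C)$. After this absorption only $\Delta$ remains outside $X$, and $\Delta$ is handled by Axiom 12.
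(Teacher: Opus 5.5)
Your final construction rests on the claim that $\{X\mid\dmnd{2}{\alpha}X\in a\}$ is closed under conjunction, and this is false. The A1 computation in Lemma \ref{lem 7 adequate 2} only merges one fixed formula with finitely many members of $\Delta$ (Axiom 12, ranks $<\alpha$) and of $\Delta^\dwn$ (Axiom 11). No axiom merges $\dmnd{2}{\alpha}X_1\wedge\dmnd{2}{\alpha}X_2$, since Axiom 12 needs $\beta<\alpha$ or $\alpha=\beta=\omega$; you noticed this yourself for the smaller family of $E$ with $E\vdash B$. Here is a concrete failure. Let $a$ be the deductive closure of $\dmnd{2}{2}\dmnd{2}{2}\dmnd{2}{3}\top$. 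Then $\dmnd{2}{2}\dmnd{2}{3}\top\in a$ by Axiom 6, but $\dmnd{2}{2}(\dmnd{2}{3}\top\wedge\dmnd{2}{2}\dmnd{2}{3}\top)\notin a$. To see this, build $M_A$ on the chain $r_AR^2_2xR^2_2yR^2_3z$. The only $R^2_2$-successor of $r_A$ satisfying $\dmnd{2}{3}\top$ is $y$, and the only $R^2_2$-successor of $y$ is $z$, which does not satisfy $\dmnd{2}{3}\top$; now apply Theorem \ref{thm trees}. So your $b$ contains $X_1\wedge X_2$ without $\dmnd{2}{\alpha}(X_1\wedge X_2)\in a$, which violates A1 for $aR^2_\alpha b$. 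For the same reason, the step ``given $D\in b$ there is a single such $X$'' in your argument for $bR^1_\omega b$ is unjustified.

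What is missing is a directed family of self-reproducing generators built from the one formula $B$. Your diagnosis of the fixed-formula attempt also points the wrong way. Axiom 12 with $\alpha=\beta=\omega$ gives $B\wedge\dmnd{1}{\omega}B\vdash\dmnd{1}{\omega}B\wedge\dmnd{1}{\omega}B\vdash\dmnd{1}{\omega}(B\wedge\dmnd{1}{\omega}B)$, so $B\wedge\dmnd{1}{\omega}B$ does reproduce itself. The actual obstruction is that the $\dwn$-generators must lie under the $\dmnd{1}{\omega}$ together with $B$. Your closing idea of absorbing them with Axiom 11 is the right ingredient, but it has to be applied to $B$ before invoking Lemma \ref{lem omega loop}:
\begin{itemize}
\item For $\dmnd{1}{1}C_1,\dots,\dmnd{1}{1}C_k\in a$, set $G=B\wedge\dwn C_1\wedge\dots\wedge\dwn C_k$ and $H=G\wedge\dmnd{1}{\omega}G$.
\item Let $b$ be generated by $\Delta$ together with all such $H$. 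This subsumes $\Delta^\dwn$ and gives A3.
\item Enlarging the list $C_1,\dots,C_k$ strengthens $H$, so each $D\in b$ follows from finitely many members of $\Delta$ and a single $H$.
\item $\dmnd{2}{\alpha}G\in a$ by Axiom 11, and then $\dmnd{2}{\alpha}H\in a$ by Lemma \ref{lem omega loop}, since $G$ is variable-free.
\item $H\vdash\dmnd{1}{\omega}H$ by the same Axiom 12 argument with $G$ in place of $B$.
\item Axiom 12 then carries the $\Delta$-part under $\dmnd{2}{\alpha}$ and under $\dmnd{1}{\omega}$. This yields A1 for both $aR^2_\alpha b$ and $bR^1_\omega b$.
\end{itemize}
A2 is immediate in both cases.
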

\begin{proof}
Given a sequence $\vec C=C_1,\dots,C_k$ ($k\geq 0$), we define:
\[ G_{\vec C}=B\wedge{\dwn}C_1\wedge\dots\wedge{\dwn}C_k,\qquad
   H_{\vec C}=G_{\vec C}\wedge\dmnd{1}{\omega}G_{\vec C}. \]
Let $\Delta=\{\dmnd{i}{\beta}C\in a\mid\beta<\alpha\}$ and $b$ be the deductive closure of $\Delta\cup\{H_{\vec C}\mid \dmnd{1}{1}C_t\in a \text{ for all } t\leq k\}$. Taking $k=0$ gives $B\in b$.
Notice that the $H$ formulas are directed: $H_{\vec C+\vec C'}\vdash H_{\vec C}$ by L$\wedge$ and Mod$_1$.
Hence every $D\in b$ follows from finitely many
$\dmnd{i_1}{\beta_1}C'_1,\dots,\dmnd{i_l}{\beta_l}C'_l\in\Delta$ and a single $H=G\wedge\dmnd{1}{\omega}G$.
Moreover, by Axiom 12 with $\alpha=\beta=\omega$,
\[ H\vdash\dmnd{1}{\omega}G\wedge\dmnd{1}{\omega}G
    \vdash\dmnd{1}{\omega}(G\wedge\dmnd{1}{\omega}G)=\dmnd{1}{\omega}H. \tag{$*$} \]
We now only need to check that $bR^1_\omega b$ and $aR^2_\alpha b$:
\begin{itemize}
\item $bR^1_\omega b$: 

A1: let $D\in b$ be as above. By $(*)$ and $l$ applications of Axiom 12,
\[ \bigwedge_{s\le l}\dmnd{i_s}{\beta_s}C'_s\wedge H \vdash\bigwedge_{s\le l}\dmnd{i_s}{\beta_s}C'_s\wedge\dmnd{1}{\omega}H \vdash\dmnd{1}{\omega}\bigl(\bigwedge_{s\le l}\dmnd{i_s}{\beta_s}C'_s\wedge H\bigr) \vdash\dmnd{1}{\omega}D, \]
so $\dmnd{1}{\omega}D\in b$.

A2: Trivially holds. 

\item $aR^2_\alpha b$:

A1: from $\dmnd{2}{\alpha}B\in a$ and $\dmnd{1}{1}C_t\in a$, $k$ applications of
Axiom 11 give $\dmnd{2}{\alpha}G\in a$; as $G$ is variable-free, Lemma \ref{lem omega loop} gives
$\dmnd{2}{\alpha}H\in a$; $l$ applications of Axiom 12 (all ranks in $\Delta$ are $<\alpha$) give
$\dmnd{2}{\alpha}(\bigwedge_{s}\dmnd{i_s}{\beta_s}C'_s\wedge H)\in a$, whence $\dmnd{2}{\alpha}D\in a$ by Mod$_1$.

A2: this holds as $\Delta\subseteq b$.

A3: if $\dmnd{j}{\beta}C\in a$, then $\dmnd{1}{1}C\in a$ by Axioms 2 and 1, so $H_{C}\in b$ and hence ${\dwn}C\in b$.
\end{itemize}
\end{proof}

\begin{lemma}\label{lem 11 trees 3}
Let $A$ be a variable-free formula and let $a\in\mathcal S_{\mathsf{BI_\omega}}$ consist of variable-free formulas. Then
\[ W_{\mathsf{BI_\omega}},a\models A \;\;\text{iff}\;\; \text{there is an embedding } f:M_A\hookrightarrow W_{\mathsf{BI_\omega}} \text{ with } f(r_A)=a. \]
\end{lemma}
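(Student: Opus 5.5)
The easy direction ($\Leftarrow$) follows from what we already have. By Proposition \ref{prop Mmodel}, $M_A,r_A\models A$, and Proposition \ref{prop embedding} transfers this along $f$ to give $W_{\mathsf{BI_\omega}},a\models A$. For ($\Rightarrow$) the plan is to build $f$ in the order in which $M_A$ was built: first on $M^0_A$, then on the copy points. The closure step should then come for free. Indeed, $W_{\mathsf{BI_\omega}}$ is an $\mathsf{RC_{BI_\omega}}$ model by Lemma \ref{lem adequate 1}, so its relations satisfy conditions I--III, which are exactly closure rules (a)--(l). By induction on the stage of the internal closure procedure, any map that preserves $\mathcal R^c_A$ therefore preserves $\mathcal R_A$.

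The first step is to embed $M^0_A$ with $f(r_A)=a$, by induction on $A$, in a strengthened form. Every tree point $t$ corresponding to a subformula $C$ will be sent to a state containing $C$ and consisting of variable-free formulas. Every $R^2$ point will be sent to a state $s$ with $sR^1_\omega s$. The cases go as follows.
\begin{itemize}
\item $\top$, $\wedge$, $\nabla$ and $\dmnd{1}{\alpha}$: use the witnesses from Lemma \ref{lem 7 adequate 2}. The deductive closures built there from variable-free formulas stay variable-free.
\item $\dmnd{2}{\alpha}B$: use Lemma \ref{lem loop witness} in place of Lemma \ref{lem 7 adequate 2}. It gives $b$ with $aR^2_\alpha b$, $B\in b$ and $bR^1_\omega b$, so the loop $r_BR^1_\omega r_B$ of $M^0_A$ is preserved.
\item ${\dwn}B$: with ${\dwn}B\in a$, choose $b$ with $aR^\dwn b$ as in Lemma \ref{lem 7 adequate 2} and recurse. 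IV.1 gives a state $f(r^0_A)$ with $aR^1_\omega f(r^0_A)$. For each intermediate point $e^i_A$ attached to $b_i$, we have $r_BR^1_1b_i$ in $M_B$, so inductively $f(r_B)R^1_1f(b_i)$. Then IV.2 applied to $aR^\dwn f(r_B)R^1_1 f(b_i)$ yields $d$ with $aR^1_\omega d R^\dwn f(b_i)$, and we put $f(e^i_A)=d$.
\end{itemize}
One subtlety is that $M_B$ is the closure, not just $M^0_B$. So the inductive hypothesis must be that the whole $M_B$ embeds at $b$; that is, the full lemma is proved by induction on $A$, with the copy-point step done at each level. Lemma \ref{lem locality}(i) ensures that the restriction of $M_A$ to $\mathcal S_B$ is $M_B$, so the pieces fit together.

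The second step is to send each copy point $b^\dwn$ to a state $c$ with $f(b)^\top$-type properties. Concretely, let $s=f(b^\top)$ and $t=f(b)$. By Lemma \ref{lem closure facts}(ii) we have $b^\top R^2_2 b$, and the closure argument (applied to the already embedded part) gives $sR^2_2t$. Condition IV.3 then gives $c$ with $sR^\nabla c$, $cR^\dwn s$, and every transition out of $t$ also out of $c$. Since $tR^1_\omega t$, we get $cR^1_\omega t$. These are exactly the three transitions added for $b^\dwn$ in $\mathcal R^c_A$, so setting $f(b^\dwn)=c$ completes an embedding of $M^c_A$, and hence of $M_A$ by the closure argument.

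The main obstacle I expect is ordering and coherence. The copy points use relations from the closure, namely $b^\top R^2_2 b$, which requires some closure to be performed before the copy points are added. So I would argue that the closure of $\mathcal R^0_A$ alone already yields $b^\top R^2_2 b$, via rules (a), (b), (f) and P5. I would also need to check that the ${\dwn}$ case really only uses $R^1_1$ transitions from $r_B$ into $M^0_B$ points. These are available because we inductively embed all of $M_B$.
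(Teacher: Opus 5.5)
Your proposal is correct and is essentially the paper's proof. It uses the same reduction to embedding $M^c_A$, with conditions I--III of $W_{\mathsf{BI_\omega}}$ absorbing the closure. The witnesses are also the same: Lemma \ref{lem loop witness} for the $\dmnd{2}{\alpha}$-successors, IV.1 for zero points, IV.2 fed by an embedding of all of $M_B$ for intermediate points, and IV.3 applied to $f(b^\top)R^2_2f(b)$ (obtained from the image of the 2-image via I.1, I.2, II.1) for copy points. The paper only packages this differently: it fixes all tree points first, defines the copy points once with respect to the final anchors of $M_A$, and handles intermediate points by an innermost-first induction over ${\dwn}$-subformulas, rather than running an outer induction on $A$.

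One caution about your packaging. Lemma \ref{lem locality}(i) only covers $B$ with $r_B$ an anchor point, which is enough for the ${\dwn}B$ case where you use it. For $A=\dmnd{i}{\alpha}B$ with $\alpha\geq2$, however, the points of $M_B$ anchored at $r_B$ become anchored at $r_A$. So $M_A$ restricted to $\mathcal S_B$ need not be $M_B$, and the copy points supplied by the induction hypothesis must be re-chosen via IV.3 at $a$ rather than inherited. This re-choice is harmless, because the images of the intermediate points were fixed using only $\mathcal R^0$-transitions and facts about $W_{\mathsf{BI_\omega}}$.
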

\begin{proof} 
The reverse implication is immediate by Propositions \ref{prop embedding} and \ref{prop Mmodel}. 

For the direct implication, note first that every point $b$ with $aRb\in\mathcal R_{\mathsf{BI_\omega}}$ consists of variable-free formulas: by conditions A1, B1 and C1 of Definition \ref{def adequate 3}, each formula of $b$ occurs, prefixed by a modality, in $a$. Hence Lemma \ref{lem loop witness} is available at every point reachable from $a$. Note further that it suffices to construct an embedding $f$ of $M^c_A$ into $W_{\mathsf{BI_\omega}}$ with $f(r_A)=a$: by induction on the stage, the image under $f$ of every transition added by a closure rule of Definition \ref{def 12 trees 3} is a transition of $W_{\mathsf{BI_\omega}}$, since each rule (a)--(l) is an instance of one of the conditions I--III satisfied by $W_{\mathsf{BI_\omega}}$ (Lemma \ref{lem adequate 1}). We define $f$ on the four kinds of points of $M^c_A$ in turn.
\begin{itemize}
    \item Tree points: We define $f$ on $T_A$ by induction on $A$ as in the proof of Proposition \ref{prop embed}, with one modification: for a conjunct $\dmnd{2}{\alpha}B$ of the formula at hand, satisfied at the current image point $f(t)$, we choose as witness for the root of $T_B$ the point $b$ provided by Lemma \ref{lem loop witness}, so that $f(t)R^2_\alpha b$, $bR^1_\omega b$ and $W_{\mathsf{BI_\omega}},b\models B$ by Lemma \ref{lem 7 adequate 2}. All tree transitions are thus matched, and so are the loops $bR^1_\omega b$ at the $R^2$ points.
    \item Zero points: For a sub-formula $A'={\dwn}B'$ we have $f(r_{A'})R^\dwn f(r_{B'})$, so condition IV.1 provides $c$ with $f(r_{A'})R^1_\omega c$; define $f(r^0_{A'})=c$.
    \item Copy points: Let $b$ be an $R^2$ point with 2-image $a_0,\dots,a_n=b$. The images $f(a_0),\dots,f(b)$ are connected by transitions of rank $\geq2$, the last one an $R^2$ transition, so $f(a_0)R^2_2f(b)$ by conditions I.1, I.2 and II.1. Condition IV.3 provides $c$ with $f(a_0)R^\nabla c$, $cR^\dwn f(a_0)$ and $\forall d\,\forall R\,(f(b)Rd\Rightarrow cRd)$; define $f(b^\dwn)=c$. The anchor transitions $b^\top R^\nabla b^\dwn$ and $b^\dwn R^\dwn b^\top$ are matched by the first two properties, and $b^\dwn R^1_\omega b$ is matched because $f(b)R^1_\omega f(b)$ and the third property of $c$.
    \item Intermediate points: We proceed by induction on the depth of the sub-formula $A'={\dwn}B'$, innermost first. By the induction hypothesis $f$ is already defined on $\mathcal S_{B'}$ and is an embedding of $M^c_{B'}$, since $\mathcal R^c_{B'}\subseteq\mathcal R^c_A$; by the observation above it is then an embedding of $M_{B'}$. Hence $r_{B'}R^1_1b_i\in\mathcal R_{B'}$ gives $f(r_{B'})R^1_1f(b_i)$, and condition IV.2 applied to $f(r_{A'})R^\dwn f(r_{B'})R^1_1f(b_i)$ provides $c$ with $f(r_{A'})R^1_\omega cR^\dwn f(b_i)$; define $f(e^i_{A'})=c$.
\end{itemize}
Every transition of $\mathcal R^c_A$ has now been matched, so $f$ is an embedding of $M^c_A$, hence of $M_A$.
\end{proof} 

\begin{theorem}\label{thm trees}
    For variable-free formulas $A$ and $B$,
    \[ A \vdash_{\mathsf{BI_\omega}} B \text{ iff } M_A, r_A \models B. \]
\end{theorem}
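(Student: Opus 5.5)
The plan is to prove the two directions separately, using soundness for the forward implication and the canonical-model embedding for the converse.

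For the forward direction, assume $A \vdash_{\mathsf{BI_\omega}} B$. By Proposition \ref{prop MA is a model}, $M_A$ is an $\mathsf{RC_{BI_\omega}}$ model. Soundness (Proposition \ref{prop Soundness}) then makes the sequent $A\vdash B$ true in $M_A$. By Proposition \ref{prop Mmodel} we have $M_A,r_A\models A$, and hence $M_A,r_A\models B$. No further work is needed here.

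For the converse, assume $M_A,r_A\models B$. Let $a$ be the deductive closure of $A$ inside the variable-free fragment of $\mathcal{L}_{\mathsf{BI_\omega}}$, that is, $a=\{C \text{ variable-free} \mid A\vdash_{\mathsf{BI_\omega}} C\}$.

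The first point to check is that $a$ is a state of $W_{\mathsf{BI_\omega}}$. It is closed under derivability among variable-free formulas. If $C$ is variable-free and derivable from finitely many members of $a$, then it is derivable from $A$, so it lies in $a$. Closure for formulas with variables is irrelevant once we restrict attention to variable-free formulas. If Definition \ref{def adequate 3} is read strictly, I would note that Lemma \ref{lem 11 trees 3} and Lemma \ref{lem loop witness} only use closure under variable-free consequences. Alternatively, one could check that a variable-free $A$ never derives a formula containing variables other than through $\top$-like weakenings, but this seems false (e.g.\ nothing derives $p$), so the restricted set is fine.

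Since $A\in a$, Lemma \ref{lem 7 adequate 2} gives $W_{\mathsf{BI_\omega}},a\models A$. Lemma \ref{lem 11 trees 3} then yields an embedding $f:M_A\hookrightarrow W_{\mathsf{BI_\omega}}$ with $f(r_A)=a$. By Proposition \ref{prop embedding}, $M_A,r_A\models B$ transfers to $W_{\mathsf{BI_\omega}},a\models B$. Applying Lemma \ref{lem 7 adequate 2} again gives $B\in a$, i.e.\ $A\vdash_{\mathsf{BI_\omega}} B$.

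The only delicate point is the status of $a$ as a state: it must be a point of the canonical model, and it must consist of variable-free formulas so that Lemma \ref{lem 11 trees 3} applies.

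- If states are required to be closed under all consequences, I would instead take the full deductive closure $a'$ of $A$. I would then restrict attention to its variable-free part by observing that the relations A1–A3, B1, C1 and the satisfaction of variable-free formulas only depend on variable-free members.
- The simplest route is to argue that the variable-free fragment forms a canonical model of its own. The proofs of Lemmas \ref{lem 7 adequate 2} and \ref{lem adequate 1} go through verbatim with all formulas variable-free, because every construction there (deductive closures of sets of variable-free formulas) stays in the fragment.

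I expect this bookkeeping to be the main obstacle. The semantic heavy lifting was already done in Lemma \ref{lem 11 trees 3}.
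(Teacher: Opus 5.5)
Your two directions follow the paper's proof. The forward implication is soundness in $M_A$. The converse takes a closure $a$ of $A$ and applies Lemma~\ref{lem 7 adequate 2}, the embedding of Lemma~\ref{lem 11 trees 3}, and Proposition~\ref{prop embedding}. However, the step you flag as delicate is never settled, and you reject the fact that settles it. Lemmas~\ref{lem 7 adequate 2} and~\ref{lem 11 trees 3} are statements about $W_{\mathsf{BI_\omega}}$, whose states are sets closed under \emph{all} consequences in $\mathcal L_{\mathsf{BI_\omega}}$. Lemma~\ref{lem 11 trees 3} also needs the state to consist of variable-free formulas. Neither of your fallbacks is carried out: the claim that these lemmas ``only use'' closure under variable-free consequences is unchecked, and the separate variable-free canonical model is only sketched. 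Either would require redoing the proofs of Lemmas~\ref{lem 7 adequate 2}, \ref{lem adequate 1}, \ref{lem loop witness} and~\ref{lem 11 trees 3}. Taking the full closure $a'$ and ``restricting to its variable-free part'' also does not meet the hypothesis of Lemma~\ref{lem 11 trees 3} unless $a'$ is already variable-free.

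The missing observation is that the full deductive closure of a variable-free $A$ contains no variables. Your example ``nothing derives $p$'' supports this rather than refuting it. The paper gets it from the forward direction you already proved. If $A\vdash_{\mathsf{BI_\omega}}C$, then $M_A,r_A\models C$. No variable is true at any point of $M_A$, and satisfying a strictly positive formula that contains $p$ requires some point to satisfy $p$, so $C$ is variable-free. Equivalently, a trivial induction on derivations shows that every variable on the right of a derivable sequent occurs on the left. Hence your restricted set coincides with the full deductive closure of $A$. It is therefore a genuine state of $W_{\mathsf{BI_\omega}}$ consisting of variable-free formulas, and Lemma~\ref{lem 11 trees 3} applies as stated, with no separate canonical model needed.
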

\begin{proof}
    If $A\vdash_{\mathsf{BI_\omega}}B$, then $M_A,r_A\models B$ by Proposition \ref{prop Soundness}, Proposition \ref{prop MA is a model} and Proposition \ref{prop Mmodel}.

    Conversely, suppose $M_A,r_A\models B$ and let $a\in\mathcal S_{\mathsf{BI_\omega}}$ be the deductive closure of $A$. Every formula of $a$ is variable-free: a formula derivable from $A$ is satisfied at $r_A$ in $M_A$ by Proposition \ref{prop Soundness}, Proposition \ref{prop MA is a model} and Proposition \ref{prop Mmodel}, while all variables are false in $M_A$. Since $A\in a$, Lemma \ref{lem 7 adequate 2} gives $W_{\mathsf{BI_\omega}},a\models A$, so by Lemma \ref{lem 11 trees 3} there is an embedding $f:M_A\hookrightarrow W_{\mathsf{BI_\omega}}$ with $f(r_A)=a$. By Proposition \ref{prop embedding} we get $W_{\mathsf{BI_\omega}},a\models B$, hence $B\in a$ by Lemma \ref{lem 7 adequate 2}, that is, $A\vdash_{\mathsf{BI_\omega}}B$.
\end{proof}

\begin{corollary}\label{cor trees}
    For variable-free formulas $A$ and $B$,
    \[ A \vdash_{\mathsf{BI_\omega}} B \text{ iff } T_B \hookrightarrow_r M_A. \] 
\end{corollary}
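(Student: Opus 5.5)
The corollary follows by combining Theorem \ref{thm trees} with Proposition \ref{prop embed}, applied to the finite model $M_A$. By Proposition \ref{prop MA is a model}, $M_A$ is an $\mathsf{RC_{BI_\omega}}$ model. Hence Proposition \ref{prop embed} may be used with $W=M_A$ and $a=r_A$. It says that $M_A,r_A\models B$ holds if and only if there is an embedding $f:T_B\hookrightarrow M_A$ with $f(r^T_B)=r_A$. By Definition \ref{def root embedding}, the existence of such an $f$ is exactly the statement $T_B\hookrightarrow_r M_A$, since $M_A$ is rooted at $r_A$ and $T_B$ at $r^T_B$.

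Theorem \ref{thm trees} then supplies the other half: for variable-free $A$ and $B$, we have $A\vdash_{\mathsf{BI_\omega}}B$ if and only if $M_A,r_A\models B$. Chaining the two equivalences gives
\[ A\vdash_{\mathsf{BI_\omega}}B \iff M_A,r_A\models B \iff T_B\hookrightarrow_r M_A. \]

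The only point needing care is valuations. The paper has agreed to omit them in the variable-free setting, so $M_A$ has all variables false. The trees $T_B$ for variable-free $B$ also have all variables false, since only the clause $T_p$ sets a variable true. Thus the valuation condition of Definition \ref{def 9 embedding} holds vacuously, and Proposition \ref{prop embed} applies without change. There is no substantial obstacle beyond confirming this compatibility: all the real work sits in Theorem \ref{thm trees} and in the verification that $M_A$ satisfies the frame conditions.
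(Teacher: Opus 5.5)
Your proposal is correct and matches the paper's proof, which simply chains Theorem~\ref{thm trees} with Proposition~\ref{prop embed} applied to $W=M_A$, $a=r_A$. Your explicit appeal to Proposition~\ref{prop MA is a model} to justify using Proposition~\ref{prop embed}, and your check that valuations are trivially compatible in the variable-free case, are exactly the details the paper leaves implicit.
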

\begin{proof}
    Follows directly by applying Proposition \ref{prop embed} to Theorem \ref{thm trees}.
\end{proof}
\vspace{\baselineskip}
\subsection{Structural properties of $M_A$}\label{subsec:structural MA}
In this subsection we show that certain kinds of transitions of $M_A$ occur only between points of specific types. These results are used in the proof of Lemma \ref{lem 12 tree approx 1}. Throughout, $A$ is a variable-free formula (in Lemmas \ref{lem omega} and \ref{lem S} moreover $A\in\W$). Also, for a tree point $r$ of $M_A$, we let $Z(r)$ be the set of zero and intermediate points $e$ with $rR^1_\omega e\in\mathcal R^0_A$.

\begin{lemma}\label{lem nabla cluster}
Let $t$ be an anchor point of $M_A$ and let
\[ N_t=\{t\}\cup\{c^\dwn\mid c\text{ is an }R^2\text{ point with }c^\top=t\}. \]
If $aRb\in\mathcal R_A$ with $tR^\nabla a$ and $b\in N_t$, then $R$ is $R^\nabla$ or $R^\dwn$, and $a\in N_t$.

In particular, if $dR^1_1d\in\mathcal R_A$, then $d$ is neither an anchor point nor a copy point. Hence $d^\top\neq d$ and $d^\top R^1_2d$.
\end{lemma}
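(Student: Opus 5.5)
The plan is to argue by induction on the stage at which $aRb$ enters $\mathcal R_A$, proving the stronger statement in the form: for every transition $aRb$ of $\mathcal R_A$ with $b\in N_t$ and $tR^\nabla a$, the relation $R$ is $R^\nabla$ or $R^\dwn$ and $a\in N_t$. First I would list the transitions of $\mathcal R^c_A$ whose target lies in $N_t$.
\begin{itemize}
\item Into $t$: the only ones are the rank-$1$ tree transition into $t$ and possibly intermediate transitions $e R^\dwn t$, both from outside $\mathcal S_{B}$ where $t=r_B$. By Lemma \ref{lem locality}(i), third claim, such sources are not $R^\nabla$-reachable from $t$, so these cases are vacuous.
\item The anchor transitions $c^\dwn R^\dwn t$ for $c^\top=t$.
\item The transitions $tR^\nabla c^\dwn$ into copy points.
\end{itemize}
No transition of rank $\geq2$ enters $N_t$ (Lemma \ref{lem rank2 paths}), and intermediate points of $\mathcal S_B$ target points of $\mathcal S^0_{B'}$ for inner $B'$. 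Such a target equals $t$ only if it lies outside, which is excluded by locality.

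For the closure rules I would go through (a)--(l). Rules (a) and (b) produce transitions of rank $\geq2$ or $R^1$ ones; their targets are handled by Lemma \ref{lem rank2 paths} and the fact that their premises already target $N_t$. Rules (c), (e), (g) and (j) produce only $R^\nabla$ or $R^\dwn$. For (c) the claim is trivial. For (g) and (j) the second premise $dR^\nabla b$ has $b\in N_t$. We also need $tR^\nabla d$, which follows from $tR^\nabla a$, the first premise and (e)/(g). The induction hypothesis then gives $d\in N_t$ with an $R^\nabla$/$R^\dwn$ step, and then the first premise $aRd$ targets $N_t$ again, so $a\in N_t$.

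The real work is showing that the $R^1$-producing rules (d), (f), (h), (i) and (k) never create an $R^1$ transition into $N_t$ from an $R^\nabla$-descendant of $t$, and likewise for rule (l). For (h) and (i), the premise ending in $N_t$ is respectively $dR^1_\alpha b$ (impossible by induction) and $dR^\nabla b$. In the latter case induction puts $d\in N_t$, and then the premise $aR^1_\alpha d$ is itself an $R^1$ into $N_t$, a contradiction. Rule (f) is similar. For (k) and (l), the new transition $aR^j_\beta b$ or $aR^\dwn b$ comes from $dR^i_\alpha a$ and $dR b$ with $\alpha\geq2$. Since $b\in N_t$, we get $R\in\{R^\nabla,R^\dwn\}$ and $d\in N_t$, provided $tR^\nabla d$. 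For (k) this contradicts $R$ being $R^j_\beta$. For (l) $d$ has an outgoing $R^2$ transition, so it must be shown that points of $N_t$ cannot have $R^2$ successors reachable this way. The copy points only have $R^1_\omega$ out, and $t$'s $R^2$ successors $a$ satisfy $a^\top=t$. Here the conclusion $aR^\dwn b$ is allowed, but I must check $a\in N_t$, which fails. So (l) is the main obstacle. I expect the fix is that the hypothesis $tR^\nabla a$ combined with $b$ being a copy point or $t$ forces, via Lemma \ref{lem closure facts}(iii) and Lemma \ref{lem locality}(iii), the relevant $R^1_1$ premise $dR^i_\beta b$ (rule (l) requires $R^i_\beta$, not $R^\nabla$). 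That premise is itself an $R^1$ into $N_t$, which gives a contradiction by induction. Getting $tR^\nabla d$ in these cases uses Lemma \ref{lem closure facts}(iii) to place $d$ below $t$.

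For the ``in particular'' part: if $dR^1_1d$ and $d$ were an anchor point $t$, or a copy point $c^\dwn$ with $t=c^\top$, then $d\in N_t$ and $tR^\nabla d$ (by (c), or the anchor transition). The first part then forbids an $R^1$ transition into $d$. Hence $d$ is neither, so $d^\top\neq d$, and $d^\top R^1_2d$ follows from Lemma \ref{lem closure facts}(ii).
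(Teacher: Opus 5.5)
Your overall plan, an induction on the closure stage that carries the hypothesis $tR^\nabla a$ along for an arbitrary anchor $t$, can be made to work. As written, however, it has a gap at rules (k) and (l).

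In both rules the two premises share a source $d$ with $dR^{i}_\gamma a$, $\gamma\geq 2$. To apply the induction hypothesis to the diamond premise $dR^{j}_\beta b$, you need $tR^\nabla d$. Unlike in rules (f)--(j), this is a \emph{backward} step. The closure rules only propagate $R^\nabla$ forward from $t$, so $tR^\nabla a$ says nothing directly about predecessors of $a$.

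Lemma \ref{lem closure facts}(iii), which you cite, does not close this gap:
\begin{itemize}
\item It concerns only $R^2$ transitions, so it gives nothing in rule (k) when the first premise is an $R^1_\gamma$ transition.
\item In rule (l) it only yields $a^\top R^\nabla d$. You would still need $tR^\nabla a^\top$, that is, that the 2-image of $a$ stays inside $\mathcal S^0_B$, which requires the first claim of Lemma \ref{lem locality}(i).
\end{itemize}
What is missing is the backward-closure fact that $dR^i_\gamma a$ with $\gamma\geq2$ and $a\in\mathcal S_B$ forces $d\in\mathcal S_B$. This is exactly the connector argument via Lemma \ref{lem rank2 paths} that the proof of Lemma \ref{lem locality}(i) uses for rules (k) and (l). Nothing in your proposal supplies it.

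The paper avoids the issue by a reduction. By Lemma \ref{lem locality}(i) with $r_B=t$, the points $a$ with $tR^\nabla a$ are exactly those of $\mathcal S_B$, all transitions from them lie in $\mathcal R_B$, and $N_t$ is the same set in $M_B$. So one may assume $t=r_A$. Then $tR^\nabla a$ holds for every point, and the induction hypothesis applies to every premise. Rules (a), (b), (d), (f), (h), (k), (l) are then dismissed uniformly, since each has a diamond premise into $b$.

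Note also that what you flag as the main obstacle in (l) is not one. The second premise of (l) is by definition a diamond transition $dR^i_\beta b$, so (l) is handled exactly like (k) once $tR^\nabla d$ is available; the real obstacle is obtaining $tR^\nabla d$ itself.

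Your treatment of the base transitions, of rules (f)--(j), and of the ``in particular'' part is fine.
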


\begin{proof}
Let $B$ be as in Lemma \ref{lem locality}(i) with $r_B=t$. By that lemma, the points $a$ with $tR^\nabla a$ are those of $\mathcal S_B$, the transitions of $\mathcal R_A$ from these points are those of $\mathcal R_B$, and $N_t$ is the same set in $M_A$ and in $M_B$. Hence it suffices to prove the claim for $M_B$ and its root, so we may assume that $t=r_A$, in which case $tR^\nabla a$ holds for every point $a$.

We argue by induction on the stage at which $aRb$ enters the closure. Since $r_A$ has no incoming transition in $\mathcal R^0_A$, and $c^\top R^\nabla c^\dwn$ is the only transition of $\mathcal R^c_A$ into $c^\dwn$, the transitions of $\mathcal R^c_A$ into $N_t$ are $tR^\nabla c^\dwn$ and $c^\dwn R^\dwn t$ for the $R^2$ points $c$ with $c^\top=t$, and these have the required form. For the closure rules:
\begin{itemize}
    \item Rules (a), (b), (d), (f), (h), (k) and (l) have a premise which is a diamond transition into $b$ (that is, a transition $R^i_\alpha$), which is excluded by the induction hypothesis.
    \item Rule (c) adds $bR^\nabla b$ with $b\in N_t$.
    \item Rule (e): the premise $aR^\dwn b$ gives $a\in N_t$ by the induction hypothesis.
    \item Rules (g) and (j): from $aRdR^\nabla b$, the induction hypothesis gives $d\in N_t$, and then $a\in N_t$.
    \item Rule (i): from $aR^1_\alpha dR^\nabla b$, the induction hypothesis gives $d\in N_t$, and then the first premise is a diamond transition into $N_t$, which is excluded.
\end{itemize}
For the second claim, suppose that $dR^1_1d$. If $d$ is an anchor point, we take $t=d$: then $tR^\nabla d$ by (c) and $d\in N_t$, so $dR^1_1d$ contradicts the first claim. If $d=c^\dwn$ is a copy point, we take $t=c^\top$: then $tR^\nabla c^\dwn$ is an anchor transition and $c^\dwn\in N_t$, again a contradiction. Hence $d$ is a point of $M^0_A$ which is not an anchor point, and $d^\top R^1_2d$ by Lemma \ref{lem closure facts}(ii).
\end{proof}

\begin{lemma}\label{lem down}
Let $aR^\dwn c\in\mathcal R_A$.
\begin{enumerate}[label=(\roman*)]
    \item The point $a$ is an $R^2$ point, a copy point, an intermediate point, or the root $r_{A'}$ of a sub-formula occurrence $A'={\dwn}B'$ of $A$. More precisely:
    \begin{enumerate}[label=(\arabic*)]
        \item if $a$ is an $R^2$ point, then $a^\top R^1_1c$;
        \item if $a=b^\dwn$ is a copy point, then $b^\top R^\nabla c$;
        \item if $a=e^i_{A'}$ is an intermediate point of a sub-formula occurrence $A'={\dwn}B'$, then $b_iR^\nabla c$, and hence $r_{B'}R^1_1c$;
        \item otherwise, $a=r_{A'}$ for a sub-formula occurrence $A'={\dwn}B'$ of $A$ with $r_{B'}R^\nabla c$.
    \end{enumerate}
    \item If also $cR^1_1a\in\mathcal R_A$, then $a$ is an $R^2$ point.
\end{enumerate}
\end{lemma}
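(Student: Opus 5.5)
The plan is to prove part (i) by induction on the stage at which $aR^\dwn c$ enters the closure, and then derive part (ii) from (i) using locality.

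\textbf{Part (1) for $R^2$ points.} I would first dispose of clause (1) uniformly, since it holds for every $R^2$ point $a$ whatever the origin of the transition. By Lemma \ref{lem closure facts}(ii) we have $a^\top R^1_2a$, hence $a^\top R^1_1 a$ by (a). From $aR^\dwn c$, rule (e) gives $aR^\nabla c$. Rule (i) then yields $a^\top R^1_1c$. So the real content of (i) is the classification of sources together with clauses (2)--(4).

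\textbf{Induction for (i).} The $R^\dwn$ transitions of $\mathcal R^c_A$ are the three base cases.
\begin{itemize}
\item $r_{A'}R^\dwn r_{B'}$: this is clause (4), with $r_{B'}R^\nabla r_{B'}$ by (c).
\item $e^i_{A'}R^\dwn b_i$: this is clause (3), with $b_iR^\nabla b_i$ by (c). Then $r_{B'}R^1_1b_iR^\nabla c$ gives $r_{B'}R^1_1c$ by (i), noting $r_{B'}R^1_1b_i\in\mathcal R_{B'}\subseteq\mathcal R_A$.
\item $b^\dwn R^\dwn b^\top$: this is clause (2), using (c).
\end{itemize}
The only closure rules producing $R^\dwn$ transitions are (j) and (l).

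For rule (j), we have $aR^\dwn dR^\nabla c$. The source $a$ is the same as in the premise, so the induction hypothesis classifies $a$ and gives the relevant property for $d$. Composing with $dR^\nabla c$ transfers it to $c$: use (g) for the $R^\nabla$ properties in (2)--(4), and (i) to pass from $r_{B'}R^\nabla$ (resp.\ $R^1_1$) to $r_{B'}R^1_1c$.

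For rule (l), the source is the target of an $R^2$ transition. By Lemma \ref{lem closure facts}(i) it is therefore an $R^2$ point, and clause (1) is already settled. Exhaustiveness of the list follows because every source arises from one of these base cases or rules.

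\textbf{Part (ii).} Assume $cR^1_1a$ and that $a$ is not an $R^2$ point; by (i) there are three cases.
\begin{itemize}
\item If $a=b^\dwn$ is a copy point, Lemma \ref{lem locality}(iii) turns $cR^1_1b^\dwn$ into $cR^1_1b^\top$. Combined with $b^\top R^\nabla c$ from (2), rule (h) gives $b^\top R^1_1b^\top$. This contradicts Lemma \ref{lem nabla cluster}, since $b^\top$ is an anchor point.
\item If $a=e^i_{A'}$, let $t=r_{B'}$, an anchor point because its incoming tree transition is the rank-$1$ transition from $r_{A'}$. From (3) we have $r_{B'}R^1_1c$, so by Lemma \ref{lem locality}(i) (second claim) $c\in\mathcal S_{B'}$. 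Applying that claim again to $cR^1_1e^i_{A'}$ gives $e^i_{A'}\in\mathcal S_{B'}$. This is impossible, since intermediate points of $A'$ are not points of $M_{B'}$.
\item If $a=r_{A'}$, then (4) and Lemma \ref{lem locality}(i) give $c\in\mathcal S_{B'}$ and hence $r_{A'}\in\mathcal S_{B'}$. This is again impossible.
\end{itemize}

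\textbf{Main obstacle.} I expect the delicate step to be the bookkeeping in (ii). Specifically, one must check that $r_{B'}$ is indeed an anchor point, so that Lemma \ref{lem locality}(i) applies. One must also check that intermediate and root points of $A'$ genuinely lie outside $\mathcal S_{B'}$, including its copy points. Both follow from the construction in Definition \ref{def 12 trees 3}, but need to be stated carefully.
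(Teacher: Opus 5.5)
Your proof is correct and follows the paper's argument. It uses the same classification of the sources of $R^{\downarrow}$ transitions, a stage induction through rules (j) and (l) for clauses (2)--(4), and a refutation of (ii) via Lemma~\ref{lem locality}(i) and Lemma~\ref{lem nabla cluster}.

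Your uniform derivation of clause (1), $a^\top R^1_1 a R^\nabla c \Rightarrow a^\top R^1_1 c$ by (e) and (i), is actually simpler than the paper's separate stage induction for that clause. It is valid because an $R^2$ point is never an anchor point, so Lemma~\ref{lem closure facts}(ii) applies. In the copy-point case of (ii), your detour through Lemma~\ref{lem locality}(iii) is correct but unnecessary: rule (h) already gives $b^\top R^1_1 b^{\downarrow}$, which contradicts the first claim of Lemma~\ref{lem nabla cluster}.
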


\begin{proof}
The sources of the $R^\dwn$ transitions of $\mathcal R^c_A$ are the tree points $r_{A'}$ for the sub-formula occurrences $A'={\dwn}B'$, the intermediate points, and the copy points. Rule (l) adds $R^\dwn$ transitions only from $R^2$ points, and rule (j) adds no new sources. Hence $a$ is a tree point, an intermediate point or a copy point. In particular $a$ is not a zero point.
\begin{itemize}
    \item Case 1: $a$ is an $R^2$ point. We argue by induction on the stage at which $aR^\dwn c$ enters the closure.
    \begin{itemize}
        \item If $aR^\dwn c\in\mathcal R^c_A$, then $a=r_{A'}$ and $c=r_{B'}$ for a sub-formula occurrence $A'={\dwn}B'$, since $a$ is a tree point. By Lemma \ref{lem closure facts}(ii) and rules (b) and (a) we have $a^\top R^1_1a$, and $aR^\nabla c$ by (e). So $a^\top R^1_1c$ by (i).
        \item If $aR^\dwn c$ is added by rule (j) from $aR^\dwn dR^\nabla c$, then $a^\top R^1_1d$ by the induction hypothesis, and $a^\top R^1_1c$ by (i).
        \item If $aR^\dwn c$ is added by rule (l) from $dR^2_\alpha a$ and $dR^i_\beta c$, then $dR^1_1c$ by (b) and (a), and by Lemma \ref{lem closure facts}(iii) one of the following holds:
        \begin{itemize}
            \item[-] $d=a^\top$. Then $a^\top R^1_1c$.
            \item[-] $a^\top R^1_1d$. Then $a^\top R^1_1c$ by (f).
            \item[-] $d=d'^\dwn$ with $d'^\top=a^\top$. Then $a^\top R^\nabla d$ is an anchor transition, and $a^\top R^1_1c$ by (h).
        \end{itemize}
    \end{itemize}
    \item Case 2: $a=b^\dwn$ is a copy point. This is Lemma \ref{lem locality}(ii).
    \item Case 3: $a=e^i_{A'}$ is an intermediate point. Then $a$ is not an $R^2$ point, so $aR^\dwn c$ is the transition $e^i_{A'}R^\dwn b_i$ or arises from it by applications of rule (j). Hence $b_iR^\nabla c$ by (c) and (g). Since $r_{B'}R^1_1b_i$, we get $r_{B'}R^1_1c$ by (i).
    \item Case 4: $a$ is a tree point which is not an $R^2$ point. As in the previous case, $aR^\dwn c$ is a transition $r_{A'}R^\dwn r_{B'}$ for a sub-formula occurrence $A'={\dwn}B'$ with $r_{A'}=a$, or arises from such a transition by applications of rule (j). Hence $r_{B'}R^\nabla c$.
\end{itemize}
For (ii), suppose that $aR^\dwn c$ and $cR^1_1a$, and that $a$ is not an $R^2$ point.
\begin{itemize}
    \item If $a=b^\dwn$ is a copy point, then $b^\top R^\nabla cR^1_1b^\dwn$ gives $b^\top R^1_1b^\dwn$ by (h). This is a diamond transition from $b^\top$ into the copy point $b^\dwn$, contradicting Lemma \ref{lem nabla cluster} with $t=b^\top$.
    \item Otherwise, let $A'={\dwn}B'$ be the sub-formula occurrence provided by Case 3 or Case 4. Then $r_{B'}R^\nabla cR^1_1a$ gives $r_{B'}R^1_1a$ by (h). But $a$ is $e^i_{A'}$ or $r_{A'}$, which do not belong to $\mathcal S_{B'}$, contradicting Lemma \ref{lem locality}(i).
\end{itemize}
\end{proof}

\begin{lemma}\label{lem omega}
Let $A\in\W$ and $aR^1_\omega d\in\mathcal R_A$. Let $r$ be the tree point with $a\in\{r\}\cup Z(r)$ if $a$ is not a copy point, and $r=b$ if $a=b^\dwn$. Then $d\in\{r\}\cup Z(r)$, and $d=r$ only if $r$ is an $R^2$ point.
\end{lemma}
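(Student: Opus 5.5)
We argue by induction on the stage at which $aR^1_\omega d$ enters the closure $\mathcal R_A$. We prove the slightly stronger statement: writing $Z^+(r)=\{r\}\cup Z(r)$ if $r$ is an $R^2$ point and $Z^+(r)=Z(r)$ otherwise, every $R^1_\omega$ transition from a point of $\{r\}\cup Z(r)$, or from $r^\dwn$ when $r$ is an $R^2$ point, lands in $Z^+(r)$. The restriction $A\in\W$ matters here: since $A$ is $\nabla$-free and has no $\dmnd{1}{\omega}$ sub-formulas, no tree transition of $\mathcal R^0_A$ has rank $\omega$.

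\textbf{Base case.} The $R^1_\omega$ transitions of $\mathcal R^c_A$ are:
\begin{itemize}
  \item $r_{A'}R^1_\omega r^0_{A'}$ and $r_{A'}R^1_\omega e^i_{A'}$ for sub-formula occurrences $A'={\dwn}B'$, with targets in $Z(r_{A'})$;
  \item the loops $bR^1_\omega b$ at $R^2$ points, where $b=r$ is an $R^2$ point;
  \item the copy transitions $b^\dwn R^1_\omega b$, covered by our convention $r=b$.
\end{itemize}

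\textbf{Closure rules.} The only rules that can produce an $R^1_\omega$ transition are (b), (f) and (k).
\begin{itemize}
  \item Rule (b) turns $R^2_\omega$ into $R^1_\omega$. There are no $R^2_\omega$ transitions at all, since $\dmnd{2}{\omega}$ is not in the language and rules (a), (f), (k) preserve upper indices and never raise ranks.
  \item Rule (f): $aR^i_\omega d'R^j_\omega d$ with $i=j=1$. The induction hypothesis gives $d'\in Z^+(r)$. If $d'=r$, the hypothesis for $d'R^1_\omega d$ gives $d\in Z^+(r)$. If $d'\in Z(r)$, then $d'$ is a zero or intermediate point of the same $r$. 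We need an auxiliary claim: for $e\in Z(r)$, every $R^1_\omega$ transition from $e$ lands in $Z(r)$, and into $r$ only if $r$ is an $R^2$ point. This is part of the same simultaneous induction, since such $e$ appear as sources $a$ in the statement.
  \item Rule (k) with $\alpha=\beta=\omega$: from $xR^1_\omega a$ and $xR^1_\omega d$ we obtain $aR^1_\omega d$. The induction hypothesis applied to both premises puts $a,d$ in $Z^+(r_x)$ for the tree point $r_x$ associated with $x$. Then $a$'s own associated tree point is $r_x$ (if $a\in Z(r_x)$, or $a=r_x$), so $d\in\{r\}\cup Z(r)$. If $d=r=r_x$, the hypothesis for $xR^1_\omega d$ already forces $r_x$ to be an $R^2$ point.
\end{itemize}
Rule (k) with $\alpha>\beta=\omega$ is impossible, since ranks are at most $\omega$.

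\textbf{Copy-point sources.} When $a=b^\dwn$, the premise of (f) is $b^\dwn R^1_\omega d'$, which by induction gives $d'\in Z^+(b)$. From there we proceed as in the rule (f) case. Note that $b^\dwn$ is never a target of a rank-$\omega$ transition, by Lemma \ref{lem rank2 paths}, so it never arises as $d'$ or as the $a$ in rule (k).

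\textbf{Main obstacle.} The hard step is checking that no rank-$\omega$ transition enters a tree point other than via the $R^2$ loop, in particular that the target of rule (k) cannot be some other tree point. For this we invoke Lemma \ref{lem rank2 paths}: an $\omega$-connector $c$ of $x$ to $d$ needs a proper $\omega$-path in $M^0_A$. In $M^0_A$ the only rank-$\omega$ transitions are the $R^1_\omega$ edges from $r_{A'}$ into $Z(r_{A'})$ and the loops at $R^2$ points. So proper $\omega$-paths are either loops at an $R^2$ point or single steps $r\to Z(r)$ preceded by loops, and the target is forced into $Z^+(r)$ for the connector's tree point $r$. It remains to check that this $r$ is the tree point associated with the source $a$. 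Here we use that the $(1+\omega)$-path from $c$ to $a$ is itself an $\omega$-path of the same restricted shape, so it also stays within $\{r\}\cup Z(r)$. Using this path description may replace much of the stage induction.
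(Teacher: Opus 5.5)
Your proof is correct, but it takes a different route from the paper. The paper runs no new induction. It notes that in $M^0_A$ (for $A\in\W$) every $\omega$-path stays inside a single cluster $\{r'\}\cup Z(r')$, starts at $r'$ if it is proper, and consists of loops if it returns to $r'$. It then applies Lemma~\ref{lem rank2 paths} to $aR^1_\omega d$. Either $a=b^\dwn$ and $d=b$ (from part (ii)), or there is an $\omega$-connector of $a$ (or of $b$) to $d$. That connector is forced to be the tree point $r$ itself, which fixes both where $d$ lies and the $R^2$ condition.

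You instead prove the invariant directly by a fresh stage induction: every $R^1_\omega$ transition out of the cluster of $r$ (including $r^\dwn$) lands in $Z^+(r)$. Your case check is complete:
\begin{itemize}
\item the base transitions of rank $\omega$ are exactly the three kinds you list;
\item rules (a) and (b) cannot produce rank $\omega$, since there is no $R^2_\omega$;
\item rule (f) preserves the invariant, because placing the middle point in $Z^+(r)$ already identifies its cluster;
\item rule (k) with $\alpha=\beta=\omega$ preserves it, because placing $a$ in $Z^+(r_x)$ already identifies its cluster.
\end{itemize}
Your route gives a self-contained proof that needs no connectors. The paper's route is shorter because it reuses Lemma~\ref{lem rank2 paths}, which is needed elsewhere anyway.

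Within your own induction, the ``main obstacle'' you describe is not an obstacle. The invariant already excludes rank-$\omega$ transitions into other tree points and into copy points. So neither the appeal to Lemma~\ref{lem rank2 paths} for copy targets nor the connector sketch in your last paragraph is needed; that sketch is essentially the paper's proof, minus the copy-point case $d=b$. Two smaller points: what you call a ``slightly stronger'' statement is just a restatement of the lemma, and $\nabla$-freeness plays no role here, only the absence of $\dmnd{1}{\omega}$ in $A$.
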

\begin{proof}
Since $A$ contains no $\dmnd{1}{\omega}$, the transitions of rank $\omega$ in $\mathcal R^0_A$ are the loops $bR^1_\omega b$ at $R^2$ points $b$ and the transitions $r'R^1_\omega e$ with $e\in Z(r')$, and the points of $Z(r')$ have no outgoing transition of rank $\omega$ in $\mathcal R^0_A$. Hence every $\omega$-path of $M^0_A$ stays inside $\{r'\}\cup Z(r')$ for a single tree point $r'$. If it is proper, it starts at $r'$. If it is proper and ends at $r'$, it consists of loops, so $r'$ is an $R^2$ point.

We apply Lemma \ref{lem rank2 paths} to $aR^1_\omega d$. Let $a'=a$ if $a$ is not a copy point, and $a'=b$ if $a=b^\dwn$, so that $a'\in\{r\}\cup Z(r)$.
\begin{itemize}
    \item If $a=b^\dwn$ and $d=b$, then $b=a'$ is an $R^2$ point, hence a tree point, so $r=b=d$.
    \item Otherwise there is an $\omega$-connector $c$ of $a'$ to $d$, that is, an $\omega$-path from $c$ to $a'$ and a proper $\omega$-path from $c$ to $d$. By the above, $c$ is a tree point $r'$ with $a',d\in\{r'\}\cup Z(r')$. Then $r'=r$, and if $d=r$, then $r$ is an $R^2$ point.
\end{itemize}
\end{proof}

\begin{lemma}\label{lem S}
Let $A\in\W$ and let $b\in\mathcal S_A$ have an outgoing $R^1_\omega$ transition. Then there are a point $a\in\mathcal S_A$ and a non-empty set $T$ of anchor points such that:
\begin{enumerate}[label=(\roman*)]
    \item $bR^i_\beta c$ implies $aR^i_\beta c$, for all $i$, $\beta$ and $c$;
    \item $aR^\dwn t$ for every $t\in T$;
    \item if $bR^1_\omega dR^\dwn c$, then $tR^1_1c$ for some $t\in T$.
\end{enumerate}
\end{lemma}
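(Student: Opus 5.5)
The plan is a case analysis on the kind of point $b$ is. I first list the points with an outgoing $R^1_\omega$ transition. Lemma \ref{lem omega} says such a point lies in $\{r\}\cup Z(r)$ for a tree point $r$, or is a copy point $r^\dwn$. Its $R^1_\omega$-successors $d$ lie in $\{r\}\cup Z(r)$, and $d=r$ is possible only when $r$ is an $R^2$ point. Condition (iii) only concerns successors $d$ with an outgoing $R^\dwn$ transition. By Lemma \ref{lem down}(i), zero points have none, so only two kinds of $d$ matter:
\begin{itemize}[label={}]
\item $d=r$ with $r$ an $R^2$ point. Here Lemma \ref{lem down}(1) gives $r^\top R^1_1c$.
\item $d=e^i_{A'}$ an intermediate point of an occurrence $A'={\dwn}B'$ with $r=r_{A'}$. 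Here Lemma \ref{lem down}(3) gives $r_{B'}R^1_1c$.
\end{itemize}
Both $r^\top$ and $r_{B'}$ are anchor points: the incoming tree transition of $r_{B'}$ is an $R^\dwn$ transition, of rank $1$. So I will take $T$ to be $\{r^\top\}$ if $r$ is an $R^2$ point, together with $r_{B'}$ if $r=r_{A'}$ for some $A'={\dwn}B'$. This set is non-empty, because $Z(r)\neq\emptyset$ forces $r=r_{A'}$, and otherwise the successor must be $d=r$, which is then an $R^2$ point.

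Next I choose $a$. If $r$ is an $R^2$ point, I take $a=r^\dwn$. Condition (ii) holds for $r^\top$ by the anchor transition $r^\dwn R^\dwn r^\top$. For $r_{B'}$, the tree transition $rR^\dwn r_{B'}$ transfers to $r^\dwn R^\dwn r_{B'}$ by the argument in the proof of Proposition \ref{prop MA is a model} (IV.3): there $bRd$ implies $b^\dwn Rd$ for every relation $R$. If $r$ is not an $R^2$ point, then $r=r_{A'}$ and I take $a=r$. Condition (ii) is then the tree transition $r_{A'}R^\dwn r_{B'}$.

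It remains to verify (i) for each possible $b$.
\begin{itemize}[label={}]
\item $b=a$. This covers $b$ a copy point $r^\dwn$, and $b=r$ when $r$ is not an $R^2$ point. Condition (i) is trivial.
\item $b=r$ is an $R^2$ point and $a=r^\dwn$. Condition (i) is again the transfer $bR^i_\beta c\Rightarrow b^\dwn R^i_\beta c$ from IV.3.
\item $b\in Z(r)$. This $b$ is an intermediate point with an outgoing $R^1_\omega$ transition; a zero point has none. From $rR^1_\omega b$, rule (a) gives $rR^1_\beta b$, and $bR^i_\beta c$ then gives $rR^i_\beta c$ by rule (f). If $a=r^\dwn$, one more transfer step gives $r^\dwn R^i_\beta c$.
\end{itemize}

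The main obstacle is making the case list genuinely exhaustive, and in particular the step that sends $b$ and every relevant $d$ into the same $\{r\}\cup Z(r)$. For a copy point $b=r^\dwn$, Lemma \ref{lem omega} is stated with $r=b$ in that case, so the same $r$ governs both $b$ and $d$, but I still need to check it. I also need the fact, which uses $A\in\W$ through Lemma \ref{lem omega}, that no $\omega$-transition leaves the cluster of $r$. Lastly, I must check that $r^\top$ is defined when $r$ is an $R^2$ point; this is Lemma \ref{lem closure facts}(i).
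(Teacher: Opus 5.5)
Your proof is correct and follows essentially the paper's route: the same split on whether $r$ is an $R^2$ point, the same choice $a=r^\dwn$ or $a=r$, and the same use of Lemmas \ref{lem omega} and \ref{lem down}. The only variation is in the case where $r$ is an $R^2$ point. There you also put the points $r_{B'}$ into $T$ and obtain (ii) for them from the IV.3 transfer $rR^\dwn r_{B'}\Rightarrow r^\dwn R^\dwn r_{B'}$. The paper instead keeps $T=\{r^\top\}$ and handles an intermediate successor $d$ via $rR^\dwn r_{B'}R^\nabla c$, rule (j) and Lemma \ref{lem down}(1).

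One slip: zero points do have outgoing $R^1_\omega$ transitions, since rule (k) applied to $r_{A'}R^1_\omega r^0_{A'}$ with itself gives a loop. This is harmless, because your argument for $b\in Z(r)$ uses only $rR^1_\omega b$ and so covers zero points unchanged.
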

\begin{proof}
Let $r$ be as in Lemma \ref{lem omega}, with $b$ in place of $a$. By that lemma, every $d$ with $bR^1_\omega d$ lies in $\{r\}\cup Z(r)$, and $d=r$ only if $r$ is an $R^2$ point. We also note that $rR^1_\omega e$ for every $e\in Z(r)$, and that $b=r^\dwn$ if $b$ is a copy point. Finally, if $bR^1_\omega dR^\dwn c$, then $d$ is not a zero point by Lemma \ref{lem down}(i), so $d=r$ or $d=e^i_{A'}$ for a sub-formula occurrence $A'={\dwn}B'$ with $r_{A'}=r$. We distinguish two cases.
\begin{itemize}
    \item Case 1: $r$ is an $R^2$ point. We take $a=r^\dwn$ and $T=\{r^\top\}$.
    \begin{itemize}
        \item (i): we have $r^\dwn R^1_\omega r$, and $b$ is $r^\dwn$, $r$ or a point of $Z(r)$, so $a=b$ or $aR^1_\omega b$ by (f). Hence $bR^i_\beta c$ gives $aR^1_\beta b$ by (a) and $aR^i_\beta c$ by (f).
        \item (ii): $r^\dwn R^\dwn r^\top$ is an anchor transition.
        \item (iii): let $bR^1_\omega dR^\dwn c$. If $d=r$, then $r^\top R^1_1c$ by Lemma \ref{lem down}(1). If $d=e^i_{A'}$ with $r_{A'}=r$, then $r_{B'}R^\nabla c$ by Lemma \ref{lem down}(3) and (d), so $rR^\dwn r_{B'}R^\nabla c$ gives $rR^\dwn c$ by (j), and $r^\top R^1_1c$ by Lemma \ref{lem down}(1).
    \end{itemize}
    \item Case 2: $r$ is not an $R^2$ point. Then $b$ is not a copy point, since $b=b_0^\dwn$ would give $r=b_0$, an $R^2$ point. Hence $b\in\{r\}\cup Z(r)$. Moreover $Z(r)\neq\emptyset$: the $R^1_\omega$ successor of $b$ lies in $\{r\}\cup Z(r)$ and is different from $r$. We take $a=r$ and
    \[ T=\{r_{B'}\mid A'={\dwn}B'\text{ is a sub-formula occurrence of }A\text{ with }r_{A'}=r\}. \]
    The points of $T$ are anchor points, and $T\neq\emptyset$ since $Z(r)\neq\emptyset$.
    \begin{itemize}
        \item (i): $b=r$ or $rR^1_\omega b$, and we conclude as in Case 1.
        \item (ii): $rR^\dwn r_{B'}\in\mathcal R^0_A$ for every $r_{B'}\in T$.
        \item (iii): let $bR^1_\omega dR^\dwn c$. Then $d\neq r$, so $d=e^i_{A'}$ for a sub-formula occurrence $A'={\dwn}B'$ with $r_{A'}=r$, and $r_{B'}R^1_1c$ by Lemma \ref{lem down}(3), where $r_{B'}\in T$.
    \end{itemize}
\end{itemize}
\end{proof}

\begin{lemma}\label{lem guards}
Let $Q\subseteq\mathcal S_A$ be non-empty.
\begin{enumerate}
    \item Let $2\leq\beta<\omega$ and suppose that $bR^1_\beta b'$ for all $b,b'\in Q$. Then there is a point $a$ such that $aR^1_{\beta+1}b$ for every $b\in Q$, and $aR^2_{\beta+1}b$ for every $b\in Q$ for which $b'R^2_\beta b$ holds for some $b'\in Q$.
    \item Suppose that $bR^1_1b'$ for all $b,b'\in Q$. Then all points of $Q$ have the same anchor $a$, and $aR^1_2b$ for every $b\in Q$, and $aR^2_2b$ for every $R^2$ point $b\in Q$.
\end{enumerate}
\end{lemma}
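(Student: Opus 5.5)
The plan is to work in the pre-model $M^0_A$ and reduce everything to paths via Lemma \ref{lem rank2 paths} and the path facts (P1)--(P5). First I would note that $Q$ contains no copy points. In case 1 we have $bR^1_\beta b$ with $\beta\geq2$, and the last claim of Lemma \ref{lem rank2 paths} forbids rank $\geq2$ transitions into copy points. In case 2 we have $bR^1_1b$, and Lemma \ref{lem nabla cluster} says $b$ is neither an anchor nor a copy point. So in both cases $Q\subseteq\mathcal S^0_A$, and every $b\in Q$ satisfies $b^\top\neq b$ and $b^\top R^1_2 b$.

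\emph{Part 1.} For each pair $b,b'\in Q$, Lemma \ref{lem rank2 paths}(i) applied to $bR^1_\beta b'$ gives a $\beta$-connector $c_{b,b'}$. This is a point with a $(1+\beta)$-path to $b$ and a proper $\beta$-path to $b'$. By (P4), all points admitting $2$-paths to a fixed $b$ lie linearly on the 2-image of $b$. Hence the connectors are comparable along 2-images, and I would let $a$ be a topmost one, closest to the anchor, among the $c_{b,b'}$. Using the splitting argument from the proof of Lemma \ref{lem rank2 paths}, I would show that $a$ has a $(1+\beta)$-path to every $b\in Q$: concatenate the initial part of one path with another connector's path, exactly as in the rule (f)/(k) cases there.

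If such a path is proper, it is a proper $(\beta+1)$-path. If it is empty, then $a=b\in Q$, and $b$ connects $b$ to itself, so it has a proper $2$-path to itself. That path consists of loops, so $b$ is an $R^2$ point with the loop $bR^1_\omega b$, which is a proper $(\beta+1)$-path. In either case (P2) gives $aR^1_{\beta+1}b$.

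For the $R^2$ clause, suppose $b'R^2_\beta b$. Then $b$ is an $R^2$ point by Lemma \ref{lem closure facts}(i), and by (P5) the last step of its 2-image is an $R^2$ tree transition. Lemma \ref{lem rank2 paths}(iii) lets us take connectors different from $b$, so the path from $a$ to $b$ can be taken proper and ending with this $R^2$ transition, or with the $R^1_\omega$ loop preceded by it. Composing with rules (a), (b) and (f), as in Lemma \ref{lem closure facts}(ii), then gives $aR^2_{\beta+1}b$.

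\emph{Part 2.} Let $b,b'\in Q$ and let $B,B'$ be the formulas given by Lemma \ref{lem locality}(i) for the anchors $b^\top$ and $b'^\top$. Since $b^\top R^1_2 b$, rules (b) and (d) give $b^\top R^\nabla b$, so $b\in\mathcal S_B$. Then $bR^1_1b'$ and Lemma \ref{lem locality}(i) give $b'\in\mathcal S_B$. Symmetrically $b\in\mathcal S_{B'}$. Using the first item of Lemma \ref{lem locality}(i), the subtrees $\mathcal S^0_B$ and $\mathcal S^0_{B'}$ are nested; say $\mathcal S^0_{B'}\subseteq\mathcal S^0_B$. If the inclusion were strict, $b'^\top$ would be entered by a rank $1$ tree transition inside $\mathcal S^0_B$. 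But $b$ lies on a $2$-path from $b^\top$ and also in $\mathcal S_{B'}$, and that path would have to enter $\mathcal S^0_{B'}$ through the rank $1$ transition into $b'^\top$, a contradiction. Hence $b^\top=b'^\top=:a$. Lemma \ref{lem closure facts}(ii) then gives $aR^1_2b$ for every $b\in Q$ and $aR^2_2b$ for every $R^2$ point $b\in Q$.

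The main obstacle is Part 1: choosing $a$ so that it reaches \emph{every} element of $Q$ by a $(1+\beta)$-path, since the pairwise connectors a priori live on different branches of the 2-image forest. I expect the linear order from (P3)/(P4), together with the fact that all of $Q$ shares one 2-image tree through the connectors, to make the topmost-connector choice work.
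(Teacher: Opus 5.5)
Your proposal works, and apart from how you choose $a$ in Part~1 it is the paper's proof. The shared steps are: excluding copy points, connectors from Lemma \ref{lem rank2 paths} with splitting of $2$-paths, the loop argument when $a\in Q$, Lemma \ref{lem rank2 paths}(iii) with (P5) and rule (f) for the $R^2$ clause, and, in Part~2, Lemma \ref{lem locality}(i) together with the fact that $\mathcal S^0_B$ is entered only by rank-$1$ transitions.

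The paper takes $h(b)$, the first point on the 2-image of $b$ admitting a $(\beta+1)$-path to $b$, and proves $h(b)=h(b')$ by comparing a connector of $b$ to $b'$ with one of $b'$ to $b$. This point is intrinsic and lies above every connector, so the case $a=b$ and the $R^2$ clause need no bookkeeping.

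Your topmost connector also works, but two things must be made precise.

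First, connectors of different pairs need not be comparable. What you actually use is this: if $a$ has a $(1+\beta)$-path to $b_1$, then $a$ and $c_{b,b_1}$ both lie above $b_1$. Hence $a$ lies above $c_{b,b_1}$ by minimality, and splitting gives a $(1+\beta)$-path from $a$ to $b$.

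Second, and more importantly, $a$ must be topmost in a pool that already contains $c_{b,b}$ and, for each pair with $b'R^2_\beta b$, a connector $c\neq b$ supplied by Lemma \ref{lem rank2 paths}(iii). Invoking (iii) after $a$ has been fixed does not yield $a\neq b$. Consider $A=\dmnd{2}{\beta+1}\dmnd{1}{\beta+1}\top$, with $b=r_{\dmnd{1}{\beta+1}\top}$, $b'=r_\top$ and $Q=\{b,b'\}$:
\begin{itemize}
\item every connector provided by Lemma \ref{lem rank2 paths}(i) for pairs in $Q$ may be taken to be $b$, so your recipe can return $a=b$;
\item yet $b'R^2_\beta b$ holds while $bR^2_{\beta+1}b\notin\mathcal R_A$, so the $R^2$ clause fails for this $a$;
\item the correct choice is $r_A=h(b)$.
\end{itemize}
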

\begin{proof}
1. By Lemma \ref{lem rank2 paths}, $Q$ contains no copy points, since every point of $Q$ is the target of a transition of rank $\beta\geq2$. By (P3) and (P4), the points admitting a $(\beta+1)$-path to a point $b\in Q$ form a final segment of the 2-image of $b$. We define $h(b)$ to be the first point of this segment. Thus every point admitting a $(\beta+1)$-path to $b$ lies on the $(\beta+1)$-path from $h(b)$ to $b$.

We first show that $h(b)=h(b')$ for all $b,b'\in Q$. Let $c$ be a $\beta$-connector of $b$ to $b'$ and $c'$ a $\beta$-connector of $b'$ to $b$, which exist by Lemma \ref{lem rank2 paths}(i). Both $c$ and $c'$ admit $2$-paths to $b$, so one of them admits a $2$-path to the other.
\begin{itemize}
    \item If $c$ admits a $2$-path to $c'$, then the $(\beta+1)$-path from $c$ to $b$ splits at $c'$, and its initial part followed by the $(\beta+1)$-path from $c'$ to $b'$ is a $(\beta+1)$-path from $c$ to $b'$. So $c$ admits $(\beta+1)$-paths to both $b$ and $b'$.
    \item Otherwise $c'$ admits a $2$-path to $c$, and symmetrically $c'$ admits $(\beta+1)$-paths to both $b'$ and $b$.
\end{itemize}
So some point $d$ admits $(\beta+1)$-paths to both $b$ and $b'$. Then $d$ lies on the $(\beta+1)$-path from $h(b)$ to $b$, so $h(b)$ admits a $(\beta+1)$-path to $b'$ through $d$, and therefore $h(b)$ lies on the $(\beta+1)$-path from $h(b')$ to $b'$. By symmetry, $h(b')$ lies on the $(\beta+1)$-path from $h(b)$ to $b$. Hence $h(b)$ and $h(b')$ admit $2$-paths to each other, and since a proper $2$-path from a point to itself consists of loops, $h(b)=h(b')$.

We define $a$ to be this common value, and we let $b\in Q$.
\begin{itemize}
    \item If $a\neq b$, then the $(\beta+1)$-path from $a$ to $b$ is proper, so $aR^1_{\beta+1}b$ by (P2).
    \item If $a=b$, let $c$ be a $\beta$-connector of $b$ to itself. As $c$ admits a $(\beta+1)$-path to $b$ and $h(b)=b$, we have $c=b$, so there is a proper $\beta$-path from $b$ to $b$. It consists of loops, hence $bR^1_\omega b\in\mathcal R^0_A$ and $bR^1_{\beta+1}b$ by (a).
\end{itemize}
Finally, let $b'R^2_\beta b$ with $b,b'\in Q$. By Lemma \ref{lem closure facts}(i), $b$ is an $R^2$ point. By Lemma \ref{lem rank2 paths}(iii) there is a $\beta$-connector $c\neq b$ of $b'$ to $b$. Since $c$ admits a $(\beta+1)$-path to $b'$, the point $a=h(b')$ admits a $2$-path to $c$, and $c$ admits a proper $2$-path to $b$. If $a=b$, these two paths would form a proper $2$-path from $b$ to $b$ through $c\neq b$, which is impossible. So $a\neq b$. Hence the $(\beta+1)$-path from $a$ to $b$ is proper, and its last transition which is not a loop is the $R^2$ tree transition $dR^2_\gamma b$ into $b$, with $\gamma\geq\beta+1$. Then $dR^2_{\beta+1}b$ by (a), and $a=d$ or $aR^1_{\beta+1}d$ by (P2), so $aR^2_{\beta+1}b$ by (f).

2. Every $b\in Q$ satisfies $bR^1_1b$, so by Lemma \ref{lem nabla cluster} no point of $Q$ is a copy point or an anchor point. Let $b,b'\in Q$, and let $B$ be as in Lemma \ref{lem locality}(i) with $r_B=b^\top$. Since $b^\top$ admits a $2$-path to $b$ in $M^0_A$ and no transition of $\mathcal R^0_A$ leaves $\mathcal S^0_B$, we have $b\in\mathcal S^0_B$. So $b'\in\mathcal S_B$ by Lemma \ref{lem locality}(i), and $b'\in\mathcal S^0_B$ as $b'$ is not a copy point. The 2-image of $b'$ lies in $\mathcal S^0_B$, since all transitions of $\mathcal R^0_A$ entering $\mathcal S^0_B$ from outside have rank $1$. So $b'^\top$ is $r_B=b^\top$ or a tree point of $M^0_B$ below it. By symmetry, $b^\top$ is $b'^\top$ or a tree point below $b'^\top$, and therefore $b^\top=b'^\top$. We define $a$ to be the common anchor of the points of $Q$. Since no point of $Q$ is an anchor point, Lemma \ref{lem closure facts}(ii) gives $aR^1_2b$ for every $b\in Q$, and $aR^2_2b$ for every $R^2$ point $b\in Q$.
\end{proof}
\vspace{\baselineskip}
\section{Linearity of $\mathsf{RC_{BI}}$ formulas}\label{section linearity}

In this section, we establish that the $\nabla$-free closed formulas of $\mathsf{RC_{BI}}$ (i.e., the formulas from $\W$) form a linear pre-order with respect to the following relation:

\begin{definition}\label{def order 1}
    Define the $\Pi^1_1$-ordering of $\mathsf{RC_{BI}}$ formulas as:
    \[ A \leq^1_1 B \text{ iff } B \vdash_{\mathsf{BI_\omega}} \dmnd{1}{1} A \text{, or } \nabla A \dashv\vdash_{\mathsf{BI_\omega}} \nabla B. \]
    We write $A <^1_1 B$ iff $B \vdash_{\mathsf{BI_\omega}} \dmnd{1}{1} A$, and $A\equiv^1_1 B$ iff $\nabla A\dashv\vdash_{\mathsf{BI_\omega}}\nabla B$.
\end{definition} 

\begin{proposition}\label{prop trans and compose}
    The relation $<^1_1$ is transitive and composes with $\nabla$-equivalence.
\end{proposition}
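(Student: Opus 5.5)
The argument is purely syntactic and uses only the axioms of $\mathsf{RC_{BI_\omega}}$; no appeal to the finite models $M_A$ is needed. I would first make precise what ``composes with $\nabla$-equivalence'' should mean. The natural reading is: if $A <^1_1 B$, $\nabla A \dashv\vdash \nabla A'$ and $\nabla B \dashv\vdash \nabla B'$, then $A' <^1_1 B'$. Together with transitivity this shows that $\leq^1_1$ is a pre-order and that $<^1_1$ descends to the quotient $\W/{\equiv^1_1}$. The key tool is the pair of equivalences
\[ \dmnd{1}{1}C \dashv\vdash_{\mathsf{BI_\omega}} \dmnd{1}{1}\nabla C \dashv\vdash_{\mathsf{BI_\omega}} \nabla\dmnd{1}{1}C. \]
Here $\dmnd{1}{1}C\vdash \dmnd{1}{1}\nabla C$ comes from Axiom 3 and Mod$_1$, and the converse is Axiom 9. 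Likewise $\dmnd{1}{1}C\vdash\nabla\dmnd{1}{1}C$ is Axiom 3, and the converse is Axiom 8.

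\textbf{Transitivity.} Suppose $B\vdash\dmnd{1}{1}A$ and $C\vdash\dmnd{1}{1}B$. Applying Mod$_1$ to the first sequent gives $\dmnd{1}{1}B\vdash\dmnd{1}{1}\dmnd{1}{1}A$. Axiom 6 with $i=j=1$ and $\alpha=1$ gives $\dmnd{1}{1}\dmnd{1}{1}A\vdash\dmnd{1}{1}A$. Chaining with Cut yields $C\vdash\dmnd{1}{1}A$, that is, $A<^1_1C$.

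\textbf{Composition on the right.} Suppose $A<^1_1B$ and $\nabla B\dashv\vdash\nabla B'$. From $B\vdash\dmnd{1}{1}A$, Mod$_3$ gives $\nabla B\vdash\nabla\dmnd{1}{1}A$, and Axiom 8 turns this into $\nabla B\vdash\dmnd{1}{1}A$. Then $B'\vdash\nabla B'\vdash\nabla B\vdash\dmnd{1}{1}A$, using Axiom 3 for the first step. Hence $A<^1_1B'$.

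\textbf{Composition on the left.} Suppose $A<^1_1B$ and $\nabla A\dashv\vdash\nabla A'$. Then
\[ B\vdash\dmnd{1}{1}A\vdash\dmnd{1}{1}\nabla A\vdash\dmnd{1}{1}\nabla A'\vdash\dmnd{1}{1}A'. \]
The second step uses Axiom 3 with Mod$_1$, the third uses Mod$_1$ applied to $\nabla A\vdash\nabla A'$, and the last is Axiom 9. Hence $A'<^1_1B$.

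\textbf{Consequences for $\leq^1_1$.} The two composition facts show that $<^1_1$ respects $\equiv^1_1$ on both sides. Since $\equiv^1_1$ is clearly an equivalence relation, transitivity of $\leq^1_1$ follows by case analysis on which disjunct of Definition \ref{def order 1} holds in each link.

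I expect no real obstacle here. The only point needing care is to state the composition claim in the two-sided form, and to notice that the left-hand case requires Axiom 9 rather than Axiom 8.
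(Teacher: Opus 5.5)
Your proof is correct and follows the paper's own argument: transitivity via Mod$_1$, Axiom 6 and Cut, and invariance of the smaller element via $\dmnd{1}{1}T\dashv\vdash\dmnd{1}{1}\nabla T$ (Axiom 3 with Mod$_1$ in one direction, Axiom 9 in the other). Your right-hand composition step via Axioms 3 and 8 is the case the paper leaves implicit here; it only invokes those axioms later, when it asserts transitivity of $\leq^1_1$ in the proof of Lemma~\ref{lem 12 tree approx 1}.
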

\begin{proof}
 Suppose $C\vdash_{\mathsf{BI_\omega}}\dmnd{1}{1}B$ and $B\vdash_{\mathsf{BI_\omega}}\dmnd{1}{1}A$. Then we have $C\vdash\dmnd{1}{1}\dmnd{1}{1}A$, from which, by Cut and Axiom 6, we get $C\vdash\dmnd{1}{1}A$. 
 
 For the second part, notice that $\dmnd{1}{1}$ is invariant under $\nabla$-equivalence, since for any formula $T$ it holds that $\dmnd{1}{1}T\dashv\vdash_{\mathsf{BI_\omega}}\dmnd{1}{1}\nabla T$ (Axiom 3 with Mod$_1$ in one direction, Axiom 9 in the other). Hence $\nabla C\dashv\vdash\nabla C'$ and $C<^1_1 A$ imply $C'<^1_1A$.
\end{proof}

The main idea for the proof of linearity is to reduce comparisons of arbitrary formulas to the existence of embeddings of finite models by using the results on satisfiability and derivability developed in the previous section.

\begin{definition}\label{def fundamental}
    A formula is called fundamental if the only modality occurring in it is $\dmnd{1}{1}$. More explicitly, it is a formula generated by the following grammar: 
\[A := \top \mid p \mid A \wedge B \mid \dmnd{1}{1} A.\]
\end{definition}

\begin{lemma}\label{lem fundamental}
    Suppose $A$ is a fundamental formula in $\W$. Then it has the following properties:
    \begin{enumerate}
        \item There is a fundamental formula $B$, built using only $\top$ and $\dmnd{1}{1}$, such that $A\dashv\vdash_\mathsf{BI_\omega}B$.
        \item If $B$ is also a fundamental formula in $\W$, then either $A\geq^1_1 B$ or $B\geq^1_1 A$.
        \item If $B$ is a non-fundamental formula in $\W$, then $B>^1_1 A$.
    \end{enumerate}
\end{lemma}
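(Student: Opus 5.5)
We prove the three items in order, each syntactically in $\mathsf{RC_{BI_\omega}}$, by induction on formulas. Write $\dmnd{1}{1}^n\top$ for $\top$ prefixed by $n$ copies of $\dmnd{1}{1}$. Two facts are used repeatedly:
\begin{itemize}
    \item[(F1)] For $n\leq m$ we have $\dmnd{1}{1}^m\top\vdash_{\mathsf{BI_\omega}}\dmnd{1}{1}^n\top$. This follows by $n$ applications of Mod$_1$ to the truth axiom $\dmnd{1}{1}^{m-n}\top\vdash\top$.
    \item[(F2)] The transfer principle of Proposition \ref{prop trans and compose}: $<^1_1$ is invariant under $\nabla$-equivalence, and $\dashv\vdash$ implies $\nabla$-equivalence by Mod$_3$.
\end{itemize}

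\textbf{Item 1.} Since $A\in\W$ is closed, it is built from $\top$, $\wedge$ and $\dmnd{1}{1}$ only. We show by induction on $A$ that $A\dashv\vdash_{\mathsf{BI_\omega}}\dmnd{1}{1}^{n}\top$, where $n$ is the modal depth of $A$.
\begin{itemize}
    \item For $A=\top$ take $n=0$.
    \item For $A=\dmnd{1}{1}C$, apply Mod$_1$ in both directions to the equivalence given by the induction hypothesis for $C$.
    \item For $A=C\wedge D$ with $C\dashv\vdash\dmnd{1}{1}^n\top$ and $D\dashv\vdash\dmnd{1}{1}^m\top$, say $n\leq m$: the left-to-right direction is L$\wedge$ to $D$. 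The converse uses R$\wedge$ together with (F1) to recover the $C$-conjunct.
\end{itemize}

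\textbf{Item 2.} By item 1 and (F2) it suffices to compare $\dmnd{1}{1}^n\top$ with $\dmnd{1}{1}^m\top$, say $n\leq m$.
\begin{itemize}
    \item If $n=m$, the two formulas are identical, so we get $\equiv^1_1$.
    \item If $n<m$, then (F1) gives $\dmnd{1}{1}^m\top\vdash\dmnd{1}{1}\dmnd{1}{1}^{n}\top$, i.e. $A<^1_1B$.
\end{itemize}

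\textbf{Item 3.} This is the main step. By item 1 it suffices to show that every non-fundamental $B\in\W$ satisfies $B\vdash_{\mathsf{BI_\omega}}\dmnd{1}{1}^k\top$ for all $k$. Such a $B$ contains an occurrence of $\dmnd{i}{\alpha}$ with $\alpha\geq 2$, or of ${\dwn}$ (there is no $\nabla$). First we handle two base formulas.
\begin{itemize}
    \item (B1) $\dmnd{1}{2}\top\vdash\dmnd{1}{2}\dmnd{1}{1}^k\top$ for every $k$. We argue by induction on $k$. From the statement for $k$, Axiom 1 inside gives $\dmnd{1}{2}\top\vdash\dmnd{1}{1}^{k+1}\top$. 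Then R$\wedge$ together with Axiom 12 (with $\beta=1<2$) yields $\dmnd{1}{2}\top\vdash\dmnd{1}{2}(\top\wedge\dmnd{1}{1}^{k+1}\top)$, which is the statement for $k+1$. Applying Axiom 1 once more gives $\dmnd{1}{2}\top\vdash\dmnd{1}{1}^k\top$ for all $k$.
    \item (B2) $\dmnd{1}{\omega}\top\vdash\dmnd{1}{\omega}^k\top$. This follows by repeated use of Axiom 12 in its $\alpha=\beta=\omega$ instance. Lowering every index by Axiom 1 under Mod$_1$ then gives $\dmnd{1}{\omega}\top\vdash\dmnd{1}{1}^k\top$.
\end{itemize}

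Now we prove the claim by induction on $B$.
\begin{itemize}
    \item $B=\dmnd{i}{\alpha}C$ with $\alpha\geq2$: we have $B\vdash\dmnd{1}{\alpha}C\vdash\dmnd{1}{2}\top$ by Axioms 2 and 1 and Mod$_1$, and we conclude by (B1).
    \item $B={\dwn}C$: Mod$_2$ gives ${\dwn}C\vdash{\dwn}\top$, Axiom 13 gives $\dmnd{1}{\omega}\top$, and we conclude by (B2).
    \item $B=C\wedge D$: one conjunct is non-fundamental, so we apply L$\wedge$ and the induction hypothesis.
    \item $B=\dmnd{1}{1}C$: here $C$ is non-fundamental, so Mod$_1$ and the induction hypothesis give $B\vdash\dmnd{1}{1}^{k+1}\top\vdash\dmnd{1}{1}^k\top$, the last step by (F1).
\end{itemize}

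Finally, take $k=n+1$, where $A\dashv\vdash\dmnd{1}{1}^n\top$. Then $B\vdash\dmnd{1}{1}\dmnd{1}{1}^n\top\vdash\dmnd{1}{1}A$ by Mod$_1$, that is, $A<^1_1B$. The only delicate point we expect is (B1), since it needs the iterated use of Axiom 12 at the level $2>1$ rather than a single step; everything else is routine.
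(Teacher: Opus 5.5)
Your proof is correct and follows the paper's argument essentially step by step: the same normal form $\idmnd{1}{1}{n}\top$, the same monotonicity fact for iterated $\dmnd{1}{1}$, and the same Axiom-12 induction at level $2>1$ giving $\dmnd{1}{2}\top\vdash_{\mathsf{BI_\omega}}\idmnd{1}{1}{k}\top$ for all $k$, on which the non-fundamental case rests. The only deviation is in the ${\dwn}$ case, where you prove the auxiliary fact (B2) via the $\alpha=\beta=\omega$ instance of Axiom 12, whereas the paper lowers ${\dwn}\top\vdash\dmnd{1}{\omega}\top\vdash\dmnd{1}{2}\top$ by Axiom 1 and reuses (B1); both routes are valid.
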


\begin{proof}
Let $\idmnd{1}{1}{n} A$ represent the formula $\underbrace{\dmnd{1}{1}\dmnd{1}{1}\dots\dmnd{1}{1}}_n A$. Throughout the proof we will use that
\begin{equation}\label{eq 11diamonds}
    \idmnd{1}{1}{n}\top \vdash_{\mathsf{BI_\omega}} \idmnd{1}{1}{m}\top, \text{ for } n \geq m,
\end{equation}
obtained by applying Mod$_1$ $m$ times to
$\idmnd{1}{1}{n-m}\top \vdash \top$, and 
\begin{equation}\label{eq 11diamonds 2}
    \dmnd{1}{2} \top \vdash_{\mathsf{BI_\omega}} \idmnd{1}{1}{n}\top, \text{ for all } n,
\end{equation}
obtained by induction on $n$: the case $n=0$ is trivial, and if $\dmnd{1}{2}\top \vdash \idmnd{1}{1}{n}\top$ then we use that:
\[
\dmnd{1}{2}\top \;\vdash\; \dmnd{1}{2}\top \wedge \idmnd{1}{1}{n}\top
\;\vdash\; \dmnd{1}{2}\bigl(\top \wedge \idmnd{1}{1}{n}\top\bigr)
\;\vdash\; \dmnd{1}{1}\idmnd{1}{1}{n}\top = \idmnd{1}{1}{n+1}\top .
\]

1. We show by induction on the structure of $A$ that $A \dashv\vdash_{\mathsf{BI_\omega}} \idmnd{1}{1}{n}\top$ for some $n \in \mathbb{N}$.
\begin{itemize}
    \item[-] $A = \top$: take $n = 0$.
    \item[-] $A = \dmnd{1}{1} B$: by the inductive hypothesis $B \dashv\vdash \idmnd{1}{1}{n}\top$, so by Mod$_1$ we get $A \dashv\vdash \idmnd{1}{1}{n+1}\top$.
    \item[-] $A = B \wedge C$: by the inductive hypothesis $B \dashv\vdash \idmnd{1}{1}{n}\top$ and $C \dashv\vdash \idmnd{1}{1}{m}\top$; suppose w.l.o.g.\ $n \geq m$. Then $A \vdash \idmnd{1}{1}{n}\top$ by L$\wedge$, and conversely $\idmnd{1}{1}{n}\top \vdash B \wedge C$ by \eqref{eq 11diamonds} and R$\wedge$.
\end{itemize}

2. Let $A, B$ be fundamental formulas in $\W$. By part~1 there are $n, m$ such that $A \dashv\vdash_{\mathsf{BI_\omega}} \idmnd{1}{1}{n}\top$ and $B \dashv\vdash_{\mathsf{BI_\omega}} \idmnd{1}{1}{m}\top$. If $n = m$, then $A \dashv\vdash_{\mathsf{BI_\omega}} B$, whence $\nabla A \dashv\vdash_{\mathsf{BI_\omega}} \nabla B$ by
Mod$_3$, and so $A \equiv^1_1 B$. Otherwise suppose w.l.o.g. $n > m$. Then $n \geq m + 1$, so by \eqref{eq 11diamonds} $$A \vdash_{\mathsf{BI_\omega}} \idmnd{1}{1}{m+1}\top = \dmnd{1}{1}\idmnd{1}{1}{m}\top \vdash_{\mathsf{BI_\omega}} \dmnd{1}{1} B,$$ where the last step is Mod$_1$. Hence $B <^1_1 A$.

3. We are going to show that $B \in \W$ satisfies $B \vdash_{\mathsf{BI_\omega}} \idmnd{1}{1}{k}\top$ for every $k$. By part 1 this suffices to prove part 3. We argue by induction on the structure of $B$.
\begin{itemize}
    \item[-] $B = \dmnd{1}{\alpha} C$ with $\alpha \geq 2$: by Mod$_1$ applied to $C \vdash \top$ and Axiom~1 we get $B \vdash \dmnd{1}{2}\top$, and we conclude by (\ref{eq 11diamonds 2}).
    \item[-] $B = \dmnd{2}{\alpha} C$: by Axiom~2, $B \vdash \dmnd{1}{\alpha} C$, and we proceed as in the previous case.
    \item[-] $B = {\downarrow} C$: by Mod$_2$ applied to $C \vdash \top$, Axiom~13, and Axiom~1 we get $B \vdash {\downarrow}\top \vdash \dmnd{1}{\omega}\top \vdash \dmnd{1}{2}\top$, and we conclude by (\ref{eq 11diamonds 2}).
    \item[-] $B = \dmnd{1}{1} C$ with $C$ non-fundamental: by the inductive hypothesis $C \vdash \idmnd{1}{1}{k}\top$ for every $k$, so by Mod$_1$, $B \vdash \idmnd{1}{1}{k+1}\top \vdash \idmnd{1}{1}{k}\top$.
    \item[-] $B = C \wedge D$ with $C$ or $D$ non-fundamental: apply the inductive hypothesis to a non-fundamental conjunct and use L$\wedge$.
\end{itemize}

Now let $A,B\in\W$ such that $A$ is fundamental and $B$ non-fundamental. By part~1, $\idmnd{1}{1}{n}\top \vdash A$ for some $n$, and by the above argument, $B \vdash \idmnd{1}{1}{n+1}\top$. Combining these we get: $$B \vdash_{\mathsf{BI_\omega}} \dmnd{1}{1}\idmnd{1}{1}{n}\top\vdash_{\mathsf{BI_\omega}} \dmnd{1}{1} A,$$ i.e.\ $A <^1_1 B$.
\end{proof}

\begin{definition}\label{def form type}
Every formula of $\W$ can be written as $A = A_1 \wedge \dots \wedge A_k$ where each $A_j$ is $\top$ or has one of the modalities as its outermost connective; we call the $A_j$ the top conjuncts of $A$. We say that a non-fundamental formula $A$ is of:
\begin{itemize}
    \item[-] successor form, if every top conjunct is $\top$ or $\dmnd{1}{1}B$;
    \item[-] limit form, if some top conjunct of $A$ is of the form $\dwn B$ or $\dmnd{i}{\alpha}B$ with $\alpha \geq 2$.
\end{itemize}
\end{definition}

\begin{definition}\label{def approx}
Given a limit-form formula $A\in\W$, we define the approximations $(A)_n$ for $n\geq1$ by recursion on $A$ as follows.

\begin{itemize}
    \item[-] $A=\dmnd{i}{\alpha}A'$ with $\alpha\geq2$: define $(A)_1=A'$ and
    \[(A)_{n+1}=\begin{cases}
      \dmnd{i}{\alpha-1}\bigl(A'\wedge(A)_n\bigr) & \text{if }\alpha>i,\\[2pt]
      \dmnd{1}{1}\bigl(A'\wedge{\dwn}(A)_n\bigr) & \text{if }\alpha=i=2.
    \end{cases}\]

    \item[-] $A={\dwn}C$. If $C$ is of limit form, we define \[({\dwn}C)_n=\dmnd{1}{n}{\dwn}(C)_n.\] Otherwise $C=\dmnd{1}{1}E_1\wedge\dots\wedge\dmnd{1}{1}E_k$, with $k\geq0$, for which we define \[({\dwn}C)_n=\dmnd{1}{n}{\dwn}E_1\wedge\dots\wedge\dmnd{1}{n}{\dwn}E_k.\] If $k=0$, we set $(\dwn \top)_n=\dmnd{1}{n}\top$.

        \item[-] $A$ is a conjunction of at least two top conjuncts. We write $A=B_A\wedge C_A$, where $B_A={\dwn}B_1\wedge\dots\wedge{\dwn}B_{k_1}$ are the ${\dwn}$-conjuncts and $C_A=C_1\wedge\dots\wedge C_{k_2}$ are the diamond conjuncts, written $C_j=\dmnd{i_j}{\alpha_j}C'_j$ and sorted so that $\alpha_1\geq\dots\geq\alpha_{k_2}$.

    If $k_1>0$, then \[(A)_n=({\dwn}B_1)_n\wedge\dots\wedge({\dwn}B_{k_1})_n\wedge C_A.\]

    If $k_1=0$, let $m\geq1$ be maximal with $\alpha_1=\dots=\alpha_m$. Then $C_A=C_1\wedge\dots\wedge C_m\wedge C^0_A$ with $C^0_A=C_{m+1}\wedge\dots\wedge C_{k_2}$ (read as $\top$ if $m=k_2$), and we denote by $A^\circlearrowleft$ the (one-step) cyclic shift of $A$, which is the formula $C_2\wedge\dots\wedge C_m\wedge C_1\wedge C^0_A$ obtained by moving $C_1$ behind $C_m$. Note that $A^\circlearrowleft$ is again of this form, with the same $m$ and the same $C^0_A$, so the recursion below is well defined.
    
    We then define $(A)_1=C^0_A$ and
    \[ 
    (A)_{n+1}=\begin{cases}
        \dmnd{i_1}{\alpha_1-1}\bigl(C'_1\wedge(A^\circlearrowleft)_n\bigr) & \text{if }\alpha_1>i_1,\\[2pt]
        \dmnd{1}{1}\bigl(C'_1\wedge{\dwn}(A^\circlearrowleft)_n\bigr) & \text{if }\alpha_1=i_1=2.
    \end{cases}
    \]
\end{itemize}
\end{definition}

\begin{lemma}\label{lem approx linearity}
Let $A \in \W$ be of limit form. Then $A >^1_1 (A)_n$ for all $n\geq 1$.
\end{lemma}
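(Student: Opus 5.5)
Our goal is to show $A\vdash_{\mathsf{BI_\omega}}\dmnd{1}{1}(A)_n$ for every $n\geq1$. We proceed by induction on $n$, and within each $n$ by induction on the structure of $A$, following the three clauses of Definition \ref{def approx}. All derivations are purely syntactic, using the axioms together with Mod and Cut. Two tools are used throughout. First, $\dmnd{i}{\alpha}X\vdash\dmnd{1}{1}X$ for all $i,\alpha$ (Axioms 1 and 2). Second, the absorption pattern from Axiom 12: $\dmnd{i}{\alpha}X\wedge\dmnd{j}{\beta}Y\vdash\dmnd{i}{\alpha}(X\wedge\dmnd{j}{\beta}Y)$ for $\beta<\alpha$. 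In practice we prove the slightly stronger statement $A\vdash\dmnd{i}{\alpha}$-shaped formulas of the right rank, so that the induction hypothesis can be fed back into Axiom 12.

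\emph{Case $A=\dmnd{i}{\alpha}A'$ with $\alpha\geq2$.} For $n=1$, $A\vdash\dmnd{1}{1}A'$ is immediate from the first tool.

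For $\alpha>i$ we strengthen the claim to $A\vdash\dmnd{i}{\alpha-1}(A)_n$ and prove it by induction on $n$. Given the claim for $n$, Axiom 12 yields
\[A\vdash\dmnd{i}{\alpha}A'\wedge\dmnd{i}{\alpha-1}(A)_n\vdash\dmnd{i}{\alpha}\bigl(A'\wedge\dmnd{i}{\alpha-1}(A)_n\bigr).\]
We then want to replace $\dmnd{i}{\alpha-1}(A)_n$ under the outer diamond by $(A)_n$ itself, so as to obtain $\dmnd{i}{\alpha}\dmnd{i}{\alpha-1}(A'\wedge(A)_n)$ and collapse it by Axioms 1 and 6. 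To do this cleanly, we strengthen the invariant further to $A'\wedge\dmnd{i}{\alpha-1}(A)_n\vdash(A)_{n+1}$-style statements. Concretely, we prove $\dmnd{i}{\alpha}A'\vdash\dmnd{i}{\alpha}(A'\wedge(A)_{n+1})$ simultaneously, by Axiom 12 applied to the conjunction of $\dmnd{i}{\alpha}A'$ with $(A)_{n+1}=\dmnd{i}{\alpha-1}(\cdots)$.

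For $\alpha=i=2$ the same scheme applies with Axiom 11 in place of Axiom 12. From $\dmnd{2}{2}A'\wedge\dmnd{1}{1}(A)_n$ we obtain $\dmnd{2}{2}(A'\wedge{\dwn}(A)_n)$, and Axiom 2 then gives $\dmnd{1}{1}(A'\wedge{\dwn}(A)_n)=(A)_{n+1}$ one diamond down. This is exactly what the induction needs.

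\emph{Case $A={\dwn}C$.} Here we use the constructive Axioms 13 and 14. If $C$ is of limit form, the outer induction gives $C\vdash\dmnd{1}{1}(C)_n$. Mod$_2$ then yields ${\dwn}C\vdash{\dwn}\dmnd{1}{1}(C)_n$, and Axiom 14 with Axiom 1 gives ${\dwn}\dmnd{1}{1}(C)_n\vdash\dmnd{1}{n+1}{\dwn}(C)_n$. To pass from there to $\dmnd{1}{1}\dmnd{1}{n}{\dwn}(C)_n$, we need ${\dwn}C$ to also prove $\dmnd{1}{\omega}$-versions, so that Axiom 12 with $\alpha=\beta=\omega$ stacks two such diamonds. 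We do this by applying the Axiom 14 step twice and using Axiom 6.

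If instead $C=\bigwedge_j\dmnd{1}{1}E_j$, then ${\dwn}C\vdash{\dwn}\dmnd{1}{1}E_j\vdash\dmnd{1}{\omega}{\dwn}E_j$ for each $j$, and ${\dwn}C\vdash\dmnd{1}{\omega}\top$ by Axiom 13. Repeated use of Axiom 12 at rank $\omega$ then packs these into $\dmnd{1}{\omega}\bigl(\bigwedge_j\dmnd{1}{n}{\dwn}E_j\bigr)$, from which $\dmnd{1}{1}$ follows.

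\emph{Conjunction case.} When $k_1>0$, we apply the ${\dwn}$ case conjunctwise. The conjunction $\bigwedge_p({\dwn}B_p)_n\wedge C_A$ is obtained under a single $\dmnd{1}{\omega}$ by Axiom 12, since every $({\dwn}B_p)_n$ is derived under $\dmnd{1}{\omega}$ and every diamond conjunct of $C_A$ has rank $<\omega$ or equals $\dmnd{1}{\omega}$. When $k_1=0$, we run the same induction on $n$ as in the single-diamond case. This time we carry the invariant for all cyclic shifts $A^\circlearrowleft$ simultaneously; note that $A^\circlearrowleft\dashv\vdash A$ by commutativity of $\wedge$. The conjuncts $C^0_A$ of rank $<\alpha_1$ are absorbed by Axiom 12, and the conjuncts of equal rank $\alpha_1$ are rotated by the shift.

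The main obstacle is setting up the strengthened invariant so that the induction closes. It must be a provable statement of the form $A\vdash\dmnd{i_1}{\alpha_1-1}(A)_n$, or of the form $A\vdash\dmnd{1}{\omega}(A)_n$ in the ${\dwn}$ cases, that can be combined by Axiom 12 without losing rank, and it must be uniform over cyclic shifts. If the syntactic route becomes unwieldy, the fallback is Corollary \ref{cor trees}: we would check directly that $T_{\dmnd{1}{1}(A)_n}\hookrightarrow_r M_A$ by induction on $n$, embedding the approximating chain along the rank-$\geq2$ loops and connectors of $M_A$ described in Lemmas \ref{lem closure facts} and \ref{lem guards}.
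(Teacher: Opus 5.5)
Your proposal follows the paper's proof. It uses the same case split along Definition~\ref{def approx}, Axiom~12 absorption, Axiom~11 when $\alpha=i=2$, Axiom~14 with rank-$\omega$ stacking (Axiom~12 with $\alpha=\beta=\omega$) for ${\dwn}$-formulas and the $k_1>0$ conjunctions, and a simultaneous induction over all cyclic shifts when $k_1=0$. The single-diamond invariant you finally settle on, $A\vdash\dmnd{i}{\alpha}(A'\wedge(A)_n)$, is exactly the paper's, and $(A)_{n+1}$ follows from it by Axiom~1 alone, so the detour through $\dmnd{i}{\alpha-1}(A)_n$ is unnecessary; for $k_1=0$ the paper's cleanly closing choice is $A\vdash(A)_n$ with $(A)_n$ being $\alpha_1$-absorbable, for all shifts at once.
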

\begin{proof}
We prove the stronger statement that $A\vdash\dmnd{1}{\alpha}(A)_n$ for all $n\geq 1$, for some $\alpha\geq 2$ which will depend on the structure of $A$. This inherently gives us $A\vdash\dmnd{1}{1}(A)_n$. We say that a formula $A$ is $\alpha$-absorbable if every top conjunct of $A$ is $\top$ or of the form $\dmnd{j}{\beta}B$ with $\beta<\alpha$ or $\alpha=\beta=\omega$. We note the following syntactic observations which we will use throughout the proof:
\begin{enumerate}
    \item \label{obs approx lin 1} If $A$ is $\alpha$-absorbable, then $A\wedge \dmnd{i}{\alpha} B\vdash \dmnd{i}{\alpha}(A\wedge B)$.
    \item \label{obs approx lin 2} Any conjunction of formulas of the form $\dmnd{1}{\omega} A_k$ is $\omega$-absorbable.
    \item \label{obs approx lin 3} The formula approximations $(\dwn A)_n$ are $\alpha$-absorbable for $\alpha>n$.
    \item \label{obs approx lin 4} $\dmnd{1}{\omega}A\vdash\dmnd{1}{\omega}\dmnd{1}{n}A$;
    \item \label{obs approx lin 5} $\dmnd{1}{\omega}A\wedge\dmnd{1}{\omega}B\vdash\dmnd{1}{\omega}(A\wedge B)$ for $\omega$-absorbable $A$.
\end{enumerate} 

\noindent We now proceed to argue by induction on the length of $A$.

\begin{itemize}
\item $A=\dmnd{2}{2}B$.

We first show $A\vdash_\bio\dmnd{2}{2}(B\wedge(A)_n)$ for all
$n\ge1$, by induction on $n$. For $n=1$ this is Mod$_1$ applied to
$B\vdash_\bio B\wedge B$. Suppose $A\vdash_\bio\dmnd{2}{2}(B\wedge(A)_n)$. Then:
\[
A\vdash_\bio\dmnd{2}{2}B\wedge\dmnd{1}{1}(B\wedge(A)_n)
\vdash_\bio\dmnd{2}{2}\big(B\wedge{\dwn}(B\wedge(A)_n)\big)
\vdash_\bio\dmnd{2}{2}\big(B\wedge{\dwn}(A)_n\big)
\vdash_\bio\dmnd{1}{1}\big(B\wedge{\dwn}(A)_n\big)=(A)_{n+1}.
\]
Since $(A)_{n+1}$ is $2$-absorbable we get $A\vdash_\bio\dmnd{2}{2}B\wedge(A)_{n+1}\vdash_\bio\dmnd{2}{2}(B\wedge(A)_{n+1})$. 

Now, since $A\vdash_\bio\dmnd{2}{2}(B\wedge(A)_n)\vdash_\bio\dmnd{1}{2}(A)_n$ we conclude that $A\vdash_\bio \dmnd{1}{2}(A)_n$.

\item $A=\dmnd{i}{\alpha}B$ with $i<\alpha$. 

Again we first show $A\vdash_\bio\dmnd{i}{\alpha}(B\wedge(A)_n)$ for all $n\ge1$ by induction on $n$. The base case is the same as in the previous case. If $A\vdash_\bio\dmnd{i}{\alpha}(B\wedge(A)_n)$, then by Axiom 1, $A\vdash_\bio\dmnd{i}{\alpha-1}(B\wedge(A)_n)=(A)_{n+1}$. Since $(A)_{n+1}$ is $\alpha$-absorbable, we get $A\vdash_\bio\dmnd{i}{\alpha}B\wedge(A)_{n+1}\vdash_\bio\dmnd{i}{\alpha}(B\wedge(A)_{n+1})$.

Finally $A\vdash_\bio\dmnd{i}{\alpha}(B\wedge(A)_n)\vdash_\bio\dmnd{1}{\alpha}(A)_n\vdash_\bio\dmnd{1}{2}(A)_n$.

\item $A={\dwn}B$. 

If $B$ is of limit form the outer induction hypothesis gives $B\vdash\dmnd{1}{1}(B)_n$, and by Mod$_2$, Axiom 14 and \ref{obs approx lin 4}, ${\dwn}B\vdash{\dwn}\dmnd{1}{1}(B)_n\vdash\dmnd{1}{\omega}{\dwn}(B)_n\vdash\dmnd{1}{\omega}\dmnd{1}{n}{\dwn}(B)_n=\dmnd{1}{\omega}({\dwn}B)_n$. Otherwise $B=\bigwedge_{j\le k}\dmnd{1}{1}E_j$ (up to $\top$), and by Mod$_2$, L$\wedge$, Axiom 14 and \ref{obs approx lin 4}, ${\dwn}B\vdash\bigwedge_j\dmnd{1}{\omega}\dmnd{1}{n}{\dwn}E_j$; each $\dmnd{1}{n}{\dwn}E_j$ is $\omega$-absorbable, so \ref{obs approx lin 5} gives $\dmnd{1}{\omega}(\bigwedge_j\dmnd{1}{n}{\dwn}E_j)=\dmnd{1}{\omega}({\dwn}B)_n$ (for $k=0$: Axiom 13 and \ref{obs approx lin 4}).

\item $A=B_A\wedge C_A$ (as in Definition \ref{def approx}).
\begin{itemize}
    \item If $k_1>0$: by the previous case, ${\dwn}B_j\vdash\dmnd{1}{\omega}({\dwn}B_j)_n$ for every $j\le k_1$ and $n\geq 1$, so $B_A\vdash\dmnd{1}{\omega}\big(\bigwedge_{j\le k_1}({\dwn}B_j)_n\big)$.

    Every conjunct of $C_A$ is a diamond formula, hence $\omega$-absorbable, which gives us that $A\vdash\dmnd{1}{\omega}\big(\bigwedge_{j}({\dwn}B_j)_n\wedge C_A\big)=\dmnd{1}{\omega}(A)_n$.
    \item If $k_1=0$: we show, by induction on $n$ and simultaneously for all formulas $A$ of this form (in particular for all iterated shifts of $A$, which have the same maximal rank $\alpha_1$), that $A\vdash_\bio(A)_n$ and that $(A)_n$ is $\alpha_1$-absorbable. All ranks in $A$ are finite, as $A\in\mathcal L_{\mathsf{BI}}$.

    For $n=1$, $(A)_1=C^0_A$ is a sub-conjunction of $A$, possibly $\top$, whose conjuncts are diamonds of rank smaller than $\alpha_1$.

    For the inductive step, the induction hypothesis for $A^\circlearrowleft$ gives $A^\circlearrowleft\vdash_\bio(A^\circlearrowleft)_n$ with $(A^\circlearrowleft)_n$ $\alpha_1$-absorbable, and $A\vdash_\bio A^\circlearrowleft$ by simple conjunction rule applications. Thus by observation \ref{obs approx lin 1},
    \[ A\vdash_\bio C_1\wedge(A^\circlearrowleft)_n\vdash_\bio\dmnd{i_1}{\alpha_1}\bigl(C'_1\wedge(A^\circlearrowleft)_n\bigr). \]
    \begin{itemize}
        \item[-] If $\alpha_1>i_1$, Axiom 1 gives $A\vdash_\bio\dmnd{i_1}{\alpha_1-1}(C'_1\wedge(A^\circlearrowleft)_n)=(A)_{n+1}$.
        \item[-] If $\alpha_1=i_1=2$, then $\dmnd{2}{2}(C'_1\wedge(A^\circlearrowleft)_n)\vdash_\bio\dmnd{1}{1}(A^\circlearrowleft)_n$,  so by Axiom 11 we get
        \[ A\vdash_\bio\dmnd{2}{2}C'_1\wedge\dmnd{1}{1}(A^\circlearrowleft)_n\vdash_\bio\dmnd{2}{2}\bigl(C'_1\wedge{\dwn}(A^\circlearrowleft)_n\bigr)\vdash_\bio\dmnd{1}{1}\bigl(C'_1\wedge{\dwn}(A^\circlearrowleft)_n\bigr)=(A)_{n+1}. \]
    \end{itemize}
    In both cases $(A)_{n+1}$ is a single diamond formula of rank at most $\alpha_1-1$, hence it is $\alpha_1$-absorbable.

    Finally by observation \ref{obs approx lin 1},
    \[ A\vdash_\bio C_1\wedge(A)_n\vdash_\bio\dmnd{i_1}{\alpha_1}\bigl(C'_1\wedge(A)_n\bigr)\vdash_\bio\dmnd{1}{\alpha_1}(A)_n\vdash_\bio\dmnd{1}{2}(A)_n. \]
\end{itemize}
\end{itemize}
In every case $A\vdash\dmnd{1}{\alpha}(A)_n\vdash\dmnd{1}{1}(A)_n$ (i.e. $A>^1_1(A)_n$) for all $n\geq 1$.
\end{proof}

\begin{lemma}\label{lem linearity 1}
    Given formulas $A, B$ in $\W$,
    \[ A \vdash_{\mathsf{BI_\omega}} \nabla B \text{ iff } T_B \hookrightarrow M_A. \] 
\end{lemma}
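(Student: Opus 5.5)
The plan is to reduce the statement to Theorem \ref{thm trees} and Proposition \ref{prop embed}, using the fact that the root of $M_A$ sees every point of $M_A$ via $R^\nabla$. Formulas of $\W$ are variable-free, so Theorem \ref{thm trees} applies to $A$ and $\nabla B$. It gives
\[ A\vdash_{\mathsf{BI_\omega}}\nabla B \iff M_A,r_A\models\nabla B. \]
By Definition \ref{def 4 sat model}, the right-hand side holds iff there is $b\in\mathcal S_A$ with $r_AR^\nabla b\in\mathcal R_A$ and $M_A,b\models B$.

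The next step removes the condition $r_AR^\nabla b$. I will apply the third claim of Lemma \ref{lem locality}(i) with the anchor point $t=r_A$ and $B$ taken to be $A$ itself, which that lemma allows. It states that $b\in\mathcal S_A$ iff $r_AR^\nabla b\in\mathcal R_A$, so every point of $M_A$ is an $R^\nabla$-successor of the root. Alternatively, I can argue directly: every point is reachable from $r_A$ by a path in $M^c_A$, and the closure rules (b), (c), (d), (e) and (g) turn such a path into a single $R^\nabla$ transition. Consequently $M_A,r_A\models\nabla B$ holds iff $M_A,b\models B$ for some $b\in\mathcal S_A$.

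Finally, I will translate satisfaction at $b$ into an embedding. By Proposition \ref{prop MA is a model}, $M_A$ is an $\mathsf{RC_{BI_\omega}}$ model, so Proposition \ref{prop embed} applies. It gives $M_A,b\models B$ iff there is an embedding $f:T_B\hookrightarrow M_A$ with $f(r^T_B)=b$. Quantifying over $b$, this says exactly that $T_B\hookrightarrow M_A$ holds for some, not necessarily root-preserving, embedding. Chaining the three equivalences yields the lemma in both directions.

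The only step needing care is the universality of $R^\nabla$ from $r_A$. I expect it to be unproblematic, since it is already recorded in Lemma \ref{lem locality}(i).
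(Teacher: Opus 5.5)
Your proposal is correct and is essentially the paper's proof. The paper also uses Lemma \ref{lem locality}(i) with $B=A$ to get $r_AR^\nabla a$ for every $a\in\mathcal S_A$, identifies $T_B\hookrightarrow M_A$ with $T_{\nabla B}\hookrightarrow_r M_A$, and concludes by Corollary \ref{cor trees}; you simply unpack that corollary into Theorem \ref{thm trees} and Proposition \ref{prop embed}.
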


\begin{proof}
    By Lemma \ref{lem locality}(i) with $B=A$, we have $r_AR^\nabla a\in\mathcal R_A$ for every $a\in\mathcal S_A$. Hence $T_B\hookrightarrow M_A$ is equivalent to $T_{\nabla B}\hookrightarrow_r M_A$, which by Corollary \ref{cor trees} is equivalent to $A\vdash_{\mathsf{BI_\omega}}\nabla B$.
\end{proof}

\begin{lemma}\label{lem rank bound}
Let $A$ be a variable-free formula. If $aR^1_\beta b\in\mathcal R_A$ for infinitely many $\beta<\omega$, then $aR^1_\omega b\in\mathcal R_A$.
\end{lemma}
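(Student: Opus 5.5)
The plan is to use Lemma \ref{lem rank2 paths} together with the structure of $\omega$-paths in the pre-model $M^0_A$. Since $M^0_A$ is finite, it has only finitely many transitions, and hence only finitely many distinct finite ranks. Let $N$ be a bound such that every transition of $\mathcal R^0_A$ of finite rank has rank $<N$. Suppose $aR^1_\beta b\in\mathcal R_A$ for some finite $\beta\geq N+1$; such a $\beta$ exists by hypothesis, and in fact $\beta\geq 2N+1$ can be arranged. We want to conclude $aR^1_\omega b$.

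The first step applies Lemma \ref{lem rank2 paths}. If $a=d^\dwn$ is a copy point and $b=d$, then $aR^1_\omega b$ is already in $\mathcal R^c_A$. Otherwise we obtain a $\beta$-connector $c$ of $a'$ to $b$, where $a'=a$ or $a'=d$ in the copy case. This gives a $(1+\beta)$-path from $c$ to $a'$ and a proper $\beta$-path from $c$ to $b$ in $M^0_A$. Every transition of $M^0_A$ has rank either $<N$ or $\omega$. Therefore both paths consist solely of rank-$\omega$ transitions: the path from $c$ to $a'$ is an $\omega$-path, and the path from $c$ to $b$ is a proper $\omega$-path. Hence $c$ is an $\omega$-connector of $a'$ to $b$.

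The second step converts an $\omega$-connector back into a transition $a'R^1_\omega b$ (respectively $aR^1_\omega b$). By (P2), the proper $\omega$-path gives $cR^1_\omega b$. If the path from $c$ to $a'$ is empty, we are done when $a$ is not a copy point. If $a=d^\dwn$, we use $d^\dwn R^1_\omega d$ together with $dR^1_\omega b$ and rule (f). If the path from $c$ to $a'$ is proper, then (P2) gives $cR^1_\omega a'$. Rule (k) with $\alpha=\beta=\omega$, applied to $cR^1_\omega a'$ and $cR^1_\omega b$, then yields $a'R^1_\omega b$. In the copy case we again compose with $d^\dwn R^1_\omega d$ via (f).

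The main obstacle is ensuring that the connector argument is valid uniformly. We must check that Lemma \ref{lem rank2 paths} is stated for transitions $aR^i_\beta b$ of rank $\geq 2$, which holds since $\beta\geq N+1\geq 2$, and that the notion of "$(1+\beta)$-path" forces all ranks to be $\geq 1+\beta>N$. If Lemma \ref{lem rank2 paths} is phrased only for $i$ and $\beta$ as stated, no further work is needed. Otherwise we would repeat its induction on closure stages with the invariant "there is an $\omega$-connector once $\beta\geq N$", which is routine.
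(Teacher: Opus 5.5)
Your argument is correct, but it takes a different route from the paper. The paper argues by a direct induction on the stage at which $aR^i_\beta b$ enters the closure. With $m$ the largest finite rank occurring in $A$, every transition of $\mathcal R^c_A$ has rank in $\{1,\dots,m\}\cup\{\omega\}$. One then checks that rules (a), (b), (f) and (k), the only rules producing transitions of rank $\geq 2$, preserve the implication ``$aR^i_\beta b$ with $m<\beta<\omega$ implies $aR^1_\omega b$''.

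You instead invoke Lemma \ref{lem rank2 paths}, so the closure-stage induction is absorbed into that already-proved lemma. A transition of large finite rank is witnessed by a connector whose paths lie in $M^0_A$, and the rank gap in $\mathcal R^0_A$ forces both paths to be $\omega$-paths. You then recover $aR^1_\omega b$ from (P2) and rule (k) at $\alpha=\beta=\omega$, and in the copy-point case from rule (f) with $d^\dwn R^1_\omega d$. Both proofs in fact show that a single finite $\beta$ above the maximal finite rank suffices.

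The paper's route is more elementary and self-contained: it needs only the set of ranks in $\mathcal R^c_A$ and no copy-point case split. Yours yields slightly more structural information, namely an $\omega$-connector in $M^0_A$ witnessing $aR^1_\omega b$, at the cost of depending on the more delicate connector lemma.

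One trivial point: Lemma \ref{lem rank2 paths} requires $\beta\geq 2$, so choose $\beta\geq\max(N+1,2)$. Your claim $N+1\geq 2$ presupposes $N\geq 1$, which you may assume without loss of generality. The hedge in your last paragraph is unnecessary, since that lemma is stated for all $\alpha\geq 2$, including arbitrarily large finite ranks.
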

\begin{proof}
Let $m$ be the highest finite rank of a modality occurring in $A$. We show, by induction on the stage at which $aR^i_\beta b$ enters the closure, that $aR^i_\beta b\in\mathcal R_A$ with $m<\beta<\omega$ implies $aR^1_\omega b\in\mathcal R_A$. The transitions of $\mathcal R^c_A$ have rank in $\{1,\dots,m\}\cup\{\omega\}$, so we only need to consider the closure step. Since rules (c), (d), (e), (g), (h), (i), (j) and (l) produce transitions of rank $1$, we only need to consider the following cases:
\begin{itemize}
    \item Rule (a): $aR^i_\alpha b$ with $\alpha>\beta$. Either $\alpha=\omega$ and we are done, or $m<\alpha<\omega$ and $aR^1_\omega b$ by the induction hypothesis.
    \item Rule (b): $aR^2_\beta b$. Then $aR^1_\omega b$ by the induction hypothesis.
    \item Rule (f): $aR^i_\beta dR^j_\beta b$. The induction hypothesis gives $aR^1_\omega d$ and $dR^1_\omega b$, so $aR^1_\omega b$ by (f).
    \item Rule (k): $dR^{i'}_\alpha a$ and $dR^j_\beta b$ with $\alpha>\beta$. The induction hypothesis gives $dR^1_\omega b$, and $dR^1_\omega a$ holds either directly, if $\alpha=\omega$, or by the induction hypothesis, if $m<\alpha<\omega$. Then $aR^1_\omega b$ by (k) with $\alpha=\beta=\omega$.
\end{itemize}
\end{proof}

\begin{lemma}\label{lem 12 tree approx 1}
Let $A\in\W$ be such that $\leq^1_1$ is linear below $A$ (that is, any two formulas $C,C'\in\W$ with $C<^1_1A$ and $C'<^1_1A$ are $\leq^1_1$-comparable), and let $B\in\W$ be of limit form. If $A\vdash_\bio\dmnd{1}{1}(B)_n$ for infinitely many $n$, then $A\vdash_\bio\nabla B$.
\end{lemma}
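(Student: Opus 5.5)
The plan is to work entirely inside the finite model $M_A$ and reduce the statement, via Lemma \ref{lem linearity 1}, to producing an embedding $T_B\hookrightarrow M_A$. By Corollary \ref{cor trees}, for each of the infinitely many $n$ with $A\vdash_\bio\dmnd{1}{1}(B)_n$ there is a point $b_n$ with $r_AR^1_1b_n$ and an embedding of $T_{(B)_n}$ rooted at $b_n$. Since $M_A$ is finite, I first apply pigeonhole to fix a single point $b$ and an infinite set $I$ of indices with $b_n=b$ for $n\in I$. The approximations are monotone in $n$ (each $(B)_{n+1}$ contains $(B)_n$ nested inside the recursive clause), so from now on I may assume that the full sequence of approximations is realised below a fixed finite configuration. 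I then argue by induction on the length of $B$, following the case split of Definition \ref{def approx}. In each case the aim is a point of $M_A$ (not necessarily $b$) at which $B$ holds; Lemma \ref{lem linearity 1} then gives $A\vdash_\bio\nabla B$.

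\emph{Diamond case} $B=\dmnd{i}{\alpha}B'$ with $\alpha\geq2$, and the cyclic-shift conjunction case with $k_1=0$. Unfolding $(B)_{n}$ gives a chain of $R^i_{\alpha-1}$ transitions (or $R^1_1$ followed by a ${\dwn}$-step when $\alpha=i=2$) of length about $n$. Each point of the chain satisfies $B'$, or the appropriate cyclically shifted conjunct $C'_j$. Since $n$ is unbounded and $M_A$ is finite, some point repeats along a long chain. The points on the resulting cycle then form a set $Q$ that is pairwise $R^1_{\alpha-1}$-related, by transitivity rule (f); in the case $\alpha=2$ they are pairwise $R^1_1$-related, using rules (h), (i) and Lemma \ref{lem down} to pass through the ${\dwn}$-steps. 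Lemma \ref{lem guards} now yields a point $a$ with $aR^1_{\alpha}q$ for all $q\in Q$, and $aR^2_\alpha q$ whenever $q$ is the target of an $R^2$ step in $Q$. This $a$ satisfies $\dmnd{i}{\alpha}B'$, and in the conjunction case it satisfies every $C_j$ simultaneously, because the cycle passes through witnesses of every shift. The side conjunct $C^0_A$ holds at $a$ since it is already contained in $(B)_1$ and is propagated by the absorbability arguments of Lemma \ref{lem approx linearity}. In the case $\alpha=i=2$ I must additionally check that the ${\dwn}(A^\circlearrowleft)_n$ parts are realised by honest $R^2$ transitions; this is what part (iii) of Lemma \ref{lem closure facts} and Lemma \ref{lem down}(ii) are for.

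\emph{Downarrow case} $B={\dwn}C$, and $B$ a conjunction with ${\dwn}$-conjuncts. Here $(B)_n=\dmnd{1}{n}{\dwn}(C)_n$, or a conjunction of formulas $\dmnd{1}{n}{\dwn}E_j$. Pigeonhole again gives a fixed pair $b'$, $d$ with $bR^1_n b'R^{\dwn}d$ for infinitely many $n$. Lemma \ref{lem rank bound} upgrades this to $bR^1_\omega b'$, and Lemma \ref{lem S} then supplies a point $a$ together with a non-empty set $T$ of anchor points such that $aR^{\dwn}t$ for all $t\in T$, and such that each witness $d$ satisfies $tR^1_1d$ for some $t\in T$. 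Thus each $t\in T$ would satisfy $C$ (or the relevant $E_j$) provided the inner induction hypothesis applies at $t$. To apply it, note that $d$ satisfies $(C)_n$ for infinitely many $n$, so $t\vdash\dmnd{1}{1}(C)_n$ holds in $M_t=M_{B_t}$ (Lemma \ref{lem locality}(i)) for infinitely many $n$.

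This step is where I expect the main difficulty, and where the hypothesis that $\leq^1_1$ is linear below $A$ enters. Different $n$, and different conjuncts $E_j$, may be witnessed by different anchors $t\in T$, and I need one $t$ that works for all of them. My first attempt is as follows. The formulas $B_t$ for $t\in T$ all satisfy $B_t<^1_1A$, since $r_AR^1_1t$ via $b$. By linearity they are $\leq^1_1$-comparable, so I choose a $\leq^1_1$-maximal $t_0\in T$. For any other $t\in T$ we then have either $B_{t_0}\vdash\dmnd{1}{1}B_t$ or $\nabla B_t\dashv\vdash\nabla B_{t_0}$. In both cases, everything $\dmnd{1}{1}$-provable from $B_t$ is $\dmnd{1}{1}$-provable from $B_{t_0}$, using Proposition \ref{prop trans and compose} and Axioms 6 and 9. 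Hence $B_{t_0}\vdash\dmnd{1}{1}(C)_n$ for infinitely many $n$, and linearity is inherited below $B_{t_0}$. The induction hypothesis then gives $B_{t_0}\vdash\nabla C$, and I would upgrade this to $M_{t_0},t_0\models C$ at a point $R^\nabla$-reachable from $t_0$. Finally, rule (j) together with Lemma \ref{lem locality}(iii) turns $aR^{\dwn}t_0R^\nabla c$ into $aR^{\dwn}c$ with $c\models C$, so $a\models B$. The remaining conjuncts $C_A$ are handled as in the diamond case.
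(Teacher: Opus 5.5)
Your overall architecture is the paper's: pigeonhole in the finite model $M_A$; Lemma \ref{lem rank bound} and Lemma \ref{lem S} for ${\dwn}$-conjuncts; a repeated point plus Lemma \ref{lem guards} for the diamond case; induction on $B$ applied at anchor subformulas. However, the step you single out as the main difficulty fails as written. You justify comparability of the $B_t$, $t\in T$, by claiming $r_AR^1_1t$ and hence $B_t<^1_1A$, and this is false. Take $A=\dmnd{2}{2}\top$ and $b=r_\top$. Then $r_AR^1_1b$, $bR^1_\omega b$, and $b\models\dmnd{1}{n}{\dwn}\top=({\dwn}\dmnd{1}{1}\top)_n$ for all $n$. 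Yet Lemma \ref{lem S} returns $T=\{r_A\}$ (the anchor of $r_\top$), and $r_AR^1_1r_A$ is excluded by Lemma \ref{lem nabla cluster}. In Case 2 of Lemma \ref{lem S}, $T$ consists of the roots $r_D$ of all ${\dwn}D$-conjuncts at a tree point $r$. When $b$ is a zero or intermediate point of $r$, nothing places $D$ below $A$.

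The hypothesis only gives comparability below $A$, and this lemma is used to \emph{prove} linearity, so you cannot compare the $B_t$. The missing idea is to compare the witnessed formulas instead. Every point $t$ satisfies $r_AR^\nabla t$ (Lemma \ref{lem locality}(i)), so from $t_jR^1_1d_j$ with $d_j\models E_j$, rule (h) gives $r_AR^1_1d_j$, that is, $E_j<^1_1A$. Pick $E_{j_0}$ that is $\leq^1_1$-maximal among the $E_j$. Then $\dmnd{1}{1}E_{j_0}\vdash_\bio\dmnd{1}{1}E_j$ for all $j$, via Mod$_1$ and Axiom 6, or via Axioms 3 and 9 with Mod$_1$ and Mod$_3$. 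By soundness the single anchor $t_{j_0}$ satisfies $\bigwedge_j\dmnd{1}{1}E_j$, and $aR^\dwn t_{j_0}$ gives $a\models{\dwn}C$. The same rule-(h) argument, not $B_t<^1_1A$, is what shows that linearity below $A$ passes to $B_t$ for every anchor $t$. You need this to apply the induction hypothesis.

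Several smaller points also need fixing.
\begin{itemize}
\item When $C$ has limit form, no maximality is needed. After you pigeonhole a single pair $(b',d)$, Lemma \ref{lem S}(iii) yields one $t$ with $tR^1_1d$, so different $n$ are never witnessed by different anchors.
\item The diamond conjuncts of $B$ hold at $a$ by Lemma \ref{lem S}(i), since they occur verbatim in $(B)_n$ and hence hold at $b$; they are not handled ``as in the diamond case''.
\item In the cyclic case, pigeonhole on pairs consisting of a chain point and its step number modulo $m$, not on points alone. Otherwise the repeating segment can be shorter than $m$ and miss some $C'_t$. The matching residue is also what ensures, for $\alpha=2$, that the step closing the cycle is again a ${\dwn}$-step when that is needed.
\item The side conjunct $C^0_B$ does not come from Lemma \ref{lem approx linearity}, which concerns derivations in the opposite direction. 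It holds at the last point of the unfolded chain, where the approximation index reaches $1$. It is transported back to the guard point because that point reaches the end of the chain by an $R^1_{\alpha-1}$ transition ($R^1_1$ when $\alpha=2$), and all conjuncts of $C^0_B$ have rank at most $\alpha-1$ (rules (a), (f)).
\item The approximations are not monotone in the sense $(B)_{n+1}\vdash_\bio(B)_n$, but you do not need this.
\end{itemize}
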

\begin{proof}
We prove by induction on the limit-form formula $B\in\W$ that for every $A\in\W$ such that $\leq^1_1$ is linear below it:
\begin{itemize}
    \item[$(*)$] if some point of $M_A$ satisfies $(B)_n$ for infinitely many $n$, then some point of $M_A$ satisfies $B$.
\end{itemize}
The lemma then follows: by Theorem \ref{thm trees}, $M_A,r_A\models\dmnd{1}{1}(B)_n$ for infinitely many $n$. Since $M_A$ is finite, this means that some point of $M_A$ satisfies $(B)_n$ for infinitely many $n$. Then by $(*)$ some point of $M_A$ satisfies $B$, and by Proposition \ref{prop embed} and Lemma \ref{lem linearity 1} we conclude that $A\vdash_\bio\nabla B$.\\

We now proceed with the proof of $(*)$. We first note that if $\leq^1_1$ is linear below $A$, and $D$ is a formula as in Lemma \ref{lem locality}(i) (so that $r_D$ is an anchor point of $M_A$) then $\leq^1_1$ is also linear below $D$. To prove this, if $C<^1_1D$, then $M_D,r_D\models\dmnd{1}{1}C$ by Theorem \ref{thm trees}, so $M_A,r_D\models\dmnd{1}{1}C$ by Lemma \ref{lem locality}(i) (i.e. $r_AR^\nabla r_DR^1_1c$ for some point $c$ with $c\models C$). Hence $r_AR^1_1c$ by (h), and $C<^1_1A$ by Theorem \ref{thm trees}. So the formulas below $D$ are below $A$, and they are pairwise $\leq^1_1$-comparable.\\

We now fix a formula $A$ such that $\leq^1_1$ is linear below it, a point $a$ of $M_A$ and an infinite set $I\subseteq\mathbb N$ with $M_A,a\models(B)_n$ for all $n\in I$. We distinguish two cases according to the form of $B$.
\begin{itemize}
    \item Case 1: $B$ has a top conjunct of the form ${\dwn}B'$. 
    
    We write $B={\dwn}B_1\wedge\dots\wedge{\dwn}B_k\wedge C_B$ with $k\geq1$, where $C_B$ is the conjunction of the $\Diamond$-conjuncts of $B$ (read as $\top$ if there are none), so that $(B)_n=({\dwn}B_1)_n\wedge\dots\wedge({\dwn}B_k)_n\wedge C_B$ by Definition \ref{def approx}.

    Each $({\dwn}B_j)_n$ is a non-empty conjunction of formulas of the form $\dmnd{1}{n}X_n$. Since $M_A$ is finite and $a\models\dmnd{1}{n}X_n$ for all $n\in I$, there is a point $d$ with $aR^1_nd$ and $d\models X_n$ for infinitely many $n\in I$. Then by Lemma \ref{lem rank bound} we have $aR^1_\omega d$ .

    In particular $a$ has an outgoing $R^1_\omega$ transition, so by Lemma \ref{lem S} there is a point $b$ and a set $T$ of anchor points. By (i), $b\models C_B$.

    We show that $b\models{\dwn}B_j$ for every $j\leq k$, which gives $b\models B$.
    \begin{itemize}
        \item $B_j$ is of limit form, so that $({\dwn}B_j)_n=\dmnd{1}{n}{\dwn}(B_j)_n$. 
        
        By finiteness there are points $d$ and $c$ with $aR^1_ndR^\dwn c$ and $c\models(B_j)_n$ for infinitely many $n\in I$. Then $aR^1_\omega d$, and (iii) gives $t\in T$ with $tR^1_1c$.

        Let $D$ be as in Lemma \ref{lem locality}(i) with $r_D=t$. Then $c\in\mathcal S_D$ and $M_D,c\models(B_j)_n$ for infinitely many $n$, and $\leq^1_1$ is linear below $D$ by the note above. The induction hypothesis for $B_j$ provides $c'\in\mathcal S_D$ with $M_D,c'\models B_j$, hence $M_A,c'\models B_j$.

        Since $bR^\dwn t$ by (ii) and $tR^\nabla c'$, we get $bR^\dwn c'$ by (j), so $b\models{\dwn}B_j$.
        \item $B_j$ is not of limit form, so that $B_j=\dmnd{1}{1}E_1\wedge\dots\wedge\dmnd{1}{1}E_p$ up to $\top$ conjuncts, with $p\geq0$, and $({\dwn}B_j)_n=\dmnd{1}{n}{\dwn}E_1\wedge\dots\wedge\dmnd{1}{n}{\dwn}E_p$, read as $\dmnd{1}{n}\top$ if $p=0$. 
        
        If $p=0$, then $bR^\dwn t$ for any $t\in T$ gives $b\models{\dwn}B_j$. 
        
        Let $p\geq1$. As before, for every $l\leq p$ there are points $d_l$ and $c_l$ with $aR^1_\omega d_lR^\dwn c_l$ and $c_l\models E_l$, and (iii) gives $t_l\in T$ with $t_lR^1_1c_l$.

        Since $r_AR^\nabla t_l$, rule (h) gives $r_AR^1_1c_l$, so $A\vdash_\bio\dmnd{1}{1}E_l$ by Theorem \ref{thm trees}, that is, $E_l<^1_1A$. As $\leq^1_1$ is linear below $A$, the formulas $E_1,\dots,E_p$ are pairwise $\leq^1_1$-comparable. The relation $\leq^1_1$ is transitive, by Proposition \ref{prop trans and compose} together with Axioms 3 and 8, so there is $l_0\leq p$ with $E_l\leq^1_1E_{l_0}$ for all $l\leq p$.

        We claim that $\dmnd{1}{1}E_{l_0}\vdash_\bio\dmnd{1}{1}E_l$ for every $l\leq p$:
        \begin{itemize}
            \item[-] if $E_l<^1_1E_{l_0}$, that is, $E_{l_0}\vdash_\bio\dmnd{1}{1}E_l$, then this follows by Mod$_1$ and Axiom 6;
            \item[-] if $\nabla E_l\dashv\vdash_\bio\nabla E_{l_0}$, then
            \[ \dmnd{1}{1}E_{l_0}\vdash_\bio\dmnd{1}{1}\nabla E_{l_0}\vdash_\bio\dmnd{1}{1}\nabla E_l\vdash_\bio\dmnd{1}{1}E_l \]
            by Axiom 3, Mod$_3$ and Axiom 9, each combined with Mod$_1$.
        \end{itemize}
        Since $t_{l_0}R^1_1c_{l_0}$ and $c_{l_0}\models E_{l_0}$, we have $t_{l_0}\models\dmnd{1}{1}E_{l_0}$, hence $t_{l_0}\models\dmnd{1}{1}E_l$ for every $l\leq p$ by soundness (Propositions \ref{prop Soundness} and \ref{prop MA is a model}), that is, $t_{l_0}\models B_j$. As $bR^\dwn t_{l_0}$ by (ii), we get $b\models{\dwn}B_j$.
    \end{itemize}

    \item Case 2: $B$ has no top conjunct of the form ${\dwn}B'$. 
    
    As in Definition \ref{def approx}, we write $B=C_1\wedge\dots\wedge C_m\wedge C^0_B$, where $C_t=\dmnd{i_t}{\alpha}C'_t$ for $t\leq m$ are the top conjuncts of maximal rank $\alpha\geq2$, and $C^0_B$ is the conjunction of the remaining top conjuncts. 
    
    Note that for a single diamond $B=\dmnd{i}{\alpha}C$ we have $m=1$ and $C^0_B=\top$. In this case we get $(B)_1=C$ rather than $C^0_B$, which does not affect the argument below since $c\models C^0_B$ is then trivial.
    
    For the duration of this proof, for $s\geq0$ we write $B^{(s)}$ for the $s^{th}$ cyclic shift of $B$, and for $s\geq1$ we write $\rho(s)$ for the element of $\{1,\dots,m\}$ congruent to $s$ modulo $m$. We call $s\geq1$ a ${\dwn}$-step if $i_{\rho(s)}=\alpha=2$.
    
    By Definition \ref{def approx}, for all $s,k\geq1$,
    \[ (B^{(s-1)})_{k+1}=\begin{cases}
        \dmnd{i_{\rho(s)}}{\alpha-1}\bigl(C'_{\rho(s)}\wedge(B^{(s)})_k\bigr) & \text{if $s$ is not a ${\dwn}$-step,}\\[2pt]
        \dmnd{1}{1}\bigl(C'_{\rho(s)}\wedge{\dwn}(B^{(s)})_k\bigr) & \text{if $s$ is a ${\dwn}$-step.}
    \end{cases}\]

    We fix $n\in I$ with $n>m\cdot|\mathcal S_A|+1$. Unfolding $a\models(B^{(0)})_n$ along this display, we obtain points $a_0=a$ and $b_s,a_s$ for $1\leq s<n$ such that $b_s\models C'_{\rho(s)}$, $a_s\models(B^{(s)})_{n-s}$, and:
    \begin{itemize}
        \item $a_{s-1}R^{i_{\rho(s)}}_{\alpha-1}b_s$ and $a_s=b_s$, if $s$ is not a ${\dwn}$-step;
        \item $a_{s-1}R^1_1b_s$ and $b_sR^\dwn a_s$, if $s$ is a ${\dwn}$-step.
    \end{itemize}
    In both cases $b_sR^\nabla a_s$ by (c) or (e), and $a_sR^1_{\alpha-1}b_{s+1}$ by (b). Hence
    \begin{equation}\label{eq 411 chain}
        b_sR^1_{\alpha-1}b_{s+1}\quad\text{for all }1\leq s<n-1,
    \end{equation}
    where for $\alpha=2$ we use (h), and for $\alpha\geq3$ there are no ${\dwn}$-steps and $a_s=b_s$ for all $s$.

    By the choice of $n$, two of the $n-1$ pairs $(\rho(s),b_s)$ with $1\leq s<n$ coincide: there are $1\leq s<s'<n$ with $s\equiv s'\pmod m$ and $b_s=b_{s'}$. We define $Q=\{b_{s+1},\dots,b_{s'}\}$. By (\ref{eq 411 chain}) and (f), passing through $b_{s'}=b_s$ where needed, we have $bR^1_{\alpha-1}b'$ for all $b,b'\in Q$.

    For every $t\leq m$ we fix $u_t\in\{s+1,\dots,s'\}$ with $\rho(u_t)=t$, which exists since $s'-s\geq m$. Then $b_{u_t}\in Q$ and $b_{u_t}\models C'_t$.

    We now find a point $c$ with $cR^{i_t}_\alpha b_{u_t}$ for all $t\leq m$, so that $c\models C_1\wedge\dots\wedge C_m$.
    \begin{itemize}
        \item $\alpha\geq3$: let $c$ be the point given by Lemma \ref{lem guards}(1) for $Q$ and $\beta=\alpha-1$. Then $cR^1_\alpha b_{u_t}$ for every $t\leq m$. If $i_t=2$, then $b_{u_t-1}R^2_{\alpha-1}b_{u_t}$ with $b_{u_t-1}\in Q$ (recall that $b_s=b_{s'}$), so $cR^2_\alpha b_{u_t}$.
        \item $\alpha=2$: let $c$ be the common anchor of the points of $Q$ given by Lemma \ref{lem guards}(2). Then $cR^1_2b_{u_t}$ for every $t\leq m$.

        Suppose that $i_t=2$, and let $u=u_t$, so that $u$ is a ${\dwn}$-step. If $u<s'$, then $b_uR^\dwn a_uR^1_1b_{u+1}$ with $b_{u+1}\in Q$. If $u=s'$, then $\rho(s)=\rho(s')=t$, so $s$ is a ${\dwn}$-step as well, and $b_u=b_sR^\dwn a_sR^1_1b_{s+1}$ with $b_{s+1}\in Q$.

        In both cases $b_uR^\dwn dR^1_1b'$ for some point $d$ and some $b'\in Q$. As $b'R^1_1b_u$, rule (f) gives $dR^1_1b_u$. By Lemma \ref{lem down}(ii), $b_u$ is an $R^2$ point, hence $cR^2_2b_u$.
    \end{itemize}
    It remains to show that $c\models C^0_B$. We have $a_{n-1}\models(B^{(n-1)})_1=C^0_B$.
    \begin{itemize}
        \item $\alpha\geq3$: then $a_{n-1}=b_{n-1}$, and $cR^1_\alpha b_{s'}$ together with (\ref{eq 411 chain}) gives $cR^1_{\alpha-1}b_{n-1}$ by (a) and (f). A top conjunct $\dmnd{i}{\gamma}C'$ of $C^0_B$ has $\gamma\leq\alpha-1$, and $b_{n-1}R^i_\gamma d$ with $d\models C'$ gives $cR^i_\gamma d$ by (a) and (f).
        \item $\alpha=2$: by (d), (e) and (g) along the unfolding we have $b_{s'}R^\nabla a_{n-1}$, so $cR^1_2b_{s'}$ gives $cR^1_1a_{n-1}$ by (i). The top conjuncts of $C^0_B$ are of the form $\dmnd{1}{1}C'$, and $a_{n-1}R^1_1d$ with $d\models C'$ gives $cR^1_1d$ by (f).
    \end{itemize}
    Hence $c\models B$, which completes the induction.
\end{itemize}
\end{proof}

\begin{theorem}[Theorem \ref{thm_lin_int}]\label{thm lin}
The relation $\le^1_1$ is a linear, well-founded pre-order on
$\W$. In particular $\mathbf W_{\mathsf{BI}}/{\equiv^1_1}$ is well-ordered by $<^1_1$.
\end{theorem}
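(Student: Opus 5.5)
Transitivity of $\le^1_1$ comes from Proposition~\ref{prop trans and compose} together with the fact that $\equiv^1_1$ is an equivalence relation. For well-foundedness I will not argue syntactically. By Proposition~\ref{prop equiv} and arithmetical soundness (Proposition~\ref{prop ari sound}), $A<^1_1B$ gives $B^*\vdash \Pi^1_1\textsf{-RFN}(A^*)$. All theories $A^*$ with $A\in\W$ are $\Pi^1_1$-sound fragments of $\mathsf{BI}$, so by \cite[Theorem 3.2]{pakhomov2021reflection} there is no infinite $<^1_1$-descending sequence. The remaining and main task is linearity. For it I follow the minimal-counterexample scheme of Theorem~\ref{abstract_linearity}, but replace condition~\ref{abstract_linearity_1} by the finite-model machinery of Section~\ref{section fin models}.

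Suppose linearity fails. By well-foundedness, choose $A\in\W$ that is $<^1_1$-minimal among formulas incomparable with some other formula. Then choose $B$ incomparable with $A$ and $<^1_1$-minimal among such. By minimality of $A$, the relation $\le^1_1$ is linear below $A$. It is also linear below $B$: if $C,C'<^1_1B$ were incomparable, then $C$ would be a counterexample, and the choice of $A$ forces $A\le^1_1 C$ or comparability issues that contradict $A\not<^1_1B$. Exactly as in Theorem~\ref{abstract_linearity}, I then show that $\{C: C<^1_1A\}=\{C:C<^1_1B\}$. Take $C<^1_1A$. By minimality, $C$ is comparable with $B$. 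The case $B\le^1_1 C$ gives $B<^1_1A$, using transitivity and Proposition~\ref{prop trans and compose}, which is excluded; so $C<^1_1B$. The converse direction is symmetric.

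By Lemma~\ref{lem fundamental}, neither $A$ nor $B$ is fundamental. I split by the form in Definition~\ref{def form type}, and the goal is to show $A\vdash\nabla B$ and $B\vdash\nabla A$, hence $A\equiv^1_1B$ by Mod$_3$ and Axiom~7. This contradicts the choice of $A,B$.

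\emph{Successor form.} Let $B=\bigwedge_l\dmnd{1}{1}E_l$. Each $E_l<^1_1B$, so $E_l<^1_1A$, and hence $A\vdash\dmnd{1}{1}E_l$ for every $l$. By R$\wedge$ we get $A\vdash B$, so $A\vdash\nabla B$.

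\emph{Limit form.} By Lemma~\ref{lem approx linearity}, $(B)_n<^1_1B$ for all $n$, so $(B)_n<^1_1A$, i.e.\ $A\vdash\dmnd{1}{1}(B)_n$ for all $n$. Since $\le^1_1$ is linear below $A$, Lemma~\ref{lem 12 tree approx 1} applies and gives $A\vdash\nabla B$.

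The symmetric argument, using linearity below $B$, gives $B\vdash\nabla A$.

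The main obstacle is the verification that linearity holds below both $A$ and $B$; the heavy lifting (Lemma~\ref{lem 12 tree approx 1}) is already done. The cleanest way is to take $A$ minimal with some incomparable partner and $B$ minimal among the partners of $A$. Then every $C<^1_1A$ is comparable with everything. For $C,C'<^1_1B$, minimality of $B$ makes each comparable with $A$, and the set equality argument shows each lies below $A$. Thus both lie in the linear region below $A$.

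Once linearity is established, the quotient $\W/{\equiv^1_1}$ is linearly ordered by $<^1_1$. Here I use that $<^1_1$ is irreflexive on classes: $A<^1_1A$ would contradict well-foundedness. Combined with well-foundedness, this makes the quotient a well-order.
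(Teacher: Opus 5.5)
Your proposal is correct and follows the paper's proof essentially step for step: well-foundedness via arithmetical soundness and \cite[Theorem 3.2]{pakhomov2021reflection}, the two-stage minimal choice of $A$ and $B$, the equality $\{C: C<^1_1A\}=\{C: C<^1_1B\}$, the exclusion of fundamental formulas via Lemma~\ref{lem fundamental}, and the successor/limit case split using Lemmas~\ref{lem approx linearity} and~\ref{lem 12 tree approx 1} to get $A\vdash\nabla B$ and, symmetrically, $B\vdash\nabla A$. Your first justification of linearity below $B$ is muddled, since it asserts something like $A\le^1_1 C$ without linearity being available; the argument in your final paragraph is the correct one and is exactly the paper's: $C,C'<^1_1B$ gives $C,C'<^1_1A$ by the set equality, and that equality does not rely on linearity below $B$.
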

\begin{proof}
We first argue that $<^1_1$ is well-founded on $\mathbf W_{\mathsf{BI}}$: by the arithmetical soundness of the calculus (Proposition \ref{prop ari sound} for $\vdash_{\mathsf{BI_\omega}}$ via the conservativity Proposition \ref{prop equiv}), $A<^1_1B$ implies $B^*\vdash\Pi^1_1\textsf{-RFN}(A^*)$. All the theories $A^*$ for $A\in\mathbf W_{\mathsf{BI}}$ are true, hence $\Pi^1_1$-sound, and by \cite[Theorem 3.2]{pakhomov2021reflection} the $\Pi^1_1$-sound extensions of $\mathsf{ACA}_0$ are well-founded with respect to the ordering by $\Pi^1_1\text{-}\mathsf{RFN}$. 
    
Suppose $\le^1_1$ is not linear. Using well-foundedness, we first choose $A$ to be $<^1_1$-minimal among the formulas that are $\leq^1_1$-incomparable with some formula, and then choose $B$ to be $<^1_1$-minimal among the formulas $\leq^1_1$-incomparable with $A$. For every formula $C$ we then have \begin{equation}\label{eq min comparable}
    C<^1_1 A\leftrightarrow C<^1_1 B.
\end{equation} 
Indeed, if $C<^1_1 A$, then by the minimality of $A$ the formula $C$ is $\leq^1_1$-comparable with every formula, in particular with $B$; the options $B<^1_1 C$ and $\nabla B\dashv\vdash\nabla C$ both yield $B<^1_1 A$ by Proposition \ref{prop trans and compose}, contradicting the incomparability of $A$ and $B$, so $C<^1_1 B$. Conversely, if $C<^1_1 B$, then by the minimality of $B$ among the formulas incomparable with $A$, the formula $C$ is $\leq^1_1$-comparable with $A$, and as before the options $A<^1_1 C$ and $\nabla A\dashv\vdash\nabla C$ are impossible, so $C<^1_1 A$.

We also note that any two formulas $C,C'\in\W$ with $C<^1_1A$ and $C'<^1_1A$ are $\leq^1_1$-comparable, by the minimality of $A$; by (\ref{eq min comparable}), the same holds with $B$ in place of $A$. Hence both $A$ and $B$ satisfy the hypothesis of Lemma \ref{lem 12 tree approx 1}.

We are now going to show that $A\vdash_\bio \nabla B$. By Lemma \ref{lem fundamental} we know that $A,B$ cannot be fundamental formulas. As such, we distinguish the following two cases:

1. $B$ is of limit form. By Lemma \ref{lem approx linearity} we know that $B>^1_1(B)_n$ for all $n\geq 1$. By substituting into (\ref{eq min comparable}) we have that $A>^1_1 (B)_n$ for all $n\geq 1$. Then, by simply using Lemma \ref{lem 12 tree approx 1} we get that $A\vdash_\bio \nabla B$.

2. $B$ is of successor form. Then we can write $B=\dmnd{1}{1}B_1\wedge\dots\wedge \dmnd{1}{1}B_k$. Since trivially $B>^1_1 B_i$, we have by (\ref{eq min comparable}) that $A>^1_1B_i$ for all $i\leq k$. Thus, $A\vdash_\bio \dmnd{1}{1}B_i$ for all $i\leq k$. By $k-1$ applications of R$\wedge$ on these we get $A\vdash_\bio B$, from which we immediately get $A\vdash_\bio \nabla B$.

By symmetry we also get that $B\vdash \nabla A$, from which we conclude that $A\equiv^1_1 B$, contradicting the incomparability assumption.
\end{proof}

\section{Reduction Properties}\label{section reduction properties}
In this section we prove the reduction properties stated in the introduction. Lemmas \ref{lem 1 red 1} and \ref{lem 2 red 2} are essentially known results. They are the second-order analogues of Beklemishev's reduction property for first-order arithmetic \cite[Section~4]{beklemishev2003proof} (for a $\Pi_{n+1}$-axiomatized theory $U$, the theory $U+\Pi_n\textsf{-RFN}(T)$ is $\Pi_n$-conservative over $U$ extended by the relativized reflection rule $\varphi/\Pi_{n-1}\textsf{-RFN}(T+\varphi)$), and in the present form they are \cite[Lemma~2.11]{pakhomov2023reducing} (see also \cite{pakhomov2023corrigendum}). There the theory $T$ is assumed to be $\Pi^1_n$-axiomatizable, but the same proof---an application of the conservation result \cite[Lemma~2.1]{pakhomov2023corrigendum}, exactly as in the proof of Lemma~\ref{Diam22_alt_red} below---gives the statements for $\Pi^1_{n+1}$-axiomatizable $T$ (in the case of Lemma \ref{lem 2 red 2} necessarily $n\ge 2$, as the modality $\dmnd{2}{1}$ is not part of our language). The proof in \cite{pakhomov2023reducing} is model-theoretic and is based on the ideas of Avigad \cite{avigad2002saturated}, whereas Beklemishev's proof is purely syntactic: a cut-free derivation, in a Tait-style sequent calculus, of a $\Pi_n$-sentence from the axioms of $U$ and instances of $\Pi_n\textsf{-RFN}(T)$ is transformed into a derivation from the axioms of $U$ using the reflection rule. As noted in \cite{pakhomov2023reducing}, each of the two methods can be used to prove either reduction property; in particular, Lemmas \ref{lem 1 red 1} and \ref{lem 2 red 2} could as well be established by the syntactic method of \cite{beklemishev2003proof}. The reduction properties involving the operators $\dwn$ and $\dmnd{2}{2}$, Lemmas \ref{lem 3 red 3}--\ref{lem 5 red 5}, are new, and proving them is the main content of this section.

Throughout the section we will be relying on the terminology from \cite{pakhomov2023reducing}. In some situations, it will be more convenient to use the class-sized language $\mathbf{L}_2$. This is the language of second-order arithmetic extended by constants $C_X$ for all the individual sets $X$. The theories with which we work within second-order arithmetic in this approach are extensions of $\mathsf{ACA}_0$ that have an at most countable number of axioms and are represented as sets of natural numbers coding an at most countable sequence of axioms. The arithmetically definable relation $T_1\sqsubseteq T_2$ is the relation asserting that all $T_1$-axioms are provable in $T_2$.

Recall from \cite{pakhomov2023reducing} that for classes $\mathbf{\Pi}^1_n$ of $\mathbf{L}_2$ formulas there are partial truth definitions. Following \cite{pakhomov2023reducing} we define
\[\mathbf{\Pi}^1_n\textsf{-RFN}(T):\;\;\;\forall \varphi\in \mathbf{\Pi}^1_n ((T\vdash \varphi)\to \mathsf{Tr}_{\mathbf{\Pi}^1_n}(\varphi))\]
\[\mathbf{\Pi}^1_n\textsf{-$\omega$RFN}(T):\;\;\;\forall \varphi\in \mathbf{\Pi}^1_n (\forall M(\varphi\dot \in M \land T\dot \subseteq M\land M\models T \to M\models \varphi)\to \mathsf{Tr}_{\mathbf{\Pi}^1_n}(\varphi)).\]
Here $\varphi\dot\in M$ and $T\dot\subseteq M$ mean that all the set constants occurring in $\varphi$, respectively in $T$, are among the sets of $M$. Also we define
\[\dwn T=\{\varphi\in L(T)\mid T\vdash \forall M(M\models \mathsf{ACA}_0\land \varphi\dot\in M\to M\models \varphi)\},\]
where $L(T)$ is the fragment of $\mathbf{L}_2$ using only the set constants occurring in $T$.

We note two facts about these definitions. In $\mathsf{ACA}_0$, for c.e. theories $T$ without set constants each $\mathbf{\Pi}^1_n\textsf{-RFN}(T)$ is equivalent to $\Pi^1_n\textsf{-RFN}(T)$ and each $\mathbf{\Pi}^1_n\textsf{-$\omega$RFN}(T)$ is equivalent to $\Pi^1_n\textsf{-$\omega$RFN}(T)$. And secondly, provably in $\mathsf{ACA}_0$, if $T_1\sqsubseteq T_2$, then for each $n$
\begin{enumerate}
    \item $\mathbf{\Pi}^1_n\textsf{-RFN}(T_2)\to \mathbf{\Pi}^1_n\textsf{-RFN}(T_1)$,
    \item $\mathbf{\Pi}^1_n\textsf{-$\omega$RFN}(T_2)\to \mathbf{\Pi}^1_n\textsf{-$\omega$RFN}(T_1)$,
    \item $\dwn T_1 \sqsubseteq \dwn T_2$.
\end{enumerate}
Implications 1. and 2. are obvious. 

To verify 3., the key observation is that, although by definition $\dwn T$ is a theory in the same countable fragment of $\mathbf{L}_2$ as $T$, up to deductive equivalence the theory does not change if the definition is made over a larger fragment. Namely, to prove $\dwn T_1\sqsubseteq \dwn T_2$ it is sufficient to prove that \begin{equation}\label{downarrow_agnostic}\dwn T \equiv \{\varphi\in H\mid T\vdash \forall M(M\models \mathsf{ACA}_0\land \varphi\dot\in M\to M\models \varphi)\}\text{, for any countable fragment $H\supseteq L(T)$ of $\mathbf{L}_2$,}\end{equation}
and apply it for both $T_1,T_2$ and a countable fragment of $\mathbf{L}_2$ that extends both $L(T_1)$ and $L(T_2)$. 

To prove (\ref{downarrow_agnostic}), given any $\varphi\in H$ such that $T\vdash \forall M(M\models \mathsf{ACA}_0\land \varphi\dot\in M\to M\models \varphi)$ we show that $\varphi$ is provable in $\dwn T$. We put $\varphi$ in the form $\psi(\vec{P})$, where $\psi(\vec{X})\in L(T)$ and $\vec{P}$ are constants not used in $T$. Then we observe that $T\vdash \forall \vec{X}\forall M (M\models \mathsf{ACA}_0\land \psi\dot\in M \land \vec{X}\dot\in M\to M\models \psi(\vec{X}))$. Hence $T\vdash \forall M (M\models \mathsf{ACA}_0\land \psi\dot \in M \to M\models \forall \vec{X} \psi(\vec{X}))$. Thus $\forall \vec{X} \psi(\vec{X})$ is in $\dwn T$ and therefore $\varphi$ is provable in $\dwn T$.

For countably coded $\mathbf{L}_2$ extensions $T$ of $\mathsf{ACA}_0$ we will freely write $\dmnd{1}{n} T$ as a shorthand for $\mathbf{\Pi}^1_n\textsf{-RFN}(T)$ and $\dmnd{2}{n} T$ as a shorthand for $\mathbf{\Pi}^1_n\textsf{-$\omega$RFN}(T)$.

\begin{lemma}\label{Diam22_alt}
Over $\mathsf{ACA}_0$, for any finitely axiomatizable theory $U$ extending $\mathsf{ACA}_0$, $\dmnd{2}{2} U$ is equivalent to the scheme
\begin{equation}\label{diamond^2_2_alt}\forall \vec{x},\vec{X}\big(\varphi(\vec{x},\vec{X})\to \dmnd{1}{1}(U+\dwn(\mathsf{ACA}_0+\varphi(\dot{\vec{x}},\dot{\vec{X}})))\big)\text{, where }\varphi\text{ is }\Pi^1_1.\end{equation}

\end{lemma}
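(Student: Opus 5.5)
The plan is to prove the two directions separately over $\mathsf{ACA}_0$. Throughout, $\dmnd{2}{2}U$ is replaced by its consistency form $\Sigma^1_2\textsf{-}\omega\textsf{Con}(U)$ from Proposition \ref{prop con form}, in the $\mathbf{L}_2$-version with set constants. Likewise $\dmnd{1}{1}V$ is replaced by $\Sigma^1_1\textsf{-Con}(V)$. The parameters $\vec x,\vec X$ of $\varphi$ are treated as constants of $\mathbf{L}_2$. Their placement inside $\dwn$ is harmless by (\ref{downarrow_agnostic}).

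\emph{From $\dmnd{2}{2}U$ to the scheme.} Fix $\vec x,\vec X$ with $\varphi(\vec x,\vec X)$ true, and put $V=U+\dwn(\mathsf{ACA}_0+\varphi(\vec x,\vec X))$. Take a true $\Sigma^1_1$-sentence $\theta(\vec Y)$; we must show $\mathsf{Con}(V+\theta)$. The conjunction $\varphi(\vec x,\vec X)\wedge\theta(\vec Y)$ is a true $\Sigma^1_2$-sentence. So $\Sigma^1_2\textsf{-}\omega\textsf{Con}(U)$ gives an $\omega$-model $M\models U+\varphi+\theta$ containing $\vec X,\vec Y$.
\begin{itemize}
\item The remaining claim is that $M\models\chi$ for every $\chi\in\dwn(\mathsf{ACA}_0+\varphi)$. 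By definition $\mathsf{ACA}_0+\varphi\vdash\forall M'(M'\models\mathsf{ACA}_0\wedge\chi\dot\in M'\to M'\models\chi)$. That conclusion is $\Pi^1_1$, so the implication ``$\varphi\to$ conclusion'' is $\Pi^1_2$.
\item Since $U\supseteq\mathsf{ACA}_0$, our hypothesis yields $\Pi^1_2\textsf{-}\omega\textsf{RFN}(\mathsf{ACA}_0)$, and hence $\Pi^1_2\textsf{-RFN}(\mathsf{ACA}_0)$: $\omega$-models of $\mathsf{ACA}_0$ satisfy its theorems. So this implication is true.
\item As $\varphi$ is true and $M$ is an $\omega$-model of $\mathsf{ACA}_0$ containing the relevant constants, we get $M\models\chi$.
\end{itemize}
Thus $M\models V+\theta$, and existence of an $\omega$-model yields consistency inside $\mathsf{ACA}_0$ via the truth predicate of $M$.

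\emph{From the scheme to $\dmnd{2}{2}U$.} Let $\exists Z\,\psi(Z,\vec P)$ be true with $\psi\in\Pi^1_1$. Fix a witness $Z$ and apply the scheme to $\varphi:=\psi(Z,\vec P)$, obtaining $\Sigma^1_1\textsf{-Con}(V)$ for $V=U+\dwn(\mathsf{ACA}_0+\psi(Z,\vec P))$. First, $\psi(Z,\vec P)\in\dwn(\mathsf{ACA}_0+\psi(Z,\vec P))$. This holds because downward absoluteness of $\Pi^1_1$ formulas to $\omega$-models of $\mathsf{ACA}_0$ containing the parameters is provable in $\mathsf{ACA}_0$. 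Hence $V\vdash U+\psi(Z,\vec P)$.

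It remains to turn $\Sigma^1_1$-consistency of $V$ into an $\omega$-model of $U+\psi(Z,\vec P)$ containing $Z,\vec P$. Argue by contradiction: assume there is no such model. Since $U$ is finitely axiomatized, this assumption is a true $\Pi^1_1$-sentence $\neg\exists M\,\rho(M,Z,\vec P)$ with $\rho$ arithmetical. The aim is to derive its negation inside $V$ plus a suitable true $\Sigma^1_1$-sentence, contradicting $\Sigma^1_1\textsf{-Con}(V)$.

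The tool is that $\dwn$ contributes every statement that holds in all $\omega$-models of $\mathsf{ACA}_0$. The first thing to try is to put into $\dwn(\mathsf{ACA}_0+\psi)$ an $\omega$-completeness / Henkin-style statement: in any $\omega$-model of $\mathsf{ACA}_0$ containing $Z,\vec P$, if the finitely axiomatized $U+\psi(Z,\vec P)$ is consistent with the true $\Sigma^1_1$ facts about $Z,\vec P$, then a coded $\omega$-model of it exists. If this does not go through directly, fall back on the conservation result \cite[Lemma~2.1]{pakhomov2023corrigendum}, as in the other reduction proofs.

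I expect this last step — extracting a genuine $\omega$-model from mere $\Pi^1_1$-reflection over $U+\dwn(\cdots)$ — to be the main obstacle. It is the only place where $\dwn$ and the finite axiomatizability of $U$ are used essentially.
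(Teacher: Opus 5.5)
Your first direction is sound. It differs from the paper only in one respect. The paper verifies the axioms of $\dwn(\mathsf{ACA}_0+\varphi)$ in the $\omega$-model $M_1$ by placing $M_1$ inside a second $\omega$-model $M_2$ of $U+\varphi$. You instead let the universe itself play the role of $M_2$, via $\Pi^1_2\textsf{-RFN}(\mathsf{ACA}_0)$ extracted from $\dmnd{2}{2}U$. Both work.

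The converse direction, however, is not proved, and the route you sketch cannot be completed. The Henkin-style principle you want to put into $\dwn(\mathsf{ACA}_0+\psi)$ does not hold in all $\omega$-models of $\mathsf{ACA}_0$, so it is not a member of that theory. For instance, take $U=\mathsf{ACA}_0$ and trivial $\psi$, and consider the $\omega$-model of arithmetical sets. It believes that $\mathsf{ACA}_0$ is consistent with every $\Sigma^1_1$ sentence true in it (it is itself a model of each), yet it contains no coded $\omega$-model of $\mathsf{ACA}_0$.

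Your stated aim is also misdirected. Suppose $V+\sigma$ did prove the false $\Sigma^1_1$ sentence $\exists M\,\rho(M,Z,\vec P)$. This would not contradict $\Sigma^1_1\textsf{-Con}(V)$, which only lets you adjoin \emph{true $\Sigma^1_1$} sentences to $V$, whereas $\neg\exists M\,\rho$ is $\Pi^1_1$. $\Pi^1_1$-soundness of $V$ says nothing about its $\Sigma^1_1$ theorems. The conservation result you mention as a fallback plays no role in this equivalence.

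The missing idea is to feed the contradiction hypothesis itself into the scheme, so that it lands inside $\dwn$.
Suppose no $\omega$-model of $U$ containing $Z,\vec P$ satisfies $\psi(Z,\vec P)$. Let $\theta$ be the conjunction of $\psi(Z,\vec P)$ with the $\Pi^1_1$ sentence expressing exactly this. Then $\theta$ is a true $\Pi^1_1$ sentence, so the scheme yields $\dmnd{1}{1}(U+\dwn(\mathsf{ACA}_0+\theta))$, and in particular the consistency of $U+\dwn(\mathsf{ACA}_0+\theta)$.

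But $\dwn(\mathsf{ACA}_0+\theta)$ contains $\psi(Z,\vec P)$, by downward absoluteness as you noted. It also contains $U\to\neg\psi(Z,\vec P)$, because $\mathsf{ACA}_0+\theta$ proves that this implication holds in every $\omega$-model of $\mathsf{ACA}_0$ containing $Z,\vec P$. This is precisely where finite axiomatizability is needed: $U$ must be a single sentence. Hence $U+\dwn(\mathsf{ACA}_0+\theta)$ is inconsistent, a contradiction. No $\omega$-model ever has to be extracted from $\Pi^1_1$-reflection; this is how the paper argues.
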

\begin{proof}
Assuming $\dmnd{2}{2} U$, we prove (\ref{diamond^2_2_alt}) as follows. We consider a true $\mathbf{\Pi}^1_1$-sentence $\varphi$ and a $\mathbf{\Pi}^1_1$-sentence $\psi$ such that $U+\dwn(\mathsf{ACA}_0+\varphi)$ proves $\psi$, and claim that $\psi$ is true. Assume $\psi$ is $\forall X\;\psi'(X)$, where $\psi'$ has no second-order quantifiers. We fix some set $X$ and claim $\psi'(X)$. We use $\dmnd{2}{2} U$ first to construct an $\omega$-model $M_1\models U+\varphi$ such that all the parameters of $\psi$ and $X$ are in $M_1$ and next an $\omega$-model $M_2\models U+\varphi$ such that $M_1\dot\in M_2$. Since $M_2\models\mathsf{ACA}_0+\varphi$ and $M_1\dot\in M_2$, we conclude that  $M_1$ is a model of $\dwn (\mathsf{ACA}_0+\varphi)$ and hence of $U+ \dwn (\mathsf{ACA}_0+\varphi)$. Thus $M_1\models \psi'(X)$. Therefore $\psi'(X)$. This finishes the proof that $\dmnd{2}{2} U$ implies (\ref{diamond^2_2_alt}).

Now we assume (\ref{diamond^2_2_alt}) to prove $\dmnd{2}{2} U$. Suppose for a contradiction that $\dmnd{2}{2} U$ fails. Then there is a false $\mathbf{\Pi}^1_2$-sentence $\varphi$ that holds in all $\omega$-models of $U$ containing all parameters of $\varphi$. We can consider $\varphi$ to be of the form $\forall X \lnot \psi(X)$, where $\psi$ is $\mathbf{\Pi}^1_1$. Thus for a suitable set $A$, $\psi(A)$ is a true $\mathbf{\Pi}^1_1$-sentence that is false in all $\omega$-models of $U$ containing all the parameters of $\psi(A)$ (including $A$). Let $\theta$ be the true $\mathbf{\Pi}^1_1$ sentence that is the conjunction of $\psi(A)$ and the assertion that $\psi(A)$ fails in all $\omega$-models of $U$. Then $\dwn (\mathsf{ACA}_0+ \theta)$ on one hand implies $\psi(A)$ and on the other hand implies $U\to \lnot \psi(A)$.  That is, $U+ \dwn (\mathsf{ACA}_0+ \theta)$ is inconsistent. Given that $\theta$ was true and (\ref{diamond^2_2_alt}) we clearly get a contradiction, hence concluding our proof of $\dmnd{2}{2} U$.
\end{proof}

\begin{lemma}[Lemma \ref{lem 5 red 5}]\label{Diam22_alt_red}
Given a finitely axiomatizable theory $U$ and a $\Pi^1_2$-axiomatizable theory $T$, \begin{equation}\label{lem_red_5_eq}T+ \dmnd{2}{2} U \equiv_{\Pi^1_1} T+\left\{\frac{\varphi\in \Pi^1_1}{\dmnd{1}{1}(U+\dwn(\mathsf{ACA}_0+\varphi))}\right\}.\end{equation}
\end{lemma}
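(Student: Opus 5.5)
The plan is to reduce the statement to Lemma \ref{Diam22_alt} together with the general conservation result \cite[Lemma~2.1]{pakhomov2023corrigendum}. That is also how the paper indicates Lemmas \ref{lem 1 red 1} and \ref{lem 2 red 2} are obtained for $\Pi^1_{n+1}$-axiomatizable $T$.

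\textbf{Right-to-left inclusion.} This direction is immediate. It suffices to show that the rule is admissible in $T+\dmnd{2}{2}U$. Suppose $T+\dmnd{2}{2}U\vdash\varphi$ with $\varphi\in\Pi^1_1$. By Lemma \ref{Diam22_alt}, $\dmnd{2}{2}U$ proves the instance
\[
\varphi\to\dmnd{1}{1}(U+\dwn(\mathsf{ACA}_0+\varphi))
\]
of scheme (\ref{diamond^2_2_alt}), with no free variables. Hence $T+\dmnd{2}{2}U$ proves the conclusion of the rule. Every theorem of the rule-extended theory is therefore a theorem of $T+\dmnd{2}{2}U$, by induction on the number of rule applications.

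\textbf{Left-to-right inclusion.} Here we must show that every $\Pi^1_1$ consequence of $T+\dmnd{2}{2}U$ is derivable in $T$ plus the rule. By Lemma \ref{Diam22_alt}, $T+\dmnd{2}{2}U$ is equivalent to $T$ plus the scheme (\ref{diamond^2_2_alt}). Each instance of that scheme has the shape $\forall\vec x,\vec X(\varphi(\vec x,\vec X)\to\Theta(\vec x,\vec X))$, where $\varphi\in\Pi^1_1$ and $\Theta$ is $\dmnd{1}{1}$ of some theory, hence $\Pi^1_1$ (a $\Pi^1_1$-reflection statement). So the scheme consists of $\Pi^1_2$ sentences, each of the form ``$\Pi^1_1$ hypothesis implies $\Pi^1_1$ conclusion'', with parameters universally quantified.

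The cited conservation lemma states the following. For a $\Pi^1_2$-axiomatized base $T$ and a family of such parametrized $\Pi^1_1\to\Pi^1_1$ axioms, adding the axioms is $\Pi^1_1$-conservative over adding the corresponding rules
\[
\frac{\forall\vec x,\vec X\,\varphi(\vec x,\vec X)}{\forall\vec x,\vec X\,\Theta(\vec x,\vec X)}.
\]
I would prove it model-theoretically, following Avigad and \cite{pakhomov2023reducing}. Suppose $\psi\in\Pi^1_1$ is not derivable with the rules. Take the rule-closure theory $S$, a model of $S+\neg\psi$, and build a saturated/$\omega$-extension chain in which every true instance $\varphi(\vec a,\vec A)$ is forced to be derivable. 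This makes $\Theta(\vec a,\vec A)$ hold, so the axioms hold in the limit. The $\Pi^1_2$ form of $T$ guarantees that $T$ persists in the limit.

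\textbf{Removing the parameters.} It remains to match the parametrized rule with the parameter-free rule in the statement, using the $\mathbf{L}_2$ machinery. If $\forall\vec x,\vec X\,\varphi$ has been derived, then its instance for fresh constants is derived, and $\dwn(\mathsf{ACA}_0+\varphi(\vec a,\vec A))$ is contained in $\dwn(\mathsf{ACA}_0+\forall\vec x,\vec X\,\varphi)$ by monotonicity fact 3 (via (\ref{downarrow_agnostic})). Then reflection fact 1 gives
\[
\dmnd{1}{1}(U+\dwn(\mathsf{ACA}_0+\forall\vec x,\vec X\,\varphi))\to\forall\vec x,\vec X\,\dmnd{1}{1}(U+\dwn(\mathsf{ACA}_0+\varphi(\dot{\vec x},\dot{\vec X}))).
\]
Thus one application of the original rule yields the parametrized conclusion.

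\textbf{Main obstacle.} The hardest step is verifying that the conservation lemma applies in this shape. In particular, $\Theta$ depends on $\varphi$ through the $\dwn$-operator with set parameters. This is where the finite axiomatizability of $U$ and the $\mathbf{L}_2$ treatment of constants are needed, so that $\Theta$ is genuinely $\Pi^1_1$ uniformly in the parameters.
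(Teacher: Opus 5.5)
Your right-to-left inclusion and your parameter-removal step are correct. The left-to-right inclusion, however, rests on a misstatement of \cite[Lemma~2.1]{pakhomov2023corrigendum}, and this hides the one step of the proof that requires real work.

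The conservation lemma does not give $\Pi^1_1$-conservativity over the bare rules $\forall\vec x,\vec X\,\varphi/\forall\vec x,\vec X\,\Theta$. It gives it over the rules with an arbitrary $\Pi^1_1$ side formula $\theta(\vec x,\vec X,\vec y,\vec Y)$: from $\forall\vec x,\vec X,\vec y,\vec Y\,(\theta\lor\varphi)$ infer $\forall\vec x,\vec X,\vec y,\vec Y\,(\theta\lor\dmnd{1}{1}(U+\dwn(\mathsf{ACA}_0+\varphi(\dot{\vec x},\dot{\vec X}))))$. The side-formula-free version you state is false in general. Suppose $\varphi,\pi\in\Pi^1_1$ satisfy $T\vdash\varphi\lor\pi$ while $T\nvdash\varphi$ and $T\nvdash\pi$; such sentences exist, e.g.\ $\Pi^0_1$ sentences from a Rosser-type construction. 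Then $T+(\varphi\to\pi)$ proves $\pi$, whereas the rule $\varphi/\pi$ never applies. Your model-theoretic sketch has the same hole. The facts about $\vec a,\vec A$ that hold in the model you start from enter the compactness argument as side formulas, so making $\Theta(\vec a,\vec A)$ true needs the side-formula rule, not the bare one.

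What is missing is a proof that $T$ plus the rule of the statement is closed under the side-formula rule. Apply the plain rule once to the $\Pi^1_1$ sentence $\forall\vec x,\vec X,\vec y,\vec Y(\theta\lor\varphi)$. You must then show in $\mathsf{ACA}_0$ that $\dmnd{1}{1}(U+\dwn(\mathsf{ACA}_0+\forall\vec x,\vec X,\vec y,\vec Y(\theta\lor\varphi)))$ together with $\lnot\theta(\vec x,\vec X,\vec y,\vec Y)$ implies $\dmnd{1}{1}(U+\dwn(\mathsf{ACA}_0+\varphi(\dot{\vec x},\dot{\vec X})))$. This goes in three steps. First absorb the true $\mathbf{\Sigma}^1_1$ sentence $\lnot\theta(\dots)$ into $\dmnd{1}{1}$. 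Then instantiate inside $\dwn$ by monotonicity. Finally use the inclusion
\[
\dwn(\mathsf{ACA}_0+\psi)\sqsubseteq\dwn(\mathsf{ACA}_0+(\chi\lor\psi))+\lnot\chi\qquad(\chi,\psi\in\mathbf{\Pi}^1_1),
\]
which you never formulate. Its proof uses that $\mathsf{ACA}_0+\chi$ proves $\chi$ in every $\omega$-model of $\mathsf{ACA}_0$ containing its constants, so the $\chi$-branch makes $\lnot\chi\to\eta$ hold there. It also uses the fragment-independence \eqref{downarrow_agnostic} of $\dwn$.

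Your parameter-removal argument is just the case $\theta=\bot$ of this. The obstacle you single out, that $\Theta$ is $\Pi^1_1$ uniformly in the parameters, is routine. The real difficulty is this interaction of the side formula with $\dwn$, which is precisely why the rule is relativized to $\varphi$.
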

\begin{proof} 
Given the characterization of $\dmnd{2}{2} U$ from Lemma \ref{Diam22_alt} and \cite[Lemma~2.1]{pakhomov2023corrigendum} it is sufficient to prove that the theory
\[T+\left\{\frac{\varphi\in \Pi^1_1}{\dmnd{1}{1}(U+\dwn(\mathsf{ACA}_0+\varphi))}\right\}\]
is closed under the rule
\[\frac{\forall \vec{x},\vec{X},\vec{y},\vec{Y}\;(\theta(\vec{x},\vec{X},\vec{y},\vec{Y})\lor \varphi(\vec{x},\vec{X}))}{\forall \vec{x},\vec{X},\vec{y},\vec{Y}(\theta(\vec{x},\vec{X},\vec{y},\vec{Y})\lor\dmnd{1}{1}(U+\dwn(\mathsf{ACA}_0+\varphi(\dot{\vec{x}},\dot{\vec{X}}))))}\text{, where $\theta,\varphi\in \Pi^1_1$}.\]
For this the following claim is sufficient: Given some $\Pi^1_1$ formulas $\theta(\vec{x},\vec{X},\vec{y},\vec{Y})$ and $\varphi(\vec{x},\vec{X})$ we can prove in $\mathsf{ACA}_0$ that the sentence
\begin{equation} \label{lem 5 red 5 rule premise} \dmnd{1}{1}(U+\dwn(\mathsf{ACA}_0+\forall \vec{x},\vec{X},\vec{y},\vec{Y}\;(\theta(\vec{x},\vec{X},\vec{y},\vec{Y})\lor \varphi(\vec{x},\vec{X}))))\end{equation}
implies the sentence
\begin{equation}\forall \vec{x},\vec{X},\vec{y},\vec{Y}(\theta(\vec{x},\vec{X},\vec{y},\vec{Y})\lor\dmnd{1}{1}(U+\dwn(\mathsf{ACA}_0+\varphi(\dot{\vec{x}},\dot{\vec{X}})))).\end{equation}

To prove the claim from the paragraph above we reason in $\mathsf{ACA}_0$+(\ref{lem 5 red 5 rule premise}) and consider some $ \vec{x},\vec{X},\vec{y},\vec{Y}$ for which $\lnot \theta(\vec{x},\vec{X},\vec{y},\vec{Y})$ holds, and show that $\dmnd{1}{1}(U+\dwn(\mathsf{ACA}_0+\varphi(\dot{\vec{x}},\dot{\vec{X}})))$; this is sufficient to establish the claim and hence to finish the proof of the lemma. Since we can absorb true $\mathbf{\Sigma}^1_1$ sentences into $\dmnd{1}{1}$, we get from $\lnot \theta(\vec{x},\vec{X},\vec{y},\vec{Y})$ and (\ref{lem 5 red 5 rule premise}) that
\[\dmnd{1}{1}(U+\lnot \theta(\dot{\vec{x}},\dot{\vec{X}},\dot{\vec{y}},\dot{\vec{Y}}) +\dwn(\mathsf{ACA}_0+\forall \vec{x},\vec{X},\vec{y},\vec{Y}\;(\theta(\vec{x},\vec{X},\vec{y},\vec{Y})\lor \varphi(\vec{x},\vec{X})))).\]
Then since both $\dmnd{1}{1}$ and $\dwn$ are monotone, we get
\[\dmnd{1}{1}(U+\lnot \theta(\dot{\vec{x}},\dot{\vec{X}},\dot{\vec{y}},\dot{\vec{Y}}) +\dwn(\mathsf{ACA}_0+(\theta(\dot{\vec{x}},\dot{\vec{X}},\dot{\vec{y}},\dot{\vec{Y}})\lor \varphi(\dot{\vec{x}},\dot{\vec{X}})))).\]

Now to obtain $\dmnd{1}{1}(U+\dwn(\mathsf{ACA}_0+\varphi(\dot{\vec{x}},\dot{\vec{X}})))$ and hence to finish the proof it is sufficient to show that
\begin{equation}\label{downarrow_absorb}\dwn(\mathsf{ACA}_0+\psi)\sqsubseteq \dwn(\mathsf{ACA}_0+(\chi\lor\psi)) +\lnot \chi\text{, for any $\psi,\chi\in\mathbf{\Pi}^1_1$}.\end{equation}
To prove (\ref{downarrow_absorb}) we fix $\psi,\chi\in\mathbf{\Pi}^1_1$, consider $\eta$ from $\dwn(\mathsf{ACA}_0+\psi)$ and claim that $\eta$ is provable in $\dwn(\mathsf{ACA}_0+(\chi\lor\psi)) +\lnot \chi$. For this $\eta$ by the definition of $\dwn$ we know that \begin{equation}\label{downarrow_absorb_eta_prop}\mathsf{ACA}_0+\psi\vdash \forall M(M\models \mathsf{ACA}_0\land \eta\dot\in M \to M\models \eta).\end{equation} We need to show that \[\dwn(\mathsf{ACA}_0+(\chi\lor\psi))\vdash \lnot \chi \to\eta.\] Given that as indicated in (\ref{downarrow_agnostic}) the definition of downarrow is stable with respect to switches to larger countable fragments, it is sufficient to show that
\[\mathsf{ACA}_0+(\chi\lor \psi)\vdash \forall M(M\models \mathsf{ACA}_0\land \eta\dot\in M\land \chi\dot\in M \to M\models (\lnot \chi \to \eta)),\] which in turn is implied by 
\begin{equation}\label{downarrow_absorb_chi_prov}\mathsf{ACA}_0+\chi\vdash \forall M(M\models \mathsf{ACA}_0\land \eta\dot\in M\land \chi\dot\in M \to M\models (\lnot \chi \to \eta))\end{equation}
and 
\begin{equation}\label{downarrow_absorb_psi_prov}\mathsf{ACA}_0+\psi\vdash \forall M(M\models \mathsf{ACA}_0\land \eta\dot\in M\land \chi\dot\in M \to M\models (\lnot \chi \to \eta)).\end{equation}
Here (\ref{downarrow_absorb_chi_prov}) holds since $\chi$ is $\mathbf{\Pi}^1_1$ and hence $\mathsf{ACA}_0+\chi$ proves $\chi$ to hold in all $\omega$-models $M$ of $\mathsf{ACA}_0$ containing all the constants from $\chi$. And (\ref{downarrow_absorb_psi_prov}) holds as an immediate consequence of (\ref{downarrow_absorb_eta_prop}).\end{proof}

\begin{lemma}[Lemma \ref{lem:red_rule_dar_ref}] For c.e. extensions $T$ and $A$ of $\mathsf{ACA}_0$ in the language of $\mathsf{ACA}_0$:
    \begin{equation}\label{red_rule_dar_ref_stat}T+ \dwn\dmnd{1}{1} A \equiv T+\left\{\frac{\varphi}{\dmnd{1}{n}(\dwn A+\varphi)}\;\middle|\; n\in \mathbb{N}\right\}.\end{equation}
\end{lemma}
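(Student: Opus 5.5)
The argument has two directions: the left-to-right inclusion, showing that the rule is admissible in $T+\dwn\dmnd{1}{1}A$, and the right-to-left inclusion, obtaining the axioms of $\dwn\dmnd{1}{1}A$ from the rule. Note that the claimed equivalence is full, not merely $\Pi^1_n$-conservative. The reason is that $\dwn\dmnd{1}{1}A$ should turn out to be axiomatized, over $\mathsf{ACA}_0$, by a scheme of rule-like statements. I therefore plan to first prove a characterization lemma in the spirit of Lemma \ref{Diam22_alt}.

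\emph{Characterization.} Over $\mathsf{ACA}_0$, I would show that $\dwn\dmnd{1}{1}A$ is equivalent to the scheme of sentences
\[\forall M\big(M\models\mathsf{ACA}_0\to M\models \dmnd{1}{n}(\dwn A+\varphi)\big)\quad\text{for }\varphi\text{ and }n,\]
or, more usefully, to the family $\{\varphi\to\dmnd{1}{n}(\dwn A+\varphi)\}$ relativized to $\omega$-models. The proof would proceed as follows.
\begin{enumerate}
\item For $\supseteq$, argue as in the arithmetical soundness proof of Axiom 14. Inside an $\omega$-model $M\models\mathsf{ACA}_0$, let $\varphi$ be true in $M$ and suppose $M\models\mathrm{Prv}_{\dwn A+\varphi}(\neg\psi)$ for a $\Sigma^1_n$ sentence $\psi$ true in $M$. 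By absoluteness, $A$ proves that all $\omega$-models of $\mathsf{ACA}_0$ satisfy $\varphi\to\neg\psi$. This is a $\Pi^1_1$ statement, so $\Pi^1_1\textsf{-RFN}(A)$ makes it true, and hence it holds in $M$, a contradiction. Since $\dmnd{1}{1}A$ proves that every $\omega$-model of $\mathsf{ACA}_0$ satisfies $\Sigma^1_n\textsf{-Con}(\dwn A+\varphi)$ for true $\varphi$, each instance lies in $\dwn\dmnd{1}{1}A$.
\item For $\subseteq$, take $\eta\in\dwn\dmnd{1}{1}A$, so that $\dmnd{1}{1}A\vdash\forall M(M\models\mathsf{ACA}_0\to M\models\eta)$. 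I want to derive $\eta$ from $\mathsf{ACA}_0$ plus one application of the rule. The natural choice is $\varphi=\neg\eta$, with $n$ large enough to cover the complexity of $\eta$. Then $\dmnd{1}{n}(\dwn A+\neg\eta)$ should be refutable in $\mathsf{ACA}_0$, which gives $\mathsf{ACA}_0\vdash\neg\neg\eta$ by the rule's contrapositive form. The idea is that $\dmnd{1}{n}(\dwn A+\neg\eta)$, together with the fact that $\dwn A$ proves the relevant instances of "$\omega$-models of $\mathsf{ACA}_0$ satisfy $\eta$," makes $\dwn A+\neg\eta$ prove its own $\Pi^1_1$-reflection in some form, which contradicts G\"odel's second incompleteness theorem.
\end{enumerate}
A cleaner route is to show directly that $\mathsf{ACA}_0+\dmnd{1}{n}(\dwn A+\neg\eta)\vdash\eta$. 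In $\mathsf{ACA}_0$ we first obtain $\mathsf{ACA}_0+\dmnd{1}{n}(\dwn A+\neg\eta)\vdash\dmnd{1}{1}A$ for suitable $n$, by a $\Pi^1_1$-reflection over $\dwn A$ that lifts to $A$ via the definition of $\dwn$. The hypothesis then gives that all $\omega$-models satisfy $\eta$. Finally, the reflection $\dmnd{1}{n}(\neg\eta)$-consistency, combined with $\neg\eta$, yields a contradiction by a reflexive-induction (L\"ob-style) argument.

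\emph{Left-to-right.} Work in $T+\dwn\dmnd{1}{1}A$ and suppose $T+\dwn\dmnd{1}{1}A\vdash\varphi$. I need $\dmnd{1}{n}(\dwn A+\varphi)$. By compactness, $T+\eta\vdash\varphi$ for a single $\eta\in\dwn\dmnd{1}{1}A$. Since the rule is stated over arbitrary $T$, I would treat it as closure of the theory under the rule, and use the characterization above to obtain $\dmnd{1}{n}(\dwn A+\varphi)$ from $\varphi$. A point to handle carefully is that the characterization gives the implication $\varphi\to\dmnd{1}{n}(\dwn A+\varphi)$ only inside $\omega$-models. I lift it to the real universe by applying $\dwn\dmnd{1}{1}A\sqsubseteq\dmnd{1}{1}A$-type reasoning, namely Axiom 14 and $\dmnd{1}{\omega}\dwn A$. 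The clean statement is then that $\dwn\dmnd{1}{1}A$ proves $\forall\varphi\,(\mathrm{Tr}(\varphi)\to\dmnd{1}{n}(\dwn A+\varphi))$ uniformly for each $n$, since $\varphi\in\Sigma^1_n$-truth is absorbed into $\dmnd{1}{n}$ via $\Sigma^1_n\textsf{-Con}$. This yields the rule outright.

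\emph{Main obstacle.} I expect the right-to-left inclusion to be the main obstacle: showing that every axiom of $\dwn\dmnd{1}{1}A$, that is, every sentence which $\dmnd{1}{1}A$ proves to hold in all $\omega$-models, is derivable using finitely many rule applications. My first attempt would follow the proof of Lemma \ref{lem:red_rule_dar_bas}, presumably proved just before this lemma, which presents $\dwn\mathsf{ACA}_0$ in the same way. I would relativize that argument to $A$ via the identity $\dwn\dmnd{1}{1}A\vdash\dmnd{1}{\omega}\dwn A$ (Axiom 14). Concretely, I would show that $\dwn\dmnd{1}{1}A$ is axiomatized by $\mathsf{ACA}_0$ together with the uniform statements $\Pi^1_n\textsf{-RFN}(\dwn A)$ relativized to every true $\varphi$. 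I would then apply the diagonal trick from Lemma \ref{Diam22_alt}: take $\theta$ to be the true conjunction of $\varphi$ with the failure of an offending sentence in the relevant models, so that a failed axiom would make $\dwn A+\theta$ inconsistent, contradicting one instance of the rule.
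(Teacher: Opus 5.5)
Your admissibility direction is fine and matches the paper: $\dwn\dmnd{1}{1}A$ proves every $\dmnd{1}{m}\dwn A$ (soundness of Axiom 14), and a provable premise $\varphi$ is absorbed into $\dmnd{1}{m}$ for $m$ large. Conversely, the rule with premise $\top$ yields every $\dmnd{1}{n}\dwn A$. You should drop the remark $\dwn\dmnd{1}{1}A\sqsubseteq\dmnd{1}{1}A$, since it is false already for $A=\mathsf{ACA}_0$: $\dwn\mathsf{ACA}_0\equiv\mathsf{ACA}$ proves $\dmnd{1}{1}\dmnd{1}{1}\top$. So everything hinges on $\dwn\dmnd{1}{1}A\subseteq\mathsf{ACA}_0+\{\dmnd{1}{n}\dwn A\mid n\in\mathbb{N}\}$, that is, on deriving $\eta$ there whenever $\dmnd{1}{1}A\vdash\forall M(M\models\mathsf{ACA}_0\to M\models\eta)$. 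None of your three sketches achieves this.

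\emph{The choice $\varphi=\neg\eta$.} The rule cannot be applied, because $\neg\eta$ has not been derived. Reading it instead as the implication $\neg\eta\to\dmnd{1}{n}(\dwn A+\neg\eta)$, which is valid over $\mathsf{ACA}_0+\{\dmnd{1}{m}\dwn A\}$, is circular. Indeed $\dmnd{1}{n}(\dwn A+\neg\eta)\vdash\neg\eta$ for large $n$, so refuting $\dmnd{1}{n}(\dwn A+\neg\eta)$ over that theory is equivalent to proving $\eta$. Also, it is $\dmnd{1}{1}A$, not $\dwn A$, that proves $\eta$ holds in all $\omega$-models.

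\emph{The ``cleaner route''.} The first steps are right: $\dmnd{1}{1}\dwn A\vdash\dmnd{1}{1}A$ via Axioms 5, 9 and Mod$_1$, hence every coded $\omega$-model of $\mathsf{ACA}_0$ satisfies $\eta$. But the step from ``$\eta$ holds in every countably coded $\omega$-model of $\mathsf{ACA}_0$'' to $\eta$ is the entire difficulty. For sentences of arbitrary complexity there is no L\"owenheim--Skolem argument available: the general principle turning a true sentence into a coded $\omega$-model of $\mathsf{ACA}_0$ satisfying it is $\omega$-model reflection. ``L\"ob-style'' names no substitute.

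\emph{The diagonal trick.} Lemma \ref{Diam22_alt} fails to transfer for the same reason. There, the failure of a $\Pi^1_2$ $\omega$-model reflection principle directly supplies a true $\mathbf{\Pi}^1_1$ sentence with a set parameter. Here, the model in which $\eta$ fails is exactly what you cannot produce. Moreover, $\dwn A+\neg\eta$ need not be inconsistent: for $\eta=\dmnd{1}{1}\dwn A$, which $\dwn\dmnd{1}{1}A$ proves by Axiom 14, it is consistent whenever $\dwn A$ is, by G\"odel's second incompleteness theorem.

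What is missing is a way to make the whole second-order universe look like a countably coded $\omega$-model from inside $\mathsf{ACA}_0+\{\dmnd{1}{n}\dwn A\}$. The paper first shows, by expanding a model of $\mathsf{PA}(M)$ with its definable sets, that $\dmnd{1}{1}A\vdash\forall M(M\models\mathsf{ACA}_0\to\varphi(M))$ iff $\mathsf{PA}(M)+\{\Pi^0_n(M)\mathsf{-RFN}(A)\mid n\in\mathbb{N}\}\vdash\varphi(M)$. It then interprets the latter theory in $\mathsf{ACA}_0+\{\dmnd{1}{n}\dwn A\}$ by forcing with finite sequences of sets, a L\'evy collapse of the universe onto $\mathbb{N}$. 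Under this forcing the sections of the generic $M$ are forced to be exactly all sets, so forcing $M\models\eta$ amounts to $\eta$. Induction is forced using $\mathsf{ACA}$, which is available from $\dmnd{1}{n}\top$. Each reflection instance is forced by reflection over $\dwn A$ applied to $\forall P\,(P\Vdash\psi(M,\underline{k}))$. This sentence lies in $\dwn A$ because $A$ proves the following: if $\psi$ holds of every $\omega$-model of $\mathsf{ACA}_0$, then every such $M'$ satisfies $\forall P\,(P\Vdash\psi(M))$. That claim is checked through a truth lemma for sufficiently generic filters, which are re-enumerations of $M'$ and hence themselves $\omega$-models of $\mathsf{ACA}_0$. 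Falling back on Lemma \ref{lem:red_rule_dar_bas} does not help either, since the paper proves it as a simplification of this same forcing argument.
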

\begin{proof}
    It is enough to prove that 
    \begin{equation}\label{red_rule_dar_ref_simp}\mathsf{ACA}_0 + \dwn \dmnd{1}{1} A\equiv \mathsf{ACA}_0+\{\dmnd{1}{n}\dwn A\mid n\in\mathbb{N}\}.\end{equation} Indeed, the general equivalence (\ref{red_rule_dar_ref_stat}) by the Compactness Theorem follows from the case when $T$ is finitely axiomatizable. And in turn, the case of finitely axiomatizable $T$ follows from the case of $T=\mathsf{ACA}_0$ by the Deduction Theorem. Thus in order to finish the reduction of (\ref{red_rule_dar_ref_stat}) to (\ref{red_rule_dar_ref_simp}), we simply observe that $\mathsf{ACA}_0+\{\dmnd{1}{n}\dwn A\mid n\in\mathbb{N}\}$ is closed under the rules $\frac{\varphi}{\dmnd{1}{n}(\dwn A+\varphi)}$.

    Since the $(\supseteq)$-inclusion in (\ref{red_rule_dar_ref_simp}) holds by the arithmetical soundness of $\mathsf{RC_{BI}}$ (Proposition \ref{prop ari sound}, Axiom 14), in the rest of the proof we show the $(\subseteq)$-inclusion in (\ref{red_rule_dar_ref_simp}).

    Let us consider the theory $\mathsf{PA}(M)$ that is $\mathsf{PA}$ enhanced with an additional set constant $M$, the induction scheme for all formulas of the extended language, and the axiom stating that $M$ is a code of an $\omega$-model of $\mathsf{ACA}_0$. Let $\Pi^0_n(M)\mathsf{-RFN}(A)$ be the scheme
    $$\forall x(\mathsf{Prv}_{A}(\ulcorner\forall M(M\models \mathsf{ACA}_0\to \varphi(M,\dot x))\urcorner)\to \varphi(M,x)), \text{ where $\varphi(X,x)\in\Pi^0_n$}.$$

    We claim that for any $\varphi(X)\in\Pi^0_\infty$: \begin{equation}\label{Cons_Diamond^1_1_over_PA(M)}\dmnd{1}{1} A\vdash \forall M (M\models \mathsf{ACA}_0\to \varphi(M))\iff \mathsf{PA}(M)+\{\Pi^0_n(M)\mathsf{-RFN}(A)\mid n\in\mathbb{N}\}\vdash \varphi(M).\end{equation} Indeed, the implication $\Leftarrow$ is straightforward, since reasoning in $\dmnd{1}{1} A$ and given an $\omega$-model $M$ of $\mathsf{ACA}_0$ we clearly can prove any individual axiom of $\mathsf{PA}(M)+\{\Pi^0_n(M)\mathsf{-RFN}(A)\mid n\in\mathbb{N}\}$. For $\Rightarrow$ we use a model-theoretic argument: Given a model $$N\models\mathsf{PA}(M)+\{\Pi^0_n(M)\mathsf{-RFN}(A)\mid n\in\mathbb{N}\}+\lnot\varphi(M)$$ we expand it to a model $N_2$ of second-order arithmetic by letting the universe of sets consist of all the subsets of $N$ that are definable (with parameters) in the language of $\mathsf{PA}(M)$. Clearly, $N_2$ satisfies $\mathsf{ACA}_0$, $M\models\mathsf{ACA}_0$, and $\lnot \varphi(M)$, where $M$ is the set in $N_2$ defined by the formula $x\in M$.  Let us verify in $N_2$ an instance of $\Pi^1_1\mathsf{-RFN}(A)$ $$\forall x(\mathsf{Prv}_A(\ulcorner\forall X\; \psi(X,\dot x)\urcorner)\to \forall X\;\psi(X,x)) \text{, where $\psi(X,x)\in \Pi^0_\infty$}.$$ We immediately switch to an equivalent formula \begin{equation}\label{Pi^1_1-reflection_in_N_2}\forall X,x(\mathsf{Prv}_A(\ulcorner\forall X\; \psi(X,\dot x)\urcorner)\to \psi(X,x)).\end{equation}  Each set $X$ in $N_2$ is an $N$-definable set defined by some formula $\chi(M,y)$, where $\chi$ is a $\Pi^0_\infty$-formula (possibly with number parameters, which we suppress). Let $\psi'(M,x)$ be the result of replacement of formulas $t\in X$ in $\psi$ with $\chi(M,t)$. Hence in order to verify that $N_2$ satisfies (\ref{Pi^1_1-reflection_in_N_2}) for this particular $X$ it is enough to verify that $N$ satisfies
    \begin{equation}\label{Pi^1_1-reflection_in_N_1}\forall x (\mathsf{Prv}_A(\ulcorner\forall X\; \psi(X,\dot x)\urcorner)\to \psi'(M,x)).\end{equation}
    But since over $\mathsf{PA}$, $\mathsf{Prv}_A(\ulcorner\forall X\; \psi(X,\dot x)\urcorner)$ clearly implies $\mathsf{Prv}_A(\ulcorner\forall M\;(M\models \mathsf{ACA}_0\to \psi'(M,\dot x))\urcorner)$, we can derive (\ref{Pi^1_1-reflection_in_N_1}) from an instance of $\Pi^0_n(M)\mathsf{-RFN}(A)$, for suitably large $n$, which finishes the proof of the claim.

    Now, taking into account (\ref{Cons_Diamond^1_1_over_PA(M)}), in order to prove the $(\subseteq)$-inclusion in (\ref{red_rule_dar_ref_simp}) and hence finish the whole proof we simply need to show that
    \begin{equation}\label{Cons_PA(M)_over_ACA_with_ref}
        \mathsf{PA}(M)+\{\Pi^0_n(M)\mathsf{-RFN}(A)\mid n\in\mathbb{N}\}\vdash (M\models \varphi)\Rightarrow \mathsf{ACA}_0+\{\dmnd{1}{n} \dwn A\mid n\in\mathbb{N}\}\vdash \varphi\text{, where $\varphi\in\Pi^1_\infty$}.
    \end{equation}
    We achieve this by embedding the theory on the left into the theory on the right via a forcing interpretation that introduces a constant $M$ outside of the second-order universe by performing a L\'evy collapse of the second-order universe onto the naturals.

    We define in the language of $\mathsf{ACA}_0$ the forcing relation $P\Vdash \varphi(M,\vec{X},\vec{x})$, where $\varphi$ is a second-order formula with an additional predicate $y\in M$. Note that the meaning of $M$ is to be a countably coded $\omega$-model containing all sets from the range of second-order quantifiers and as such its intended meaning is to be outside of the range of the second-order quantifiers. The forcing conditions in the forcing formula are treated as codes for finite sequences of sets of natural numbers $P=(P_0,\ldots,P_{\mathsf{lh}(P)-1})$. We have the natural extension order on forcing conditions:
    $$(P_0,\ldots,P_{k-1})\succeq (P_0,\ldots,P_{k-1},P_k,\ldots,P_{s-1}).$$  
    We denote the above defined class ordering on forcing conditions as $\mathbb{P}$. 
    The forcing relation is defined by induction on the construction of the formula $\varphi$, where we consider $\varphi$ constructed from atomic formulas, using the connectives $\land,\lnot,\to,\forall x,\forall X$:
    \begin{enumerate}
        \item $P\Vdash \varphi$ is $\varphi$ if $\varphi$ is atomic without $M$;
        \item $P\Vdash t\in M$ is $t\in\{(x,y)\mid y\in P_x \text{ and }x<\mathsf{lh}(P)\}$;
        \item $P\Vdash \varphi \land \psi$ is $(P\Vdash \varphi)\land (P\Vdash \psi)$;
        \item $P\Vdash \forall x\; \varphi $ is $\forall x\;(P\Vdash \varphi)$;
        \item $P\Vdash \forall X\; \varphi $ is $\forall X\;(P\Vdash \varphi)$;
        \item $P\Vdash \lnot \varphi$ is $\forall P'\preceq P(P'\nVdash \varphi)$;
        \item $P\Vdash \varphi \to \psi$ is $\forall P'\preceq P((P'\Vdash \varphi)\to (P'\Vdash \psi))$.
    \end{enumerate}
    We extend the forcing relation to formulas with $\lor$ and $\exists$ by expressing them using negations, e.g.\ $P\Vdash \varphi\lor \psi$ translates to $P\Vdash \lnot (\lnot\varphi \land \lnot \psi)$. Note that $\mathsf{ACA}_0$-provably 
    \begin{enumerate}
        \item $P\Vdash \varphi\lor \psi$ is equivalent to $\forall P'\preceq P\exists P''\preceq P' ((P''\Vdash \varphi)\lor (P'' \Vdash \psi))$;
        \item $P\Vdash \exists x\; \varphi$ is equivalent to $\forall P'\preceq P\exists P''\preceq P' \exists x (P''\Vdash \varphi)$;
        \item $P\Vdash \exists X\; \varphi$ is equivalent to $\forall P'\preceq P\exists P''\preceq P' \exists X (P''\Vdash \varphi)$.
    \end{enumerate}
    
    Obviously, for $M$-free formulas $\varphi$, $$\mathsf{ACA}_0\vdash (P\Vdash \varphi)\mathrel{\leftrightarrow} \varphi.$$
    
    Notice that in fact for every model $N$ of $\mathsf{ACA}_0$, the forcing relation defines validity of formulas in the constant-domain first-order Kripke structure whose worlds are indexed by $N$-forcing conditions and the structure in each node $P$ is a copy of $N$ enhanced by a predicate $x\in M$ such that $a\in M$ iff $a$ is in the $N$-set $\{(x,y)\mid y\in P_x \text{ and }x<\mathsf{lh}(P)\}$. Hence the forcing relation in any model of $\mathsf{ACA}_0$ is deductively closed and satisfies intuitionistic logic (in the language without $\lor$ and $\exists$). It is easy to see that the predicate $x\in M$ is equivalent to its double negation in this model. And since we are only interested in formulas from the negative fragment, our forcing relation in fact satisfies classical first-order logic. 

    It is easy to see that $$\mathsf{ACA}_0\vdash (P\Vdash \forall X\exists x\forall y(y\in X\mathrel{\leftrightarrow} (x,y)\in M)).$$
    Indeed, we need to check in $\mathsf{ACA}_0$ that for any $P$, $X$, and $P'\preceq P$ we can pick $P''$ and $x$ such that for any $y$, $y\in X$ iff $P''\Vdash (x,y)\in M$. Clearly, we can pick $P''$ to be $(P'_0,\ldots,P'_{k-1},X)$ and $x=k$, where $P'=(P'_0,\ldots,P'_{k-1})$.

    Also it is easy to see that
    $$\mathsf{ACA}_0\vdash (P\Vdash \forall x\exists X\forall y(y\in X\mathrel{\leftrightarrow} (x,y)\in M)).$$
    Here given $P$, $x$ and $P'\preceq P$ we pick as $P''$ the extension of $P'$ by some sets so that $\mathsf{lh}(P'')>x$ and we pick as $X$ the $x$-th component of $P''$.

    Combining the facts that we established we see that from the perspective of the forcing relation, the sections of $M$ precisely code all the sets of the second-order universe. In particular $\mathsf{ACA}_0$ verifies that $P\Vdash (M\models \mathsf{ACA}_0)$, for every $P$.

    Next we observe that provably in $\mathsf{ACA}$ (that is, $\mathsf{ACA}_0$ with the full induction scheme; note that $\mathsf{ACA}\subseteq \mathsf{ACA}_0+\{\dmnd{1}{n}\dwn A\mid n\in\mathbb{N}\}$, since the latter theory proves $\dmnd{1}{n}\top$ for all $n$), all instances of the induction scheme in the language of $\mathsf{PA}(M)$ are forced: For this it is clearly enough to show that for a fixed formula $\varphi(x)$, provably in $\mathsf{ACA}$, for every $a$ we have $\forall P(P\Vdash (\varphi(0)\land \forall x (\varphi(x)\to\varphi(x+1))\to \varphi(a)))$. We prove this by a straightforward induction on $a$.  

    Finally let us work in $\mathsf{ACA}_0+\{\dmnd{1}{n}\dwn A\mid n\in\mathbb{N}\}$ to establish forceability of arbitrarily picked instances of $\Pi^0_\infty(M)\mathsf{-RFN}(A)$. We need to show that
    $$\forall P(P\Vdash \forall x(\mathsf{Prv}_{A}(\ulcorner\forall M(M\models \mathsf{ACA}_0\to \varphi(M,\dot x))\urcorner)\to \varphi(M,x))), \text{ where $\varphi(X,x)\in\Pi^0_n$}.$$
    This is equivalent to
    $$\forall x(\mathsf{Prv}_{A}(\ulcorner\forall M(M\models \mathsf{ACA}_0\to \varphi(M,\dot x))\urcorner)\to \forall P(P\Vdash \varphi(M,x))).$$
    Given that we have access to the full reflection scheme for $\dwn A$, it will be sufficient to show that if $A$ proves $\forall M(M\models \mathsf{ACA}_0\to \varphi(M,\underline{k}))$, then $\dwn A$ proves $\forall P(P\Vdash \varphi(M,\underline{k}))$. And for this it is enough to show that if $A$ proves $\forall M(M\models \mathsf{ACA}_0\to \varphi(M,\underline{k}))$, then $A$ proves $\forall M'((M'\models \mathsf{ACA}_0)\to (M'\models \forall P(P\Vdash \varphi(M,\underline{k}))))$. That is, it is enough to prove the following claim:
    \begin{equation}\label{forcing_in_out_claim}\begin{aligned}
     &\text{Given a $\Pi^0_\infty$ formula $\psi(X)$, the theory $A$ proves that if $\psi(M')$ holds for each $\omega$-model $M'$ of $\mathsf{ACA}_0$,}&\\&\text{then each $\omega$-model $M'$ of $\mathsf{ACA}_0$ satisfies $\forall P(P\Vdash \psi(M))$.}&   
    \end{aligned}\end{equation}

    To establish the claim (\ref{forcing_in_out_claim}) above we fix a formula $\psi(X)$ and reason in $\mathsf{ACA}_0+\forall M'(M'\models \mathsf{ACA}_0\to \psi(M'))$ to show that for any $\omega$-model $M'$ of $\mathsf{ACA}_0$ we have $M'\models (\forall P(P\Vdash \psi(M)))$. We will be using a characterization of the forceability in terms of generic filters. 
    
    For $m\ge 1$, we call a filter $G$ on $\mathbb{P}^{M'}$ an $m$-generic filter if it intersects any dense subset of $\mathbb{P}^{M'}$ that is $M'$-definable by a $\Pi^1_m$-formula with parameters. In the present proof we will be interested in $m$-generic filters for a particular finite collection of $m$'s that depend only on the formula $\psi$; note that for a fixed $m$ we can define the notion of $m$-generic filters, while working in $\mathsf{ACA}_0$, without any problems, since we can use arithmetical comprehension to construct a single set-code for the whole family of sets that are $M'$-definable by $\Pi^1_m$-formulas. Using this enumeration we can also show within $\mathsf{ACA}_0$ that any $P\in \mathbb{P}^{M'}$ lies in some $m$-generic filter $G$.
    
    Trivially all $m$-generic filters are maximal filters. So we identify generic filters $G$ with infinite sequences of $M'$-sets $(P_0,P_1,\ldots)$ such that the elements of the filter are precisely the finite initial segments of this sequence. Also it is trivial to observe that any $m$-generic $G$ will be an enumeration of all $M'$-sets. For such a $G$ we denote by $(M',G)$ the expansion of $M'$ in which the predicate $M$ is interpreted by the set $\{(x,y)\mid y\in P_x\}$.
    
    For a suitable fixed natural number $m_\psi$, via an external induction on subformulas $\theta$ of $\psi$ we show that for any $\theta'$ that is the result of replacing all free variables of $\theta$ with parameters from $M'$ and any $m_\psi$-generic filter $G$:
    \begin{equation}\label{forcing_truth_lemma}(M',G)\models \theta' \iff \exists P\in G\, (M'\models(P\Vdash \theta')).\end{equation}
    Before starting the proof we note that this is just a variant of the usual truth lemma for forcing, and that the value $m_\psi$ can be reconstructed from the complexities of the $M'$-definitions of the dense sets used in the proof. 
    
    The case when $\theta'$ is an atomic formula not using $M$ is trivial, since all forcing conditions $P$ agree with each other and with $(M',G)$ about the validity of such formulas. 
    When $\theta'$ is $t\in M$ and the value of $t$ is a pair $(k,s)$, we pick $P$ from $G$ intersected with the dense set $\{P\in \mathbb{P}^{M'}\mid \mathsf{lh}(P)> k\}$; clearly if $M'\models (P\Vdash \theta')$ then $(M',G)\models \theta'$, and if $M'\models (P\nVdash \theta')$, then $s\not\in P_k=G_k$ and hence $(M',G)\not\models \theta'$. If the value of $t$ is not a pair, then no condition forces $\theta'$ and $(M',G)\not\models\theta'$. 
    
    The step of induction for the case when $\theta'$ is $\theta_1\land \theta_2$ is trivial. 
    
    The step of induction for the case when $\theta'$ is $\lnot \theta_1$ can be reduced to proving that for any $m_\psi$-generic $G$, \begin{equation} \label{decide_forcing} \exists P\in G\, (M'\models (P\Vdash \theta'))\iff \lnot \exists P\in G\, (M'\models (P\Vdash \theta_1)).\end{equation} To prove the latter equivalence we notice that the set $K=\{P\in \mathbb{P}^{M'}\mid M'\models (P\Vdash \theta_1)\}\cup \{P\in \mathbb{P}^{M'}\mid M'\models(P\Vdash \theta')\}$ is dense, since $M'\models (()\Vdash \theta'\lor \theta_1)$, the latter being an instance of the law of excluded middle. The density of this set allows us to consider $P_0\in G\cap K$, and for this $P_0$ we have two options, $M'\models (P_0\Vdash \theta')$ or $M'\models (P_0\Vdash \theta_1)$, with the first option implying that $\exists P\in G\, (M'\models (P\Vdash \theta'))$ and the second option implying that $\exists P\in G\, (M'\models (P\Vdash \theta_1))$. This concludes the proof of (\ref{decide_forcing}) since we cannot simultaneously have $\exists P\in G\, (M'\models (P\Vdash \theta'))$ and $\exists P\in G\, (M'\models (P\Vdash \theta_1))$: as any two elements of $G$ are compatible, this would imply the existence of $P\in G$ for which $M'\models (P\Vdash \theta_1\land \theta')$.

    The step of induction for the case when $\theta'$ is $\theta_1\to\theta_2$ reduces to the cases of $\lnot$ and $\land$, since $\mathsf{ACA}_0$ proves that $P\Vdash \theta_1\to\theta_2$ is equivalent to $P\Vdash \lnot(\theta_1\land\lnot\theta_2)$ (this uses the stability of forcing under double negation).

    Now let us prove the step of induction for the case when $\theta'$ is $\forall x\;\theta_1(x)$. Here we consider the set \[K=\{P\in\mathbb{P}^{M'}\mid M'\models (P\Vdash \theta')\}\cup\{P\in\mathbb{P}^{M'}\mid \exists n\in\mathbb{N}\; M'\models (P\Vdash \lnot \theta_1(n))\}.\]
    It is dense, because if for some $P\in\mathbb{P}^{M'}$ there were no $P'\preceq P$ from $K$, then on one hand $M'\models (P\Vdash \lnot \theta')$, but on the other hand for all $n\in\mathbb{N}$, $M'\models (P\Vdash \lnot\lnot\theta_1(n))$, which implies $M'\models (P\Vdash \forall x \lnot\lnot \theta_1(x))$ and hence, by the stability of forcing under double negation, $M'\models (P\Vdash \theta')$, leading to a contradiction. Taking $P_0\in K\cap G$ we reason as follows to prove (\ref{forcing_truth_lemma}). If $M'\models (P_0\Vdash \lnot\theta_1(n))$, for some $n$, then on one hand by the induction hypothesis combined with the argument from the $\lnot$-case we get that $(M',G)\models \lnot \theta_1(n)$ and hence $(M',G)\not \models \theta'$, and on the other hand we get that there is no $P\in G$ for which $M'\models (P\Vdash \theta')$, since such a $P$ would be compatible with $P_0$. If  $M'\models (P_0\Vdash \theta')$ then clearly $\exists P\in G\, ( M'\models (P\Vdash \theta'))$ and $M'\models (P_0\Vdash \theta_1(n))$ for all individual $n$, which by the induction hypothesis implies that $(M',G)\models \theta'$.

    The case of $\theta'$ being $\forall X\; \theta_1(X)$ is analogous to the case of a leading universal first-order quantifier, but instead of instantiations in $\mathbb{N}$ we consider instantiations with all the sets $(M')_i$ of $M'$. This concludes the inductive proof of (\ref{forcing_truth_lemma}).

    Using (\ref{forcing_truth_lemma}), the fact that every $P\in \mathbb{P}^{M'}$ is contained in some $m_\psi$-generic $G$, and the density of the sets $K$ from the $\lnot$-case, we get for every $P\in\mathbb{P}^{M'}$:
    \begin{center}$M'\models(P\Vdash \theta' )\iff (M',G)\models \theta'$ for every $m_\psi$-generic $G$ that contains $P$.\end{center}    
     In light of this, to prove (\ref{forcing_in_out_claim}) we simply need to establish that $(M',G)\models \psi(M)$, for any $m_{\psi}$-generic $G$. Given that the second-order part of $M'$ is not involved here at all, this is the same as showing that we have $\psi(G)$, for any $m_{\psi}$-generic $G$, which we do have, since $G$ is an alternative enumeration of $M'$ and as such is an $\omega$-model of $\mathsf{ACA}_0$ and we are reasoning in the theory that has an axiom $\forall M'((M'\models \mathsf{ACA}_0)\to \psi(M'))$. This finishes the proof of the claim (\ref{forcing_in_out_claim}) and hence the whole lemma.
\end{proof}
The proof of Lemma \ref{lem:red_rule_dar_bas} is essentially a simpler version of the proof of Lemma \ref{lem:red_rule_dar_ref}. The only difference is that for Lemma \ref{lem:red_rule_dar_bas} we do not need to deal with reflection axioms, since the lemma can be equivalently reformulated as the equivalence $\dwn \mathsf{ACA}_0\equiv \mathsf{ACA}$. Now instead of considering the extension of $\mathsf{PA}(M)$ by an appropriate reflection principle we consider just $\mathsf{PA}(M)$ itself. In the part embedding $\mathsf{PA}(M)$ into $\mathsf{ACA}$ via a forcing interpretation, we now do not need to prove forceability of reflection principles and simply get away with forceability of the full induction scheme.

\vspace{\baselineskip}
\section{Ordinal notation system for the Bachmann-Howard ordinal}\label{section ONS}
With all the technical groundwork established, we now turn to the ordinal analysis consequences of the calculus. We claim that the $\nabla$-free closed $\mathsf{RC_{BI}}$ formulas naturally induce an ordinal notation system. Specifically, our main result of this section shows that the $\Pi^1_1$ proof-theoretic ordinal of a theory at the $\alpha$-th position in the $\leq^1_1$ ordering is exactly $\varepsilon_\alpha$.

\begin{lemma}\label{lemma finite theory}
For every formula $A\in\W$ there is a formula $B\in\W$ such that $\dmnd{2}{2} A$ is $\mathsf{RC}_{\mathsf{BI}}$-equivalent to $\dmnd{2}{2} B$ and $B^*$ is a finitely axiomatizable theory.\end{lemma}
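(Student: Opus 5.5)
Which formulas $A\in\W$ have finitely axiomatizable $A^*$? The theories $\top^*=\mathsf{ACA}_0$, $(\dmnd{i}{n}C)^*$ and finite conjunctions of such are axiomatized over $\mathsf{ACA}_0$ by a single sentence. The reflection principles $\Pi^1_n\textsf{-RFN}(C^*)$ and $\Pi^1_n\text{-}\omega\textsf{RFN}(C^*)$ are single sentences via the partial truth predicates, and $\mathsf{ACA}_0$ itself is finitely axiomatizable. So the only obstruction is the operator $\dwn$: $(\dwn C)^*$ is in general not finitely axiomatizable. For instance, $\dwn\top$ is $\mathsf{ACA}$, which has full induction.

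The plan is therefore to eliminate top-level occurrences of $\dwn$ inside $\dmnd{2}{2}$, using an induction on the structure of $A$. Only the outermost layer matters, since anything under a $\Diamond$ already contributes a single sentence. Write $A={\dwn}C_1\wedge\dots\wedge{\dwn}C_k\wedge D$, where $D$ is a conjunction of $\Diamond$-formulas (possibly $\top$). We may assume there are no top-level $\nabla$, as $A\in\W$. If $k=0$, take $B=A$. Otherwise we aim for
\[ \dmnd{2}{2}A \dashv\vdash \dmnd{2}{2}\bigl(D\wedge {\textstyle\bigwedge_j} \dmnd{1}{1}C'_j\bigr) \]
for suitable $C'_j$, i.e. we replace each ${\dwn}C_j$ by a $\Diamond$-formula that is equivalent inside $\dmnd{2}{2}$.

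The key tool is Lemma \ref{lem omega loop} together with Axiom 11. Axiom 11 gives $\dmnd{2}{2}E\wedge\dmnd{1}{1}C\vdash\dmnd{2}{2}(E\wedge{\dwn}C)$. In the other direction, we use that $\dmnd{2}{2}({\dwn}C\wedge E)$ proves a $\Diamond$-statement about $C$ at the top, namely $\dmnd{1}{1}C$. The steps are as follows.

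1. For the reverse direction, show $\dmnd{2}{2}({\dwn}C\wedge E)\vdash\dmnd{1}{1}C$. I am not sure this is derivable, but Axiom 15 and the $\nabla$ axioms suggest it. Arithmetically it says: if every true $\Sigma^1_2$ statement holds in an $\omega$-model of ${\dwn}C^*$, then $C^*$ is $\Pi^1_1$-sound. Taking an $\omega$-model $M\models\dwn C^*$ containing a counterexample to a $\Pi^1_1$ theorem $\forall X\psi$ of $C^*$ gives a contradiction: by monotonicity $M\models$ the $\dwn$-version, and $\Pi^1_1$ sentences proved by $C^*$ hold in all $\omega$-models of $\mathsf{ACA}_0$ only if $C^*$ proves that, which is not automatic. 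So the semantic check is delicate.

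2. If step 1 fails, use a different choice: $B=D\wedge\bigwedge_j{\dwn}C_j$ replaced by $D\wedge\bigwedge_j\dmnd{1}{\omega}$-approximations is unavailable in $\mathsf{RC_{BI}}$. Instead use the Kripke characterization of Theorem \ref{thm trees}. Compare $M_{\dmnd{2}{2}A}$ with $M_{\dmnd{2}{2}B}$, with $B:=D\wedge\bigwedge_j\dmnd{1}{1}C_j$. We need $T_{\dmnd{2}{2}A}\hookrightarrow_r M_{\dmnd{2}{2}B}$, which comes from the closure rule (l) adding $bR^\dwn c$ for $r R^2_2 b$ and $rR^1_1 c$. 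We also need $T_{\dmnd{2}{2}B}\hookrightarrow_r M_{\dmnd{2}{2}A}$, which needs an $R^1_1$ transition from the $R^2$-successor to a $C_j$-point. Here the zero/intermediate points and the loop $bR^1_\omega b$ help: $b R^\dwn r_{C_j}$ combined with rule (l) yields $r R^1_1$ only from $r$.

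3. Finally, check finite axiomatizability: $B^*$ is $\mathsf{ACA}_0$ plus finitely many reflection sentences.

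The main obstacle is step 1/2: finding the exact $\Diamond$-replacement of ${\dwn}C$ under $\dmnd{2}{2}$. I expect the correct replacement to be $\dmnd{1}{1}C$ when $C$ itself starts with $\dmnd{1}{1}$. In general it may be $\dmnd{1}{1}$ applied to $C$, corrected by Axiom 14 (${\dwn}\dmnd{1}{1}C\vdash\dmnd{1}{\omega}{\dwn}C$). The equivalence would be verified via the finite models $M_A$ rather than syntactically.
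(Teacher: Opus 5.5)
\textbf{The gap: the equivalence is never proved.} Your choice of replacement, $B=D\wedge\bigwedge_j\dmnd{1}{1}C_j$, is exactly the one the paper uses. Your observation is also correct that, for $A\in\W$, only the top-level ${\dwn}$-conjuncts obstruct finite axiomatizability. But the whole content of the lemma is the equivalence $\dmnd{2}{2}({\dwn}C\wedge E)\dashv\vdash\dmnd{2}{2}(\dmnd{1}{1}C\wedge E)$, and you never establish it. Step~1 is left as ``not sure this is derivable'', step~2 stops before the embeddings are verified, and your last paragraph still hesitates between candidate replacements.

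Both directions are short derivations in $\mathsf{RC_{BI}}$ itself, eliminating one ${\dwn}$-conjunct at a time.
\begin{itemize}
\item Forward: $\dmnd{2}{2}({\dwn}C\wedge E)\vdash\dmnd{1}{1}({\dwn}C\wedge E)\vdash\dmnd{1}{1}\nabla C\vdash\dmnd{1}{1}C$. This uses Axioms 2 and 1, then Axiom 5 under Mod$_1$, then Axiom 9. Combining with $\dmnd{2}{2}E$ by R$\wedge$, Axiom 12 (with $\beta=1<2$) gives $\dmnd{2}{2}E\wedge\dmnd{1}{1}C\vdash\dmnd{2}{2}(E\wedge\dmnd{1}{1}C)$.
\item Backward: $\dmnd{2}{2}(\dmnd{1}{1}C\wedge E)\vdash\dmnd{2}{2}\dmnd{1}{1}C\vdash\dmnd{1}{1}\dmnd{1}{1}C\vdash\dmnd{1}{1}C$. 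This uses Mod$_1$, Axioms 2 and 1, and Axiom 6. Together with $\dmnd{2}{2}E$, Axiom 11 then gives $\dmnd{2}{2}(E\wedge{\dwn}C)$.
\end{itemize}
Your plan is missing two of these ingredients. In the forward direction, Axiom 12 is needed to push the top-level $\dmnd{1}{1}C$ back under $\dmnd{2}{2}$. In the backward direction, Axiom 6 is needed to obtain $\dmnd{1}{1}C$ at the top before Axiom 11 can be applied. Axiom 14 and Lemma \ref{lem omega loop} play no role here.

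\textbf{The arithmetical worry.} Your doubt in step~1 is mistaken. If $C^*$ proves a $\Pi^1_1$ sentence $\forall X\,\psi(X)$, then $C^*$ proves that this sentence holds in every $\omega$-model of $\mathsf{ACA}_0$, since $\mathsf{ACA}_0$ proves that true $\Pi^1_1$ sentences hold in all $\omega$-models. Hence every $\Pi^1_1$ theorem of $C^*$ lies in ${\dwn}C^*$. This is precisely the soundness of Axiom 5, ${\dwn}C\vdash\nabla C$, which is what settles step~1.

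\textbf{The Kripke route.} Your semantic route could be completed, but you would need the rules used correctly.
\begin{itemize}
\item In $M_{\dmnd{2}{2}A}$, the $R^2$-successor $b$ of the root carries the loop $bR^1_\omega b$. So $bR^\dwn c$ yields $bR^\nabla c$ by rule (e), and then $bR^1_1c$ by rule (i).
\item In $M_{\dmnd{2}{2}B}$, rules (a), (b) and (f) turn $rR^2_2bR^1_1c$ into $rR^1_1c$, and rule (l) then gives $bR^\dwn c$. Note that rule (l) produces $R^\dwn$ transitions, not $R^1_1$ ones.
\end{itemize}
This route would also require Theorem \ref{thm trees} together with Proposition \ref{prop equiv} to return to $\mathsf{RC_{BI}}$. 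As written it is not carried out, and the direct derivation above makes it unnecessary.
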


\begin{proof}
We first observe that $B^*$ is finitely axiomatizable whenever $B$ is such that each of its conjuncts is either
$\top$ or of the form $\dmnd{i}{\alpha} A'$: $B^*$ is then the union of $\mathsf{ACA}_0$ with finitely many theories of the form $\mathsf{ACA}_0 + \Pi^1_n\text{-}\mathsf{RFN}(T)$ or $\mathsf{ACA}_0 + \Pi^1_n\text{-}\omega\mathsf{RFN}(T)$, each of which is finitely axiomatizable. Hence, given any formula $A$, it suffices to construct a formula $B$ in this form with
$\dmnd{2}{2} A \dashv\vdash_{\mathsf{BI}} \dmnd{2}{2} B$.

 Suppose we have a formula $C\wedge D$ such that $D$ is of the form $\nabla A'$ or ${\downarrow} A'$. We are going to show that:
\begin{equation}\label{eq finite theory}
    \dmnd{2}{2}(D \wedge C) \;\dashv\vdash_{\mathsf{BI_\omega}}\;\dmnd{2}{2}(\dmnd{1}{1} A' \wedge C).
\end{equation}
For the left-to-right direction, by using Axioms~2 and~1, Mod$_1$ (together with Axiom~5 when $D={\downarrow}A'$) and Axiom~9 we get
$$\dmnd{2}{2}(D \wedge C) \vdash \dmnd{1}{1}(D \wedge C)
    \vdash \dmnd{1}{1} \nabla A' \vdash \dmnd{1}{1} A'.$$
Since $\dmnd{2}{2}(D \wedge C) \vdash \dmnd{2}{2} C$, combining the two and applying Axiom~12 yields
$$\dmnd{2}{2}(D \wedge C) \vdash \dmnd{2}{2} C \wedge \dmnd{1}{1} A'
    \vdash \dmnd{2}{2}(C \wedge \dmnd{1}{1} A').$$

\noindent For the converse direction we have two cases depending on the form of $D$. If $D = \nabla A'$, since $\dmnd{1}{1} A' \wedge C \vdash \nabla A' \wedge C$ we easily get
$\dmnd{2}{2}(\dmnd{1}{1} A' \wedge C) \vdash \dmnd{2}{2}(\nabla A' \wedge C)$.
For the second case $D = {\downarrow} A'$, we use the following derivation:
$$\dmnd{2}{2}(\dmnd{1}{1} A' \wedge C)
    \vdash \dmnd{2}{2}\dmnd{1}{1} A'\vdash \dmnd{1}{1}\dmnd{1}{1} A' \vdash \dmnd{1}{1} A'.$$
This together with $\dmnd{2}{2}(\dmnd{1}{1} A' \wedge C) \vdash \dmnd{2}{2} C$ yields
$$\dmnd{2}{2}(\dmnd{1}{1} A' \wedge C)\vdash \dmnd{2}{2} C \wedge \dmnd{1}{1} A'\vdash \dmnd{2}{2}(C \wedge {\downarrow} A').$$

Each application of (\ref{eq finite theory}) decreases the number of conjuncts of the form $\nabla A'$ or ${\downarrow} A'$, so after finitely many replacements we arrive at a formula $B$ in the desired form with $\dmnd{2}{2} A \dashv\vdash_{\mathsf{BI}} \dmnd{2}{2} B$.
\end{proof}

\begin{lemma}\label{lemma order 1}
    Provably in $\mathsf{ACA}_0$, for every formula $A\in\W$,
    \[A^*\equiv_{\Pi^1_1} \mathsf{ACA}_0+\{\Pi^1_1\textsf{-RFN}(B^*)\mid B<^1_1A\}.\]
\end{lemma}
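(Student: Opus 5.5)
The inclusion from right to left is immediate: if $B<^1_1A$ then $A\vdash_{\mathsf{BI_\omega}}\dmnd{1}{1}B$, so by Propositions \ref{prop equiv} and \ref{prop ari sound} (whose proof formalizes in $\mathsf{ACA}_0$) we get $A^*\vdash\Pi^1_1\textsf{-RFN}(B^*)$. The content is the $\Pi^1_1$-conservativity of $A^*$ over the right-hand side, which we call $R_A$. I would prove it by induction on the structure of $A$, following the split of Definition \ref{def form type}: fundamental, successor form, and limit form, with the limit case further divided by outermost connective. In each case I would use one of the reduction properties (Lemmas \ref{lem 1 red 1}--\ref{lem 5 red 5}) to replace the top modality by a reflection \emph{rule}. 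Then I would show that each application of that rule is absorbed by an instance of $\Pi^1_1\textsf{-RFN}(C^*)$ for some $C<^1_1A$.

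The conjunction case comes first. For $A=A_1\wedge\dots\wedge A_k$, linearity (Theorem \ref{thm lin}) lets me pick the $\leq^1_1$-largest conjunct. Using $\Pi^1_2$-axiomatizability of the other conjuncts, I reduce to $T+\dmnd{i}{\alpha}A'$ with $T$ the remaining conjuncts, so the reductions apply with base $T$. In the successor case $A=\dmnd{1}{1}A'\wedge T$, there is nothing to do, since $\dmnd{1}{1}A'$ is itself one of the right-hand axioms. For $\dmnd{i}{n+1}A'$ with $n\ge1$, Lemma \ref{lem 1 red 1} or \ref{lem 2 red 2} gives $\Pi^1_n$- (hence $\Pi^1_1$-) conservativity over $T$ plus the rule $\varphi/\dmnd{i}{n}(A'+\varphi)$. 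Iterating down to rank $1$ (or to $\dmnd{2}{2}$, where Lemma \ref{lem 5 red 5} applies after passing to a finitely axiomatized $B$ by Lemma \ref{lemma finite theory}), a $\Pi^1_1$-theorem of $A^*$ becomes provable from finitely many nested instances. These instances have the shape of the approximations $(A)_m$ of Definition \ref{def approx}, with the $\varphi$'s being earlier derived theorems.

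The crux is showing that every sentence produced by a finite stack of rule applications is implied by $\dmnd{1}{1}$ of some formula $C\in\W$ with $C<^1_1A$. This requires two facts.
\begin{enumerate}
\item The rule premises $\varphi$ are themselves $\Pi^1_1$-theorems of $R_A$ (in the inner induction). By the induction hypothesis they can therefore be replaced by finitely many reflection axioms $\dmnd{1}{1}C_j$, which are true $\Sigma$-absorbable statements inside $\dmnd{i}{n}(A'+\cdot)$.
\item The resulting nested formula is bounded by some $(A)_m$, which lies $<^1_1A$ by Lemma \ref{lem approx linearity}.
\end{enumerate}
For the $\dwn$ cases, Lemmas \ref{lem 3 red 3} and \ref{lem 4 red 4} give exact equivalence with rules $\varphi/\dmnd{1}{n}(\dwn A'+\varphi)$, matching $(\dwn C)_n=\dmnd{1}{n}\dwn(C)_n$. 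To collapse the conjunction $\bigwedge_j\dmnd{1}{1}C_j$ into a single reflection axiom, I would use linearity again, which selects the largest $C_j$ as in the proof of Lemma \ref{lem 12 tree approx 1}.

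I expect the main obstacle to be exactly this bookkeeping. One must verify that absorbing a premise $\varphi$, equal to a conjunction of $\dmnd{1}{1}C_j$ with $C_j<^1_1A$, into $\dmnd{i}{n}(A'+\varphi)$ yields a formula of $\W$ that is still $<^1_1A$. That is, we need $A\vdash\dmnd{1}{1}\dmnd{i}{n}(A'\wedge\dmnd{1}{1}C)$, obtained by Axiom 12 together with $A\vdash\dmnd{i}{n+1}A'\wedge\dmnd{1}{1}C$. This argument must also formalize in $\mathsf{ACA}_0$, uniformly in $A$, which is needed for the ``provably in $\mathsf{ACA}_0$'' clause. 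I would handle this by carrying out the induction on the finite derivation length externally, while quoting only the uniform, $\mathsf{ACA}_0$-verifiable forms of the reduction lemmas and of arithmetical soundness.
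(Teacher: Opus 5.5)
Your split into the two inclusions matches the paper's, whose proof only asserts that the conservativity direction follows from Lemmas \ref{lem 1 red 1}--\ref{lem 5 red 5} together with Lemma \ref{lemma finite theory}. But the way you carry out that direction has genuine gaps.

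\textbf{First, the conjunction step fails as stated.} Eliminating a diamond of rank $n+1$ requires a $\Pi^1_{n+1}$-axiomatized base ($\Pi^1_2$ for Lemma \ref{lem 5 red 5}), and the $\leq^1_1$-largest conjunct need not have maximal rank.
\begin{itemize}
\item In $\dmnd{2}{2}\dmnd{2}{3}\top\wedge\dmnd{1}{3}\top$ the first conjunct is $\leq^1_1$-largest (it proves $\dmnd{1}{1}\dmnd{1}{3}\top$), yet the leftover base is the $\Pi^1_3$ axiom $\dmnd{1}{3}\top$.
\item ${\dwn}$-conjuncts bring axioms of unbounded complexity.
\end{itemize}
The eliminations must be organized by rank, not by $\leq^1_1$. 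If there are ${\dwn}$-conjuncts, replace them via Lemmas \ref{lem 3 red 3} and \ref{lem 4 red 4}, which allow any base (after the treatment of general ${\dwn}C$ described below). All remaining finite-rank material is then absorbed under $\dmnd{1}{\omega}$ by Axioms 13, 14 and 12. Otherwise, eliminate all conjuncts of maximal rank, closing under their rules simultaneously, and carry the lower-rank conjuncts along (call their conjunction $D$).

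\textbf{Second, your invariant that rule premises are $\Pi^1_1$-theorems of $R_A$ breaks for $n\geq2$.} The premises of $\varphi/\dmnd{i}{n}(A'+\varphi)$ are $\Pi^1_n$ sentences, e.g.\ $\dmnd{1}{2}\top$ when $A=\dmnd{1}{3}\top$. In general the $\Pi^1_1$-axiomatized theory $R_A$ does not prove them. An invariant that works is this: every theorem of the rule-closed theory follows from $\mathsf{ACA}_0$, $D^*$, and finitely many $(\dmnd{i}{n}E_j)^*$ with $A\vdash_{\mathsf{BI_\omega}}\dmnd{i}{n+1}E_j$.
\begin{itemize}
\item It is preserved because Axiom 12 gives $A\vdash\dmnd{i}{n+1}(A'\wedge D\wedge\bigwedge_j\dmnd{i}{n}E_j)$. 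For Lemma \ref{lem 5 red 5}, use Axiom 11 and monotonicity of ${\dwn}$ instead.
\item At the end, a $\Pi^1_1$-theorem $\psi$ of $A^*$ follows from $G^*$ with $G=D\wedge\bigwedge_j\dmnd{i}{n}E_j$ and $A\vdash\dmnd{1}{1}G$. The axiom $\Pi^1_1\textsf{-RFN}(G^*)$ of $R_A$ then yields $\psi$.
\end{itemize}
The derivation in your last paragraph is the right tool, but it must be applied to this invariant. No approximations $(A)_m$ are involved. Your claim that the generated formulas are bounded by some $(A)_m$ is unsupported: cofinality of the $(A)_m$ below $A$ is only available via Lemma \ref{lem 12 tree approx 1} and linearity.

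\textbf{Third, appealing to Theorem \ref{thm lin} is incompatible with ``provably in $\mathsf{ACA}_0$''.} The lemma is used internally and uniformly: the L\"ob argument of Theorem \ref{thm iter ref} applies it inside $\mathsf{ACA}_0$ to an arbitrary $A$. So an external induction giving an $\mathsf{ACA}_0$-proof for each standard $A$ does not suffice. Linearity in turn is proved via the well-foundedness of $<^1_1$, i.e.\ via the $\Pi^1_1$-soundness of all theories $A^*$, which $\mathsf{ACA}_0$ cannot prove.

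Linearity is also unnecessary where you use it. $R_A$ contains each $\Pi^1_1\textsf{-RFN}(C_j^*)$ separately, and Axioms 11 and 12 absorb the conjuncts $\dmnd{1}{1}C_j$ one at a time, so nothing ever has to be collapsed under a diamond. The only place where a conjunction sits under an operator the stated lemmas do not cover is ${\dwn}C$, after replacing $C^*$ by $R_C$ via the induction hypothesis. That replacement is legitimate, since ${\dwn}T$ depends only on the $\Pi^1_1$-theorems of $T$.
\begin{itemize}
\item If $C$ is of limit form, then $C\vdash\dmnd{1}{2}\top$. Hence for $E_1,\dots,E_k<^1_1C$ the formula $G=\bigwedge_j\dmnd{1}{1}E_j$ satisfies $G<^1_1C$ and $\dmnd{1}{1}G\vdash\dmnd{1}{1}E_j$ (Axioms 12 and 6). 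Lemma \ref{lem 4 red 4} then applies to the single formula ${\dwn}\dmnd{1}{1}G$.
\item If $C=\dmnd{1}{1}X_1\wedge\dots\wedge\dmnd{1}{1}X_k$, you need Lemma \ref{lem 4 red 4} for finitely many reflection schemes at once. Its proof goes through verbatim with all the schemes $\Pi^0_n(M)\textsf{-RFN}(X_j)$ added to $\mathsf{PA}(M)$.
\end{itemize}
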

\begin{proof}
    The claim follows directly from the results of the preceding sections. The $(\subseteq)$-inclusion is an immediate consequence of the reduction properties established in Lemmas \ref{lem 1 red 1}--\ref{lem 5 red 5} (note that to meet the finite axiomatizability condition of Lemma \ref{lem 5 red 5} we can always use Lemma \ref{lemma finite theory}). The converse $(\supseteq)$-inclusion follows immediately from the definition of the $\leq^1_1$-ordering, $A<^1_1 B$ iff $B\vdash_\mathsf{BI_\omega}\dmnd{1}{1} A$, due to the arithmetical soundness of the calculus (Proposition \ref{prop ari sound}).
\end{proof}

\begin{definition}\label{iter_ref_def}
    For every formula $A\in\W$ we define theories $\Pi^1_1\textsf{-R}^A(\mathsf{ACA}_0)$ to be
    \[\Pi^1_1\textsf{-R}^A(\mathsf{ACA}_0)=\mathsf{ACA}_0+\{\Pi^1_1\textsf{-RFN}(\Pi^1_1\textsf{-R}^B(\mathsf{ACA}_0))\mid B<^1_1 A\}.\]
    Formally, we use G\"odel's Diagonal Lemma to define a $\Sigma_1$-formula $\mathsf{Ax}(x,y)$ expressing that $x$ is a formula $A\in\W$ and $y$ is an axiom of $\Pi^1_1\textsf{-R}^A(\mathsf{ACA}_0)$.
\end{definition}

\begin{theorem}\label{thm iter ref}
    Provably in $\mathsf{ACA}_0$,
    \begin{equation} \label{RC_it_ref} A^*\equiv_{\Pi^1_1} \Pi^1_1\textsf{-R}^{A}(\mathsf{ACA}_0)\text{, for every formula $A\in\W$.}\end{equation}
\end{theorem}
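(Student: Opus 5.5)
The natural approach is reflexive (Löb-style) induction inside $\mathsf{ACA}_0$, combined with transfinite induction along $<^1_1$ viewed externally. First I would fix what has to be proved. Write $\Phi(A)$ for the formalized statement ``$A^*\equiv_{\Pi^1_1}\Pi^1_1\textsf{-R}^{A}(\mathsf{ACA}_0)$'', meaning that each theory proves every $\Pi^1_1$ theorem of the other. The key observation is that $\Pi^1_1$-conservativity is exactly what is needed to transfer $\Pi^1_1$-reflection. Provably in $\mathsf{ACA}_0$, if $U\equiv_{\Pi^1_1}V$, then $\Pi^1_1\textsf{-RFN}(U)\leftrightarrow\Pi^1_1\textsf{-RFN}(V)$, because both assert the truth of the same set of $\Pi^1_1$ sentences.

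Next I would combine Lemma \ref{lemma order 1} with Definition \ref{iter_ref_def}. Lemma \ref{lemma order 1} gives, provably in $\mathsf{ACA}_0$,
\[A^*\equiv_{\Pi^1_1}\mathsf{ACA}_0+\{\Pi^1_1\textsf{-RFN}(B^*)\mid B<^1_1A\},\]
and the definition gives
\[\Pi^1_1\textsf{-R}^A(\mathsf{ACA}_0)=\mathsf{ACA}_0+\{\Pi^1_1\textsf{-RFN}(\Pi^1_1\textsf{-R}^B(\mathsf{ACA}_0))\mid B<^1_1A\}.\]
Suppose $\Phi(B)$ holds for all $B<^1_1A$. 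Then the two axiom sets are pairwise equivalent over $\mathsf{ACA}_0$, so the theories coincide up to deductive equivalence and $\Phi(A)$ follows. Formalized, this gives $\mathsf{ACA}_0\vdash\forall A\,(\forall B<^1_1A\;\Phi(B)\to\Phi(A))$. There is one subtlety: the relation $B<^1_1A$ must be c.e. (derivability in $\mathsf{RC_{BI_\omega}}$), and the equivalence of axiom sets must be witnessed uniformly. Both hold because Lemma \ref{lemma order 1} is stated provably in $\mathsf{ACA}_0$ and uniformly in $A$.

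The main obstacle is closing this progressiveness into $\forall A\,\Phi(A)$ inside $\mathsf{ACA}_0$. Transfinite induction along $<^1_1$ is not available there, since its order type is the Bachmann--Howard ordinal. I would try Löb's theorem instead. The formula $\Phi(A)$ is itself arithmetical, in fact $\Pi_2$ modulo provability. The goal is to reduce $\forall B<^1_1A\,\Phi(B)$ to a statement of the form $\mathrm{Prv}_{\mathsf{ACA}_0}(\ulcorner\forall A\,\Phi(A)\urcorner)$, or to its reflection instance. Concretely, I would reason as follows:
\begin{itemize}
\item Assume $\mathrm{Prv}_{\mathsf{ACA}_0}(\ulcorner\forall A\,\Phi(A)\urcorner)$.
\item Then for any $B$, the sentence $\Phi(B)$ is provable in $\mathsf{ACA}_0$.
\item For the step above, it suffices to know that $\mathsf{ACA}_0$ proves the conservation $\Pi^1_1\textsf{-RFN}(B^*)\leftrightarrow\Pi^1_1\textsf{-RFN}(\Pi^1_1\textsf{-R}^B(\mathsf{ACA}_0))$ for $B<^1_1A$. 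A proof of $\Phi(B)$ yields exactly this, again by formalized $\Sigma_1$-completeness and the first observation.
\end{itemize}
This would give $\mathsf{ACA}_0\vdash\mathrm{Prv}_{\mathsf{ACA}_0}(\ulcorner\forall A\,\Phi(A)\urcorner)\to\forall A\,\Phi(A)$, and Löb's theorem finishes the argument.

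The point I would verify most carefully is that only provability of $\Phi(B)$, and not its truth, is needed in the inductive step. This holds because the equivalence of the reflection axioms follows from a derivation of the mutual $\Pi^1_1$-conservativity inside the ambient theory. I expect this to work, just as in the analogous treatment for $\mathsf{RC}$ and iterated reflection over $\mathsf{PA}$, where the $\Sigma_1$-definition via the Diagonal Lemma is exactly what makes the reflexive induction go through.
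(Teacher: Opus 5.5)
Your Löb-style argument is essentially the paper's proof. Reasoning in $\mathsf{ACA}_0$ plus the $\mathsf{ACA}_0$-provability of $\forall A\,\Phi(A)$, the paper chains Lemma~\ref{lemma order 1}, the provable equivalence $\Pi^1_1\textsf{-RFN}(B^*)\leftrightarrow\Pi^1_1\textsf{-RFN}(\Pi^1_1\textsf{-R}^B(\mathsf{ACA}_0))$ for $B<^1_1A$, and the diagonal definition of $\Pi^1_1\textsf{-R}^A(\mathsf{ACA}_0)$, and then applies Löb's theorem; one correction: your preliminary progressiveness step from the mere truth of $\Phi(B)$ does not work as stated, because comparing the two axiomatized theories requires $\mathsf{ACA}_0$ to \emph{prove} the equivalence of the reflection axioms, which is exactly what the Löb hypothesis supplies (so provability is needed there, not merely sufficient).
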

\begin{proof}
We prove the theorem via L\"ob's Theorem. Thus in order to conclude the proof we reason in the theory $\mathsf{ACA}_0$ plus $\mathsf{ACA}_0$-provability of (\ref{RC_it_ref})  to prove (\ref{RC_it_ref}). 

Indeed, we consider some formula $A\in\W$ and observe that  
\[\begin{aligned} A^*& \equiv &\mathsf{ACA}_0+\{\Pi^1_1\textsf{-RFN}(B^*)\mid B<^1_1A\} & \text{\;\;\;\;\;(by Lemma \ref{lemma order 1})} \\
& \equiv & \mathsf{ACA}_0+\{\Pi^1_1\textsf{-RFN}(\Pi^1_1\textsf{-R}^B(\mathsf{ACA}_0))\mid B<^1_1 A\} &\text{\;\;\;\;\;(by $\mathsf{ACA}_0$-provability of (\ref{RC_it_ref}))}\\
& \equiv & \Pi^1_1\textsf{-R}^{A}(\mathsf{ACA}_0) & \text{\;\;\;\;\;(by Definition \ref{iter_ref_def}).}\end{aligned}\]
\end{proof}

For a formula $A\in\W$, let $o(A)$ denote the ordinal $\alpha$ such that the $\leq^1_1$-equivalence class of $A$ occupies the $\alpha$-th position in the ordering; this is well defined, since $\leq^1_1$ is a linear and well-founded (Theorem \ref{thm lin}) pre-order.

\begin{definition}
For $T\supseteq \mathsf{ACA}_0$ its $\Pi^1_1$ proof-theoretic ordinal $|T|_{\Pi^1_1}$ is the supremum of the order types of recursive well-orderings\footnote{Strictly speaking, in this definition $\prec$ ranges over indices of Turing machines deciding well-ordered binary relations on subsets of naturals.} $\prec$ such that $T$ proves that $\prec$ is indeed a recursive binary relation and that $\prec$ is a well-ordering.
\end{definition}

Theorem \ref{thm iter ref} together with \cite[Theorem 3.2]{pakhomov2021reflection} gives us the following corollary.

\begin{corollary}\label{proof_theoretic_ordinals_cor}
    For every formula $A\in\W$,
    \[|A^*|_{\Pi^1_1}=\varepsilon_{o(A)}.\]
\end{corollary}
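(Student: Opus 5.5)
The plan is to combine Theorem \ref{thm iter ref} with the computation of proof-theoretic ordinals of iterated $\Pi^1_1$-reflection from \cite[Theorem 3.2]{pakhomov2021reflection}. First, I would note that $|\cdot|_{\Pi^1_1}$ is invariant under $\Pi^1_1$-conservativity. Indeed, the assertion that a fixed recursive $\prec$ is a well-ordering is a $\Pi^1_1$ sentence, and the assertion that $\prec$ is recursive is arithmetical, so both lie in $\Pi^1_1$. Hence Theorem \ref{thm iter ref}, applied in the metatheory (it is provable in $\mathsf{ACA}_0$, hence true), gives
\[|A^*|_{\Pi^1_1}=|\Pi^1_1\textsf{-R}^{A}(\mathsf{ACA}_0)|_{\Pi^1_1}.\]

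Second, I would identify $\Pi^1_1\textsf{-R}^{A}(\mathsf{ACA}_0)$ with a transfinite iteration of $\Pi^1_1$-reflection along a well-ordering. By Theorem \ref{thm lin}, $(\W/{\equiv^1_1},<^1_1)$ is a well-ordering. The relation $A<^1_1B$ is decidable via Theorem \ref{thm trees}, since $M_B$ is finite and computable from $B$; Proposition \ref{prop equiv} lets us pass between $\mathsf{RC_{BI}}$ and $\mathsf{RC_{BI_\omega}}$. Hence the ordering is a recursive well-ordering, and the initial segment below $A$ has order type $o(A)$. Definition \ref{iter_ref_def} is then exactly the standard Schmerl-style iterated reflection $\mathbf{R}^{\alpha}$ along this recursive ordering, with the minor caveat that the index set is a pre-order rather than an order. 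I would dispatch that caveat by observing that $\equiv^1_1$-equivalent $A,B$ have exactly the same $<^1_1$-predecessors (Proposition \ref{prop trans and compose}), so their iterated theories have literally the same axiom sets.

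Third, I would invoke \cite[Theorem 3.2]{pakhomov2021reflection}. In the form I would use it, it says that for $\Pi^1_1$-sound $T$ the $\Pi^1_1$-ordinal of $\alpha$-fold iterated $\Pi^1_1$-reflection over $\mathsf{ACA}_0$ is $\varepsilon_\alpha$. This comes via the equivalence of $\Pi^1_1$-reflection over $\mathsf{ACA}_0$ with the well-foundedness of $\varepsilon$-iterates, together with the inductive computation $|\mathsf{ACA}_0+\{\Pi^1_1\textsf{-RFN}(U)\mid U\}|_{\Pi^1_1}=\sup(\varepsilon_{|U|+1})$-style bounds: $\varepsilon_0$ at the base, and at each step the sup over predecessors of the next epsilon. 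Proceeding by transfinite induction on $o(A)$ gives $|A^*|_{\Pi^1_1}=\varepsilon_{o(A)}$. Here $\Pi^1_1$-soundness of all theories involved follows from arithmetical soundness.

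The main obstacle I expect is matching the exact formulation of the cited theorem, in particular the role of the ordering's notation. The cited result is stated for iterations along a specific recursive well-ordering that the base theory can reason about, for instance proving its linearity. Here $\mathsf{ACA}_0$ does not obviously prove linearity of $<^1_1$. However, the definition of the iteration only uses the decidable relation $<^1_1$, and the upper and lower bound arguments should only require $\mathsf{ACA}_0$-provable transitivity (Proposition \ref{prop trans and compose}, formalized) plus true well-foundedness. I would verify this by rechecking the inductive step. For the lower bound, $\Pi^1_1\textsf{-R}^A$ proves well-foundedness of $\varepsilon_\beta$ for $\beta<o(A)$ by reflecting over $\Pi^1_1\textsf{-R}^B$ for $o(B)=\beta$. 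For the upper bound, a cut-elimination- or $\omega$-model-free argument bounds provable well-orderings by $\sup_{B<A}\varepsilon_{o(B)+1}$, or by $\varepsilon_0$ when $A$ is minimal, which equals $\varepsilon_{o(A)}$ in the successor and limit cases respectively.
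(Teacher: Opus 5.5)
Your proposal follows the paper's own argument. The paper derives the corollary in one line from Theorem \ref{thm iter ref} together with the Pakhomov--Walsh computation for iterated $\Pi^1_1$-reflection, and your additional checks are correct bookkeeping that the paper leaves implicit: invariance of $|\cdot|_{\Pi^1_1}$ under $\Pi^1_1$-equivalence, decidability of $<^1_1$ via Theorem \ref{thm trees}, and the fact that $\equiv^1_1$-equivalent formulas give literally the same iterated theories. The subtlety you flag is that the cited result is phrased for well-orderings the base theory can reason about, whereas linearity of $<^1_1$ is only established through well-foundedness (Theorem \ref{thm lin}) and is not evidently $\mathsf{ACA}_0$-provable; the paper passes over this too, so your re-verification goes beyond what the paper provides, though it is still sketchy, especially for the upper bound.
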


Being a computable pre-well-ordering, the order $<^1_1$ on $\nabla$-free closed $\mathsf{RC_{BI}}$ formulas can be considered an ordinal notation system. Namely, each element $A$ denotes the ordinal $o(A)$, which is the order type of the initial segment $\{B \mid B <^1_1 A\}$ modulo provable equivalence. In this paper, we do not perform any direct computations of the correspondence between the commonly used ordinal notation systems for the Bachmann-Howard ordinal and $<^1_1$. However, Corollary \ref{proof_theoretic_ordinals_cor} in fact provides such a correspondence when we restrict our attention to formulas $A$ such that $A^*$ is a theory whose proof-theoretic ordinal has already been computed in terms of standard ordinal notation systems. In particular, the following holds:
\begin{enumerate}
    \item $o(\top)=0$;
    \item $o(\dmnd{1}{1} \top)=1$;
    \item $o(\dmnd{1}{2} \top)=\omega$, note that $(\dmnd{1}{2}\top)^*$ is $\mathsf{ACA}_0'$ \cite[Theorem~15]{PachecoYokoyama2023};
    \item $o(\dwn \top)=\varepsilon_0$, note that $(\dwn \top)^*=\mathsf{ACA}$ by Lemma \ref{lem 3 red 3};
    \item $o(\dmnd{2}{2} \top)=\varphi_2(0)$, since trivially $(\dmnd{2}{2} \top)^*=\mathsf{ACA}_0^+$;
    \item $o(\dmnd{2}{3} \top)=\varphi_\omega(0)$, since $(\dmnd{2}{3} \top)^*=\Pi^1_1\textsf{-BI}_0=\Sigma^1_1\textsf{-DC}_0$ \cite[Theorem VIII.5.12]{simpson2009subsystems}, and from \cite{cantini1986relation} we have $|\Sigma^1_1\textsf{-DC}_0|_{\Pi^1_1}=\varphi_\omega(0)$;
    \item $o(\dmnd{2}{2}\dmnd{2}{3} \top)=\Gamma_0$, since $(\dmnd{2}{2}\dmnd{2}{3} \top)^*=\mathsf{ATR}_0$ \cite[Theorem 6.7]{pakhomov2023reducing} and the proof-theoretic ordinal of $\mathsf{ATR}_0$ is well-known to be $\Gamma_0$ \cite{friedman1982finite};
    \item $o(\dmnd{2}{2}\dmnd{2}{3} \top\land \dwn\top)=\Gamma_{\varepsilon_0}$, since $(\dmnd{2}{2}\dmnd{2}{3} \top\land \dwn\top)^*=\mathsf{ATR}$;
    \item $\sup_{A\in \W} o(A)=\psi(\varepsilon_{\Omega+1})$, since by  \cite{fried} $\mathsf{ACA}_0+\bigcup\limits_{n<\omega} \Pi^1_n\textsf{-}\omega\textsf{RFN} (\mathsf{ACA}_0)\equiv \mathsf{BI}$ and it is a folklore result that the proof-theoretic ordinal of $\mathsf{BI}$ is the Bachmann-Howard ordinal $\psi(\varepsilon_{\Omega+1})$.\footnote{This, for example, can be concluded from the result that $\mathsf{BI}$ has the same $\Pi^1_1$ theorems as $\mathsf{ACA}_0+\Pi^1_1\textsf{-CA}^-$ \cite[Corollary~VII.2.21]{simpson2009subsystems}, and the computation of the proof-theoretic ordinal of $\mathsf{ACA}_0+\Pi^1_1\textsf{-CA}^-$ (see \cite{freund2022impredicativity}, where the computation is done via the reduction to the computation of the proof-theoretic ordinal of $\mathsf{ID}_1$).}
\end{enumerate}

\begin{remark}
A computation of the proof-theoretic ordinals of all the theories of the form $(\dmnd{2}{n_1}\ldots\dmnd{2}{n_k}\top)^*$ can be found in \cite{probst2015habilitation}. The method employed by Probst in that work is based on cut-elimination for infinitary logic.
\end{remark}

\vspace{\baselineskip}
\section{Conclusion and Future Work}
There are several main directions of further work that naturally follow from here. 

Let us start with possible further research in the direction of strictly positive modal logics. First, it would be natural to put our result into the perspective of a broader theory exploring under which conditions strictly positive modal logics similar to ours will have a well-behaved Kripke semantics. In particular, it would be interesting to do a more systematic investigation of strictly positive modal logics with axioms corresponding to the generation of additional worlds in terms of Kripke semantics.\footnote{We note that $\mathsf{RC}_{\mathsf{BI}}$ has such axioms, namely the ones denoted as constructive axioms: 13. $\dwn\top \vdash \dmnd{1}{\alpha} \top$, 14. $\dwn\dmnd{1}{1} A\vdash \dmnd{1}{\alpha}\dwn A$, and 15. $A\wedge \dmnd{2}{2} B\vdash\nabla(\dwn A\wedge B)$.} A deeper understanding of the behavior of modal frames in this setting may simplify the approach and enable generalizations to related systems.

Another direction involves developing and studying a reflection calculus that incorporates stronger principles, such as $\beta$-model reflection. Extending the calculus and developing the appropriate reduction properties in this way could facilitate a Beklemishev-style ordinal analysis of larger fragments of second-order arithmetic, and potentially yield new ordinal notation systems. Finally, we find a further investigation of the phenomenon of connections between theories axiomatized by reflection principles and the linearity of the consistency strength to also be an intriguing research direction.

\vspace{\baselineskip}
\section*{Use of AI}
 There was some limited use of AI in the final stages of the preparation of this paper. When we already had a manuscript with full proofs, AI was used for proofreading, and it found a number of mistakes in the draft. The vast majority of these were relatively minor, and those that were not entirely trivial to fix were fixed manually by the authors. The only relatively major gap was in the proof of Lemma \ref{lem 12 tree approx 1}. Fixing it required a new framework for the analysis of the models $M_A$, built around the notion of anchor points (Definition \ref{def 12 trees 3}). This is the only part of the proofs in the paper where AI assistance was used in proof design in a substantive way. The idea of introducing anchor points, together with the accompanying framework of paths, transition ranks and connectors, is the result of interaction with AI about possible fixes of this gap. In our opinion, both the human and the AI input contributed to the end result. As a consequence, the proofs of Lemmas \ref{lem rank2 paths}, \ref{lem closure facts} and \ref{lem locality}, Proposition \ref{prop MA is a model}, Lemma \ref{lem 11 trees 3}, and the results of Subsection \ref{subsec:structural MA}, which had previously been proved in a different form, as well as the now fixed Lemma \ref{lem 12 tree approx 1} were adapted to the new framework. AI was used to help rewrite them, and the authors subsequently revised the results.

In addition, AI was used in text editing, to polish the paper with grammar fixes, stylistic adjustments such as better notations, the correction of typos and other trivial mistakes, and presentational improvements to some proofs that leave the mathematical content unchanged. Finally, AI was used in the preparation of the introduction, to help survey the existing literature related to the present paper. The authors have checked all proofs and references and take full responsibility for the content of the paper.

\vspace{\baselineskip}
\section*{Acknowledgements} 
The authors would like to thank Lev Beklemishev, David Fernández-Duque, and
 Joost J. Joosten for fruitful discussions. The work of both the authors has been funded by the FWO grant G0F8421N.

\vspace{\baselineskip}
\bibliographystyle{acm}
\bibliography{citation}
\nocite{*}
\end{document}